\tikzset{
>=stealth',
  punktchain/.style={
    rectangle,
    rounded corners,
    draw=black, thick,
    minimum height=3em,
    text centered,
    on chain},
  line/.style={draw, thick, <-},
  element/.style={
    tape,
    top color=white,
    bottom color=blue!50!black!60!,
    minimum width=8em,
    draw=blue!40!black!90, very thick,
    text width=10em,
    minimum height=3.5em,
    text centered,
    on chain},
  every join/.style={->, thick,shorten >=1pt},
  decoration={brace},
  tuborg/.style={decorate},
  tubnode/.style={midway, right=2pt},
}
\def\C{\ensuremath{\mathbb{C}}}
\def\H{\ensuremath{\mathbb{H}}}
\def\N{\ensuremath{\mathbb{N}}}
\def\P{\ensuremath{\mathbb{P}}}
\def\Q{\ensuremath{\mathbb{Q}}}
\def\R{\ensuremath{\mathbb{R}}}
\def\Z{\ensuremath{\mathbb{Z}}}
\def\ch{\mathop{\mathrm{ch}}\nolimits}
\def\Coh{\mathop{\mathrm{Coh}}\nolimits}
\def\dim{\mathop{\mathrm{dim}}\nolimits}
\def\ext{\mathop{\mathrm{ext}}\nolimits}
\def\Ext{\mathop{\mathrm{Ext}}\nolimits}
\def\GL{\mathop{\mathrm{GL}}\nolimits}
\def\Hom{\mathop{\mathrm{Hom}}\nolimits}
\def\id{\mathop{\mathrm{id}}\nolimits}
\def\Ker{\mathop{\mathrm{Ker}}\nolimits}
\def\NS{\mathop{\mathrm{NS}}\nolimits}
\def\rk{\mathop{\mathrm{rk}}}
\def\Tor{\mathop{\mathrm{Tor}}\nolimits}
\def\MG13{\ensuremath{{\mathcal M}_{\Gamma_1(3)}}}
\def\tildeMG13{\ensuremath{\widetilde{\mathcal M}_{\Gamma_1(3)}}}
\def\Stab{\mathop{\mathrm{Stab}}\nolimits}
\def\into{\ensuremath{\hookrightarrow}}
\def\onto{\ensuremath{\twoheadrightarrow}}
\def\blank{\underline{\hphantom{A}}}
\def\Db{\mathrm{D}^{b}}
\newcommand\TFILTB[3]{%
\xymatrix@=1pc{
{0 = {#1}_0} \ar[rr]&&
{{#1}_1} \ar[rr]\ar[ld] &&
{{#1}_2} \ar[r]\ar[ld] &
{\cdots} \ar[r] & { {#1}_{#3-1}} \ar[rr] &&
{{#1}_{#3} = {#1}} \ar[ld]
\\
& *{{#2}_1} \ar@{.>}[ul] &&
{{#2}_2} \ar@{.>}[ul] & &&&
{{#2}_{{#3}}} \ar@{.>}[ul]
}}
\def\abs#1{\left\lvert#1\right\rvert}
\newcommand\stv[2]{\left\{#1\,\colon\,#2\right\}}
\newtheorem*{rep@theorem}{\rep@title}
\newcommand{\newreptheorem}[2]{%
\newenvironment{rep#1}[1]{%
 \def\rep@title{#2 \ref{##1}}%
 \begin{rep@theorem}}%
 {\end{rep@theorem}}}
\newtheorem{Thm}{Theorem}[section]
\newtheorem{Prop}[Thm]{Proposition}
\newtheorem{Lem}[Thm]{Lemma}
\newtheorem{Cor}[Thm]{Corollary}
\newtheorem{Con}[Thm]{Conjecture}
\newtheorem{thm-int}{Theorem}
\theoremstyle{definition}
\newtheorem{Def-s}[Thm]{Definition}
\newtheorem{Def}[Thm]{Definition}
\newtheorem{Rem}[Thm]{Remark}
\newtheorem{Ex}[Thm]{Example}
\def\C{\ensuremath{\mathbb{C}}}
\def\H{\ensuremath{\mathbb{H}}}
\def\N{\ensuremath{\mathbb{N}}}
\def\P{\ensuremath{\mathbb{P}}}
\def\Q{\ensuremath{\mathbb{Q}}}
\def\R{\ensuremath{\mathbb{R}}}
\def\Z{\ensuremath{\mathbb{Z}}}
\def\AA{\ensuremath{\mathcal A}}
\def\BB{\ensuremath{\mathcal B}}
\def\CC{\ensuremath{\mathcal C}}
\def\DD{\ensuremath{\mathcal D}}
\def\FF{\ensuremath{\mathcal F}}
\def\OO{\ensuremath{\mathcal O}}
\def\PP{\ensuremath{\mathcal P}}
\def\QQ{\ensuremath{\mathcal Q}}
\def\TT{\ensuremath{\mathcal T}}
\def\UU{\ensuremath{\mathcal U}}
\def\VV{\ensuremath{\mathcal V}}
\def\ZZ{\ensuremath{\mathcal Z}}
\def\CCC{\mathfrak C}
\def\PPP{\mathfrak P}
\def\VVV{\mathfrak V}
\newcommand{\ignore}[1]{}
\def\um{\underline{m}}
\def\un{\underline{n}}
\def\uq{\underline{q}}
\def\obeta{\overline{\beta}}
\def\HDelta{\overline{\Delta}_H}
\def\HDeltaB{\overline{\Delta}_H^B}
\def\HDeltaBC{\Delta_{H, B}^C }
\def\HNablab{\overline{\nabla}_H^\beta}
\begin{document}

\title[Stability Conditions on Abelian Threefolds and some Calabi-Yau threefolds]{The Space of Stability Conditions on Abelian Threefolds, and on some Calabi-Yau Threefolds}

\author{Arend Bayer}
\address{School of Mathematics and Maxwell Institute,
University of Edinburgh,
James Clerk Maxwell Building,
Peter Guthrie Tait Road, Edinburgh, EH9 3FD,
United Kingdom}
\email{arend.bayer@ed.ac.uk}
\urladdr{\url{http://www.maths.ed.ac.uk/~abayer/}}

\author{Emanuele Macr\`i}
\address{Department of Mathematics, The Ohio State University, 231 W 18th Avenue, Columbus, OH
43210, USA}
\curraddr{Department of Mathematics, Northeastern University, 360 Huntington Avenue, Boston, MA
02115, USA}
\email{e.macri@neu.edu}
\urladdr{\url{http://nuweb15.neu.edu/emacri/}}

\author{Paolo Stellari}
\address{Dipartimento di Matematica ``F. Enriques'', Universit\`a degli Studi di Milano, Via Cesare Saldini 50, 20133 Milano, Italy}
\email{paolo.stellari@unimi.it}
\urladdr{\url{http://users.unimi.it/stellari}}

\keywords{Bridgeland stability conditions, Derived category, Bogomolov-Gieseker inequality, Abelian threefolds}

\subjclass[2010]{14F05 (Primary); 14J30, 18E30 (Secondary)}

\begin{abstract}
We describe a connected component of the space of stability conditions on abelian threefolds, and on
Calabi-Yau threefolds obtained as (the crepant resolution of) a finite quotient of an abelian
threefold. Our proof includes the following essential steps:

1. We simultaneously strengthen a conjecture by the first two authors and Toda, and prove that it
follows from a more natural and seemingly weaker statement. This conjecture is a Bogomolov-Gieseker
type inequality involving the third Chern character of ``tilt-stable'' two-term complexes on
smooth projective threefolds; we extend it from complexes of tilt-slope zero to arbitrary tilt-slope.

2. We show that this stronger conjecture implies the so-called support property of Bridgeland
stability conditions, and the existence of an explicit open subset of the space of stability
conditions.

3. We prove our conjecture for abelian threefolds, thereby reproving and generalizing
a result by Maciocia and Piyaratne.

Important in our approach is a more systematic understanding on the behaviour of quadratic
inequalities for semistable objects under wall-crossing, closely related to the support
property.
\end{abstract}


\maketitle
\setcounter{tocdepth}{1}
\tableofcontents

\section{Introduction}\label{sec:intro}
In this paper, we determine the space of Bridgeland stability conditions on abelian
threefolds and on Calabi-Yau threefolds obtained either as a finite quotient of an abelian threefold, or as the crepant resolution of such a quotient. More precisely, we describe a connected
component of the space of stability conditions for which the central charge
only depends on the degrees $H^{3-i} \ch_i(\blank)$, $i = 0, 1, 2, 3$, of the Chern
character\footnote{In the case of crepant resolutions, we take the Chern character after applying BKR-equivalence \cite{Mukai-McKay} between the crepant resolution and the orbifold quotient.}
with respect to a given polarization $H$, and that satisfy the support property.

\subsection*{Stability conditions on threefolds via a conjectural Bogomolov-Gieseker type inequality}
The existence of stability conditions on three-dimensional
varieties in general, and more specifically on Calabi-Yau threefolds, is often considered the
biggest open problem in the theory of Bridgeland stability conditions. Until recent work by
Maciocia and Piyaratne \cite{Dulip-Antony:I, Dulip-Antony:II}, they were
only known to exist on threefolds whose derived category admits a full exceptional collection.
Possible applications of stability conditions range from modularity properties of generating
functions of Donaldson-Thomas invariants \cite{Toda:DenefMoore, Yukinobu:ICM} to Reider-type theorems for
adjoint linear series \cite{BBMT:Fujita}.

In \cite{BMT:3folds-BG}, the first two authors and Yukinobu Toda, also based on discussions with
Aaron Bertram, proposed a general approach towards the
construction of stability conditions on a smooth projective threefold $X$. The construction is based
on the auxiliary notion of \emph{tilt-stability} for two-term complexes, and a conjectural
Bogomolov-Gieseker type inequality for the third Chern character of tilt-stable objects; we review
these notions  in Section \ref{sec:TiltStability} and the precise inequality in Conjecture
\ref{con:strongBG}. It depends on the choice of two divisor classes $\omega, B \in \NS(X)_\R$
with $\omega$ ample.
It was shown that this conjecture would imply the existence of Bridgeland
stability conditions\footnote{Not including the so-called ``support property'' reviewed further
below.}, and, in the companion paper \cite{BBMT:Fujita}, a version of an open case of Fujita's
conjecture, on the very ampleness of adjoint line bundles on threefolds.

Our first main result is the following, generalizing the result of
\cite{Dulip-Antony:I, Dulip-Antony:II} for the case when $X$ has Picard rank one:
\begin{Thm}\label{thm:mainabelian}
The Bogomolov-Gieseker type inequality for tilt-stable objects,
Conjecture \ref{con:strongBG}, holds when $X$ is an abelian threefold, and $\omega$ is a real
multiple of an integral ample divisor class.
\end{Thm}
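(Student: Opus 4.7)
The plan is to establish Conjecture \ref{con:strongBG} on an abelian threefold $X$ by combining two ingredients stressed in the introduction: the reduction of the inequality at arbitrary tilt-slope to its slope-zero version (item~1 of the abstract), and the rich self-symmetries of $\Db(X)$ coming from Fourier--Mukai transforms with the Poincar\'e bundle and from multiplication-by-$n$ maps.

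First, using the strengthened reformulation of Conjecture \ref{con:strongBG} to be proved earlier in the paper, it suffices to bound $\ch_3^B$ for a tilt-stable object $E$ of tilt-slope zero. This is a crucial simplification: slope zero objects have $H^2 \cdot \ch_1^B(E) = 0$, so the conjectural bound becomes a purely quadratic inequality in the remaining Chern data, which is much better suited for Fourier--Mukai manipulations.

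Second, following the strategy of Maciocia--Piyaratne, I would exploit the Fourier--Mukai equivalence $\Phi_\PP \colon \Db(X) \xrightarrow{\sim} \Db(\hat X)$ induced by the normalized Poincar\'e bundle. The key technical claim is that, for suitable choices of $(\omega, B)$ (and after tensoring with a line bundle and possibly shifting), $\Phi_\PP$ sends tilt-semistable objects on $X$ to tilt-semistable objects on $\hat X$ with an explicitly transformed pair $(\hat\omega, \hat B)$. Compatibility with $\Phi_\PP$ requires that the $t$-structure defining tilt-stability on $X$ be mapped close to the analogous $t$-structure on $\hat X$; proving this involves showing that the cohomology objects $\HH^i(\Phi_\PP(E))$ vanish outside of two consecutive degrees whenever $E$ is tilt-semistable, via carefully chosen torsion pairs. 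The hypothesis that $\omega$ is a real multiple of an integral ample class is used exactly here: it ensures that after rescaling by multiplication-by-$n$, the transformed parameter $\hat\omega$ remains in the admissible region on $\hat X$, and that the FM transform of the heart is controllable.

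Third, once $\Phi_\PP$ and multiplication-by-$n$ are known to act on the set of tilt-stable objects, one obtains a large group of transformations acting on the parameter space $(\omega, B)$ together with the third Chern character data. By iterating these transformations, any tilt-slope zero tilt-stable $E$ can be transported to a regime where the inequality is accessible, for instance where $\Phi_\PP(E)$ is (up to shift) a torsion-free slope-semistable sheaf and hence satisfies the classical Bogomolov--Gieseker inequality; tracing the quadratic form $\HDeltaB$ and $\ch_3^B$ back through the equivalence then gives the desired bound on $E$.

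The hard part will undoubtedly be the compatibility of Fourier--Mukai with tilt-stability in higher Picard rank: in \cite{Dulip-Antony:I, Dulip-Antony:II} this was handled under the Picard rank one assumption, where one can essentially work with one-parameter families of $(\omega, B)$. To extend this to arbitrary abelian threefolds (polarized by a single class $H$), the estimates on the cohomology sheaves of $\Phi_\PP(E)$ must be performed with $\omega$ that is a real, not necessarily rational, multiple of $H$, and the wall-and-chamber structure in the $(\omega, B)$-space must be understood well enough to guarantee that the transformed parameters still lie inside a chamber where tilt-stability behaves well. This is precisely the sort of systematic wall-crossing analysis for quadratic inequalities emphasized in the last sentence of the abstract.
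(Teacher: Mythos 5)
Your proposal contains a genuine gap, and also misstates a key definition. First, the definitional issue: tilt-slope zero does \emph{not} mean $H^2\ch_1^B(E)=0$; objects with $H^2\ch_1^B(E)=0$ have tilt-slope $+\infty$. Slope zero means the numerator $\omega\ch_2^B(E)-\tfrac16\omega^3\ch_0^B(E)$ vanishes, so your ``crucial simplification'' in the first step does not hold, and in any case Conjecture \ref{con:strongBG} is already stated for slope-zero objects, so that step achieves no reduction. The substantive gap is in your second and third steps: the assertion that the Poincar\'e Fourier--Mukai transform $\Phi_\PP$ carries tilt-semistable objects to tilt-semistable objects with explicitly transformed $(\hat\omega,\hat\beta)$ is precisely the hard technical core of \cite{Dulip-Antony:I, Dulip-Antony:II}, and it is only established there for principally polarized abelian threefolds of Picard rank one. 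You acknowledge this is ``the hard part'' but give no argument for it; as written, the proposal reduces the theorem to an unproven claim that is at least as difficult as the theorem itself. Moreover, even granting that claim, your endgame --- transporting $E$ to a regime where $\Phi_\PP(E)$ is a slope-semistable sheaf and invoking the classical Bogomolov--Gieseker inequality --- cannot work as stated, since classical BG bounds only $\ch_2$ in terms of $\ch_0,\ch_1$ and gives no control on $\ch_3$, which is what inequality \eqref{eq:strongBG} is about.

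For comparison, the paper deliberately avoids Fourier--Mukai transforms entirely. It first proves (Theorem \ref{thm:reduceto0}) that the general inequality follows from a limiting statement, Conjecture \ref{con:BG0}, for objects stable in a neighborhood of $(\alpha,\beta)=(0,\obeta(E))$, via an induction on the $H$-discriminant $\HDelta$ using the wall-and-chamber structure and Corollary \ref{cor:DeltaJH}. It then proves Conjecture \ref{con:BG0} on abelian threefolds by contradiction: the only input from the abelian structure is that the \'etale multiplication maps $\un$ preserve tilt-stability (Proposition \ref{prop:EtaleAndStability}), so that $\chi(\OO_X,\un^*E)=n^6\ch_3(E)\ge n^6$, while Hom-vanishing from stability against $\OO_X(\pm H)$, restriction to general divisors in $|H|$, and the Simpson--Le Potier bound (Theorem \ref{thm:SLP}) give $h^0+h^2=O(n^4)$, a contradiction; irrational $\obeta(E)$ is handled by Dirichlet approximation. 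That counting argument is the engine of the proof, and nothing resembling it appears in your proposal.
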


There are Calabi-Yau threefolds that admit an abelian variety as a finite \'etale cover; we call
them \emph{Calabi-Yau threefolds of abelian type}. Our result applies similarly in these
cases:

\begin{Thm}\label{thm:mainCYabelian}
Conjecture \ref{con:strongBG} holds when $X$ is a Calabi-Yau threefold of abelian type, and $\omega$ is a real
multiple of an integral ample divisor class.
\end{Thm}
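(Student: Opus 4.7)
The plan is to deduce the statement from Theorem \ref{thm:mainabelian} by pulling back to an abelian étale cover. By hypothesis, $X$ admits a finite étale cover $f\colon A\to X$ with $A$ an abelian threefold; after replacing $f$ by its Galois closure we may assume $f$ is Galois with group $G$. Set $\omega':=f^*\omega$ and $B':=f^*B$. Since $f^*H$ is an integral ample class on $A$, $\omega'$ is again a real multiple of an integral ample class, so Theorem \ref{thm:mainabelian} applies to $(A,\omega',B')$.

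First I would check that $f^*$ is compatible with the two-step tilting procedure defining tilt-stability. Flatness of $f$ makes $f^*\colon\Coh(X)\to\Coh(A)$ exact and rank-preserving, and a direct computation gives $\mu_{\omega',B'}(f^*F)=\mu_{\omega,B}(F)$ for torsion-free $F$, and $\nu_{\omega',B'}(f^*E)=\nu_{\omega,B}(E)$ for $E$ in the first tilted heart. Consequently $f^*$ sends $\Coh^B(X)$ to $\Coh^{B'}(A)$ and preserves tilt-slope. The main technical step is then the claim: \emph{if $E$ is $\nu_{\omega,B}$-semistable on $X$, then $f^*E$ is $\nu_{\omega',B'}$-semistable on $A$.} This is the tilt-stability analogue of the classical fact that slope-semistability is preserved under étale Galois pullback; it follows by averaging a putative destabilizing Harder--Narasimhan filtration of $f^*E$ over the $G$-action and descending the resulting $G$-equivariant filtration to a destabilizing filtration of $E$ in $\Coh^B(X)$, using that $G$ fixes $\omega'$ and $B'$.

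Given this, applying Theorem \ref{thm:mainabelian} to $f^*E$ yields Conjecture \ref{con:strongBG} for $f^*E$ with parameters $(\omega',B')$. Since $\ch(f^*E)=f^*\ch(E)$ and $\int_A f^*\alpha=\deg(f)\int_X\alpha$, each numerical term $(\omega')^{3-i}\cdot\ch_i(f^*E)$ equals $\deg(f)$ times the corresponding term $\omega^{3-i}\cdot\ch_i(E)$; the Bogomolov--Gieseker inequality on $A$ is therefore $\deg(f)$ times the desired inequality on $X$, and dividing by $\deg(f)$ concludes the proof. The main obstacle is the descent step: one must justify that Galois averaging of Harder--Narasimhan filtrations inside a tilted heart behaves as it does in the abelian category of coherent sheaves, and that $G$-equivariant subobjects of $f^*E$ descend to honest subobjects of $E$ in $\Coh^B(X)$. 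Both rest on the $G$-equivariance of the torsion pair defining the tilt, which in turn follows from the $G$-invariance of $\omega'$ and $B'$, so I expect only careful bookkeeping rather than a genuine new difficulty.
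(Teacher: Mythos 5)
Your proposal is correct and follows essentially the same route as the paper: the paper proves that tilt-(semi)stability is preserved under pull-back along an \'etale Galois cover (Proposition \ref{prop:EtaleAndStability}, using exactly the descent of the canonical Harder--Narasimhan filtration of $f^*E$ via its induced $G$-equivariant structure), deduces that Conjecture \ref{con:strongBG} descends along such covers (Proposition \ref{prop:conjet}), and then combines this with Theorem \ref{thm:mainabelian} in Section \ref{sec:CY}. The only cosmetic difference is that you pass to a Galois closure, whereas the paper uses that a Calabi--Yau threefold of abelian type is by construction a free quotient $X/G$ of an abelian threefold, so the cover is already Galois.
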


Combined with the results of \cite{BMT:3folds-BG}, these theorems imply the existence of Bridgeland
stability conditions in either case. There is one more type of Calabi-Yau threefolds whose
derived category is closely related to those of abelian threefolds: namely \emph{Kummer threefolds},
that are obtained as the crepant resolution of the quotient of an abelian threefold $X$ by the action
of a finite group $G$. Using the method of ``inducing'' stability conditions on the $G$-equivariant
derived category of $X$ and the BKR-equivalence \cite{Mukai-McKay}, we can also treat this case.  Overall this leads to
the following result (which we will make more precise in Theorem \ref{MAINTHM}).

\begin{Thm} \label{maintheorem}
Bridgeland stability conditions on $X$ exist when $X$ is an abelian threefold, or a Calabi-Yau
threefold of abelian type, or a Kummer threefold.
\end{Thm}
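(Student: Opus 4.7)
The plan is to treat the three cases in turn, all ultimately reducing to combining Theorems \ref{thm:mainabelian} and \ref{thm:mainCYabelian} with the construction principle of \cite{BMT:3folds-BG}: if Conjecture \ref{con:strongBG} holds for a smooth projective threefold with some $(\omega, B)$, then that threefold admits a Bridgeland stability condition.

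For $X$ an abelian threefold, I would directly combine Theorem \ref{thm:mainabelian} with \cite{BMT:3folds-BG}: pick an integral ample class $H$, set $\omega = tH$ for some $t > 0$ and choose any $B \in \NS(X)_\R$; Theorem \ref{thm:mainabelian} supplies the Bogomolov-Gieseker inequality, and \cite{BMT:3folds-BG} produces the stability condition. For $X$ a Calabi-Yau threefold of abelian type, by definition there is an abelian threefold $A$ finitely étale covering $X$, so $X$ is smooth and projective and carries an ample polarization; we apply Theorem \ref{thm:mainCYabelian} and \cite{BMT:3folds-BG} in exactly the same way. No further input is needed in either case.

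The Kummer case is the most delicate. Let $A$ be an abelian threefold, $G$ a finite group acting on $A$, and $X \to A/G$ a crepant resolution (so that $X$ is Calabi-Yau). The BKR-Kawamata equivalence \cite{Mukai-McKay} identifies $\Db(X) \cong \Db_G(A)$, the $G$-equivariant derived category of $A$. My plan is: (i) choose a $G$-invariant polarization $H$ on $A$, obtained by averaging any ample class over $G$, together with a $G$-invariant $B$; (ii) apply Theorem \ref{thm:mainabelian} together with \cite{BMT:3folds-BG} to construct a stability condition $\sigma_A$ on $\Db(A)$ whose defining data (heart and central charge) depend only on $G$-invariant input, so that $\sigma_A$ is itself $G$-invariant; (iii) invoke the standard ``inducing'' procedure for stability conditions on equivariant categories to transfer $\sigma_A$ to a stability condition on $\Db_G(A) \cong \Db(X)$.

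The main obstacle is step (ii): verifying that the stability condition produced by \cite{BMT:3folds-BG} is genuinely $G$-invariant in the strong sense required for inducing, i.e.\ the heart and slicing are preserved by pullback along each $g \in G$, not merely the numerical central charge. This should follow formally from the functoriality of the double-tilt construction applied to $G$-invariant input, but the argument must be unwound carefully at each stage (first tilt, tilt-slope stability, second tilt, $B$-twist). Once this $G$-invariance is in place, the inducing theorem delivers a stability condition on $\Db(X)$, completing the proof; the more precise combined statement appears in Theorem \ref{MAINTHM}.
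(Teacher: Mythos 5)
Your case division and the outline of the Kummer case (average the polarization over $G$, check $G$-invariance of the heart and central charge, then induce to $\Db([A/G]) \cong \Db(X)$ via BKR) match the paper's route through Proposition \ref{prop:equivariant} and Theorem \ref{thm:inducing}. However, there is a genuine gap at the very first step, which you treat as a black box: the claim that Conjecture \ref{con:strongBG} plus \cite{BMT:3folds-BG} ``produces the stability condition.'' What \cite{BMT:3folds-BG} produces is only a pre-stability condition --- a heart plus a central charge satisfying positivity and Harder--Narasimhan --- \emph{without} the support property (the paper flags this explicitly in a footnote in the introduction). In this paper a ``Bridgeland stability condition'' means an element of $\Stab_H(X)$, which by definition satisfies the support property; that is what Theorem \ref{MAINTHM} (the precise form of Theorem \ref{maintheorem}) asserts, and it is also what the inducing machinery of Theorem \ref{thm:inducing} is formulated for. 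So your argument, as written, does not prove the statement in the sense the paper intends.

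Supplying the support property is the main new content of Section \ref{sec:newstability} and cannot be waved through. It requires two ingredients you do not mention: first, the upgrade from Conjecture \ref{con:strongBG} (an inequality only for tilt-semistable objects of tilt-slope zero) to Conjecture \ref{con:BGgeneral} (a quadratic inequality for tilt-semistable objects of \emph{arbitrary} tilt-slope), shown to be equivalent in Theorem \ref{thm:BGgeneral} via Bertram's nested wall theorem; second, the argument of Theorem \ref{thm:BGinDim3} / Lemma \ref{lem:RationalBGinDim3}, which shows that a perturbation $K\HDelta + \HNablab$ of that quadratic form is negative definite on the kernel of the central charge and is preserved under the wall-crossings between tilt-stability and the double-tilted stability conditions (using the convexity arguments of Appendix \ref{app:SupportProperty}). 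Only then do the deformation results (Proposition \ref{prop:EffectiveDefo}) yield actual points of $\Stab_H(X)$, after which your steps for the equivariant/Kummer case go through essentially as in the paper. Your concern in step (ii) about $G$-invariance of the heart is real but is the easy part: since $H$ and $B$ are $G$-invariant, $g^*$ preserves slope- and tilt-stability, hence $\Coh^{\beta}(X)$ and $\AA^{\alpha,\beta}(X)$, exactly as in Proposition \ref{prop:equivariant}.
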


\subsection*{Support property}
The notion of support property of a Bridgeland stability condition is crucial in order to apply the
main result of \cite{Bridgeland:Stab}, namely that the stability condition can be deformed;
moreover, it ensures that the space of such stability conditions satisfies well-behaved wall-crossing.

In order to prove the support property, we first need a quadratic inequality for all
tilt-stable complexes, whereas
Conjecture \ref{con:strongBG} only treats complexes $E$ with tilt-slope zero.
We state such an inequality in Conjecture \ref{con:BGgeneral} for the case where $\omega, B$ are
proportional to a given ample class $H$:

\begin{repCon}{con:BGgeneral}
Let $(X, H)$ be a smooth polarized threefold, and
$\omega = \sqrt{3} \alpha H$, $B = \beta H$, for $\alpha>0$, $\beta\in\R$.
If $E\in\Db(X)$ is tilt-semistable with respect to $\omega, B$, then
\[
\alpha^2 \Bigl( \left(H^2\ch_1^B(E)\right)^2 - 2 H^3 \ch_0^B(E) H\ch_2^B(E)\Bigr) + 4 \left( H \ch_2^B(E)\right)^2 - 6 H^2 \ch_1^B(E) \ch_3^B(E) \ge 0,
\]
where $\ch^B := e^{-B}\ch$.
\end{repCon}

In Theorem \ref{thm:BGgeneral}, we prove that this generalized conjecture
is in fact equivalent to the original Conjecture \ref{con:strongBG}. Moreover,
in Theorem \ref{thm:BGinDim3} we prove that it implies a similar quadratic inequality for objects that are stable with respect
to the Bridgeland stability conditions constructed in Theorem \ref{maintheorem}, thereby obtaining a
version of the support property.

To be precise, we consider stability conditions whose
\emph{central charge} $Z \colon K(X) \to \C$ factors via
\begin{equation} \label{eq:chvector}
v_H\colon K(X) \to \Q^4, \quad E \mapsto \left(H^3 \ch_0(E), H^2 \ch_1(E), H \ch_2(E),
\ch_3(E)\right).
\end{equation}
(In the case of Kummer threefolds, we apply the BKR-equivalence before
taking the Chern character.) We prove the support property with respect to $v_H$; this shows
that a stability condition deforms along a small deformation of its central charge, if that
deformation still factors via $v_H$.

We discuss the relation between support property, quadratic inequalities for semistable objects and
deformations of stability conditions systematically in Appendix \ref{app:SupportProperty}. In
particular, we obtain an explicit open subset of stability conditions whenever Conjecture
\ref{con:BGgeneral} is satisfied, see Theorem \ref{thm:mainPPP}.

\subsection*{The space of stability conditions}
In each of the cases of Theorem \ref{maintheorem}, we show moreover that this open subset is
a connected component of the space of stability conditions. We now give a description of this
component.

Inside the space $\Hom(\Q^4, \C)$, consider the open subset $\VVV$ of linear maps $Z$ whose kernel
does not intersect the (real) twisted cubic $\CCC\subset \P^3(\R)$ parametrized by $(x^3, x^2y,
\frac 12 xy^2, \frac 16 y^3)$; it is the complement of a real hypersurface.  Such a linear map $Z$
induces a morphism $\P^1(\R) \cong \CCC \to \C^*/\R^* = \P^1(\R)$; we define $\PPP$ be the component
of $\VVV$ for which this map is an unramified cover of topological degree $+3$ with respect to the
natural orientations. Let $\widetilde \PPP$ be its universal cover.

We let $\Stab_H(X)$ be the space of stability conditions for which the central charge factors via the
map $v_H$ as in equation \eqref{eq:chvector} (and satisfying the support property).

\begin{Thm}  \label{MAINTHM}
Let $X$ be an abelian threefold, or a Calabi-Yau threefold of abelian type, or a Kummer threefold. Then $\Stab_H(X)$ has a
connected component isomorphic to $\widetilde \PPP$.
\end{Thm}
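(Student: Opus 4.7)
My plan is to combine the explicit construction of stability conditions made available by Theorem \ref{thm:mainPPP} with a closedness argument to pin down a full connected component. First, I would gather the inputs: Theorems \ref{thm:mainabelian} and \ref{thm:mainCYabelian} establish Conjecture \ref{con:strongBG} for $X$ an abelian threefold or of abelian type, Theorem \ref{thm:BGgeneral} upgrades this to Conjecture \ref{con:BGgeneral}, and Theorem \ref{thm:mainPPP} then produces an explicit open subset $\mathfrak{U}\subset\Stab_H(X)$ of stability conditions satisfying the support property with respect to $v_H$, whose central charges exhaust $\PPP\subset\Hom(\Q^4,\C)$. For a Kummer threefold $X$, one picks an abelian threefold $A$ with a finite group action $G$ whose crepant resolution is $X$, induces the stability conditions already produced on $\Db(A)$ to the $G$-equivariant category $\Db_G(A)$, and transfers them across the BKR-equivalence $\Db_G(A)\simeq\Db(X)$, obtaining the analogous open subset $\mathfrak{U}\subset\Stab_H(X)$ (with the Chern character computed after BKR, as stipulated in \eqref{eq:chvector}).

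Next I would identify $\mathfrak{U}$ with $\widetilde{\PPP}$ via the central-charge map $\mathcal{Z}\colon\mathfrak{U}\to\PPP$. By the support property and Bridgeland's deformation theorem, $\mathcal{Z}$ is a local homeomorphism. The construction of $\mathfrak{U}$ assigns a specific double-tilted heart to each point of $\PPP$, and the $\widetilde{\GL}^+_2(\R)$-action on $\Stab_H(X)$ preserves $\mathfrak{U}$, acts freely, and has orbits matching the fibers of $\mathcal{Z}$ in the pattern dictated by the winding-by-$3$ condition defining $\PPP$. Path lifting for $\mathcal{Z}$ is provided by deformation along paths of central charges in $\PPP$, so $\mathcal{Z}$ is a covering map. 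A direct analysis of the resulting orbit structure then shows that $\mathfrak{U}$ is simply connected, hence $\mathfrak{U}\cong\widetilde{\PPP}$.

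The main obstacle is to prove that $\mathfrak{U}$ is closed in $\Stab_H(X)$, so that it is a full connected component. Let $\sigma_n\in\mathfrak{U}$ converge to $\sigma\in\Stab_H(X)$. Since the support property persists in the limit (with a possibly slightly perturbed quadratic form), the kernel of the limit central charge $Z$ still avoids the twisted cubic $\CCC$ and the winding is preserved, so $Z\in\PPP$. It remains to show that the heart of $\sigma$ coincides with the tilted heart that $\mathfrak{U}$ attaches to $Z$; equivalently, that no degeneration of the t-structure occurs at the boundary. The delicate point is ruling out subtle wall-crossings in which $k(x)$, or a line bundle stable throughout the sequence, is destabilized in the limit. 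The systematic treatment in Appendix~\ref{app:SupportProperty} of how the Bogomolov--Gieseker quadratic inequality propagates under deformation of stability conditions is the essential input: combined with the continued stability of skyscraper sheaves and the explicit form of the quadratic form from Conjecture \ref{con:BGgeneral}, it pins down the heart of $\sigma$ to be the one attached to $Z$, giving $\sigma\in\mathfrak{U}$ and completing the proof.
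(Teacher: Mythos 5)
There is a genuine gap, and it sits exactly at the heart of the theorem: your argument that $\mathfrak{U}$ is closed in $\Stab_H(X)$ does not work. You claim that if $\sigma_n\in\mathfrak{U}$ converge to $\sigma=(Z,\PP)$, then ``since the support property persists in the limit \dots the kernel of the limit central charge $Z$ still avoids the twisted cubic $\CCC$.'' The support property of $\sigma$ only says that $Z$ does not vanish on classes of $\sigma$-semistable objects (up to a uniform constant); it imposes no constraint whatsoever on the position of $\Ker Z$ relative to $\CCC$ unless you already know that there exist $\sigma$-semistable objects whose classes accumulate on $\CCC$. Moreover, the quadratic forms $Q_K^\beta$ controlling the support property on $\mathfrak{U}$ degenerate as the central charge approaches $\partial\PPP$, so nothing prevents the limit from being a perfectly good stability condition with $\Ker Z\cap\CCC\neq 0$ and support property with respect to a different form. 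Indeed, the paper explicitly expects that for a general threefold satisfying Conjecture \ref{con:BGgeneral} the space of stability conditions is strictly larger than $\widetilde\PPP$; your closedness argument, if valid, would prove the component statement for any such threefold, which cannot be right. Relatedly, you locate the ``delicate point'' in pinning down the heart of the limit and the stability of $k(x)$ and line bundles, but that is not where the obstruction lies: the point is that the limit stability condition cannot exist at all.

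The missing idea — and the only genuinely abelian input in this part of the paper — is the family of semi-homogeneous vector bundles $E_{p/q}$ with $\ch(E_{p/q})=r\,e^{\frac{p}{q}H}$, whose classes $\frac{1}{r}v_H(E_{p/q})$ are dense in the twisted cubic $\CCC$, and which are shown to be $\sigma$-stable for \emph{every} $\sigma\in\widetilde\PPP$ (using Corollary \ref{cor:Delta0}, the vanishing $Q_K^\beta(E_{p/q})=0$, and Proposition \ref{prop:Q0remainsstable}). Given a boundary point $\sigma=(Z,\PP)$ of $\widetilde\PPP$ in $\Stab_H(X)$, the kernel of $Z$ meets $\CCC$ at some $(x^3,x^2y,\tfrac12 xy^2,\tfrac16 y^3)$; if $y/x$ is rational one gets $Z(E_{p/q})=0$ for a $\sigma$-semistable object, if $x=0$ one gets $Z(\OO_x)=0$ against Proposition \ref{prop:StableSkyscraper}, and if $y/x$ is irrational a sequence $E_{p_n/q_n}$ violates the support property because $\abs{Z(v_H(E_n))}/\lVert v_H(E_n)\rVert\to 0$. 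Without these objects your argument has no way to exclude the boundary. Finally, for Calabi--Yau threefolds of abelian type and Kummer threefolds the component statement is not re-proved on $X$ but transferred formally: one needs the fact (Theorem \ref{thm:inducing}) that $\Stab_H^G$ of the abelian cover is a union of connected components and that $(f^*)^{-1}$ sends unions of components to unions of components, which your proposal also omits. The rest of your outline (Theorem \ref{thm:mainPPP} for the open embedding, inducing plus BKR for the quotient cases) matches the paper.
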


\subsection*{Approach} We will now explain some of the key steps of our approach.

\subsubsection*{Reduction to a limit case}
The first step applies to any smooth projective threefold. Assume that $\omega,
B$ are proportional to a given ample polarization $H$ of $X$. We reduce Conjecture
\ref{con:BGgeneral} to a statement for objects $E$ that are stable in the limit as
$\omega(t) \to 0$ and $\nu_{\omega(t), B(t)}(E) \to 0$; if $\overline{B} := \lim B(t)$, the claim is
that
\begin{equation} \label{eq:ch3Ble0}
\int_X e^{-\overline B}\ch(E) \le 0.
\end{equation}
The reduction is based on the methods of \cite{Macri:P3}: as we approach this limit, either $E$
remains stable, in which case the above inequality is enough to ensure that $E$ satisfies our
conjecture everywhere. Otherwise, $E$ will be strictly semistable at some point; we then show that
all its Jordan-H\"older factors have strictly smaller ``\emph{$H$-discriminant}'' (which is a
variant of the discriminant appearing in the classical Bogomolov-Gieseker inequality). This
allows us to proceed by induction.

\subsubsection*{Abelian threefolds}
In the case of an abelian threefold, we make extensive use of the multiplication by $m$ map
$\underline{m}\colon X \to X$ in order to establish inequality \eqref{eq:ch3Ble0}. The key fact is that if $E$ is tilt-stable, then so is
$\underline{m}^*E$.

To illustrate these arguments, assume that $\overline{B}$ is rational. Via pull-back we can then
assume that $\overline{B}$ is integral; by tensoring with $\OO_X(\overline B)$ we reduce to the case of
$\overline{B} = 0$. We then have to prove that $\ch_3(E) \le 0$; in other words, we have to prove
an inequality of the Euler characteristic of $E$. To obtain a contradiction, assume that
$\ch_3(E) > 0$, and consider further pull-backs:
\begin{equation} \label{eq:ch3m6}
\chi(\OO_X, \underline{m}^*E) = \ch_3(\underline{m}^*E) = m^6 \ch_3(E) \ge m^6.
\end{equation}
However, by stability we have $\Hom(\OO_X(H), \underline{m}^*E) = 0$; moreover, if $D \in \abs{H}$ is a
general element of the linear system of $H$, classical arguments, based on the Grauert-M\"ulich
theorem and bounds for global sections of slope-semistable sheaves, give a bound of the form
\[
h^0(\underline{m}^*E) \le h^0((\underline{m}^*E)|_D) = O(m^4)
\]
Similar bounds for $h^2$ lead to a contradiction to \eqref{eq:ch3m6}.

\subsubsection*{Support property}
As pointed out by Kontsevich and Soibelman in \cite[Section 2.1]{Kontsevich-Soibelman:stability},
the support property is equivalent to the existence of a real quadratic form $Q \colon \Q^4 \to
\R$ such that
\begin{enumerate}
\item \label{enum:negativeonkernel} the kernel of the central charge (as a subspace of $\R^4$) is negative
definite with respect to $Q$, and
\item \label{enum:positiveonobjects} every semistable object $E$ satisfies $Q(v_H(E)) \ge 0$.
\end{enumerate}
The inequality in Conjecture \ref{con:BGgeneral} precisely gives such a quadratic form. We therefore
need to show that this inequality is preserved when we move from tilt-stability to actual Bridgeland
stability conditions.

We establish a more basic phenomenon of this principle in Appendix \ref{app:SupportProperty}, which
may be of independent interest: if a stability condition satisfies the support property with
respect to $Q$, and if we deform along a path for which the central charges all satisfy condition
\eqref{enum:negativeonkernel}, then condition \eqref{enum:positiveonobjects} remains preserved under
this deformation, i.e., it is preserved under wall-crossing. The essential arguments involve elementary
linear algebra of quadratic forms.

Tilt-stability can be thought of as a limiting case of a path in the set of stability conditions we
construct. In Section \ref{sec:newstability} we show that the principle described in the previous
paragraph similarly holds in this case: we show that a small perturbation of the quadratic form in Conjecture
\ref{con:BGgeneral} is preserved under the wall-crossings between tilt-stability and any of our
stability conditions, thereby establishing the desired support property.

\subsubsection*{Connected component}
In Appendix \ref{app:SupportProperty}, we also provide a more effective version of Bridgeland's
deformation result. In particular, the proof of the support property yields large open sets of
stability conditions, which combine to cover the manifold $\widetilde \PPP$ described above.

In Section \ref{sec:abelianspace}, we show that this set is in fact an entire component.
The proof is based on the observation that semi-homogeneous vector bundles $E$ with $c_1(E)$
proportional to $H$ are stable everywhere on $\PPP$; their Chern classes (up to rescaling) are dense
in $\CCC$.

This fact is very unique to varieties admitting \'etale covers by abelian threefolds.
In particular, while Conjecture \ref{con:BGgeneral} implies that $\widetilde \PPP$ is a subset of
the space of stability conditions, one should in general expect the space to be much larger than
this open subset.

\subsection*{Applications}
Our work has a few immediate consequences unrelated to derived categories. Although these are fairly
specific, they still serve to illustrate the power of Conjecture \ref{con:BGgeneral}.

\begin{Cor}
Let $X$ be a Calabi-Yau threefold of abelian type.
Given $\alpha \in \Z_{>0}$, let $L$ be an ample line bundle on $X$ satisfying
\begin{itemize}
\item $L^3 > 49\alpha$,
\item $L^2 D \ge 7\alpha$ for every integral divisor class $D$ with $L^2 D > 0$
and $L D^2 < \alpha$, and
\item $L.C \ge 3\alpha$ for every curve $C \subset X$.
\end{itemize}
Then
$ H^1(L \otimes I_Z) = 0 $
for every 0-dimensional subscheme $Z \subset X$ of length $\alpha$.

In addition, if $L = A^{\otimes 5}$ for an ample line bundle $A$, then $L$ is very ample.
\end{Cor}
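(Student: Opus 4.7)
The plan is to deduce the corollary from the strong Bogomolov--Gieseker inequality of Conjecture \ref{con:BGgeneral}, which on Calabi--Yau threefolds of abelian type is now available via Theorems \ref{thm:mainCYabelian} and \ref{thm:BGgeneral}, following the Reider-type strategy of \cite{BBMT:Fujita}. The idea is to encode a hypothetical non-vanishing $H^1(L \otimes I_Z) \ne 0$ by a two-term complex $E$ in $\Db(X)$, to verify that $E$ is tilt-semistable for a judicious choice of parameters, and to show that the resulting quadratic inequality then conflicts with the three numerical hypotheses on $L$.

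Concretely, assume for contradiction that $H^1(L \otimes I_Z) \ne 0$. A non-trivial class in $\Ext^1(\OO_X, L \otimes I_Z) = H^1(L \otimes I_Z)$ gives a distinguished triangle
\[
L \otimes I_Z \longrightarrow E \longrightarrow \OO_X \longrightarrow (L \otimes I_Z)[1],
\]
so that $\ch(E) = \bigl(2,\, L,\, \tfrac{1}{2}L^2,\, \tfrac{1}{6}L^3 - \alpha\bigr)$. Set the polarization $H = L$ and consider the family of tilt-stability parameters $(\omega, B) = (\sqrt{3}\alpha_0 H, \beta H)$, choosing $(\alpha_0, \beta)$ with $\beta < 0$ on the locus where $\nu_{\omega, B}(E) = 0$, i.e.\ $H\ch_2^B(E) = 0$. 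Along this one-parameter locus I propose to apply Conjecture \ref{con:BGgeneral} to $E$, once tilt-semistability is established.

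The crux is precisely this tilt-semistability, which I would verify by a wall-crossing analysis. A destabilizing sub-object $A \hookrightarrow E$ in the tilted heart would yield tilt-semistable $A$ and $E/A$ of the same tilt slope, each subject to the classical Bogomolov--Gieseker inequality recalled in Section \ref{sec:TiltStability}. Sorting by $\rk A \in \{0,1,2\}$, a case analysis produces, in each case, either a divisor class $D$ with $0 < L^2 D < 7\alpha$, an effective curve class $C$ with $L \cdot C < 3\alpha$, or a numerical configuration forcing $L^3 \le 49\alpha$ --- each excluded by the corresponding hypothesis. Hence $E$ is tilt-semistable on the chosen locus and Conjecture \ref{con:BGgeneral} applies; substituting $\ch(E)$ and using $H\ch_2^B(E) = 0$, the inequality collapses to a bound contradicting $\ch_3(E) = \tfrac{1}{6}L^3 - \alpha$ together with $L^3 > 49\alpha$.

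The hard part is the wall-crossing case analysis in the previous step: the three numerical hypotheses are calibrated precisely to shut off, in turn, the three broad families of potential destabilizers sorted by the rank of $A$. The very ampleness statement is then routine Reider-style bookkeeping: applying the first part with $\alpha = 2$ shows $H^1(L \otimes I_Z) = 0$ for every length-$2$ subscheme $Z$ (reduced or not), which is equivalent to $L$ separating points and tangent vectors. For $L = A^{\otimes 5}$ the required numerical bounds reduce to $A^3 \ge 1$, $A^2 \cdot D \ge 1$ whenever $A^2 \cdot D > 0$, and $A \cdot C \ge 1$, all immediate from the ampleness of $A$ via Kleiman's criterion and the integrality of intersection numbers on a smooth threefold.
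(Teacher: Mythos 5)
The paper's own proof is a two-line citation: since Theorem \ref{thm:mainCYabelian} establishes Conjecture \ref{con:strongBG} for Calabi--Yau threefolds of abelian type, the statement follows from Theorem 4.1 and Remark 4.3 of \cite{BBMT:Fujita}. What you have written is, in effect, an outline of the proof of that cited Theorem 4.1 rather than of anything new in this paper; the only input the present paper contributes is the verification of the conjecture itself. Your outline does match the strategy of \cite{BBMT:Fujita} (extension class $\leadsto$ two-term complex $E$ with $\ch(E)=(2,L,\tfrac12 L^2,\tfrac16 L^3-\alpha)$, tilt-semistability along the locus $\nu_{\omega,B}(E)=0$, contradiction with the BG-type inequality), but the entire mathematical content of that theorem is the wall-crossing case analysis you explicitly defer: showing that every potential destabilizer of $E$, sorted by rank, produces a divisor class, a curve class, or a degree bound excluded by the three hypotheses. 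As written, that step is a statement of intent, not a proof, and it is several pages of delicate estimates in the source.

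There is also a concrete numerical error in your ``routine bookkeeping'' for the very ampleness of $L=A^{\otimes 5}$. With $\alpha=2$ the third hypothesis reads $L\cdot C\ge 3\alpha=6$, and $L\cdot C=5\,A\cdot C$; for a curve with $A\cdot C=1$ this gives $5<6$, so ampleness of $A$ alone does \emph{not} verify the hypotheses of the first part (the first two conditions are fine: $125\,A^3\ge 125>98$ and $25\,A^2D\ge 25>14$). This borderline curve case is precisely why the paper cites Remark 4.3 of \cite{BBMT:Fujita} in addition to Theorem 4.1: that remark weakens the curve condition enough to cover $A^{\otimes 5}$. Your claim that the required bounds ``reduce to $A\cdot C\ge 1$'' is therefore false as stated, and the very ampleness conclusion does not follow from the first part of the corollary by direct substitution.
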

\begin{proof} Since Conjecture \ref{con:strongBG} holds for $X$ by our Theorem \ref{thm:mainCYabelian}, we
can apply Theorem 4.1 and Remark 4.3 of \cite{BBMT:Fujita}.
\end{proof}
Setting $\alpha = 2$ we obtain a Reider-type criterion for $L$ to be very ample.
The statement for $A^{\otimes 5}$ confirms (the very ampleness case of) Fujita's conjecture for such $X$.
The best known bounds for Calabi-Yau threefolds say that
$A^{\otimes 8}$ is very ample if $L^3 > 1$ \cite[Corollary 1]{Gallego-Purnaprajna:CY3folds},
$A^{\otimes 10}$ is very ample in general, and that $A^{\otimes 5}$ induces a birational map
\cite[Theorem I]{Oguiso-Peternell:CY3folds}.
For abelian varieties, much stronger statements are known, see \cite{PP:regularity-I, PP:regularity-II}.

\begin{Cor} \label{cor:slopestable}
Let $X$ be one of the following threefolds: projective space, the quadric in $\P^4$, an abelian
threefold, or a Calabi-Yau threefold of abelian type. Let $H$ be a polarization, and let
$c \in \Z_{>0}$ be the minimum positive value of $H^2 D$ for integral divisor classes $D$.
If $E$ is a sheaf that is slope-stable with respect to $H$, and with
$H^2 c_1(E) = c$, then
\[
3 c \ch_3(E) \le 2 \left(H \ch_2(E)\right)^2.
\]
\end{Cor}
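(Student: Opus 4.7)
The plan is to apply Conjecture~\ref{con:BGgeneral} at $B=0$ and $\omega = \sqrt{3}\,\alpha H$, and then let $\alpha \to 0^+$. With $B=0$ we have $H^2\ch_1^B(E)=H^2 c_1(E)=c$, $H\ch_2^B(E)=H\ch_2(E)$ and $\ch_3^B(E)=\ch_3(E)$, so the conjectural inequality reads
\[
\alpha^2\bigl(c^2 - 2H^3\,\rk(E)\,H\ch_2(E)\bigr) + 4\bigl(H\ch_2(E)\bigr)^2 - 6c\,\ch_3(E) \ge 0,
\]
and letting $\alpha \to 0^+$ yields exactly $2(H\ch_2(E))^2 \ge 3c\,\ch_3(E)$. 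The required input is Conjecture~\ref{con:BGgeneral} itself, which holds in each of the listed cases: for abelian and Calabi--Yau-of-abelian-type threefolds it follows by combining Theorems~\ref{thm:mainabelian} and~\ref{thm:mainCYabelian} with Theorem~\ref{thm:BGgeneral} (the equivalence of the two conjectural inequalities); for $\P^3$ and the smooth quadric in $\P^4$ it has been established in prior work.

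The only substantive point is therefore to verify that $E$ is tilt-semistable at $(\omega,B) = (\sqrt{3}\alpha H, 0)$ for every $\alpha>0$ (or at least along a sequence $\alpha_n \to 0^+$). Since $E$ is torsion-free and slope-stable with $\mu_H(E) = c/(H^3\rk(E)) > 0$, it lies in the torsion piece $\TT^0$, and hence in the tilted heart at $\beta=0$. Suppose $F \hookrightarrow E$ is a nonzero subobject in this heart. The cohomology long exact sequence combined with the $\TT^0$-condition forces $F$ to be a torsion-free subsheaf of $E$ with all Harder--Narasimhan slopes strictly positive; in particular $d_F := H^2 c_1(F) > 0$, and the minimality hypothesis on $c$ gives $d_F \ge c$.

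I would then split by rank. If $\rk(F) < \rk(E)$, slope-stability of $E$ gives $d_F/\rk(F) < c/\rk(E)$, hence $d_F < c$, contradicting $d_F \ge c$. If $\rk(F) = \rk(E)$, then $E/F$ is torsion with $d_F = c - H^2 c_1(E/F)$; since $H$ is ample, $H^2 D > 0$ for every nonzero effective divisor class $D$, so the only way to avoid $0 < d_F < c$ is $c_1(E/F) = 0$, i.e.\ $E/F$ is supported in codimension $\ge 2$. Then $\ch_2(E/F)$ is represented by an effective $1$-cycle and $H\ch_2(E/F) \ge 0$; and because $F$ and $E$ have identical rank and identical $H^2\ch_1$, the comparison of tilt-slopes reduces to
\[
\nu_{\alpha,0}(F) - \nu_{\alpha,0}(E) \;\propto\; H\ch_2(F) - H\ch_2(E) \;=\; -H\ch_2(E/F) \;\le\; 0.
\]
So $F$ does not destabilize $E$, proving $\nu_{\alpha,0}$-semistability for every $\alpha > 0$.

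The main (and essentially the only) obstacle is this case analysis of subobjects in the tilted heart; the rest is a routine application of Conjecture~\ref{con:BGgeneral} followed by a limit. What drives the argument is precisely that $c$ is the \emph{minimum} positive value of $H^2 D$ on integral divisor classes, which forbids the existence of any subsheaf of intermediate $H^2c_1$ and thereby excludes every potential destabilizer.
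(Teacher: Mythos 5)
Your overall reduction is exactly the paper's: the corollary is obtained by applying Conjecture~\ref{con:BGgeneral} with $\beta=0$ and letting $\alpha\to 0^+$ (this is Example~\ref{ex:slopeCorollary} combined with Remark~\ref{rem:slopeCorollary}), and your accounting of where the conjecture is known is correct. The paper, however, does not reprove tilt-stability of $E$; it cites \cite[Lemma 7.2.2]{BMT:3folds-BG} for that step. It is precisely in your self-contained replacement for that citation that there is a genuine gap.

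The problem is the assertion that a nonzero subobject $F \into E$ in $\Coh^{0}(X)$ is forced to be a torsion-free \emph{subsheaf} of $E$. The long exact cohomology sequence of $0 \to F \to E \to Q \to 0$ gives $0 \to H^{-1}(Q) \to F \to E \to H^0(Q) \to 0$ in $\Coh(X)$: so $F$ is a sheaf lying in $\TT_{\omega,0}$, but the sheaf map $F \to E$ has kernel $N := H^{-1}(Q) \in \FF_{\omega,0}$, which need not vanish, since objects of $\TT_{\omega,0}$ may well contain subsheaves of non-positive slope. Concretely, a slope-stable extension $0 \to \OO_X \to F \to \OO_X(H) \to 0$ yields a monomorphism $F \into \OO_X(H)$ in $\Coh^{0}(X)$ with quotient $\OO_X[1]$, and here $\rk F = 2 > 1 = \rk E$ --- a configuration falling outside both of your cases. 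Your rank dichotomy and the bound $d_F \ge c$ therefore apply only to the image $F' = F/N \subseteq E$, not to $F$ itself, and the comparison of tilt-slopes acquires extra contributions from $N$. The argument can be repaired: non-negativity and additivity of $H^2\ch_1$ on the heart together with the minimality of $c$ force $H^2\ch_1(F)=c$ and $H^2\ch_1(Q)=0$, whence $N$ is slope-semistable of slope zero and $H^0(Q)$ is supported in codimension $\ge 2$; the classical Bogomolov inequality (Theorem~\ref{thm:BGclassical}) applied to $N$ then gives $H\ch_2(N)\le 0$, which combined with $\ch_0(Q)=-\rk N \le 0$ and $H\ch_2(H^0(Q))\ge 0$ yields $\nu_{\alpha,0}(F)\le\nu_{\alpha,0}(E)$. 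But as written, the step you yourself single out as the only substantive one does not go through.
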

The assumptions hold when $\NS(X)$ is generated by $H$, and $c_1(E) = H$. We
refer to Example \ref{ex:slopeCorollary} and Remark \ref{rem:slopeCorollary} for a
proof and more discussion.  Even for vector bundles on $\P^3$, this statement was not previously known for rank bigger
than three.

It is a special case of Conjecture \ref{con:BGgeneral}. Even when $X$ is a complete intersection
threefold and $E = I_C \otimes L$  is the twist of an ideal sheaf of a curve $C$, this inequality is
not known, see \cite{Becca:Castelnuovo}.

\subsection*{Open questions}
\subsubsection*{General proof of Conjecture \ref{con:BGgeneral}} While Conjecture
\ref{con:BGgeneral} for arbitrary threefolds remains elusive, our approach
seems to get a bit closer: in our proof of Theorem \ref{thm:mainabelian}
(in Sections \ref{sec:TiltStability}---\ref{sec:abelianBG0}), only Section \ref{sec:abelianBG0} is
specific to abelian threefolds. One could hope to generalize our construction by replacing the
multiplication map $\um$ with ramified coverings.  This would immediately
yield the set $\widetilde \PPP$ as an open subset of the space of stability conditions.

\subsubsection*{Strengthening of Conjecture \ref{con:BGgeneral}} In order to construct a set of
stability conditions of dimension equal to the rank of the algebraic cohomology of $X$, we would
need a stronger Bogomolov-Gieseker type inequality, depending on $\ch_1$ and $\ch_2$
directly, not just on $H^2 \ch_1$ and $H \ch_2$. We point out that the obvious guess,
namely to replace $\left(H^2 \ch_1\right)^2$ by $H \ch_1^2 \cdot H^3$, and $\left(H \ch_2\right)^2$
by an appropriate quadratic form on $H^4(X)$, does not work in general: for $\alpha \to +\infty$, such an
inequality fails for torsion sheaves supported on a divisor $D$ with
$H D^2 < 0$.

\subsubsection*{Higher dimension} Our work also clarifies the expectations for higher dimensions.
The definition of $\PPP$ directly generalizes to
dimension $n$ in an obvious way, by replacing the twisted cubic with the rational normal curve
$\left(x^n, x^{n-1}y, \frac 12 x^{n-2}y^2, \dots, \frac 1{n!} y^n\right)$. Let
$\widetilde \PPP_n \to \PPP_n$ denote the corresponding universal covering.
\begin{Con}
Let $(X, H)$ be a smooth polarized $n$-dimensional variety. Its space $\Stab_H(X)$ of stability
conditions contains an open subset $\widetilde \PPP_n$, for which skyscraper sheaves of points are stable. 
In the case of abelian varieties, $\widetilde \PPP_n \subset \Stab_H(X)$ is a connected component.
\end{Con}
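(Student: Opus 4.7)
The plan is to proceed by induction on $n$, mimicking the architecture of Sections \ref{sec:TiltStability}--\ref{sec:abelianspace}. For the first part of the conjecture (general $(X,H)$), one must construct $n-1$ successive tilts of $\Coh(X)$, each controlled by a Bogomolov--Gieseker type inequality involving one more term of the Chern character. These inequalities should collectively amount to a finite family of real quadratic forms $Q_k$ on $\Q^{n+1}$ such that the kernel of every central charge $Z \in \widetilde\PPP_n$ is $Q_k$-negative; the signature should match the osculating planes to the rational normal curve $\CCC_n$, so that the Kontsevich--Soibelman criterion recalled in Appendix \ref{app:SupportProperty} applies uniformly to yield both the support property and openness.

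For abelian $n$-folds, the central geometric tool remains the multiplication map $\um\colon X \to X$. Since $\um$ is finite \'etale and $v_H$-equivariant (with $H^{n-i}\ch_i$ multiplied by $m^{n+i}$), pullback preserves each layer of the iterated tilt-stability. Following the strategy of Section \ref{sec:abelianBG0}, one first reduces each inequality in the chain to a limit statement of the form $\int_X e^{-\overline B}\ch(E) \le 0$, valid for objects stable in a suitable $\omega(t) \to 0$ limit; the reduction is driven by the observation that Jordan--H\"older factors at a wall have strictly smaller $H$-discriminant, enabling induction. The limit inequality is then attacked by contradiction: assuming $\ch_n(E) > 0$, the identity $\chi(\OO_X, \um^*E) = m^{2n}\ch_n(E)$ must be defeated by bounds of the form $h^i(\um^*E) = O(m^{2(n-1)})$, proved via Grauert--M\"ulich together with induction on dimension applied to restrictions to general $D \in \abs{mH}$.

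The connected component statement for abelian varieties follows the pattern of Section \ref{sec:abelianspace}. Semi-homogeneous bundles with $c_1$ proportional to $H$ have Chern characters on the rational normal curve $\CCC_n$ itself; they should remain stable throughout $\widetilde\PPP_n$, and their characters are dense in $\CCC_n$. A boundary wall of $\widetilde\PPP_n$ inside $\Stab_H(X)$ would, by the geometry of $\PPP_n$, force such a bundle to destabilize a skyscraper sheaf of a point, contradicting the hypothesis that points remain stable on $\widetilde\PPP_n$.

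The main obstacle is establishing the chain of higher-order Bogomolov--Gieseker inequalities, particularly the compatibility required to iterate tilts beyond the second level. In dimension three, the Euler characteristic bound only needed a restriction theorem for ordinary slope-semistable sheaves; for $n \ge 4$, the relevant intermediate notions of stability are neither slope-stability nor full Bridgeland stability, and the corresponding restriction theorems for general $D \in \abs{mH}$ are not currently available. Closely related is the problem that the torsion pairs used to define successive tilts must be shown to interact well with the pullback $\um^*$; without such compatibility, even extracting a clean limit statement analogous to \eqref{eq:ch3Ble0} becomes ambiguous. Formulating and proving these restriction and compatibility results is, in my view, the fundamental difficulty standing between the techniques of this paper and an unconditional proof in dimension $n \ge 4$.
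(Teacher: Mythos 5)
The statement you have been asked to prove is one of the explicitly \emph{open} conjectures in the introduction (under ``Higher dimension''); the paper contains no proof of it, only a short sketch, immediately following the conjecture, of an inductive strategy that might one day yield one. So there is no proof of record to compare against. What you have written is likewise a program rather than a proof, and it is essentially the authors' own program: an induction on $i$ through auxiliary weak stability notions depending on $H^n\ch_0,\dots,H^{n-i}\ch_i$, with each step controlled by a quadratic form lying in the ideal of the rational normal curve and semi-negative on the kernel of the relevant central charge, the support property extracted via the Kontsevich--Soibelman criterion of Appendix \ref{app:SupportProperty}; for abelian varieties, the multiplication maps, a limit inequality analogous to \eqref{eq:ch3Ble0}, the Euler-characteristic asymptotics, and the density of semi-homogeneous bundles on $\CCC_n$ for the connected-component claim. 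To your credit, you locate the genuine obstruction exactly where the authors do: the missing Mehta--Ramanathan/Grauert--M\"ulich type restriction theorems for the intermediate stability notions in dimension $\ge 4$ (the authors propose, as a first test, showing that a tilt-stable object on a threefold restricts to a Bridgeland-stable object on a high-degree divisor). Since you supply no new idea to close that gap, the proposal cannot be accepted as a proof; it should be presented as a (correct) restatement of the conjectural roadmap.

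Two smaller points. First, your scaling law is off: on an abelian $n$-fold, $\um^*H = m^2H$ and $\deg\um = m^{2n}$ give $H^{n-i}\ch_i(\um^*E) = m^{2i}\,H^{n-i}\ch_i(E)$, not $m^{n+i}$; your exponent happens to agree at $i=n$ (whence your correct $\chi(\OO_X,\um^*E)=m^{2n}\ch_n(E)$) but fails already at $i=0$, where the rank is unchanged. Second, in the threefold argument of Section \ref{sec:abelianBG0} the restriction is to a general divisor in a \emph{fixed} linear system ($\abs{H}$ or $\abs{3H}$), not in $\abs{mH}$; letting the degree of the divisor grow with $m$ would spoil the $O(m^{2(n-1)})$ count you need from Theorem \ref{thm:SLP}.
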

Such stability conditions could be constructed by an inductive procedure; the $i$-th induction step
would be an auxiliary notion of stability with respect to a weak notion of central charge $Z_i$
depending on $H^n \ch_0, H^{n-1} \ch_1, \dots, H^{n-i} \ch_i$. Semistable objects would have to
satisfy a quadratic inequality $Q_i$ involving $\ch_{i+1}$. The precise form of $Q_i$ would depend
on the parameters of the stability condition; it would always be contained in the defining ideal of
the rational normal curve, and  the kernel of $Z_i$ would be semi-negative definite with
respect to $Q_i$.

One could hope to prove such inequalities for $i < n$ using a second induction by dimension: for
example, an inequality for $\ch_3$ for stable objects on a fourfold would follow from a Mehta-Ramanathan type
restriction theorem, showing that such objects restrict to semistable objects on
threefolds. As a first test case, one should try to prove that a given tilt-stable object on a threefold
restricts to a Bridgeland-stable object on a divisor of sufficiently high degree.

\subsection*{Related work}
As indicated above, the first breakthrough towards constructing stability conditions on threefolds
(without using exceptional collections) is due to Maciocia and Piyaratne, who proved Theorem
\ref{thm:mainabelian} in the case of principally polarized abelian varieties of Picard rank one
in \cite{Dulip-Antony:I, Dulip-Antony:II}. Their method is based on an extensive analysis of the
behavior of tilt-stability with respect to Fourier-Mukai transforms; in addition to
constructing stability conditions, they show their invariance under Fourier-Mukai transforms.

Our approach is very different, as it only uses the existence of the \'etale self-maps given by multiplication with
$m$. Nevertheless, there are some similarities. For example, a crucial step in their
arguments uses restriction to divisors and curves to control a certain cohomology sheaf of the Fourier-Mukai
transform of $E$, see the proof of \cite[Proposition 4.15]{Dulip-Antony:I}; in Section
\ref{sec:abelianBG0} we use restriction of divisors explicitly and to curves implicitly (when we use Theorem
\ref{thm:SLP}) to control global sections of pull-backs of $E$.

As mentioned earlier, it is easy to construct stability conditions on any variety admitting a
complete exceptional collection; however, it is still a delicate problem to relate them to the
construction proposed in \cite{BMT:3folds-BG}. This was done in \cite{BMT:3folds-BG, Macri:P3} for
the case of $\P^3$, and in \cite{Benjamin:quadric} for the case of the quadric in $\P^4$; these are
the only other cases in which Conjecture \ref{con:strongBG} is known.

There is an alternative conjectural approach towards stability conditions on the quintic
hypersurface in $\P^4$ via graded matrix factorizations, proposed by Toda
\cite{Yukinobu:Gepner-matrix-factorizations, Yukinobu:Gepner-point-quintic}. It is more specific,
but would yield a stability condition that is invariant under certain auto-equivalences; it would
also lie outside of our set $\widetilde \PPP$. His approach would require a stronger
Bogomolov-Gieseker inequality already for slope-stable vector bundles, and likely lead to very
interesting consequences for generating functions of Donaldson-Thomas invariants.

Conjecture \ref{con:strongBG} can be specialized to certain slope-stable sheaves, similar to
Corollary \ref{cor:slopestable}; see \cite[Conjecture 7.2.3]{BMT:3folds-BG}. This statement was
proved by Toda for certain Calabi-Yau threefolds, including the quintic hypersurface, in
\cite{Yukinobu:noteonBG3}. Another case of that conjecture implies a certain Castelnuovo-type
inequality between the genus and degree of curves lying on a given threefold; see
\cite{Becca:Castelnuovo} for its relation to bounds obtained via classical methods.

Our results are at least partially consistent with the expectations formulated in
\cite{Polishchuk:Lagrangian}; in particular, semi-homogeneous bundles are examples of the
Lagrangian-invariant objects  considered by Polishchuk, are semistable for our stability
conditions, and their phases behave as predicted.

\subsection*{Plan of the paper} Appendix \ref{app:SupportProperty} may be of independent interest.
We review systematically the relation between support property, quadratic inequalities for
semistable objects and deformations of stability conditions, and their behaviour under
wall-crossing.

Sections \ref{sec:TiltStability} and \ref{sec:classicalBG} and Appendix \ref{app:deformingtilt} review
basic properties of tilt-stabilty, its deformation properties (fixing a small inaccuracy in
\cite{BMT:3folds-BG}), the conjectural inequality proposed in \cite{BMT:3folds-BG} and variants
of the classical Bogomolov-Gieseker inequality satisfies by tilt-stable objects.

In Section \ref{sec:generalizing} we show that a more general form of Conjecture \ref{con:strongBG}
is equivalent to the original conjecture, whereas Section \ref{sec:reductionalphazero} shows
that both conjectures follows from a special limiting case.

This limiting case is proved for abelian threefolds in Section \ref{sec:abelianBG0}; in the
following Section \ref{sec:newstability} we show that this implies the existence of the open subset
$\widetilde \PPP$ of stabilty conditions described above. Section \ref{sec:abelianspace} shows that in the case
of abelian threefolds, $\widetilde \PPP$ is in fact a connected component, and Section \ref{sec:CY} extends
these results to (crepant resolutions) of quotients of abelian threefolds.

\subsection*{Acknowledgments}

The paper benefitted from many useful discussions with Aaron Bertram, Izzet Coskun, Alice Garbagnati, Bert van Geemen, Daniel Huybrechts, Mart\'i
Lahoz, Antony Maciocia, Eric Miles, Rahul Pandharipande, Dulip Piyaratne, Benjamin Schmidt, Yukinobu
Toda, and we would like to thank all of them. The first author is particularly grateful to Dulip
Piyaratne for many hours explaining the details of \cite{Dulip-Antony:I, Dulip-Antony:II}, including
a long session under the disguise of a PhD defense. We are grateful to the referee for a very
careful reading of the manuscript. 
We also would like to thank for their hospitality the Ohio State University, the University of Bonn, and the University of Edinburgh, where parts of this paper were written.

A.B.~ is supported by ERC starting grant no. 337039 ``WallXBirGeom''.
E.M.~ is partially supported by the NSF grants DMS-1160466 and DMS-1302730/DMS-1523496.
P.S.~ is partially supported by the grant FIRB 2012 ``Moduli Spaces and Their Applications'' and by the national research project ``Geometria delle Variet\`a Proiettive'' (PRIN 2010-11).

\subsection*{Update (March 2016)} 
Counterexamples due to Schmidt \cite{Benjamin:counterexample} and Martinez
\cite{Cristian:counterexample} indicate that Conjectures \ref{con:strongBG} and \ref{con:BGgeneral}
need to be modified in the case of a threefold obtained as the blowup at a point of another
threefold; on the other hand,
they have been verified for all Fano threefolds of Picard rank one \cite{Chunyi:Fano3folds}.

\section{Review: tilt-stability and the conjectural BG inequality}
\label{sec:TiltStability}

In this section, we review the notion of tilt-stability for threefolds introduced in
\cite{BMT:3folds-BG}. We then recall the conjectural Bogomolov-Gieseker type inequality for
tilt-stable complexes proposed there; see Conjecture \ref{con:strongBG} below.

\subsection*{Slope-stability}

Let $X$ be a smooth projective complex variety and let $n\geq1$ be its dimension.
Let $\omega \in\mathrm{NS}(X)_\R$ be a real ample divisor class.

For an arbitrary divisor class $B \in \mathrm{NS}(X)_\R$, we will always consider
the twisted Chern character
$\ch^B(E) = e^{-B} \ch(E)$; more explicitly, we have
\begin{equation}\label{eq:explicitB}
\begin{aligned}
& \ch^B_0=\ch_0= \mathrm{rank} \qquad & \ch^B_2=\ch_2-B \ch_1+\frac{B^2}{2} \ch_0 & \\
& \ch^B_1=\ch_1-B\ch_0 \qquad & \ch^B_3=\ch_3-B\ch_2+\frac{B^2}{2} \ch_1-\frac{B^3}{6} \ch_0&.
\end{aligned}
\end{equation}

We define the slope $\mu_{\omega, B}$ of a coherent sheaf $E$ on $X$ by
\[
\mu_{\omega, B}(E) = \begin{cases}
 + \infty, & \text{ if }\ch^B_0(E)=0,\\
\ & \\
\frac{\omega^{n-1} \ch^B_1(E)}{\omega^n \ch^B_0(E)}, & \text{ otherwise.}
 \end{cases}
\]
When $B=0$, we will often write $\mu_\omega$.

\begin{Def}\label{def:SlopeStability}
A coherent sheaf $E$ is slope-(semi)stable (or $\mu_{\omega, B}$-(semi)stable) if, for all non-zero subsheaves $F \into E$, we have
\[
\mu_{\omega, B}(F) < (\le) \mu_{\omega, B}(E/F).
\]
\end{Def}

Observe that if a sheaf is slope-semistable, then it is either torsion-free or torsion.
Harder-Narasimhan filtrations (HN-filtrations, for short) with respect to slope-stability exist in $\Coh(X)$:
given a non-zero sheaf $E\in\Coh(X)$, there is a filtration
\[
0=E_0 \subset E_1 \subset \dots \subset E_m=E
\]
such that: (i) $A_i := E_i/E_{i-1}$ is slope-semistable, and (ii) $\mu_{\omega, B}(A_1) > \dots >
\mu_{\omega, B}(A_m)$.
We set $\mu^+_{\omega, B}(E):=\mu_{\omega, B}(A_1)$ and $\mu^-_{\omega, B}(E):=\mu_{\omega, B}(A_m)$.

\subsection*{The tilted category}
Let $X$ be a smooth projective threefold. As above, let
$\omega, B$ be real divisor classes with $\omega$ ample.
There exists a \emph{torsion pair} $(\TT_{\omega, B}, \FF_{\omega, B})$ in $\Coh(X)$ defined
as follows:
\begin{align*}
\TT_{\omega, B} &= \stv{E \in \Coh(X)}
{\text{any quotient $E \onto G$ satisfies $\mu_{\omega, B}(G) > 0$}}
= \stv{E}{\mu_{\omega, B}^-(E) > 0} \\
\FF_{\omega, B} &= \stv{E \in \Coh(X)}
{\text{any subsheaf $F \into E$ satisfies $\mu_{\omega, B}(F) \le 0$}}
= \stv{E}{\mu_{\omega, B}^+(E) \le 0}
\end{align*}
Equivalently, $\TT_{\omega, B}$ and $\FF_{\omega, B}$ are the extension-closed
subcategories of $\Coh(X)$ generated by slope-stable sheaves of positive and non-positive slope,
respectively.

\begin{Def}\label{def:BB}
We let $\Coh^{\omega, B}(X) \subset \Db(X)$ be the extension-closure
\[
\Coh^{\omega, B}(X) = \langle \TT_{\omega, B}, \FF_{\omega, B}[1] \rangle.
\]
\end{Def}

By the general theory of torsion pairs and tilting \cite{Happel-al:tilting}, $\Coh^{\omega, B}(X)$
is the heart of a bounded t-structure on $\Db(X)$; in particular, it is an abelian category.

\subsection*{Tilt-stability and the main conjecture}

We now define the following slope function, called \emph{tilt}, on the abelian
category $\Coh^{\omega, B}(X)$: for an object $E \in \Coh^{\omega, B}(X)$, its
tilt $\nu_{\omega, B}(E)$ is defined by
\[
\nu_{\omega, B}(E) = \begin{cases}
 + \infty, & \text{ if }\omega^{2} \ch^B_1(E) = 0,\\
\ & \\
\frac{\omega \ch^B_2(E) - \frac 16 \omega^3 \ch^B_0(E)}{\omega^{2} \ch^B_1(E)}, & \text{ otherwise.}
\end{cases}
\end{equation*}
We think of this as induced by the ``reduced'' central charge
\begin{equation} \label{eq:reducedZnew}
\overline{Z}_{\omega, B}(E) = \omega^2 \ch^B_1(E) + i
\omega \left( \ch^B_2(E) - \frac 16 \omega^2 \ch^B_0(E) \right);
\end{equation}
indeed, if $\overline{Z}_{\omega, B}(E) \neq 0$, then the tilt
$\nu_{\omega, B}(E)$ of $E$ agrees with the slope of that complex number; otherwise it is $+\infty$.

\begin{Def} \label{def:tilt-stable}
An object $E \in \Coh^{\omega, B}(X)$ is \emph{tilt-(semi)stable} if, for all non-trivial subobjects
$F \into E$, we have
\[
\nu_{\omega, B}(F) < (\le) \nu_{\omega, B}(E/F).
\]
\end{Def}
Tilt-stability gives a notion of stability, in the sense that Harder-Narasimhan filtrations exist.

The following conjecture is the main topic of \cite{BMT:3folds-BG}:

\begin{Con}[{\cite[Conjecture 1.3.1]{BMT:3folds-BG}}] \label{con:strongBG}
For any $\nu_{\omega,B}$-semistable object $E\in \Coh^{\omega, B}(X)$ satisfying
$\nu_{\omega, B}(E) = 0$, we have the following generalized Bogomolov-Gieseker inequality
\begin{equation} \label{eq:strongBG}
\ch^B_3(E) \le \frac{\omega^2}{18}\ch^B_1(E).
\end{equation}
\end{Con}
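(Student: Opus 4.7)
The plan is to prove Conjecture \ref{con:strongBG} in the cases covered by Theorems \ref{thm:mainabelian} and \ref{thm:mainCYabelian}; a proof for arbitrary smooth projective threefolds seems out of reach with current techniques. The first step is to pass to the equivalent but more flexible Conjecture \ref{con:BGgeneral}, which removes the constraint $\nu_{\omega,B}(E) = 0$; this equivalence can be established by a tilting-and-rescaling argument on the heart $\Coh^{\omega,B}(X)$. Next, following the strategy of \cite{Macri:P3}, I would reduce Conjecture \ref{con:BGgeneral} to its limiting statement as $\omega(t) \to 0$ with $\nu_{\omega(t), B(t)}(E) \to 0$, where the inequality degenerates to $\int_X e^{-\overline{B}} \ch(E) \le 0$ with $\overline{B} = \lim B(t)$. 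Along a generic such path, a given tilt-stable $E$ either remains stable to the limit (in which case the limit inequality together with elementary estimates yields the full conjecture), or it strictly destabilizes at some wall, at which point all Jordan--H\"older factors have strictly smaller classical Bogomolov--Gieseker discriminant $\overline{\Delta}_H$; this allows induction on that invariant.

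For the core limit inequality on an abelian threefold $X$, the crucial tool is the isogeny $\um \colon X \to X$, which is étale of degree $m^{6}$ and satisfies $\ch_i(\um^* F) = m^{2i} \ch_i(F)$; importantly, $\um^* E$ remains tilt-stable with appropriately rescaled parameters. First I would reduce to $\overline{B} \in \NS(X)_\Q$ by density/continuity, then use pull-back by $\um$ (which multiplies $\overline{B}$ by $m^2$) to clear denominators, and finally twist by $\OO_X(\overline{B})$ to reach $\overline{B} = 0$. The target becomes $\ch_3(E) \le 0$, i.e.\ $\chi(\OO_X, E) \le 0$. Supposing for contradiction $\ch_3(E) > 0$, further pull-backs give
\[
\chi(\OO_X, \um^* E) = \ch_3(\um^* E) = m^{6} \ch_3(E),
\]
which grows like $m^{6}$. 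Meanwhile, tilt-stability of $\um^* E$ rules out nonzero maps from $\OO_X(H)$ once $m \gg 0$, so by restriction to a general divisor $D \in |H|$, the Grauert--M\"ulich theorem, and Simpson-type bounds on global sections of slope-semistable sheaves, one obtains $h^0(\um^* E) = O(m^{4})$; Serre duality on the Calabi--Yau $X$ yields an analogous $h^2(\um^* E) = O(m^{4})$. After passing from the complex $\um^* E$ to its cohomology sheaves $\HH^{-1}, \HH^{0}$ via a short argument, one gets $\chi \le h^0 + h^2$, contradicting the $m^{6}$-growth.

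The extension to Calabi--Yau threefolds of abelian type then follows by pulling back along the finite étale cover by an abelian threefold and exploiting compatibility of tilt-stability with étale pullback together with Galois descent; the Kummer case is treated via the Bridgeland--King--Reid/Mukai--McKay equivalence, inducing stability from the $G$-equivariant derived category of the abelian cover. The main obstacle is establishing the uniform bounds $h^0(\um^* E), h^2(\um^* E) = O(m^{4})$: because $E$ lives in the tilted heart $\Coh^{\omega,B}(X)$ rather than in $\Coh(X)$, one must first use the classical Bogomolov--Gieseker inequality for tilt-stable objects to control the ranks and slopes of $\HH^{-1}(E)$ and $\HH^{0}(E)$, and then translate these sheaf-theoretic bounds into a bound for the complex. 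A secondary subtlety is the irrational case of $\overline{B}$, which requires an approximation argument combined with the density of the image of the pull-back maps $\um^*$ on $\NS(X)_\R$.
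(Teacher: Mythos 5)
Your proposal follows essentially the same route as the paper: pass to the equivalent Conjecture \ref{con:BGgeneral}, reduce to the limiting inequality $\ch_3^{\obeta(E)}(E)\le 0$ by induction on the discriminant $\HDelta$ using that Jordan--H\"older factors at a wall have strictly smaller discriminant, and prove that limit case on abelian threefolds by playing the $m^6$-growth of $\chi(\OO_X,\um^*E)$ against $O(m^4)$ bounds on $h^0$ and $h^2$ coming from Hom-vanishing, restriction to divisors and Grauert--M\"ulich/Simpson--Le Potier, with \'etale descent and the BKR equivalence handling the Calabi--Yau and Kummer cases. The one imprecision is your initial claim that one can reduce to rational $\overline{B}$ ``by density/continuity'': since $\obeta(E)$ is an invariant of $E$ and cannot be perturbed, the irrational case genuinely requires the quantitative Dirichlet approximation you mention at the end, which is exactly what the paper does.
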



\subsection*{Properties of tilt-stability}

We will often fix $B$ and vary $\omega$ along a ray in the ample cone via
\[ \omega = \sqrt{3} \alpha  H \]
for some given integral ample class $H \in \NS(X)$.\footnote{We follow the convention of \cite{Macri:P3} by inserting
a factor of $\sqrt{3}$ above. This ensures that walls of semistability are semicircles, in analogy
to the case of Bridgeland stability on surfaces. In particular, results from \cite{Aaron-Daniele,
Maciocia:walls} carry over more directly.}

To prove that tilt-stability is a well-behaved property, one needs to use variants of the classical
Bogomolov-Gieseker inequality for slope-semistable sheaves; in particular, this leads to the following statements:

\begin{Rem}\label{rmk:OpennessAndHN}
\begin{enumerate}
\item
Tilt-stability is an open property. More precisely, assume that $E\in\Db(X)$ is $\nu_{\omega,
B}$-stable with $\omega = \sqrt{3}\alpha H$.
Then the set of pairs $(\alpha', B') \in \R_{>0} \times \NS(X)_\R$ such that $E$ is
$\nu_{\sqrt{3}\alpha'H, B'}$-stable is open.
\item The boundary of the above subset of $\R_{>0} \times \NS(X)_\R$ where $E \in \Db(X)$ is tilt-stable is given by a locally finite
collection of \emph{walls}, i.e., submanifolds of real codimension one.
\end{enumerate}
\end{Rem}
Unfortunately, a slightly stronger statement was claimed in \cite[Corollary 3.3.3]{BMT:3folds-BG},
but (as noted first by Yukinobu Toda) the proof there only yields the above claims. We will therefore review these statements
in more detail in Section \ref{sec:classicalBG} and Appendix \ref{app:deformingtilt}; one can also deduce them with the same arguments as
in the surface case, treated in detail in \cite[Section 3]{Toda:ExtremalContractions}.

\begin{Rem} \label{rmk:twowalltypes}
It can be helpful to distinguish between two types of walls for tilt-stability, see Proposition
\ref{prop:tiltwallcrossing}. Locally, a wall for
tilt-stability of $E$ is described by the condition
$\nu_{\omega, B}(F) = \nu_{\omega, B}(E)$ for a destabilizing subobject $F$.
This translates into the condition that either
\begin{enumerate}
\item $\overline{Z}_{\omega, B}(F)$ and $\overline{Z}_{\omega, B}(E)$ are linearly dependent, or
that
\item $\nu_{\omega, B}(E) = +\infty$.
\end{enumerate}
\end{Rem}

In the limit $\omega \to +\infty\cdot H$, tilt-stability becomes
closely related to slope-stability:
\begin{Lem} \label{lem:largelimit}
Let $H, B$ be fixed divisor classes with $H$ ample, and let $\omega = \sqrt{3}\alpha H$ for $\alpha
\in \R_{>0}$.
Then
\begin{enumerate}
\item The category $\Coh^{\omega, B}(X)$ is independent of $\alpha$.
\item Moreover, its subcategory of objects $E$ with $\nu_{\omega, B}(E) = +\infty$ is independent of
$\alpha$.
\item \label{enum:limitstable}
If $E \in \Coh^{H, B}(X)$ is $\nu_{\omega, B}$-semistable for
$\alpha \gg 0$, then it satisfies one of the following conditions:
\begin{enumerate}
\item $H^{-1}(E) = 0$ and $H^0(E)$ is a $\mu_{\omega,B}$-semistable torsion-free sheaf.
\item $H^{-1}(E) = 0$ and $H^0(E)$ is a torsion sheaf.
\item $H^{-1}(E)$ is a $\mu_{\omega,B}$-semistable sheaf and $H^0(E)$ is either 0, or supported in
dimension $\le 1$.
\end{enumerate}
Conversely, assume $E \in \Coh(X)$ is a $\mu_{\omega,B}$-\emph{stable} torsion-free sheaf.
\begin{enumerate}
\item If $H^2\ch_1^B(E)>0$, then $E\in\Coh^{H,B}(X)$ and it is $\nu_{\omega, B}$-stable for $\alpha \gg 0$.
\item If $H^2\ch_1^B(E)\leq0$, then $E[1]\in\Coh^{H,B}(X)$; if moreover $E$ is a vector bundle, then it is $\nu_{\omega, B}$-stable for $\alpha \gg 0$.
\end{enumerate}
\end{enumerate}
\end{Lem}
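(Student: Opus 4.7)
The plan is to exploit the scaling $\omega = \sqrt{3}\alpha H$. For Parts (1) and (2), the slope $\mu_{\omega, B}(E) = H^2\ch_1^B(E)/(\sqrt{3}\alpha H^3 \ch_0^B(E))$ depends on $\alpha$ only through the positive factor $1/(\sqrt{3}\alpha)$, so slope-(semi)stability and the torsion pair $(\TT_{\omega, B}, \FF_{\omega, B})$—and hence the tilted heart $\Coh^{\omega, B}(X)$—are independent of $\alpha$. Similarly $\omega^2\ch_1^B(E) = 3\alpha^2 H^2\ch_1^B(E)$, so $\nu_{\omega, B}(E) = +\infty$ reduces to the $\alpha$-independent condition $H^2\ch_1^B(E) = 0$. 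For Part (3), I would rewrite
\[
\nu_{\omega, B}(E) = \frac{2H\ch_2^B(E) - \alpha^2 H^3\ch_0^B(E)}{2\sqrt{3}\,\alpha H^2\ch_1^B(E)},
\]
so that its leading behavior as $\alpha \to \infty$ is controlled by the ratio $H^3\ch_0^B/H^2\ch_1^B$. This yields the asymptotic dictionary I will use: positive-rank sheaves in $\TT$ have $\nu \to -\infty$ linearly in $\alpha$; codimension-one torsion in $\TT$ has $\nu \to 0$; higher-codimension torsion has $\nu = +\infty$; and shifts $F[1]$ of torsion-free $F \in \FF$ have $\nu(F[1]) \to +\infty$ linearly in $\alpha$ (if $H^2\ch_1^B(F) < 0$) or $\nu(F[1]) = +\infty$ (if $H^2\ch_1^B(F) = 0$).

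For the forward direction, given $E$ tilt-semistable for $\alpha \gg 0$, the inclusion $H^{-1}(E)[1] \hookrightarrow E$ rules out $H^2\ch_1^B(H^{-1}(E)) < 0$ (which would give $\nu(H^{-1}(E)[1]) \to +\infty$ while $\nu(E) \to -\infty$, since then $H^2\ch_1^B(E)>0$). So either $H^{-1}(E) = 0$, or $H^2\ch_1^B(H^{-1}(E)) = 0$; in the latter case, combined with $\mu_{\omega, B}^+(H^{-1}(E)) \le 0$, the HN filtration collapses to force $H^{-1}(E)$ to be $\mu$-semistable of slope zero, and $H^2\ch_1^B(E) = H^2\ch_1^B(H^0(E)) = 0$ forces $H^0(E)$ to be zero or supported in dimension $\le 1$ (case (c)). When $H^{-1}(E) = 0$, the torsion subsheaf of $H^0(E) \in \TT$ is a subobject whose $\nu$ is bounded below (by $0$ or $+\infty$), incompatible with the torsion-free quotient's $\nu \to -\infty$, unless $H^0(E)$ is purely torsion (case (b)) or purely torsion-free; in the latter, any $\mu$-destabilizing subsheaf lies in $\TT$ and becomes a $\nu$-destabilizer by leading-order comparison, forcing $\mu$-semistability (case (a)).

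For the converse, a monomorphism $F \hookrightarrow E$ in $\Coh^{\omega, B}(X)$ with $E \in \TT$ forces $F \in \TT$ (since $\Hom(H^{-1}(F)[1], E) = 0$) and identifies $F$ with a $\TT$-subsheaf of $E$; in case (a), $\mu$-stability of $E$ gives $\mu(F) < \mu(E)$ for proper $F$, and the leading $\alpha$-term promotes this to strict $\nu$-destabilization. For case (b), a parallel analysis of subobjects $F \hookrightarrow E[1]$ produces an exact sequence
\[
0 \to H^{-1}(F) \to E \to H^{-1}(E[1]/F) \to H^0(F) \to 0
\]
in $\Coh(X)$ with $H^{-1}(F), H^{-1}(E[1]/F) \in \FF$ and $H^0(F) \in \TT$. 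The vector-bundle hypothesis eliminates torsion contributions in $H^0(F)$ (via $\Hom(T, E) = 0$ for all torsion $T$ when $E$ is locally free), reducing the question to a $\mu$-comparison of subsheaves of $E$ in $\FF$, which the $\mu$-stability of $E$ handles.

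The main obstacle I expect is case (b) of the converse: without the vector-bundle hypothesis, non-reflexivity of $E$ introduces spurious subobjects of $E[1]$ in the heart—for instance, $0 \to E^{**}/E \to E[1] \to E^{**}[1] \to 0$ is a short exact sequence in $\Coh^{\omega, B}(X)$ in which $E^{**}/E$ is torsion of codimension $\ge 2$ with $\nu_{\omega, B} = +\infty$, which would destabilize $E[1]$. The vector-bundle assumption is precisely what rules out such configurations. The borderline case $H^2\ch_1^B(E) = 0$, where $\nu_{\omega, B}(E[1]) = +\infty$, likewise requires interpreting tilt-stability via the reduced central charge $\overline{Z}_{\omega, B}$ rather than $\nu_{\omega, B}$ alone, breaking ties among objects of infinite tilt-slope.
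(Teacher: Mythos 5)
Parts (1) and (2), and your overall strategy for part (3) --- compare the leading terms of $\nu_{\omega,B}$ as $\alpha \to \infty$ --- are fine and match what the paper intends (the paper gives no details and simply cites \cite[Proposition 14.2]{Bridgeland:K3}). However, your forward direction contains a genuine error. You claim that the inclusion $H^{-1}(E)[1] \into E$ rules out $H^2\ch_1^B(H^{-1}(E)) < 0$ because then ``$\nu(E) \to -\infty$, since $H^2\ch_1^B(E)>0$''. The sign of the leading ($\alpha$-linear) term of $\nu_{\omega,B}(E)$ is governed by $-\ch_0(E)/H^2\ch_1^B(E)$, not by $H^2\ch_1^B(E)$ alone; since $\ch_0(E) = \ch_0(H^0(E)) - \ch_0(H^{-1}(E))$ can be negative, $\nu_{\omega,B}(E)$ may tend to $+\infty$ at the same linear rate as $\nu_{\omega,B}(H^{-1}(E)[1])$, and no contradiction arises. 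Concretely, for $E = \OO_X(-uH)[1]$ with $u>0$ and $B$ near $0$ one has $H^{-1}(E) = \OO_X(-uH)$ with $H^2\ch_1^B < 0$, yet $E$ is tilt-stable for $\alpha \gg 0$ --- this is exactly what the converse part of the same lemma asserts, and what the paper relies on in the proof of Conjecture \ref{con:BG0} for abelian threefolds, see \eqref{eq:nu0}. So the classification you derive (which forces $H^{-1}(E)$, when nonzero, to be $\mu$-semistable of slope zero) is strictly stronger than case (c) of the statement, and it is false.

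The repair in this regime is as follows: writing $r_{-1} = \ch_0(H^{-1}(E))$, $c_{-1} = H^2\ch_1^B(H^{-1}(E)) < 0$, $r_0 = \ch_0(H^0(E))$, $c_0 = H^2\ch_1^B(H^0(E))$, the difference of the $\alpha$-linear coefficients of $\nu_{\omega,B}(H^{-1}(E)[1])$ and $\nu_{\omega,B}(E)$ is a positive multiple of $r_{-1}c_0 + r_0(-c_{-1}) \ge 0$, which vanishes if and only if $c_0 = r_0 = 0$. Hence semistability for $\alpha \gg 0$ forces $H^0(E)$ to be supported in dimension $\le 1$, but does \emph{not} force $c_{-1} = 0$; the $\mu$-semistability of $H^{-1}(E)$ must then be extracted from a separate comparison with its maximal destabilizing subsheaf and quotient. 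Your treatment of the case $H^{-1}(E)=0$ and of the converse is essentially sound (though the relevant vanishing supplied by local freeness is $\Ext^1(T,E)=0$ for torsion $T$ supported in codimension $\ge 2$, not $\Hom(T,E)=0$), but the forward trichotomy needs to be corrected as above.
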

\begin{proof}
The first two statements are immediate to see.
The arguments for part \eqref{enum:limitstable} are completely analogous to the case of Bridgeland stable objects on surfaces, first
treated in \cite[Proposition 14.2]{Bridgeland:K3}; see also \cite[Proposition 7.2.1]{BMT:3folds-BG} for the first part.
\end{proof}

\section{Classical Bogomolov-Gieseker type inequalities}
\label{sec:classicalBG}

In this section, we review a result from \cite{BMT:3folds-BG} that shows that
tilt-stable objects on $X$ satisfy variants of the classical Bogomolov-Gieseker inequality.

We continue to assume that $X$ is a smooth projective threefold. Throughout this section,
 let $H \in \NS(X)$ be a polarization, $\omega = \sqrt 3 \alpha H$ for $\alpha > 0$, and
$B \in \NS(X)_\R$ arbitrary.

First we recall the classical Bogomolov-Gieseker inequality:
\begin{Def}\label{def:discriminant}
The discriminant of $E$ with respect to $H$ is defined by
\[
\Delta_H(E) := H \left(\ch_1(E)^2 - 2 \ch_0(E) \ch_2(E)\right)
 = H \left(\ch_1^B(E)^2 - 2 \ch^B_0(E) \ch^B_2(E)\right).
\]
\end{Def}

\begin{Thm}[Bogomolov, Gieseker] \label{thm:BGclassical}
Assume that $E$ is a $\mu_{H}$-semistable torsion-free sheaf on $X$.
Then $\Delta_H(E) \ge 0$.
\end{Thm}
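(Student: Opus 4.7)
The plan is to reduce the statement to the well-known Bogomolov-Gieseker inequality on surfaces by means of the Mehta-Ramanathan restriction theorem. First I would reduce to the slope-stable case: if $E$ is strictly $\mu_H$-semistable with Jordan-H\"older factors $A_1, \dots, A_\ell$ of slope equal to $\mu_H(E)$, then a direct (and classical) computation, exploiting the fact that the ratio $H^2\ch_1(A_i)/H^3\ch_0(A_i)$ takes the same value for all $i$, yields a Hodge-type additivity estimate of the form
\[
\Delta_H(E) \ \geq\ \sum_{i=1}^{\ell} \frac{\ch_0(E)}{\ch_0(A_i)}\, \Delta_H(A_i).
\]
It therefore suffices to prove the inequality under the additional assumption that $E$ is $\mu_H$-stable.

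Next, I would invoke the Mehta-Ramanathan restriction theorem: for $m \gg 0$ sufficiently divisible, and for a very general smooth surface $S \in |mH|$, the restriction $E|_S$ is again $\mu_{H|_S}$-semistable (in fact stable, over $\C$). On the surface $S$, the classical Bogomolov inequality gives
\[
\ch_1(E|_S)^2 - 2\,\ch_0(E|_S)\ch_2(E|_S) \ \geq\ 0,
\]
where the left-hand side is computed as an intersection number on $S$. By the projection formula this equals
\[
mH \cdot \bigl(\ch_1(E)^2 - 2\,\ch_0(E)\ch_2(E)\bigr) \ =\ m\,\Delta_H(E)
\]
on $X$. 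Dividing by $m>0$ yields the desired inequality.

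The surface case itself, which has to be appealed to at the end, admits two standard proofs: Bogomolov's original one, via the instability of sufficiently high symmetric powers of $E$ combined with Hirzebruch-Riemann-Roch, and Gieseker's, which combines Riemann-Roch on $\mathcal{E}nd(E)$ with Serre duality and the bound $\dim\Hom(E,E) = 1$ coming from stability. For the purposes of this paper the inequality is regarded as a black box, and the authors presumably simply cite it. The genuine obstacle, were one to aim at a self-contained treatment, would not be the reduction steps above but rather the covering-trick arguments underlying Mehta-Ramanathan and the careful discriminant-bound estimates that make the restriction of a semistable sheaf remain semistable.
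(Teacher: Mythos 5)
The paper does not prove this statement at all: Theorem \ref{thm:BGclassical} is stated as a classical black box, attributed to Bogomolov and Gieseker, and is only \emph{used} (as input to the proof of Theorem \ref{thm:BGvariants}, via the large-volume limit of Lemma \ref{lem:largelimit}) rather than established. So there is nothing in the paper to compare your argument against line by line; what you have written is the standard textbook proof (essentially \cite[Theorems 3.4.1 and 7.3.1]{HL:Moduli}), and it is correct in outline. Two small points worth being careful about if you were to write it out. First, in the reduction to the stable case, the identity
\[
\frac{\Delta_H(E)}{\ch_0(E)} \;=\; \frac{\Delta_H(A)}{\ch_0(A)} + \frac{\Delta_H(B)}{\ch_0(B)} - \frac{H\xi^2}{\ch_0(A)\ch_0(B)\ch_0(E)}, \qquad \xi := \ch_0(B)\ch_1(A) - \ch_0(A)\ch_1(B),
\]
together with $H^2\xi = 0$ (equality of slopes) and the Hodge index theorem ($H^2\xi=0 \Rightarrow H\xi^2 \le 0$) is exactly the right mechanism; one should just note that Jordan--H\"older factors for \emph{slope} stability are taken torsion-free (e.g.\ after saturating), so that the stable case applies to each factor. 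Second, for the restriction step one needs $E|_S$ to be torsion-free for a very general $S \in |mH|$; this follows because $S$ can be chosen to avoid the associated points of $E^{**}/E$ and the (codimension $\ge 3$) singular locus of the reflexive hull $E^{**}$, so that $E|_S \hookrightarrow E^{**}|_S$ with $E^{**}|_S$ locally free. With those caveats the projection-formula computation $\Delta(E|_S) = m\,\Delta_H(E)$ and the appeal to Mehta--Ramanathan and the surface case complete the argument as you describe.
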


However, a sheaf $F$ supported on a divisor $D \subset X$ does not necessarily
satisfy $\Delta_H(F) \ge 0$ (even if it is the push-forward of a slope-stable sheaf); indeed, we may
have $H D^2 < 0$.
This leads us to modify the inequality to a form that also holds for
torsion sheaves, and in consequence for tilt-stable objects. We first need the
following easy observation
(see, for example, the proof of \cite[Corollary 7.3.3]{BMT:3folds-BG}):
\begin{Lem}\label{lem:boundeffectiveD2}
There exists a constant $C_H\geq0$ such that for every
effective divisor $D \subset X$, we have
\[
C_H \left(H^2 D\right)^2 + H.D^2 \ge 0.
\]
\end{Lem}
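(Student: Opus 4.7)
The strategy is a homogeneity-plus-compactness argument. The inequality $C_H(H^2\cdot D)^2 + H\cdot D^2 \geq 0$ is homogeneous of degree $2$ in $D$, so I would first dispose of the trivial case $D=0$, and then reduce the general case to a uniform lower bound on a cross-section of the pseudo-effective cone. Since $H$ is ample, Nakai-Moishezon gives $H^2\cdot D_i > 0$ for every prime component $D_i$ of an effective divisor, hence $H^2\cdot D > 0$ whenever $D\neq 0$ is effective. Consequently, for any non-zero effective $D$ we may rescale to the class $D/(H^2\cdot D)$ lying in the affine cross-section
\[
K \;:=\; \overline{\Eff}(X) \,\cap\, \{D\in\NS(X)_\R : H^2\cdot D = 1\}.
\]
If we can show that the continuous function $D\mapsto H\cdot D^2$ admits a uniform lower bound $-M$ on $K$, then setting $C_H:=\max(0,M)$ and unscaling yields the required inequality for every effective $D$.

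The heart of the proof is thus the compactness of $K$ inside the finite-dimensional real vector space $\NS(X)_\R$: granted compactness, the continuous quadratic function $D\mapsto H\cdot D^2$ automatically attains its minimum. Compactness of $K$ is equivalent to the strict positivity of the linear functional $\ell(D):=H^2\cdot D$ on the pointed closed cone $\overline{\Eff}(X)\setminus\{0\}$, since the cross-section of a closed pointed cone by the affine hyperplane $\{\ell=1\}$ is bounded precisely under this condition. This positivity is a standard consequence of the theorem of Boucksom-Demailly-Paun-Peternell, which identifies $\overline{\Eff}(X)$ with the dual of the closed cone of movable curves: the complete-intersection class $H^2$ is represented by the generic intersection of two hyperplane sections, a strongly movable family, hence lies in the interior of the movable cone; by duality a functional in the interior of the dual cone pairs strictly positively with the cone minus its apex.

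The main obstacle is precisely this compactness/positivity step. An elementary alternative is also available: since $H$ is integral, $H^2\cdot D_i\in\Z_{\geq 1}$ for every non-zero irreducible effective $D_i$, and the Hilbert scheme of effective divisors of bounded $H$-degree has only finitely many components, each parametrizing a single $\NS$-class; a direct convex-geometry argument then bounds the real effective cone intersected with $\{\ell\leq 1\}$. Modulo this positivity input, the rest of the argument is a routine scaling.
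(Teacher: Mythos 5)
The paper itself offers no proof of this lemma; it is presented as an ``easy observation'' with a pointer to the proof of \cite[Corollary 7.3.3]{BMT:3folds-BG}. Your overall strategy --- use homogeneity to rescale onto the cross-section $K=\overline{\Eff}(X)\cap\{H^2\cdot D=1\}$ and take the minimum of the continuous function $D\mapsto H\cdot D^2$ on the compact set $K$ --- is sound, and the surrounding steps (disposing of $D=0$, positivity of $H^2\cdot D_i$ for prime components via Nakai--Moishezon, the unscaling at the end) are all correct. The one load-bearing input is the strict positivity of $D\mapsto H^2\cdot D$ on $\overline{\Eff}(X)\setminus\{0\}$, and this is where your justification is circular. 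By the BDPP duality $\overline{\Mov}_1(X)=\overline{\Eff}(X)^\vee$ that you invoke, the assertion ``$H^2$ lies in the interior of $\overline{\Mov}_1(X)$'' is \emph{equivalent} to the strict positivity you want to deduce from it; and the inference ``$H^2$ is strongly movable, hence an interior point'' is not valid --- movability only places $H^2$ in the cone, and boundary points of $\overline{\Mov}_1(X)$ can perfectly well be movable. So the key fact is asserted rather than proved. It is nevertheless true and standard; one clean argument is that $\delta\mapsto (H+\delta)^2$ has differential $\delta\mapsto 2H\cdot\delta$ at $\delta=0$, which is an isomorphism $\NS(X)_\R\to N_1(X)_\R$ by the Hodge index theorem, so a full neighbourhood of $H^2$ in $N_1(X)_\R$ consists of complete intersections of ample $\R$-classes and hence of movable classes.

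Two further remarks. First, the ``elementary alternative'' in your last paragraph does not close: finiteness of the Hilbert-scheme components in bounded degree controls only the prime divisors with $H^2\cdot D_i\le N$ for a fixed $N$, whereas the set you need to bound, $\Eff(X)\cap\{H^2\cdot D\le 1\}$, is the convex hull of $\{0\}$ together with the points $D_i/(H^2\cdot D_i)$ for prime divisors $D_i$ of \emph{arbitrary} degree, so no bound follows from that finiteness alone. A genuinely elementary substitute is: reduce to prime divisors $S$ using $H\cdot D_i\cdot D_j\ge 0$ for distinct prime components and $(H^2D)^2\ge\sum_i a_i^2(H^2D_i)^2$, then choose ample classes $A_1,\dots,A_\rho$ with $c_jH-A_j$ ample and with $A_1^2,\dots,A_\rho^2$ a basis of $N_1(X)_\R$ (possible by the Hodge-index argument above); the inequalities $0\le A_j^2\cdot S\le c_j^2\, H^2\cdot S$ then bound $\lVert S\rVert$ by a multiple of $H^2\cdot S$, which is exactly the compactness you need, with no appeal to BDPP. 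Second, it is worth stressing (as your argument implicitly does) that effectivity is essential: the inequality fails for general integral classes with $H^2\cdot D>0$ whenever $\rho(X)\ge 2$, so no proof can proceed from the Hodge index theorem alone.
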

(Note that for abelian threefolds, we may take $C_H = 0$.)

\begin{Def}
We define the $H$-discriminant as the following quadratic form:
\begin{equation} \label{eq:Hdiscriminant}
\overline{\Delta}_H^B := \left(H^2 \ch_1^B\right)^2 - 2 H^3 \ch_0^B H.\ch_2^B.
\end{equation}
For the second definition, choose a rational non-negative constant $C_H$ satisfying the conclusion of
Lemma \ref{lem:boundeffectiveD2}. Then
\begin{equation}
\Delta_{H, B}^C := \Delta_H + C_H \left(H^2 \ch_1^B\right)^2.
\end{equation}
\end{Def}

\begin{Thm}[{\cite[Theorem 7.3.1, Corollaries 7.3.2 and 7.3.3]{BMT:3folds-BG}}] \label{thm:BGvariants}
Let $X$ be a smooth projective threefold with ample polarization $H \in \NS(X)$.
Assume that $E$ is $\nu_{\omega, B}$-semistable for $\omega = \sqrt{3}\alpha H$ and $B \in \NS(X)_\R$.
Then
\[ \overline{\Delta}_H^B(E) \ge 0 \quad \text{and} \quad \Delta_{H, B}^C(E) \ge 0. \]
\end{Thm}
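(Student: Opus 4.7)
The strategy is to reduce both inequalities to the classical Bogomolov-Gieseker theorem (Theorem \ref{thm:BGclassical}) and to Lemma \ref{lem:boundeffectiveD2} by deforming the stability parameter to the large volume limit $\omega = \sqrt{3}\alpha H$ with $\alpha \to +\infty$. Concretely, by the openness of tilt-stability and local finiteness of walls (Remark \ref{rmk:OpennessAndHN}), if $E$ is $\nu_{\omega, B}$-semistable for some $\omega$, then along the ray of multiples of $H$ it remains tilt-semistable outside finitely many walls; and at each wall its Jordan--H\"older factors have the same tilt-slope as $E$. I would proceed by induction on a discrete invariant (essentially the $H$-discriminant $\overline{\Delta}_H$) to reduce to the limiting case, while simultaneously verifying that both inequalities propagate across walls.

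In the large volume limit, Lemma \ref{lem:largelimit}(3) classifies tilt-semistable objects: up to a cohomological shift, $E$ is either (i) a $\mu_{H, B}$-semistable torsion-free sheaf $F$, or (ii) a torsion sheaf. In case (i), classical Bogomolov-Gieseker gives $\Delta_H(F) \ge 0$, which immediately implies $\Delta_{H,B}^C(F) \ge \Delta_H(F) \ge 0$; for the other inequality, the algebraic identity
\[
\overline{\Delta}_H^B(F) = H^3 \cdot \Delta_H(F) + \bigl[(H^2 \ch_1^B(F))^2 - H^3 \cdot H (\ch_1^B(F))^2\bigr],
\]
combined with the Hodge index theorem applied to $\ch_1^B(F) \in \NS(X)_\R$, shows the second summand is non-negative, yielding $\overline{\Delta}_H^B(F) \ge 0$. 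In case (ii), either $F$ is supported in codimension $\ge 2$, where $\ch_0 = \ch_1 = 0$ makes both inequalities trivial, or $\ch_1(F) = [D]$ for an effective divisor $D$: then $\overline{\Delta}_H^B(F) = (H^2 \ch_1^B(F))^2 \ge 0$ is immediate, and $\Delta_{H, B}^C(F) = H D^2 + C_H (H^2 D)^2 \ge 0$ is exactly the defining property of $C_H$ in Lemma \ref{lem:boundeffectiveD2}.

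The principal technical difficulty lies in the wall-crossing step: one must check that both $\overline{\Delta}_H^B$ and $\Delta_{H, B}^C$ are preserved under forming extensions of tilt-semistable objects of the same tilt-slope. This is a property of the relevant quadratic forms restricted to the affine hyperplane of fixed tilt-slope $\nu_0$. For $\overline{\Delta}_H^B$, an eigenvalue computation shows the ambient form has signature $(2,1)$ on the $(H^3\ch_0, H^2\ch_1, H\ch_2)$-space; its restriction to the slope hyperplane has signature $(1,1)$, and the positive locus of this restricted form, once intersected with the half-space $H^2 \ch_1^B \ge 0$ imposed by membership in $\Coh^{\omega, B}(X)$, becomes a convex cone, so sums of classes with $\overline{\Delta}_H^B \ge 0$ remain in it. For $\Delta_{H, B}^C$, a parallel argument works once one exploits that the correction term $C_H (H^2 \ch_1^B)^2$ is the square of a linear form. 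Making this convexity argument precise and matching it with an induction on $\overline{\Delta}_H^B$ that descends properly across walls is the crux of the proof.
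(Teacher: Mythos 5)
Your overall strategy --- push $\alpha \to +\infty$, verify both inequalities in the large-volume limit via Theorem \ref{thm:BGclassical} and Lemma \ref{lem:boundeffectiveD2}, and propagate them back across walls by convexity of $\{q_H^B \ge 0\}$ on the preimage of a ray under $\overline{Z}_{\omega,B}$ --- is exactly the paper's. Your limit-case analysis and the Hodge-index identity relating $\overline{\Delta}_H^B$ to $H^3\Delta_H$ are correct, and your convexity step is in substance Lemma \ref{lem:halfspace}\eqref{enum:convexcone}; just note that the precise input there is that $\ker \overline{Z}_{\omega,B}$ is \emph{negative definite} for $q_H^B$ (Lemma \ref{lem:discriminantsignature}). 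Signature $(1,1)$ of the restriction to the slope hyperplane is not by itself enough: the intersection of the positive locus of a $(1,1)$-form with a half-space is convex only when the boundary of that half-space lies in $\{q \le 0\}$, which is what the negative definiteness of the kernel guarantees.

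The genuine gap is your choice of induction parameter. You cannot induct on $\overline{\Delta}_H$: its non-negativity on tilt-semistable objects is precisely the statement being proved, so you do not know a priori that it takes values in a well-ordered set on the relevant objects; and the fact that Jordan--H\"older factors at a wall have strictly smaller $\overline{\Delta}_H$ is Corollary \ref{cor:DeltaJH}, which is itself deduced \emph{from} Theorem \ref{thm:BGvariants} via Lemma \ref{lem:trivial}. The argument is circular as stated. The paper instead inducts on $H^2\ch_1^B(E)$, which is non-negative on $\Coh^{H,B}(X)$ by construction of the heart, takes discrete values when $H^2 B$ is rational, and strictly decreases on both terms of any destabilizing short exact sequence (a sub- or quotient object with $H^2\ch_1^B = 0$ has tilt $+\infty$ and cannot appear at a genuine wall). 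This induction hypothesis is also what supplies the support-property-type control on potentially destabilizing classes needed to conclude that the walls along the ray $\alpha \mapsto \sqrt{3}\alpha H$ are locally finite, i.e.\ that a first wall exists; you delegate this to Remark \ref{rmk:OpennessAndHN}, but it must be fed by the induction. Finally, discreteness of $H^2\ch_1^B$ fails for irrational $B$: the paper proves the theorem this way only when $H^2 B \in \Q$ and obtains the general case by a separate deformation argument (Proposition \ref{prop:tiltwallcrossing} in Appendix \ref{app:deformingtilt}), a case your proposal does not address.
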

This was proved for rational $B$ in \cite{BMT:3folds-BG};
we will give a self-contained proof of the rational case with a slightly
different presentation below, and extend it to arbitrary $B$ in Appendix \ref{app:deformingtilt}.

We think of $\HDeltaBC$ as the composition
\[
K(X) \xrightarrow{v_H^B} H^0(X, \R) \oplus \NS(X)_\R \oplus \R \xrightarrow{q_H^B} \R
\]
where the first map is given by
\[
v_H^B(E) = \left(\ch_0^B(E), \ch_1^B(E), H \ch_2^B(E)\right)
\]
and where $q_H^B$ is the quadratic form
\begin{equation*}
\left(r, c, d\right) \mapsto H c^2  +C_H \left(H^2 c\right)^2 - 2rd.
\end{equation*}
If $B$ is rational, then the image of $v_H^B$ (and of $\overline{v}_H^B$, defined in Remark
\ref{rmk:DeltaBar} below) is a finite rank lattice.

Notice that $\overline{Z}_{\omega, B}$ as defined in equation \eqref{eq:reducedZnew} factors via $v_H^B$.
Its relation to $q_H^B$ is controlled by
the following immediate consequences of the Hodge index theorem:

\begin{Lem}\label{lem:discriminantsignature}
The quadratic form $q_H^B$ has signature $(2, \rho(X))$.

The kernel of $\overline{Z}_{\omega, B}$ is negative definite with respect to $q_H^B$.
\end{Lem}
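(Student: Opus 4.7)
Both statements reduce to the Hodge index theorem on $\NS(X)_\R$. The plan is to choose an orthogonal decomposition adapted to $H$, evaluate $q_H^B$ and $\overline{Z}_{\omega,B}$ on the resulting pieces, and read off the signs.

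For the signature statement, decompose $\NS(X)_\R = \R\cdot H \oplus H^{\perp}$ where $H^\perp:=\{c\in\NS(X)_\R : H^2 c = 0\}$. Given $c = tH + c'$ with $c' \in H^\perp$, one computes $Hc^2 = t^2 H^3 + Hc'^{2}$ and $H^2 c = tH^3$, so
\[
q_H^B(0, c, 0) = t^2 H^3\bigl(1 + C_H H^3\bigr) + Hc'^{2}.
\]
Since $H^3 > 0$ and $C_H \ge 0$, the coefficient of $t^2$ is strictly positive; by the Hodge index theorem (applied on the threefold polarised by $H$), the restriction of $c' \mapsto Hc'^{2}$ to $H^\perp$ is negative definite of rank $\rho(X)-1$. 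Hence the restriction of $q_H^B$ to the middle summand $\NS(X)_\R$ has signature $(1,\rho(X)-1)$. Finally, $(r,d)\mapsto -2rd$ is a standard hyperbolic form of signature $(1,1)$, and it is $q_H^B$-orthogonal to the $\NS(X)_\R$-component. Adding the two contributions gives signature $(2,\rho(X))$.

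For the kernel statement, insert $\omega = \sqrt{3}\,\alpha H$ into the formula \eqref{eq:reducedZnew} for $\overline{Z}_{\omega,B}$: the vanishing of real and imaginary parts becomes
\[
H^2 \ch_1^B(E) = 0 \qquad\text{and}\qquad H\ch_2^B(E) = \tfrac{\alpha^2}{2}\, H^3 \ch_0^B(E).
\]
Thus, viewed on the target $H^0(X,\R)\oplus \NS(X)_\R\oplus\R$ of $v_H^B$, the kernel consists of triples $(r,c,d)$ with $c \in H^\perp$ and $d = \tfrac{\alpha^2}{2}H^3 r$. Substituting into $q_H^B$ yields
\[
q_H^B(r,c,d) = Hc^2 + C_H(H^2 c)^2 - 2rd = Hc^2 - \alpha^2 H^3 r^2.
\]
The first term is $\le 0$ by the Hodge index theorem on $H^\perp$, with equality iff $c=0$; the second is $\le 0$ since $\alpha^2 H^3 > 0$, with equality iff $r=0$. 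When both vanish, $d = \tfrac{\alpha^2}{2}H^3 r = 0$ as well, so the triple is zero. Hence the kernel is negative definite with respect to $q_H^B$.

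Neither step presents a real obstacle: once the decomposition $\NS(X)_\R = \R H \oplus H^\perp$ is chosen and the kernel of $\overline{Z}_{\omega,B}$ is parametrised, both claims are immediate from Hodge index. The only point worth double-checking is the positivity of the coefficient $1 + C_H H^3$, which holds since $C_H$ may be taken non-negative by Lemma~\ref{lem:boundeffectiveD2}.
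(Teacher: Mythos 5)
Your proof is correct and follows exactly the route the paper intends: the lemma is stated there without proof as an ``immediate consequence of the Hodge index theorem,'' and your orthogonal decomposition $\NS(X)_\R = \R H \oplus H^\perp$ together with the explicit parametrisation of $\Ker \overline{Z}_{\omega,B}$ is precisely the computation being left to the reader. Both the signature count $(1,\rho-1)+(1,1)=(2,\rho)$ and the negative-definiteness check $Hc^2 - \alpha^2 H^3 r^2 \le 0$ are accurate.
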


This makes our situation analogous to the one in Appendix \ref{app:SupportProperty}; in particular,
Theorem \ref{thm:BGvariants} implies a version of the support property for tilt-stable objects.

\begin{Lem} \label{lem:halfspace}
Let $\nu \in \R \cup \{+\infty\}$.
Then there exists a half-space
\[ \H_{\omega, B, \nu} \subset H^0(X, \R) \oplus \NS(X)_\R \oplus \R \]
of codimension one with the following properties:
\begin{enumerate}
\item For any object $E \in \Coh^{\omega, B}(X)$ with
$\nu_{\omega, B}(E) = \nu$, we have
\[ v_H^B(E) \in \H_{\omega, B, \nu}. \]
\item \label{enum:convexcone}
The intersection of $\H_{\omega, B, \nu}$ with the set defined by $q_H^B(\blank) \ge 0$
is a real convex cone.
\end{enumerate}
\end{Lem}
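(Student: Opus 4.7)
The plan is to take $\H_{\omega,B,\nu}$ to be the preimage under $\overline{Z}_{\omega,B}$ of a closed half-line in $\C$. Since $\overline{Z}_{\omega,B}$ from equation \eqref{eq:reducedZnew} factors through $v_H^B$, it defines a $\C$-linear functional on $H^0(X,\R)\oplus\NS(X)_\R\oplus\R$. For finite $\nu$ I will set $\H_{\omega,B,\nu}=\overline{Z}_{\omega,B}^{-1}(\R_{\ge 0}(1+i\nu))$, and $\H_{\omega,B,+\infty}=\overline{Z}_{\omega,B}^{-1}(i\R_{\ge 0})$. In both cases $\H_{\omega,B,\nu}$ is the intersection of a codimension-one subspace $L_\nu$ (the preimage of the full line $\R(1+i\nu)$, respectively $i\R$) with a half-space cut out by the sign of $\Real\overline{Z}_{\omega,B}$ (respectively $\mathrm{Im}\,\overline{Z}_{\omega,B}$), which is a half-hyperplane of codimension one.

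To verify property (a), I first observe that every $E\in\Coh^{\omega,B}(X)$ satisfies $\Real\overline{Z}_{\omega,B}(E)=\omega^2\ch_1^B(E)\ge 0$: this is built into the construction of the tilted heart, since $\TT_{\omega,B}$ consists of sheaves with $\mu^-_{\omega,B}>0$ while $\FF_{\omega,B}[1]$ consists of shifts of sheaves with $\mu^+_{\omega,B}\le 0$. For finite $\nu$ this combined with the defining equation of $L_\nu$ yields $v_H^B(E)\in\H_{\omega,B,\nu}$. For $\nu=+\infty$ I must additionally show $\mathrm{Im}\,\overline{Z}_{\omega,B}(E)=\omega\ch_2^B(E)-\tfrac{1}{6}\omega^3\ch_0^B(E)\ge 0$. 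Using the canonical exact sequence $0\to H^{-1}(E)[1]\to E\to H^0(E)\to 0$ in $\Coh^{\omega,B}(X)$, both slope bounds combined with $\omega^2\ch_1^B(E)=0$ force $H^0(E)$ to be torsion of dimension $\le 1$ (a torsion-free or divisorial component would contribute strictly positively to $\omega^2\ch_1^B$) and $H^{-1}(E)$ to be slope-semistable of slope zero. The contribution of $H^0(E)$ is then $\omega\ch_2(H^0(E))\ge 0$ by effectivity of the curve class, while for $H^{-1}(E)$, Theorem \ref{thm:BGclassical} combined with the Hodge index theorem (which gives $\omega\ch_1^B(H^{-1}(E))^2\le 0$, since $\ch_1^B(H^{-1}(E))$ is $\omega^2$-orthogonal) yields $\omega\ch_2^B(H^{-1}(E))\le 0$, and hence a nonnegative contribution after the shift.

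For property (b) I will invoke Lemma \ref{lem:discriminantsignature}. The kernel $K=\ker\overline{Z}_{\omega,B}$ is a codimension-two subspace on which $q_H^B$ is negative definite, while $q_H^B$ has signature $(2,\rho(X))$ on the ambient space; its restriction to the $q_H^B$-orthogonal complement $K^{\perp}$ is therefore positive definite of dimension two. Since each $L_\nu$ contains $K$, we obtain an orthogonal decomposition $L_\nu=K\oplus(L_\nu\cap K^{\perp})$ in which $L_\nu\cap K^{\perp}$ is a positive-definite line, so $q_H^B|_{L_\nu}$ has Lorentzian signature $(1,\rho(X))$. Consequently $\{q_H^B\ge 0\}\cap L_\nu$ is a double cone, i.e., the union of two closed convex cones meeting only at the origin; the linear functional cutting out $\H_{\omega,B,\nu}$ from $L_\nu$ vanishes on $L_\nu$ precisely along $K$, hence is strictly positive on one nappe and strictly negative on the other, so $\H_{\omega,B,\nu}\cap\{q_H^B\ge 0\}$ is exactly one of the two convex cones.

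The main obstacle I anticipate is the $\nu=+\infty$ case of property (a): \emph{a priori} an object of $\Coh^{\omega,B}(X)$ with $\omega^2\ch_1^B=0$ could give either sign of $\mathrm{Im}\,\overline{Z}_{\omega,B}$, and choosing the correct nappe is not automatic. The case analysis above resolves this but relies crucially on the classical Bogomolov-Gieseker inequality together with the Hodge index theorem to force the correct sign on the $\FF_{\omega,B}[1]$-component; all remaining steps are linear-algebraic consequences of the signature data from Lemma \ref{lem:discriminantsignature}.
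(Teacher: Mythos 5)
Your construction is exactly the paper's: $\H_{\omega,B,\nu}$ is the preimage under $\overline{Z}_{\omega,B}$ of the ray of slope $\nu$ through the origin, and the proof is correct. The only divergences are in how the two supporting facts are handled. For part (a) the paper simply asserts that the definition ``ensures the first claim,'' leaving implicit the weak positivity $\Re\overline{Z}_{\omega,B}\ge 0$ and, when $\Re\overline{Z}_{\omega,B}=0$, $\Im\overline{Z}_{\omega,B}\ge 0$ on $\Coh^{\omega,B}(X)$ (this is part of the well-definedness of tilt-stability, cf.\ the ``very weak stability condition'' positivity in Appendix \ref{app:deformingtilt}); your case analysis via the cohomology sequence, Theorem \ref{thm:BGclassical} and the Hodge index theorem is precisely the argument needed and is carried out correctly. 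For part (b) the paper cites the general Lemma \ref{lem:convexcone}, whose sign-change argument needs only semi-negative-definiteness of the kernel, whereas you give a direct Lorentzian-signature argument using the full strength of Lemma \ref{lem:discriminantsignature} (kernel negative definite, ambient signature $(2,\rho(X))$); both are valid here, the appendix lemma being slightly more general and your version yielding the extra information that $\H_{\omega,B,\nu}\cap\{q_H^B\ge 0\}$ is exactly one nappe of the double cone.
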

\begin{proof}
We define $\H_{\omega, B, \nu}$ as the preimage under $\overline{Z}_{\omega, B}$
of the ray in the complex plane that has slope $\nu$, starting at the origin; this ensures the first
claim. The second claim is a general fact about quadratic forms, see Lemma
\ref{lem:convexcone}.
\end{proof}
Note that by definition, a half-space is closed; indeed, we may have $v_H^B(E) = 0$
iff $\nu = +\infty$.

\begin{Rem}\label{rmk:DeltaBar}
If we replace $v_H^B$ with the map
\[
K(X) \xrightarrow{\overline{v}_H^B} \R^3, \quad
\overline{v}_H^B(E) = \left(H^3 \ch_0^B(E), H^2 \ch_1^B(E), H \ch_2^B(E)\right)
\]
and $q_H^B$ with the obvious quadratic form $\overline{q}_H^B$ on $\R^3$, then $\HDeltaB = \overline{q}_H^B \circ \overline{v}_H^B$ and the analogues of Lemma \ref{lem:discriminantsignature} and Lemma \ref{lem:halfspace} hold.
\end{Rem}

\begin{proof}[Proof of Theorem \ref{thm:BGvariants}, case $H^2B \in \Q$]
We prove the statement for $\HDeltaBC$ under the assumption that $H^2B$ is rational. The proof for
$\HDeltaB$ follows similarly due to Remark \ref{rmk:DeltaBar}, and the non-rational case will be
treated in Appendix \ref{app:deformingtilt}.

We proceed by induction on $H^2 \ch_1^B(E)$, which by our assumption is a non-negative function with discrete values on objects of
$\Coh^{H, B}(X)$.

We start increasing $\alpha$.
If $E$ remains stable as $\alpha \to +\infty$, we apply Lemma \ref{lem:largelimit}, \eqref{enum:limitstable};
by Theorem \ref{thm:BGclassical} (for torsion-free
slope-semistable sheaves) and Lemma \ref{lem:boundeffectiveD2} (for torsion sheaves) one easily
verifies that $E$ satisfies the conclusion in any of the possible cases.

Otherwise, $E$ will get destabilized. Note that as $\alpha$ increases, all possible destabilizing subobjects and
quotients have strictly smaller $H^2 \ch_1^B$, which satisfy
the desired inequality by our induction assumption. This is enough to ensure that $E$ satisfies
well-behaved wall-crossing: following the argument of \cite[Proposition 9.3]{Bridgeland:K3}
it is enough to know a support property type statement for all potentially destabilizing classes.

Hence there will be a wall $\alpha = \alpha_W$ where $E$ is strictly
$\nu_{\sqrt{3}\alpha_W H, B}$-semistable; let
\[
0\to E_1 \to E \to E_2 \to 0 \]
be a short exact sequence where both $E_1$ and $E_2$ have the same tilt as $E$.
Then both $E_1$ and $E_2$ have strictly smaller $H^2 \ch_1^B$; so they satisfy the inequality
$\HDeltaBC(E_i) \ge 0$ by the induction assumption. In other words, $v_H^B(E_i)$ are contained in
the cone described in Lemma \ref{lem:halfspace}, \eqref{enum:convexcone}; by convexity, the same
holds for
\[
v_H^B(E) = v_H^B(E_1) + v_H^B(E_2).
\]
\end{proof}

We now turn to some consequences of Theorem \ref{thm:BGvariants}.

\begin{Lem} \label{lem:trivial}
Let $Q$ be a quadratic form of signature $(1, r)$. Let $\CC^+$ be the closure of one of the
two components of the positive cone given by $Q(x) > 0$. Assume that $x_1, \dots, x_m \in \CC^+$,
and let $x := x_1 + \dots, x_m$.
Then
\[ Q(x_i) \le Q(x) \quad \text{for all $i$,} \]
with equality if and only if for all $i$, we have that $x_i$ is proportional to $x$ and $Q(x_i) =
Q(x) = 0$.
\end{Lem}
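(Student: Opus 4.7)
The lemma is a purely linear-algebraic statement that reduces to the Lorentzian reverse Cauchy--Schwarz inequality. Denote by $B(\cdot,\cdot)$ the symmetric bilinear form polarizing $Q$, so that $Q(v)=B(v,v)$. Bilinearity gives
\[
Q(x) \;=\; \sum_{i=1}^m Q(x_i) \;+\; 2\sum_{i<j} B(x_i,x_j).
\]
Since $Q(x_j)\ge 0$ for each $j$ by the hypothesis $x_j\in\CC^+$, the inequality $Q(x)\ge Q(x_i)$ will follow at once from the claim
\[
(\ast)\qquad B(u,v)\ge 0 \quad \text{for all } u,v\in\CC^+.
\]

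To establish $(\ast)$ I would first treat the interior case $Q(u)>0$: then $u^\perp$ is negative definite by the signature hypothesis, so the linear functional $B(u,\cdot)$ does not vanish on $\{Q\ge 0\}\setminus\{0\}$, and hence has constant sign on each of the two components of that set. Since $B(u,u)=Q(u)>0$, it is strictly positive on all of $\CC^+\setminus\{0\}$. The boundary case $Q(u)=0$ then follows by approximating $u$ by vectors in the interior of $\CC^+$ and passing to the limit.

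For the equality discussion, suppose $Q(x_i)=Q(x)$ for every $i$. Combined with $(\ast)$ and the sign condition $Q(x_j)\ge 0$, the displayed identity forces $Q(x_j)=0$ for every $j$ (so in particular $Q(x)=0$) and $B(x_i,x_j)=0$ for every $i\ne j$. Now if $u,v\in\CC^+$ are both null and satisfy $B(u,v)=0$, then the subspace they span is totally isotropic for $Q$; but in signature $(1,r)$ no totally isotropic subspace has dimension $\ge 2$, so $u$ and $v$ are linearly dependent. Applying this observation to each pair $x_i,x_j$ shows that all the $x_i$ lie on a common ray of the light cone of $\CC^+$, and are therefore proportional to $x$. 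The converse direction is an immediate computation.

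The only real content is the elementary linear algebra behind $(\ast)$, together with the signature argument ruling out a $2$-dimensional totally isotropic subspace in the equality case; since the whole statement is essentially the reverse triangle inequality in Minkowski space, one could alternatively invoke the standard literature on Lorentzian quadratic forms.
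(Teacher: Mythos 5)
Your proof is correct and follows essentially the same route as the paper's: the paper reduces the lemma to the ``easy fact'' that the associated bilinear form satisfies $(x,y)\ge 0$ for $x,y\in\CC^+\setminus\{0\}$, with equality if and only if $x,y$ are proportional and null --- which is exactly your claim $(\ast)$ combined with your totally-isotropic-subspace argument. The only difference is that you actually supply the (correct) perpendicular-space and limiting argument for this fact, which the paper leaves unproven.
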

\begin{proof}
This follows immediately
from the easy fact that if $x, y \in \CC^+ - \{0\}$, then the bilinear form associated to $Q$
satisfies $(x,y) \ge 0$, with equality if and only if $x,y$ are proportional with $Q(x)= Q(y) = 0$.
\end{proof}

\begin{Cor}\label{cor:DeltaJH}
Assume that $E$ is strictly $\nu_{\omega, B}$-semistable with $\nu_{\omega, B}(E) \neq +\infty$.
Let $E_i$ be the Jordan-H\"older factors of $E$. Then
\[ \HDeltaB(E_i) \le \HDeltaB(E) \quad \text{for all $i$.} \]
Equality holds if and only if all $\overline{v}_H^B(E_i)$ are proportional to $\overline{v}_H^B(E)$ and
satisfy $\HDeltaB(E_i) = \HDeltaB(E) = 0$.
In particular, if $E$ is $\nu_{\omega', B'}$-\emph{stable} for some $\omega', B'$ with
$\omega'$ proportional to $\omega$, then the inequality is strict.

The same statements hold with $\HDeltaB$ and $\overline{v}_H^B$ replaced by $\HDeltaBC$ and $v_H^B$,
respectively.
\end{Cor}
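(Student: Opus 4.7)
The plan is to combine Theorem \ref{thm:BGvariants} (non-negativity of $\HDeltaB$), Lemma \ref{lem:halfspace} (confining the Jordan--H\"older factors to a convex cone), Lemma \ref{lem:discriminantsignature} (controlling the signature), and Lemma \ref{lem:trivial} (the key inequality on a signature-$(1,r)$ positive cone). The essential content is that Lemma \ref{lem:trivial} applies with $r=1$ to the vectors $\overline{v}_H^B(E_i)$, whose sum is $\overline{v}_H^B(E)$ by additivity of the Chern character on the $K$-group.

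First, each Jordan--H\"older factor $E_i$ is tilt-semistable with $\nu_{\omega,B}(E_i) = \nu_{\omega,B}(E) = \nu$, which is finite by hypothesis. Theorem \ref{thm:BGvariants} gives $\HDeltaB(E_i) = \overline{q}_H^B(\overline{v}_H^B(E_i)) \geq 0$ for every $i$, and Lemma \ref{lem:halfspace} (in the $\overline{v}_H^B$-version of Remark \ref{rmk:DeltaBar}) places every $\overline{v}_H^B(E_i)$, as well as $\overline{v}_H^B(E) = \sum_i \overline{v}_H^B(E_i)$, inside the convex cone
\[
\CC := \overline{\H}_{\omega,B,\nu} \cap \bigl\{\overline{q}_H^B \geq 0\bigr\}.
\]

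Second, determine the signature of $\overline{q}_H^B|_{\overline{\H}_{\omega,B,\nu}}$. The form $\overline{q}_H^B(r,c,d) = c^2 - 2rd$ has signature $(2,1)$ on $\R^3$ by direct inspection, while the half-space $\overline{\H}_{\omega,B,\nu}$ has codimension one and contains the one-dimensional kernel of $\overline{Z}_{\omega,B}$, which is negative definite by (the $\overline{q}_H^B$-analogue of) Lemma \ref{lem:discriminantsignature}. Therefore $\overline{q}_H^B$ restricts to a signature-$(1,1)$ form on $\overline{\H}_{\omega,B,\nu}$, so $\CC$ is exactly the closure of one component of its positive cone. Lemma \ref{lem:trivial}, applied with $x_i = \overline{v}_H^B(E_i)$ and $x = \overline{v}_H^B(E)$, then yields $\HDeltaB(E_i) \leq \HDeltaB(E)$ together with the stated equality characterization. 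The $\HDeltaBC$-version is handled identically in the larger space of Lemma \ref{lem:halfspace}, where $q_H^B$ has signature $(2,\rho(X))$ and the $\rho(X)$-dimensional kernel of $\overline{Z}_{\omega,B}$ is negative definite, making $q_H^B$ of signature $(1,\rho(X))$ on the codimension-one half-space.

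Finally, the strict-inequality assertion requires showing that if $\overline{v}_H^B(E_i)$ were proportional to $\overline{v}_H^B(E)$, then $E$ could not be $\nu_{\omega',B'}$-stable for any $(\omega',B')$ with $\omega'$ proportional to $\omega$. The key points are that proportionality in $\overline{v}_H^B$ is $B$-independent (the maps $\overline{v}_H^B$ for different $B$ differ by an invertible lower-triangular change of coordinates coming from $e^{B-B'}$ followed by the $H$-projections), that $\overline{Z}_{\omega',B'}$ factors through $\overline{v}_H^{B'}$ whenever $\omega'$ is a multiple of $H$, and hence $\nu_{\omega',B'}(E_i) = \nu_{\omega',B'}(E)$; the destabilizing short exact sequence involving $E_i$ then persists in the nearby tilt heart by openness (Remark \ref{rmk:OpennessAndHN}), and the general case is obtained by deformation along a path. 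The main conceptual obstacle is this last step, where one must track that the Jordan--H\"older factor genuinely survives as a subobject in the new heart; the inequality itself is an immediate consequence of the signature/convex cone bookkeeping already provided by the preceding lemmas.
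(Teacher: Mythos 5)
Your proof of the main inequality and of the equality characterization is correct and is essentially the paper's own argument: the paper's proof consists precisely of setting $x_i := \overline{v}_H^B(E_i)$, $x := \overline{v}_H^B(E)$ and invoking Lemmas \ref{lem:discriminantsignature} and \ref{lem:halfspace} to verify the hypotheses of Lemma \ref{lem:trivial}; your signature bookkeeping (restricting the signature-$(2,1)$, resp.\ $(2,\rho(X))$, form to the hyperplane containing the negative-definite kernel of $\overline{Z}_{\omega,B}$ to get signature $(1,1)$, resp.\ $(1,\rho(X))$) is exactly the implicit content of that citation, and you are right that Theorem \ref{thm:BGvariants} is needed to place the $x_i$ in the cone.

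The one place you go beyond the paper is the ``in particular'' clause, and there your justification has an inaccurate step: the maps $\overline{v}_H^B$ and $\overline{v}_H^{B'}$ do \emph{not} differ by an invertible change of coordinates for arbitrary $B,B'$, because $H\ch_2^{B'}$ involves the term $H(B'-B)\ch_1$, which is not a function of $\left(H^3\ch_0^B, H^2\ch_1^B, H\ch_2^B\right)$ unless $B'-B$ is proportional to $H$; so proportionality of reduced characters is not literally $B$-independent. This affects only a clause whose proof the paper itself leaves implicit (its proof says the lemmas ``imply our claim'' without further comment); the intended deduction is that in the equality case every Jordan--H\"older factor has $\overline{v}_H^B$ proportional to $\overline{v}_H^B(E)$, hence the same tilt-slope as $E$ for all central charges factoring through $\overline{v}_H^B$, which rules out stability of $E$ along the relevant family. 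Your remaining concern about the destabilizing subobject persisting in the new heart is legitimate but does not affect the main inequality, which stands as proved.
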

The case $\nu = +\infty$ is excluded as in that case we may have
$\overline{v}_H^B(E_i) = 0$ or $\overline{v}_H^B(E_i) = \overline{v}_H^B(E)$.

\begin{proof}
Let $x_i := \overline{v}_H^B(E_i)$ and $x := \overline{v}_H^B(E)$.
By Lemmas \ref{lem:discriminantsignature} and \ref{lem:halfspace}, they satisfy the assumptions of Lemma \ref{lem:trivial}, which then implies
our claim.
\end{proof}

As another application, one obtains the tilt-stability of certain slope-stable sheaves
(see also \cite[Proposition 7.4.1]{BMT:3folds-BG}):

\begin{Cor} \label{cor:Delta0}
\begin{enumerate}
\item Let $F$ be a $\mu_{H,B}$-\emph{stable} vector bundle with $\HDeltaBC(F) = 0$ or $\HDeltaB(F) = 0$. Then
$F$ or $F[1]$ is a $\nu_{\omega, B}$-stable object of $\Coh^{H,B}(X)$.
\item \label{enum:linebundlesstable}
In particular, if $L$ is a line bundle, and if in addition either $c_1(L) - B$ is proportional to
$H$, or we can choose the constant $C_H$ of Lemma \ref{lem:boundeffectiveD2} to be zero, then
$L$ or $L[1]$ is $\nu_{\omega, B}$-stable.
\item \label{enum:tiltstableD0}
Conversely, consider an object
$E \in \Coh^{H, B}(X)$ that is $\nu_{\omega, B}$-stable with
$\HDeltaBC(E) = 0$ or $\HDeltaB(E) = 0$. Then either $E = H^0(E)$ is a $\mu_H$-semistable sheaf,
or $E = H^0(E)$ is supported in dimension $\le 2$,
or $H^{-1}(E) \neq 0$ is a $\mu_H$-semistable sheaf and
$H^0(E)$ has zero-dimensional support.
In addition, $E$ is $\nu_{\omega', B}$-stable for all $\omega'$ proportional to $H$.
\end{enumerate}
\end{Cor}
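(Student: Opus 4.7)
The plan is to treat the three parts in sequence, using openness of tilt-stability (Remark \ref{rmk:OpennessAndHN}) together with the \emph{strict} form of the inequality in Corollary \ref{cor:DeltaJH}, in order to rule out any wall for a tilt-stable object of vanishing $H$-discriminant.

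For part (a), let $F$ be a $\mu_{H,B}$-stable vector bundle with $\HDeltaB(F)=0$ (the argument for $\HDeltaBC(F)=0$ is identical). By Lemma \ref{lem:largelimit}\eqref{enum:limitstable}, either $F$ or $F[1]$ lies in $\Coh^{H,B}(X)$ and is $\nu_{\sqrt{3}\alpha H,B}$-stable for $\alpha\gg 0$; call this object $F'$. Suppose for a contradiction that $F'$ is destabilized at some $\alpha_{\ast}>0$; by local finiteness of walls we may assume $F'$ is strictly $\nu_{\sqrt{3}\alpha_{\ast}H,B}$-semistable with Jordan--H\"older factors $F_i$. Since $F'$ is $\nu_{\omega',B}$-stable for $\omega'=\sqrt{3}\alpha'H$ with $\alpha'\gg \alpha_{\ast}$, and $\omega'$ is proportional to $\sqrt{3}\alpha_{\ast}H$, the strict inequality in Corollary \ref{cor:DeltaJH} forces $\HDeltaB(F_i)<\HDeltaB(F')=0$, contradicting Theorem \ref{thm:BGvariants}. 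Hence no wall exists and $F'$ is $\nu_{\omega,B}$-stable throughout. Part (b) then follows by applying part (a) to a line bundle $L$: slope-stability is automatic since every proper subsheaf of $L$ is a twist by a non-trivial ideal, and a direct computation gives $\Delta_H(L)=0$, so $\HDeltaBC(L)=C_H(H^2(c_1(L)-B))^2$ vanishes when $C_H=0$, while $\HDeltaB(L)=(H^2(c_1(L)-B))^2-H^3\cdot H(c_1(L)-B)^2$ vanishes whenever $c_1(L)-B$ is proportional to $H$ by the Hodge index theorem.

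For part (c), let $E\in\Coh^{H,B}(X)$ be $\nu_{\omega,B}$-stable with vanishing $H$-discriminant. Arguing exactly as in part (a), any candidate wall $\omega_{\ast}$ proportional to $\omega$ would produce Jordan--H\"older factors violating Theorem \ref{thm:BGvariants} via Corollary \ref{cor:DeltaJH}; hence $E$ is $\nu_{\omega',B}$-stable for every $\omega'$ proportional to $H$. Sending $\alpha\to\infty$ and applying Lemma \ref{lem:largelimit}\eqref{enum:limitstable} leaves three possibilities for $(H^{-1}(E),H^0(E))$, the first two of which match the stated conclusion directly. In the third, set $F:=H^{-1}(E)$ and $T:=H^0(E)$, which is torsion of dimension at most one; since $\ch_1^B(T)=0$ one computes
\[
\HDeltaB(E)=\HDeltaB(F)+2H^{3}\ch_{0}(F)\cdot H\ch_{2}(T).
\]
The first summand is non-negative (Hodge index combined with Theorem \ref{thm:BGclassical} applied to the $\mu_H$-semistable torsion-free sheaf $F$), while $H\ch_{2}(T)>0$ as soon as $T$ has one-dimensional support by ampleness of $H$. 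The vanishing $\HDeltaB(E)=0$ thus forces $T$ to be zero-dimensional, and the parallel calculation with $\HDeltaBC$ gives the same conclusion.

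The only real point of care is verifying that the strict rather than the weak form of Corollary \ref{cor:DeltaJH} applies at each candidate wall, which comes down to the polarizations all being positive multiples of $H$ and hence proportional; once this is in place, the wall-ruling-out argument is automatic, and the remaining analysis of the cohomology sheaves in part (c) is elementary.
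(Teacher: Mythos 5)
Your proof is correct and follows essentially the same route as the paper's: rule out walls using the strict inequality of Corollary \ref{cor:DeltaJH} against the non-negativity of Theorem \ref{thm:BGvariants}, then classify via Lemma \ref{lem:largelimit}\eqref{enum:limitstable}. The only difference is that you spell out details the paper leaves implicit, notably the computation $\HDeltaB(E)=\HDeltaB(H^{-1}(E))+2H^3\ch_0(H^{-1}(E))\cdot H\ch_2(H^0(E))$ forcing $H^0(E)$ to have zero-dimensional support in the third case, which is a welcome addition.
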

Note that the choice $C_H = 0$ in particular applies to abelian threefolds (or more generally any
threefold whose group of automorphisms acts transitively on closed points), or to any threefold of Picard rank one.
\begin{proof}
Consider an object $E$ that is $\nu_{\omega, B}$-stable with $\HDeltaB(E) = 0$ or $\HDeltaBC(E) = 0$.
By Corollary \ref{cor:DeltaJH}, $E$ can never become strictly
semistable with respect to $\nu_{\omega', B'}$ as long as $\omega'$ is proportional to $\omega$.
Combined with Lemma \ref{lem:largelimit}, \eqref{enum:limitstable} this implies all our claims.
\end{proof}

The analogue to the case $C_H = 0$ of part \eqref{enum:linebundlesstable} for Bridgeland stability
on surfaces is due to Arcara and Miles, see \cite[Theorem 1.1]{Daniele-Eric:line-bundles}, with a very different proof.

\begin{Prop}\label{prop:Delta0Conjecture}
Assume that $B$ is rational, and let $E \in \Coh^{H, B}(X)$ be a $\nu_{\omega, B}$-stable object with $\HDeltaB(E) = 0$ and $\nu_{\omega,B}(E)=0$.
Then $E$ satisfies Conjecture \ref{con:strongBG}.
\end{Prop}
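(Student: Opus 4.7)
The plan is to apply Corollary \ref{cor:Delta0}\eqref{enum:tiltstableD0} (classifying tilt-stable objects on the boundary $\HDeltaB=0$) and to exploit the rigid numerical structure forced by $\nu_{\omega,B}(E)=0$ together with the rationality of $B$.

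Setting $\omega=\sqrt{3}\alpha H$, the condition $\nu_{\omega,B}(E)=0$ unwinds to $H\ch_2^B(E)=\tfrac{\alpha^2}{2}H^3\ch_0^B(E)$, and substituting into $\HDeltaB(E)=0$ yields $(H^2\ch_1^B(E))^2=\alpha^2(H^3\ch_0^B(E))^2$. In these terms, inequality \eqref{eq:strongBG} reduces to $\ch_3^B(E)\le\tfrac{\alpha^2}{6}H^2\ch_1^B(E)$. The possibility from Cor \ref{cor:Delta0}\eqref{enum:tiltstableD0} that $E$ is a sheaf supported in dimension $\le 2$ (hence $\ch_0^B(E)=0$) is excluded at once: then $\HDeltaB=(H^2\ch_1^B)^2=0$ forces $H^2\ch_1^B=0$, contradicting $\nu_{\omega,B}(E)=0$. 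So $\ch_0^B(E)\ne 0$. Combining the Hodge index inequality $\HDeltaB\ge H^3\cdot\Delta_H\ge 0$ with Theorem \ref{thm:BGclassical} (applied to the $\mu_H$-semistable sheaf present in $E$ or $H^{-1}(E)$) then forces $\Delta_H(E)=0$ and $\ch_1^B(E)$ numerically proportional to $H$. Since $B\in\NS(X)_\Q$ and $\ch_0^B(E)\in\Z\setminus\{0\}$, we also obtain $\alpha\in\Q$.

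The key remaining step is to verify the inequality in the two non-trivial cases. In case (a), where $E$ is a $\mu_H$-semistable torsion-free sheaf, the plan is to compare $E$ with a line-bundle model. Since $B$ and $\alpha$ are rational, there exists, after an integer rescaling of Chern-character invariants, a line bundle $L$ whose class $c_1(L)-B$ is numerically $\pm\alpha H$ (matching the sign of $H^2\ch_1^B(E)/(H^3\ch_0^B(E))$); by Corollary \ref{cor:Delta0}\eqref{enum:linebundlesstable} $L$ is tilt-stable with $\HDeltaB(L)=0$ and $\nu_{\omega,B}(L)=0$, and a direct expansion of $\ch^B(L)=e^{\pm\alpha H}$ shows that \eqref{eq:strongBG} holds for $L$ with \emph{equality}. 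To transfer the equality to $E$, one uses Corollary \ref{cor:DeltaJH}: perturbing $(\omega,B)$ to a nearby wall where $E$ is strictly semistable (allowed because $E$ is tilt-stable for all $\omega'$ proportional to $H$ by the last assertion of Cor \ref{cor:Delta0}\eqref{enum:tiltstableD0}), every JH-factor has $\overline{v}_H^B$ aligned with that of $E$, and hence has the form of a (shifted) line-bundle model. Summing $\ch_3^B$ over the factors then recovers \eqref{eq:strongBG} for $E$. Case (c), where $H^{-1}(E)$ is $\mu_H$-semistable and $H^0(E)$ has zero-dimensional support, is handled by the derived dual $\mathbb{R}\Hom(-,\mathcal O_X)[2]$, which flips the sign of $\ch_0^B$ and exchanges $(\omega,B)$-tilt-stability with $(\omega,-B)$-tilt-stability, reducing to case (a).

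The main obstacle is this last transfer step: Corollary \ref{cor:DeltaJH} constrains only $\overline{v}_H^B$ of the JH-factors, while the inequality involves $\ch_3^B$, which is not determined by $\overline{v}_H^B$. Making the perturbation-plus-rationality argument rigorous, so that one can genuinely identify the JH-factors at the nearby wall with line-bundle models up to a shift, is the crux of the proof; the rationality of both $B$ and $\alpha$ should ensure that the factors have discrete numerical invariants aligning them with $L$.
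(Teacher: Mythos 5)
Your reduction steps are fine (excluding the torsion case, deducing $\Delta_H(E)=0$ and $\ch_1^B(E)$ numerically proportional to $H$ via Hodge index and Theorem \ref{thm:BGclassical}), but the step you yourself flag as the crux is a genuine gap, and the mechanism you propose for it cannot work. First, there is no wall to perturb to: the last assertion of Corollary \ref{cor:Delta0}\eqref{enum:tiltstableD0} says precisely that $E$ stays $\nu_{\omega',B}$-stable for \emph{all} $\omega'$ proportional to $H$, and the strictness statement in Corollary \ref{cor:DeltaJH} rules out $E$ becoming strictly semistable nearby while keeping $\HDeltaB(E)=0$. Second, even granting a wall, Corollary \ref{cor:DeltaJH} only constrains the vectors $\overline{v}_H^B(E_i)=(H^3\ch_0^B,H^2\ch_1^B,H\ch_2^B)$ of the Jordan--H\"older factors; it says nothing about $\ch_3^B(E_i)$, nor does it identify the factors as shifted line bundles --- on a general threefold there are $\mu_H$-stable bundles of arbitrary rank with $\HDeltaB=0$ and $c_1$ proportional to $H$ (e.g.\ the semi-homogeneous bundles of Section \ref{sec:abelianspace}), so "aligned numerical class" does not mean "line-bundle model". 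Since \eqref{eq:strongBG} is an inequality on $\ch_3^B$, nothing in your argument actually controls $\ch_3^B(E)$.

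The missing idea is a structural, not a wall-crossing, input: by \cite[Propositions 3.1 and 3.12]{JasonYogesh:P3}, in the case $H^{-1}(E)\neq 0$ one has $H^0(E)=0$ and $H^{-1}(E)$ is a reflexive, hence (using $\HDeltaB=0$) locally free, $\mu_H$-semistable sheaf; in the case $H^{-1}(E)=0$ the double dual of the torsion-free sheaf $H^0(E)$ is again locally free with $\HDeltaB=0$. A classical theorem of Simpson (extended by Langer) then says that a $\mu_H$-stable bundle with vanishing discriminant is projectively flat, so $\ch(E)=\ch_0(E)\,e^{c_1(E)/\ch_0(E)}$; this pins down $\ch_3^B(E)$ exactly and one checks \eqref{eq:strongBG} directly (this is \cite[Proposition 7.4.2]{BMT:3folds-BG}). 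Your line-bundle computation is the rank-one shadow of this fact, but without Simpson's theorem there is no way to propagate it to higher rank.
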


\begin{proof}
If $F$ is a $\mu_{\omega,B}$-semistable reflexive sheaf on $X$ with $\HDeltaB(F) = 0$, then $F$ is a vector bundle
by \cite[Proposition 3.12]{JasonYogesh:P3},
Further, if $E$ is $\nu_{\omega,B}$-semistable with $\nu_{\omega,B}(E)<+\infty$, then $H^{-1}(E)$ is reflexive
by \cite[Proposition 3.1]{JasonYogesh:P3}.
Hence, the case $H^{-1}(E) \neq 0$ of part \eqref{enum:tiltstableD0} in Corollary \ref{cor:Delta0} can actually be made much more precise: in this case, $H^0(E) = 0$ and $H^{-1}(E)$ is a vector bundle.
In the other case, if $\nu_{\omega,B}(E)=0$, $\HDeltaB(E)=0$, and $H^{-1}(E)=0$, then $H^0(E)$ is a torsion-free sheaf and its double-dual is again locally-free with $\HDeltaB=0$.
In either case, a classical result of Simpson (see \cite[Theorem 2]{Simpson:Higgs} and \cite[Theorem 4.1]{Langer:S-group}) implies that $E$ satisfies Conjecture
\ref{con:strongBG}; see \cite[Proposition 7.4.2]{BMT:3folds-BG}.
\end{proof}

\section{Generalizing the main conjecture}
\label{sec:generalizing}

For this and the following section, we assume that $\omega$ \emph{and} $B$ are proportional to a given ample class
$H \in \NS(X)$:
\begin{equation} \label{eq:omegaBproportionalH}
 \omega = \sqrt{3}\alpha H, \quad B = \beta H.
\end{equation}
We will abuse notation and write
$\ch_i^\beta$ instead of $\ch_i^{\beta H}$, $\Coh^\beta(X)$ instead of $\Coh^{H,\beta H}(X)$, and
$\nu^H_{\alpha, \beta}$ or $\nu_{\alpha, \beta}$  to abbreviate
\[
\nu_{\alpha, \beta} = \sqrt{3} \alpha \nu_{\sqrt{3}\alpha H, \beta H}
= \frac{H \ch_2^\beta - \frac 12 \alpha^2 H^3 \ch_0^\beta}{H^2 \ch_1^\beta}.
\]
We will also write $\HDelta$ instead of $\HDeltaB$, as it is independent of the choice of $\beta$.

The goal of this section is to generalize Conjecture \ref{con:strongBG} to arbitrary
tilt-semistable objects, not just those satisfying $\nu_{\alpha, \beta} = 0$. This generalization
relies on the structure of walls for tilt-stability in $\R_{>0}\times \R$;
it is completely analogous to the case of walls for Bridgeland stability on
surfaces, treated most systematically in \cite{Maciocia:walls}.

\begin{Con} \label{con:BGgeneral}
Let $X$ be a smooth projective threefold, and $H \in \NS(X)$ an ample class.
Assume that $E$ is $\nu^H_{\alpha, \beta}$-semistable. Then
\begin{equation} \label{eq:BGgeneral}
\alpha^2 \HDelta(E) + 4 \left( H \ch_2^\beta(E)\right)^2 - 6 H^2 \ch_1^\beta(E) \ch_3^\beta(E) \ge 0.
\end{equation}
\end{Con}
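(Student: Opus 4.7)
The plan is to reduce Conjecture \ref{con:BGgeneral} to its tilt-slope-zero case, namely Conjecture \ref{con:strongBG}, and then to attack that latter conjecture via the multiplication-by-$m$ strategy sketched in the introduction for abelian threefolds.

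One direction of the equivalence is an immediate calculation. If $\nu_{\alpha,\beta}(E)=0$ then $H\ch_2^\beta(E) = \tfrac12 \alpha^2 H^3 \ch_0^\beta(E)$, and substituting this into the left-hand side of \eqref{eq:BGgeneral} together with the definition of $\HDelta$ collapses the expression to
\[
H^2\ch_1^\beta(E)\bigl(\alpha^2 H^2\ch_1^\beta(E) - 6\ch_3^\beta(E)\bigr).
\]
Since $H^2\ch_1^\beta(E)\ge 0$ on $\Coh^\beta(X)$, non-negativity of this is equivalent to $\ch_3^\beta(E) \le (\omega^2/18)\ch_1^\beta(E)$, i.e., to \eqref{eq:strongBG}. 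So Conjecture \ref{con:BGgeneral} specialized to $\nu=0$ recovers Conjecture \ref{con:strongBG}.

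For the converse, write $Q_{\alpha,\beta}(E)$ for the left-hand side of \eqref{eq:BGgeneral} and proceed by induction on $\HDelta(E)$, in close parallel with the proof of Theorem \ref{thm:BGvariants} given above. Given a $\nu_{\alpha,\beta}$-semistable $E$, I would deform $(\alpha,\beta)$ along a suitable path in the upper half-plane toward the numerical zero-slope locus $\{(\alpha',\beta'):\nu_{\alpha',\beta'}(E)=0\}$, which is a semicircle or vertical ray. If $E$ remains $\nu$-semistable all the way, Conjecture \ref{con:strongBG} at the endpoint yields \eqref{eq:BGgeneral} via the collapse above. Otherwise $E$ becomes strictly semistable on some actual wall and fits into a short exact sequence $0\to A\to E\to B\to 0$ of objects of the same tilt-slope; by Corollary \ref{cor:DeltaJH} we have $\HDelta(A),\HDelta(B) < \HDelta(E)$, so $Q\ge 0$ for $A$ and $B$ there by induction. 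The main obstacle is to propagate this to $E$: since $Q_{\alpha,\beta}$ is a genuine quadratic form, not additive on exact sequences, I need a convexity statement asserting that the cone $\{Q_{\alpha',\beta'}\ge 0\}$ intersected with the hyperplane of Chern characters having fixed tilt-slope $\nu_{\alpha',\beta'}(E)$ is convex. I expect this to follow from a signature computation for $Q_{\alpha',\beta'}$ analogous to Lemma \ref{lem:discriminantsignature}, combined with the convex-cone argument underlying Lemma \ref{lem:halfspace}\eqref{enum:convexcone}; this is the technical crux of the equivalence.

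Finally, for Conjecture \ref{con:strongBG} itself on abelian threefolds, I would follow the plan in the introduction: after the further reduction of Section \ref{sec:reductionalphazero} to the limit case, it suffices to prove $\int_X e^{-\overline{B}}\ch(E)\le 0$ for $E$ tilt-stable as $\omega\to 0$ and $\nu\to 0$. When $\overline{B}$ is rational, pulling back by $\um\colon X\to X$ and then twisting by $\OO_X(\overline{B})$ reduces the statement to $\ch_3(E)\le 0$. If instead $\ch_3(E)>0$, then $\chi(\um^*E)=m^6\ch_3(E)\to +\infty$, while tilt-stability of $\um^*E$ gives $\Hom(\OO_X(H),\um^*E)=0$, and restriction of $\um^*E$ to a general $D\in |H|$ together with Grauert--M\"ulich-type bounds on sections of $\mu$-semistable sheaves yields $h^0(\um^*E),h^2(\um^*E)=O(m^4)$, a contradiction. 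The irrational case I would handle by continuity after perturbing $\overline{B}$.
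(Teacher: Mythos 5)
Your overall architecture is the paper's: reduce \eqref{eq:BGgeneral} to the tilt-slope-zero statement \eqref{eq:strongBG}, reduce further to the limit case $\alpha\to 0$, $\nu\to 0$, and prove that limit case on abelian threefolds by pulling back along $\um$ and bounding $h^0$ and $h^2$ of restrictions to divisors. The collapse of \eqref{eq:BGgeneral} to \eqref{eq:strongBG} at $\nu_{\alpha,\beta}(E)=0$ is computed correctly, and the rational case of the limit argument is essentially the paper's. However, there are two genuine gaps. First, in the converse direction of the equivalence you deform $(\alpha,\beta)$ to the locus $\nu=0$ and apply \eqref{eq:strongBG} \emph{at the endpoint}, but you never explain why the resulting inequality at the endpoint implies \eqref{eq:BGgeneral} at the \emph{starting} point $(\alpha,\beta)$ --- these are different quadratic inequalities. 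The paper's resolution is Lemma \ref{lem:semicircle} (Bertram's nested wall theorem): $E$ stays semistable along the entire semicircle centred at $(0,\beta+\nu_{\alpha,\beta}(E))$ of radius $\sqrt{\alpha^2+\nu_{\alpha,\beta}(E)^2}$, which meets the $\nu=0$ locus, and a direct algebraic computation (the passage through \eqref{eq:BGstrange}) shows that \eqref{eq:BGgeneral} at $(\alpha,\beta)$ is \emph{identical} to \eqref{eq:strongBG} at that intersection point. In particular no wall is ever crossed and no induction on $\HDelta$ is needed for this equivalence; the wall-crossing induction with Corollary \ref{cor:DeltaJH} and the convexity lemma that you describe is the mechanism of the \emph{subsequent} reduction (Theorem \ref{thm:reduceto0}, where one decreases $\alpha$ at fixed $\beta=\obeta(E)$ and uses that the $\alpha^2\HDelta$ term is monotone), not of Theorem \ref{thm:BGgeneral}. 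If you insist on a path that can cross walls, you additionally need the strictness $\HDelta(E_i)<\HDelta(E)$, which Corollary \ref{cor:DeltaJH} only guarantees when $E$ is stable somewhere on the ray, so you must first reduce to stable $E$.

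Second, the irrational case of the limit statement cannot be handled ``by continuity after perturbing $\overline{B}$'': the class $\overline{B}=\obeta(E)H$ is determined by $E$ and is not a free parameter, and after replacing $\obeta(E)$ by a nearby rational $\beta_n=p_n/q_n$ the twisted pullback $\underline{q_n}^*E\otimes\OO_X(-p_nq_nH)$ no longer has vanishing $H\ch_2$, so its tilt-slope need not tend to $0$ and the Hom-vanishing against $\OO_X(\pm H)$ can fail. The paper needs Dirichlet approximation with the quadratic error bound $\abs{\obeta(E)-p_n/q_n}<1/q_n^2$ precisely to keep the limiting tilt-slope of these twisted pullbacks bounded (by approximately $1$), so that Hom-vanishing against $\OO_X(\pm 3H)$ still holds and the $O(q_n^4)$ section bounds beat the $q_n^6$ growth of the Euler characteristic. (Continuity in $B$ is used in the paper only at the very last step, to pass from rational to irrational $B$ in Conjecture \ref{con:strongBG} for an $E$ that is already known to be stable in a neighbourhood; that is a different and much easier statement.)
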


\begin{Thm} \label{thm:BGgeneral}
Let $X$ be a smooth projective threefold, and $H \in \NS(X)$ an ample class.
Then Conjecture \ref{con:BGgeneral} holds if and only if Conjecture \ref{con:strongBG} holds for
all $\omega, B$ proportional to $H$.
\end{Thm}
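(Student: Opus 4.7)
The plan is to prove the two implications separately. The easy direction, that Conjecture \ref{con:BGgeneral} implies Conjecture \ref{con:strongBG}, I would settle by direct substitution: when $\nu_{\alpha, \beta}(E) = 0$ one has $H\ch_2^\beta(E) = \tfrac{\alpha^2}{2} H^3\ch_0^\beta(E)$, so expanding $\HDelta(E)=(H^2\ch_1^\beta)^2-2H^3\ch_0^\beta\cdot H\ch_2^\beta$ the sum $\alpha^2\HDelta(E)+4(H\ch_2^\beta)^2$ collapses to $\alpha^2(H^2\ch_1^\beta)^2$. Inequality \eqref{eq:BGgeneral} then rearranges to $\ch_3^\beta\leq\tfrac{\alpha^2}{6}H^2\ch_1^\beta=\tfrac{\omega^2}{18}\ch_1^\beta$, which is Conjecture \ref{con:strongBG}; the cases with $H^2\ch_1^\beta\leq 0$ only flip signs in the obvious way.

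For the converse, assuming Conjecture \ref{con:strongBG} for every $\omega, B$ proportional to $H$, I would induct on the non-negative $\HDelta(E)$, which by Theorem \ref{thm:BGvariants} is discrete for rational $\beta$ (with real $\beta$ handled by continuity). The base case $\HDelta(E)=0$ follows from Corollary \ref{cor:Delta0}, Proposition \ref{prop:Delta0Conjecture}, and the easy direction, since in this situation $E$ is (up to shift) essentially a slope-semistable sheaf for which strongBG already applies. For the inductive step, starting from $E$ tilt-semistable at $(\alpha_0,\beta_0)$, I would deform $(\alpha, \beta)$ along a continuous path $\gamma$ ending on the ``$\nu = 0$'' semicircle $\SSSS_E := \{\nu_{\alpha,\beta}(E)=0\}$ of $E$. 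If $E$ stays tilt-semistable all along $\gamma$, then Conjecture \ref{con:strongBG} applied at $\gamma(1)\in\SSSS_E$ combined with the easy direction gives $F(\gamma(1);E)\ge 0$, where $F$ denotes the LHS of \eqref{eq:BGgeneral}. Otherwise $E$ destabilizes at some wall $\gamma(t_0)$, and by Corollary \ref{cor:DeltaJH} the Jordan-H\"older factors $E_i$ have strictly smaller $\HDelta$ (strict, since $E$ was actually tilt-\emph{stable} at $(\alpha_0,\beta_0)$, so the $\overline v_H^\beta(E_i)$ are not all proportional to $\overline v_H^\beta(E)$). Induction then applies to each $E_i$, and the convexity of $\{F\ge 0\}\cap\{\HDelta\ge 0\}$ inside the half-space from Lemma \ref{lem:halfspace} (using Lemma \ref{lem:discriminantsignature}) delivers $F(\gamma(t_0); E) \ge 0$.

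The main obstacle will be the propagation step: rigorously showing that the inequality $F\ge 0$ transfers along paths on which $E$ is tilt-semistable, and thus from $\gamma(1)$ or $\gamma(t_0)$ back to $(\alpha_0, \beta_0)$. The natural tool is that $F$ is affine in $\alpha^2$ with non-negative slope $\HDelta(E)$, combined with the explicit $\beta$-dependence of the twisted Chern characters, which should let me choose $\gamma$ along a controlled numerical wall for $E$. This is precisely the wall-crossing-of-support-property principle developed in Appendix \ref{app:SupportProperty}; its applicability relies on the negative definiteness of $\ker(\overline Z_{\omega, B})$ with respect to the ambient quadratic form, which is Lemma \ref{lem:discriminantsignature}. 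A separate minor point is the degenerate case $\nu_{\alpha_0,\beta_0}(E) = +\infty$, where $\SSSS_E$ collapses to a vertical line and a routine limit argument is needed.
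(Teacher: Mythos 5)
Your first implication is correct and matches the paper: at $\nu_{\alpha,\beta}(E)=0$ one has $H\ch_2^\beta=\tfrac{\alpha^2}{2}H^3\ch_0^\beta$, the first two terms of \eqref{eq:BGgeneral} collapse to $\alpha^2(H^2\ch_1^\beta)^2$, and since $H^2\ch_1^\beta>0$ in this case the inequality rearranges to \eqref{eq:strongBG}. (The remark about $H^2\ch_1^\beta\le 0$ is vacuous here: that forces $\nu=+\infty$, not $0$.)

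The converse direction has a genuine gap, and it sits exactly where you flag it: the propagation step. Your plan needs to transport the inequality $F\ge 0$ from a point of $\{\nu_{\alpha,\beta}(E)=0\}$ back to the original $(\alpha_0,\beta_0)$ along a path on which $E$ stays semistable, but the only monotonicity you offer --- $F$ affine in $\alpha^2$ with slope $\HDelta(E)\ge 0$ --- controls vertical paths in one direction only, and a vertical path through $(\alpha_0,\beta_0)$ in general does not meet $\{\nu=0\}$ at all (e.g.\ if $\ch_0(E)=0$ then $\nu$ is independent of $\alpha$). The missing ingredient is Lemma \ref{lem:semicircle} (Bertram's nested wall theorem): $E$ is \emph{automatically} $\nu$-semistable along the entire semicircle $\CC_{\alpha,\beta}(E)$ centered at $(0,\beta+\nu_{\alpha,\beta}(E))$ of radius $\sqrt{\alpha^2+\nu_{\alpha,\beta}(E)^2}$, because along it the kernel of $\overline{Z}_{\alpha',\beta'}$ stays inside the fixed $2$-plane spanned by $\overline{v}_H(E)$ and the original kernel, so no wall of type \eqref{enum:standardseqence} can be crossed and $H^2\ch_1^{\beta'}(E)>0$ rules out the $\nu=+\infty$ wall. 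This semicircle meets $\{\nu=0\}$ at $(\alpha',\beta')$ with $\beta'=\beta+\nu_{\alpha,\beta}(E)$ and $\alpha'^2=\alpha^2+\nu_{\alpha,\beta}(E)^2$, where Conjecture \ref{con:strongBG} applies directly; and a purely algebraic computation (the passage from \eqref{eq:BGstrange} to \eqref{eq:BGgeneral} in the paper) shows the resulting inequality at $(\alpha',\beta')$ is \emph{identical} to \eqref{eq:BGgeneral} at $(\alpha_0,\beta_0)$. Once you have this, your entire inductive scaffolding --- induction on $\HDelta$, Jordan--H\"older factors via Corollary \ref{cor:DeltaJH}, and the convexity argument (which, incidentally, would need negative semi-definiteness of $\ker\overline{Z}_{\alpha,\beta}$ for the quadratic form $F$ on $\R^4$, not Lemma \ref{lem:discriminantsignature}, which concerns $q_H^B$ on $\R^3$) --- becomes unnecessary for this theorem. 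That machinery is the right tool for the harder Theorem \ref{thm:reduceto0} (the reduction to $\alpha\to 0$), which you appear to have conflated with the present statement; as written, your plan for Theorem \ref{thm:BGgeneral} cannot be closed without first proving something equivalent to Lemma \ref{lem:semicircle}.
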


We begin with the following aspect of ``Bertram's Nested Wall Theorem'' \cite[Theorem 3.1]{Maciocia:walls}:
\begin{Lem} \label{lem:semicircle}
Assume the situation and notation of Conjecture \ref{con:BGgeneral} with
$\nu_{\alpha, \beta}(E) \neq +\infty$.
Then the object $E$ is
$\nu_{\alpha, \beta}$-semistable along the semicircle $\CC_{\alpha, \beta}(E)$ in the $(\alpha, \beta)$-plane $\R_{>0}\times
\R$ with center $(0, \beta + \nu_{\alpha, \beta}(E))$ and radius
$\sqrt{\alpha^2 + \nu_{\alpha, \beta}(E)^2}$.
\end{Lem}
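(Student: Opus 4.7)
My plan is to reduce the statement to a ``Nested Wall'' theorem for tilt-stability, which I would establish by direct algebra.

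First, a warm-up calculation. Parametrize $\CC_{\alpha,\beta}(E)$ by the equation $(\alpha')^2 + (\beta' - s)^2 = \alpha^2 + \nu_{\alpha,\beta}(E)^2$ with $s := \beta + \nu_{\alpha,\beta}(E)$. Expanding $\ch^{\beta'}(E)$ via \eqref{eq:explicitB} and substituting into the definition of $\nu_{\alpha',\beta'}$, a direct calculation yields $\nu_{\alpha',\beta'}(E) = s - \beta'$ along the semicircle. In particular $H^2\ch_1^{\beta'}(E) > 0$ everywhere on $\CC_{\alpha,\beta}(E)$, so $E \in \Coh^{\beta'}(X)$ and the notion of $\nu_{\alpha',\beta'}$-(semi)stability for $E$ is well-defined at each such point.

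The crux is the following Nested Wall statement: for any class $v \in K(X)$, the numerical wall $W_v := \{(\alpha',\beta') \in \R_{>0}\times\R : \nu_{\alpha',\beta'}(v) = \nu_{\alpha',\beta'}(E)\}$ is either all of $\R_{>0}\times\R$, empty, a vertical line, or a semicircle with center on the $\beta'$-axis; and two distinct such walls are disjoint. Writing the wall equation explicitly, the coefficient of $(\alpha')^2$ depends only on the $(\ch_0, \ch_1)$-data, and rearranging produces a circle equation in $\beta'$ with center on the axis. Two distinct semicircles of this form cannot cross, since a common point together with their common axis of centers would determine each circle uniquely.

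The statement then follows by an open-and-closed argument along the connected semicircle $\CC_{\alpha,\beta}(E)$. By Remark \ref{rmk:OpennessAndHN}, the subset $U \subset \CC_{\alpha,\beta}(E)$ where $E$ is $\nu$-semistable is open and contains $(\alpha,\beta)$. If $U \neq \CC_{\alpha,\beta}(E)$, a boundary point $(\alpha_0,\beta_0)$ would admit a strictly destabilizing subobject $F \hookrightarrow E$, whose numerical wall $W_{[F]}$ passes through $(\alpha_0,\beta_0)$; by the Nested Wall property, $W_{[F]}$ must then coincide with $\CC_{\alpha,\beta}(E)$, and therefore passes through $(\alpha,\beta)$. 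But there $\nu$-semistability of $E$ forces $\nu_{\alpha,\beta}(F) = \nu_{\alpha,\beta}(E)$, and by the wall identity this equality propagates to all of $\CC_{\alpha,\beta}(E)$, contradicting the presumed strict destabilization at $(\alpha_0,\beta_0)$.

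The main obstacle is the change of heart: since $\Coh^{\beta'}(X)$ varies with $\beta'$, a subobject relation at $(\alpha_0,\beta_0)$ need not correspond to one at $(\alpha,\beta)$. This is handled by local finiteness of walls together with the discreteness of the Chern character, which bound the candidate destabilizing classes on any compact segment of $\CC_{\alpha,\beta}(E)$, so that the nested wall property can be applied to each such class to track a destabilizer back to $(\alpha,\beta)$.
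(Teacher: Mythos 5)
Your overall route is the same as the paper's: both arguments reduce the lemma to the fact that a numerical wall for $E$ which meets the semicircle $\CC_{\alpha,\beta}(E)$ must contain it entirely (the paper quotes this as an instance of Bertram's Nested Wall Theorem from \cite{Maciocia:walls} and reproves it by linear algebra), and your preliminary computation $\nu_{\alpha',\beta'}(E)=s-\beta'$ together with the positivity of $H^2\ch_1^{\beta'}(E)$ along the semicircle matches the paper's ``simple computation''. The open--closed bookkeeping at the end is a cosmetic variant of the paper's ``the semicircle crosses no wall''.

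However, the justification you give for the crux --- the disjointness of distinct numerical walls --- is false as stated. Two distinct circles with centers on a common line can share a point (indeed two points, symmetric about that line): the circles $\alpha'^2+\beta'^2=1$ and $\alpha'^2+(\beta'-1)^2=1$ both have centers on the $\beta'$-axis and meet at $\bigl(\tfrac{\sqrt3}{2},\tfrac12\bigr)$. A common point together with the axis of centers does \emph{not} determine a circle, so pure circle geometry cannot give nestedness. The correct argument is linear-algebraic and uses an input you never invoke: $W_{[F]}$ is the locus where $\ker\overline Z_{\alpha',\beta'}$ lies in the $2$-plane spanned by $\overline v_H(E)$ and $\overline v_H(F)$, while $\CC_{\alpha,\beta}(E)$ is the locus where it lies in the $2$-plane $\Pi$ spanned by $\overline v_H(E)$ and $\ker\overline Z_{\alpha,\beta}$; if these planes are distinct they meet only in $\R\cdot\overline v_H(E)$, so a common point of the two walls forces $\overline Z_{\alpha_0,\beta_0}(E)=0$, which is excluded on the semicircle precisely because $H^2\ch_1^{\beta'}(E)>0$ there (ultimately because $\HDelta(E)\ge 0$ for the semistable object $E$). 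You must replace your geometric justification by this plane argument, which is exactly what the paper does. Two smaller repairs: Remark \ref{rmk:OpennessAndHN} gives openness of \emph{stability}, whereas the locus of \emph{semistability} is closed, so the roles of open and closed in your final argument need to be swapped (take $U$ closed and use local finiteness of walls to extract a destabilizing class at a boundary point); and you should note explicitly that $\CC_{\alpha,\beta}(E)$ is itself a numerical wall (namely $W_w$ for any $w\in\Pi$ independent of $\overline v_H(E)$) before applying nestedness to it.
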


\begin{proof}
We have to show that $\CC_{\alpha, \beta}(E)$ does not intersect any wall for tilt-stability, which are described in
Remark \ref{rmk:twowalltypes} or
Proposition \ref{prop:tiltwallcrossing}. In our situation, all reduced central charges
$\overline{Z}_{\alpha, \beta}$ factor via the map
\begin{equation}\label{eq:vHbar}
\overline{v}_H \colon K(X) \to \Q^3, \quad w \mapsto \left(H^3 \ch_0(w), H^2 \ch_1(w), H \ch_2(w)\right).
\end{equation}
The first type of wall, case \eqref{enum:standardseqence} in Proposition
\ref{prop:tiltwallcrossing}, can thus equivalently be described as the set of $(\alpha', \beta')$ for
which $\overline{v}_H(F)$ (for some destabilizing subobject $F \into E$) is contained in the
two-dimensional subspace of $\Q^3$ spanned by
$\overline{v}_H(E)$ and the kernel of $\overline{Z}_{\alpha', \beta'}$.

However, this two-dimensional subspace does not vary as
$(\alpha', \beta')$ move within $\CC_{\alpha, \beta}(E)$: the kernel of
$\overline{Z}_{\alpha', \beta'}$ is spanned by $\left(1, \beta', \frac 12 (\alpha'^2 + \beta'^2)\right)$, and the
the vectors
\[ \left(H^3 \ch_0(E), H^2 \ch_1(E), H \ch_2(E)\right), \quad
\left(1, \beta, \frac 12 (\alpha^2 + \beta^2)\right) \quad
\left(1, \beta', \frac 12 (\alpha'^2 + \beta'^2)\right) \]
are linearly dependent if and only if $(\alpha', \beta')$ is contained in $\CC_{\alpha, \beta}(E)$.

In addition, a simple computation shows $H^2 \ch_1^{\beta'}(E) > 0$ for $(\alpha', \beta')  \in\CC_{\alpha, \beta}(E)$;
therefore, the semicircle cannot intersect a wall given by $\nu_{\alpha', \beta'}(E) = +\infty$ either.
\end{proof}

\begin{proof}[Proof of Theorem \ref{thm:BGgeneral}]
We first note that due to Theorem \ref{thm:BGvariants}, Conjecture \ref{con:BGgeneral} holds for all
objects $E$ with $H^2 \ch_1^\beta(E) = 0$. We may therefore assume $\nu_{\alpha, \beta}(E) \neq
+\infty$ throughout the proof.

As an auxiliary step, consider the following statement:
\begin{itemize}
\item[(*)]
Assume that $E$ is $\nu_{\alpha, \beta}$-stable with
$\nu_{\alpha, \beta}(E) \neq +\infty$.
Let $\beta':= \beta + \nu_{\alpha, \beta}(E)$. Then
\begin{equation} \label{eq:BGstrange}
\ch^{\beta'}_3(E) \le \frac 16 \left(\alpha^2 + \nu_{\alpha, \beta}(E)^2\right) H^2\ch_1^{\beta'}(E).
\end{equation}
\end{itemize}
Evidently, Conjecture \ref{con:strongBG} (for the case of $\omega, B$ proportional to $H$) is a
special case of (*). Conversely, consider the assumptions of (*). By Lemma \ref{lem:semicircle},
$E$ is $\nu_{\alpha', \beta'}$-semistable, where $\beta'$ is as above, and
$\alpha'^2 = \alpha ^2 + \nu_{\alpha, \beta}(E)^2$. Moreover, a simple computation shows
$\nu_{\alpha', \beta'}(E) = 0$. Therefore, Conjecture \ref{con:strongBG} implies the statement (*).

Finally, a straightforward computation shows that the inequalities \eqref{eq:BGstrange}
and \eqref{eq:BGgeneral} are equivalent; for this purpose, let us use the abbreviations
$e_i := H^{3-i}\ch_i^\beta(E)$ for $0 \le i \le 3$. Note that by our assumptions, $e_1 > 0$.
With this notation, expanding inequality \eqref{eq:BGstrange} yields:
\[
e_3 - \nu_{\alpha, \beta}e_2 + \frac 12 \nu_{\alpha, \beta}^2 e_1 - \frac 16 \nu_{\alpha, \beta}^3
e_0
 \le \frac 16 \alpha^2 e_1 + \frac 16 \nu_{\alpha, \beta}^2 e_1 - \frac 16 \alpha^2 \nu_{\alpha,
\beta}e_0 - \frac 16 \nu_{\alpha, \beta}^3 e_0
\]
Collecting related terms, substituting
$\nu_{\alpha, \beta} = \frac{-\frac 12 \alpha^2 e_0 + e_2}{e_1}$
and multiplying with $6e_1$ yields:
\[
0 \le -6e_1 e_3 + 3e_2 \left(- \alpha^2 e_0 + 2e_2\right) - 2 \left(-\frac 12 \alpha^2 e_0 + e_2\right)^2
+\alpha^2 e_1^2 - \alpha^2 \left(-\frac 12 \alpha^2 e_0 + e_2\right)e_0
\]
This simplifies to \eqref{eq:BGgeneral}.
\end{proof}

\begin{Ex} \label{ex:slopeCorollary}
Assume that $E$ is a slope-stable sheaf such that $c:=H^2 c_1(E)$ is the minimum positive
integer of the form $H^2 F$ for integral divisor classes $F$;
for example, this is the case when $\NS(X) = \Z \cdot H$ and $c_1(E) = H$.
Then $E$ is $\nu_{\alpha, 0}$-stable for all $\alpha > 0$ by \cite[Lemma
7.2.2]{BMT:3folds-BG}. Hence in that case, Conjecture \ref{con:BGgeneral} claims that
\begin{equation}  \label{ineq:slopestable}
3 c \ch_3(E) \le 2 \left(H \ch_2(E)\right)^2.
\end{equation}
This generalizes \cite[Conjecture 7.2.3]{BMT:3folds-BG}. In particular, let $C \subset X$ be a curve
of genus $g$ and degree $d = H C$; then $E = I_C \otimes \OO(H)$ is supposed to satisfy
\eqref{ineq:slopestable}.
Let $K \in \Z$ such that the canonical divisor class $K_X = K H$. By the Hirzebruch-Riemann-Roch Theorem, we have
\[
1 - g = \chi(\OO_C) = \ch_3(\OO_C) - \frac 12 K d.
\]
Since
\[ \ch(I_C \otimes \OO(H)) = \left(1,  H, \frac 12 H^2 - C, \frac 16 H^3 - d - \ch_3(\OO_C)\right),
\]
the inequality \eqref{ineq:slopestable} specializes to the following Castelnuovo type inequality
between genus and degree of the curve (where $D = H^3$ is the degree of the threefold):
\begin{equation} \label{ineq:Castelnuovo}
g \le \frac{2 d^2}{3D} + \frac{5 + 3K}{6} d + 1
\end{equation}
Even for complete intersection threefolds, this inequality does not follow from existing results;
see \cite[Section 3]{Becca:Castelnuovo} for progress in that direction.
\end{Ex}

\begin{Rem} \label{rem:slopeCorollary}
The inequality \eqref{ineq:slopestable} holds when $X$ is an abelian threefold, or a Calabi-Yau threefold of
abelian type. Moreover, since Conjecture \ref{con:BGgeneral} is equivalent to Conjecture
\ref{con:strongBG}, and since the latter has been verified for $\P^3$ in \cite{BMT:3folds-BG, Macri:P3},
and for the quadric threefold in \cite{Benjamin:quadric}, it also applies in these two cases.

The inequality is new even in the case of $\P^3$: for sheaves of rank three, it is slightly weaker than
classically known results, see \cite[Theorem 4.3]{EinHartshorneVogelaar} and \cite[Theorem
1.2]{Miro-Roig:chernclassesrank3}, but no such results are known for higher rank.
\end{Rem}

\section{Reduction to small $\alpha$}
\label{sec:reductionalphazero}

The goal of this section is to reduce Conjecture \ref{con:BGgeneral} to a more natural inequality,
that can be interpreted as an Euler characteristic in the case of abelian threefolds, and which
considers the limit as $\alpha \to 0$ and $\nu_{\alpha, \beta} \to 0$.

We continue to assume that $X$ is a smooth projective threefold with an ample polarization
$H \in \NS(X)$. To give a slightly better control over the limit $\alpha \to 0$, we will modify the
definition of the reduced central charge of \eqref{eq:reducedZnew} to the following form (which is
equivalent for $\alpha \neq 0$):
\begin{equation} \label{eq:reducedZalphabeta}
\overline{Z}_{\alpha, \beta} = H^2 \ch_1^\beta + i \left(H \ch_2^\beta - \frac 12 \alpha^2 H^3 \ch_0^\beta \right)
\end{equation}
It factors via the map $\overline{v}_H$ of \eqref{eq:vHbar}.
Also, as observed in Remark \ref{rmk:DeltaBar}, the $H$-discriminant can be written as the composition $\HDelta = \overline{q} \circ \overline{v}_H$
where $\overline{q}$ is the quadratic form on $\Q^3$ given by
\[
(r, c, d) \mapsto c^2 - 2rd.
\]

Given any $E \in \Coh^\beta(X)$, we define $\obeta(E)$ as follows:
\begin{equation}\label{eq:betabar}
\obeta(E) :=
\begin{cases}
\frac{H\ch_2(E)}{H^2\ch_1(E)} & \text{if $\ch_0(E) = 0$,} \\
\frac{H^2\ch_1(E) - \sqrt{\overline{\Delta}_H(E)}}{H^3\ch_0(E)}& \text{if $\ch_0(E) \neq 0$.} \end{cases}
\end{equation}
The motivation behind this definition is that $\obeta(E)$ is the limit
of a curve $(\alpha(t), \beta(t)) \in \R_{>0} \times \R$ for which both $\alpha(t) \to 0$
and $\nu_{\alpha(t), \beta(t)}(E) \to 0$; in other words, for which the right-hand-side of
the inequality \eqref{eq:BGstrange} goes to zero: this follows from
\begin{equation} \label{eq:ch2barbeta}
H\ch_2^{\obeta(E)}(E) = 0.
\end{equation}
We also point out that $H^2 \ch_1^{\obeta(E)}(E) > 0$ unless $\HDelta(E) = 0$.

The other motivation for the definition of $\bar\beta$ lies in the following observations,
extending Lemma \ref{lem:discriminantsignature}:
\begin{Lem} \label{lem:limitkernel}
The kernel of $\overline{Z}_{0, \obeta(E)}$ (as a subspace of $\R^3$) is contained
in the quadric $\overline{q} =0$, and the map
$(\alpha, \beta) \to \Ker \overline{Z}_{\alpha, \beta}$ extends to a continuous map
from of $\R_{\ge 0} \times \R$ to the projectivization $\CC^-/\R^*$
of the cone  $\CC^- \subset \R^3$ given by $\overline{q} \le 0$.

Moreover, if $\HDelta(E) > 0$, then the quadratic form $\overline{q}$ is positive semi-definite on
the 2-plane spanned by $\overline{v}_H(E)$ and the kernel of $\overline{Z}_{0, \obeta(E)}$.
\end{Lem}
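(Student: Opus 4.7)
The plan is to compute $\Ker \overline{Z}_{\alpha,\beta} \subset \R^3$ explicitly in the coordinates provided by $\overline{v}_H$, and then reduce each of the three claims to elementary linear algebra. Writing $(r,c,d)$ for the coordinates on $\R^3$ corresponding to $(H^3\ch_0, H^2\ch_1, H\ch_2)$, a direct expansion of the twisted Chern character in \eqref{eq:reducedZalphabeta} shows that the real and imaginary parts of $\overline{Z}_{\alpha,\beta}$ give the linear conditions $c = \beta r$ and $d = \tfrac12(\alpha^2+\beta^2)r$, so that
\[
\Ker \overline{Z}_{\alpha,\beta} = \R\cdot k_{\alpha,\beta}, \qquad k_{\alpha,\beta} := \bigl(1,\,\beta,\,\tfrac12(\alpha^2+\beta^2)\bigr),
\]
for every $(\alpha,\beta) \in \R_{\ge 0}\times \R$.

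For (1) and (2), I would simply evaluate the quadratic form: $\overline{q}(k_{\alpha,\beta}) = \beta^2 - (\alpha^2+\beta^2) = -\alpha^2$. Specializing to $\alpha = 0$ proves (1), and for general $\alpha \ge 0$ this shows $k_{\alpha,\beta}\in \CC^-$. Since $(\alpha,\beta)\mapsto k_{\alpha,\beta}$ is polynomial and never zero, the induced map into $\P^2(\R)$ is continuous, and its image lies in $\CC^-/\R^*$, giving (2).

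For (3), the symmetric bilinear form $B$ associated to $\overline{q}$ is given by $B((r_1,c_1,d_1),(r_2,c_2,d_2)) = c_1c_2 - r_1d_2 - r_2d_1$. Writing $k := k_{0,\obeta(E)}$ and $v := \overline{v}_H(E)$, a one-line expansion yields
\[
B(k,v) \;=\; \obeta(E)\, H^2\ch_1(E) - H\ch_2(E) - \tfrac12\obeta(E)^2\, H^3\ch_0(E) \;=\; -H\ch_2^{\obeta(E)}(E),
\]
which vanishes by the defining identity \eqref{eq:ch2barbeta} of $\obeta(E)$. Since $\overline{q}(v) = \HDelta(E) > 0$ while $\overline{q}(k) = 0$, the vectors $k$ and $v$ are linearly independent, and the matrix of $\overline{q}$ on the $2$-plane they span, in the basis $(k,v)$, is $\diag\!\bigl(0,\HDelta(E)\bigr)$, which is positive semi-definite.

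The whole argument is purely computational, and I do not anticipate any real obstacle; the only conceptual input is the observation that the definition \eqref{eq:betabar} of $\obeta(E)$ is engineered precisely so that $\overline{v}_H(E)$ becomes $\overline{q}$-orthogonal to the limiting kernel $k_{0,\obeta(E)}$, which is exactly what makes the semi-definiteness in (3) work.
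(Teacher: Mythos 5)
Your proof is correct and follows essentially the same route as the paper: compute the kernel generator $\bigl(1,\beta,\tfrac12(\alpha^2+\beta^2)\bigr)$, evaluate $\overline{q}$ on it to get $-\alpha^2$, and observe that the defining identity $H\ch_2^{\obeta(E)}(E)=0$ makes $\overline{v}_H(E)$ orthogonal to the limiting kernel with respect to the bilinear form associated to $\overline{q}$. The only difference is that you spell out the diagonalization $\diag(0,\HDelta(E))$ explicitly, which the paper leaves implicit.
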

In other words, the vector $\overline{v}_H(E)$ is contained in the tangent plane to the
quadric $\overline{q} = 0$ at the kernel of $\overline{Z}_{0, \obeta(E)}$; see Figure \ref{fig:walls}.

\begin{Rem} \label{rem:Zviakernel}
The map $(\alpha, \beta) \mapsto \Ker \overline{Z}_{\alpha, \beta}$ gives a
homeomorphism from $\R_{\ge 0} \times \R$ onto its image in the closed unit disc
$\CC^-/\R^*$. This can be a helpful visualization, as a central charge is,
up to the action of $\GL_2(\R)$, determined by its kernel.
\end{Rem}

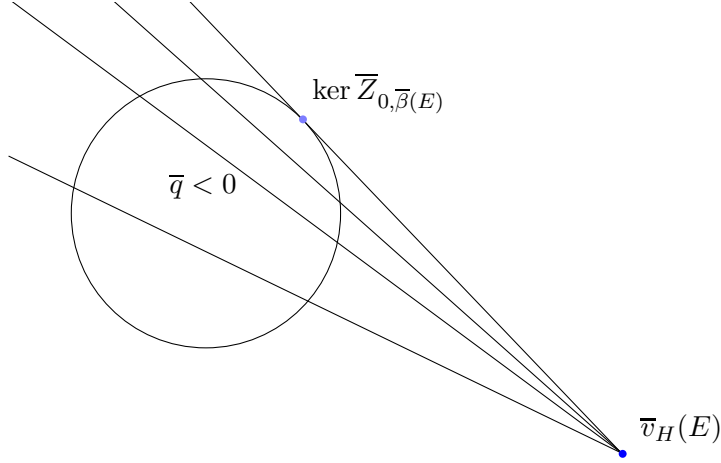
\begin{figure}
\begin{centering}
\definecolor{xdxdff}{rgb}{0.49,0.49,1}
\definecolor{qqqqff}{rgb}{0,0,1}
\begin{tikzpicture}[line cap=round,line join=round,>=triangle 45,x=1.0cm,y=1.0cm]
\clip(-1,-1.5) rectangle (9,5);
\draw(1.62,2.2) circle (1.79cm);
\draw (1,2.86) node[anchor=north west] {$\overline{q}<0$};
\draw (2.9,4.2) node[anchor=north west] {$\ker \overline{Z}_{0,\overline{\beta}(E)}$};
\draw (7.26,-0.34) node[anchor=north west] {$\overline{v}_H(E)$};
\draw [domain=-1.0:7.16] plot(\x,{(-18.07--3.54*\x)/-7.3});
\draw [domain=-1.0:7.16] plot(\x,{(-26.97--4.64*\x)/-6.26});
\draw [domain=-1.0:7.16] plot(\x,{(-26.44--4.38*\x)/-4.92});
\draw [domain=-1.0:7.16] plot(\x,{(-27.58--4.45*\x)/-4.25});
\begin{scriptsize}
\fill [color=qqqqff] (7.16,-1) circle (1.5pt);
\fill [color=xdxdff] (2.91,3.45) circle (1.5pt);
\end{scriptsize}
\end{tikzpicture}\caption{A section of the negative cone $\overline{q}\leq0$ and the tangent plane passing through $\overline{v}_H(E)$ and $\ker \overline{Z}_{0,\overline{\beta}(E)}$.
The other planes through $\overline{v}_H(E)$ intersecting $\overline{q}<0$ correspond to walls of stability for $\overline{v}_H(E)$.}
\label{fig:walls}
\end{centering}
\end{figure}

\begin{proof}[Proof of Lemma \ref{lem:limitkernel}]
The kernel of $\overline{Z}_{\alpha, \beta}$ is spanned by the vector
$\left(1, \beta, \frac 12 (\alpha^2 + \beta^2)\right)$, which has $H$-discriminant
$q_H\left(1, \beta, \frac 12 (\alpha^2 + \beta^2)\right)=  -\alpha^2$. This
proves the first claim.

For the second claim, we just observe that $\left(1, \obeta(E), \frac 12 \obeta(E)^2\right)$ and
$\overline{v}_H(E)$ are orthogonal with respect to the bilinear form on $\R^3$ associated
to $\overline{q}$.
\end{proof}

The following is a limit case of Conjecture \ref{con:BGgeneral}:

\begin{Con} \label{con:BG0}
Let $E \in \Db(X)$ be an object with the following property: there exists an open neighborhood $U
\subset \R^2$
of $(0, \obeta(E))$ such that
for all $(\alpha, \beta) \in U$ with $\alpha > 0$,
either $E$ or $E[1]$ is a $\nu_{\alpha, \beta}$-stable object of
$\Coh^\beta(X)$.
Then
\begin{equation} \label{eq:BG0}
\ch_3^{\obeta(E)}(E) \le 0.
\end{equation}
\end{Con}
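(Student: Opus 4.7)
The plan is to prove Conjecture \ref{con:BG0} in the case $X$ is an abelian threefold, following the outline in the introduction. Assume $E$ satisfies the hypotheses and, for contradiction, $\ch_3^{\obeta(E)}(E) > 0$. I would exploit the étale self-maps $\um\colon X \to X$ of degree $m^6$, for which pullback preserves tilt-stability (after an appropriate rescaling of $(\omega, B)$) and scales the degree of $\ch_3^{\obeta}$ by $m^6$, while only scaling polynomial cohomological invariants like $h^0$ of a restriction to a divisor by an amount of order $m^4$.

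The first step is to reduce to $\obeta(E) = 0$. When $\obeta(E)\in\Q$, choose $m$ clearing the denominator, apply $\um^*$, and then tensor by $\OO_X(-\um^*\obeta(E)\cdot H)$; the resulting object satisfies the hypotheses of Conjecture \ref{con:BG0} with $\obeta = 0$ and preserves the sign of $\ch_3^{\obeta}$. When $\obeta(E)\notin\Q$, I would rely on a limiting argument inside the space of tilt-stability conditions: by openness of tilt-stability (Remark \ref{rmk:OpennessAndHN}) and the inequality $\HDelta\ge 0$ (Theorem \ref{thm:BGvariants}), one can perturb $E$ to nearby classes with rational $\obeta$ that still satisfy the hypotheses of Conjecture \ref{con:BG0}, and then conclude via continuity of $\ch_3^\beta$ in $\beta$.

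In the normalized case $\obeta(E) = 0$, $\ch_3(E) > 0$, replace $E$ by $\um^*E$ and use that $K_X = 0$: Hirzebruch--Riemann--Roch gives
\[
\chi(X, \um^*E) = \ch_3(\um^*E) = m^6\,\ch_3(E),
\]
which grows like $m^6$. Both $\um^*E$ and $\OO_X(H)$ are tilt-stable (the latter by Corollary \ref{cor:Delta0}\,(\ref{enum:linebundlesstable}), using $C_H = 0$ for $X$ abelian), and for small $\alpha$ a comparison of their tilt-slopes forces $\Hom(\OO_X(H), \um^*E) = 0$. Restricting to a general divisor $D\in|H|$ and running a Grauert--Mülich type argument on the cohomology sheaves of $\um^*E$, whose slopes and $H$-discriminants scale predictably under $\um^*$, should yield $h^0((\um^*E)|_D) = O(m^4)$, and hence $h^0(\um^*E) = O(m^4)$ via the restriction triangle and the above vanishing. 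The dual vanishing $\Hom(\um^*E, \OO_X(-H)) = 0$ combined with Serre duality gives $h^3(\um^*E) = O(m^4)$, contradicting $\chi = m^6\,\ch_3(E)$ for $m\gg 0$.

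The main obstacle is the restriction bound $h^0((\um^*E)|_D) = O(m^4)$: since $\um^*E$ is a genuine two-term complex in $\Coh^\beta(X)$ rather than a sheaf, one must analyze $(\um^*E)|_D$ via a spectral sequence, bound global sections of each cohomology sheaf separately (using Simpson--Langer type bounds for slope-semistable torsion-free sheaves, and elementary bounds for torsion sheaves supported on divisors with $\HDelta$ controlled via $C_H=0$), and verify the necessary slope-semistability of these restrictions via an appropriate Grauert--Mülich theorem. A secondary difficulty is making the irrational case rigorous: the perturbation must be carried out while keeping the strong property that an entire $\R^2$-neighborhood of $(0,\obeta)$ sees the object as tilt-stable, which is a nontrivial compatibility requirement with the wall-and-chamber structure of tilt-stability.
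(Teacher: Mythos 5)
Your overall strategy is the paper's: pull back by $\um$, compare the $m^6$ growth of $\chi(\OO_X,\um^*E)$ against $O(m^4)$ bounds on sections obtained by restricting to a divisor and applying Le Potier--Simpson/Grauert--M\"ulich estimates. But two steps are genuinely broken as written. The first is the Euler characteristic bookkeeping. Since $\um^*E$ and $\OO_X[1]$ both lie in the heart $\Coh^{\beta=0}(X)$, one has $\hom^i(\OO_X,\um^*E)=0$ for $i\le -2$ and, by Serre duality (using $K_X=0$), for $i\ge 3$; hence
\[
\chi(\OO_X,\um^*E)=-h^{-1}+h^0-h^1+h^2\le h^0+\ext^2(\OO_X,\um^*E).
\]
The term that must be bounded from above alongside $h^0$ is therefore $\ext^2(\OO_X,\um^*E)$, not $h^3$: the latter vanishes for free and in any case enters $\chi$ with a negative sign, so bounding it proves nothing, and $h^2$ is a priori unbounded. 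Your ``dual vanishing'' $\Hom(\um^*E,\OO_X(-H))=0$ is, via Serre duality, the statement $\Ext^3(\OO_X(-H),\um^*E)=0$ and does not touch $\ext^2$. What is actually needed is $\Ext^2(\OO_X(-H),\um^*E)=\Hom(\um^*E,\OO_X(-H)[1])^*=0$, which follows from $\nu_{\alpha,0}(\um^*E)>\nu_{\alpha,0}(\OO_X(-H)[1])$ for small $\alpha$; combined with the triangle $\um^*E\to\um^*E\otimes\OO_X(H)\to(\um^*E\otimes\OO_X(H))\otimes\OO_D$ it gives $\ext^2(\OO_X,\um^*E)\le\ext^1(\OO_X,(\um^*E\otimes\OO_X(H))\otimes\OO_D)=O(m^4)$.

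The second gap is the irrational case. The number $\obeta(E)$ is determined by $v_H(E)$; no perturbation of the stability parameters $(\alpha,\beta)$ changes it, so ``perturb $E$ to nearby classes with rational $\obeta$'' has no meaning, and there is no continuity argument reducing to the rational case. The paper's proof uses Dirichlet approximation: choose $p_n/q_n$ with $\abs{\obeta(E)-p_n/q_n}<1/q_n^2$ and set $F_n:=\underline{q_n}^*E\otimes\OO_X(-p_nq_nH)$. Pulling back by $\underline{q_n}$ magnifies the approximation error by $q_n^2$, and it is precisely the quadratic quality of the Dirichlet bound that keeps $\lim_{\alpha\to 0}\nu_{\alpha,0}(F_n)$ bounded (by roughly $1$), so that the Hom-vanishing can be run against the fixed line bundles $\OO_X(\pm 3H)$ uniformly in $n$. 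One also needs $\ch_3^{\beta_n}(E)>\ch_3^{\obeta(E)}(E)>0$, which holds because $\beta=\obeta(E)$ is a local minimum of $\beta\mapsto\ch_3^\beta(E)$ (its first derivative $-H\ch_2^{\obeta(E)}(E)$ vanishes and its second derivative $H^2\ch_1^{\obeta(E)}(E)$ is positive). Without these two quantitative inputs the irrational case does not close.
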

Unless $\HDelta(E) = 0$, we can always make $U$ small enough such that $H^2\ch_1^\beta(E) > 0$
for $(\alpha, \beta) \in U$; then $E$ itself is an object of $\Coh^\beta(X)$.

A strengthening of the methods of \cite{Macri:P3} leads to the main result of this section:
\begin{Thm} \label{thm:reduceto0}
Conjectures \ref{con:BG0} and \ref{con:BGgeneral} are equivalent.
\end{Thm}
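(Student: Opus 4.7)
The plan is to prove the two implications separately. The direction Conjecture \ref{con:BGgeneral} $\Rightarrow$ Conjecture \ref{con:BG0} is a straightforward limit, while the reverse implication is the substantive one and will proceed by induction on the $H$-discriminant $\HDelta(E)$, combined with wall-crossing for tilt-stability.

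For the easy direction, let $E$ satisfy the hypothesis of Conjecture \ref{con:BG0} and apply inequality \eqref{eq:BGgeneral} at points $(\alpha,\beta)$ in the open neighborhood $U$ of $(0,\obeta(E))$. Using $H\ch_2^{\obeta(E)}(E)=0$ from \eqref{eq:ch2barbeta} and letting $\alpha\to 0$, the $\alpha^2\HDelta$ and squared middle terms vanish, leaving $-6H^2\ch_1^{\obeta(E)}(E)\cdot\ch_3^{\obeta(E)}(E)\ge 0$. A direct computation shows $H^2\ch_1^{\obeta(E)}(E)=\sqrt{\HDelta(E)}$, so positivity of this quantity forces $\ch_3^{\obeta(E)}(E)\le 0$ whenever $\HDelta(E)>0$. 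The edge case $\HDelta(E)=0$ is handled separately by combining Corollary \ref{cor:Delta0}\eqref{enum:tiltstableD0} with Proposition \ref{prop:Delta0Conjecture}.

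For the hard direction, by Theorem \ref{thm:BGgeneral} it suffices to verify Conjecture \ref{con:strongBG}, and equivalently (via statement (*) in its proof) the inequality \eqref{eq:BGstrange} for each $\nu_{\alpha_0,\beta_0}$-stable object $E$ with $\nu_{\alpha_0,\beta_0}(E)\ne +\infty$. I induct on $\HDelta(E)$, which is non-negative and discretely valued by Theorem \ref{thm:BGvariants}; the base case $\HDelta(E)=0$ is Proposition \ref{prop:Delta0Conjecture}. For the inductive step, first pass to Jordan--H\"older factors of $E$ (using Corollary \ref{cor:DeltaJH}) to reduce to $E$ tilt-stable, and then replace $(\alpha_0,\beta_0)$ by the top point of the semicircle $\CC_{\alpha_0,\beta_0}(E)$ from Lemma \ref{lem:semicircle}, so that $\nu_{\alpha_0,\beta_0}(E)=0$ and $(\alpha_0,\beta_0)$ lies on the hyperbola branch through $(0,\obeta(E))$. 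Now deform $(\alpha,\beta)$ toward $(0,\obeta(E))$ inside the open tilt-stability locus of $E$, exploiting the local finiteness of walls (Remark \ref{rmk:OpennessAndHN}). Two cases arise: \textbf{(a)} $E$ remains tilt-stable on an entire open neighborhood of $(0,\obeta(E))$, in which case Conjecture \ref{con:BG0} yields $\ch_3^{\obeta(E)}(E)\le 0$, and this boundary bound is propagated back to \eqref{eq:BGgeneral} at $(\alpha_0,\beta_0)$ by expanding all relevant Chern characters as polynomials in $\beta-\obeta(E)$ and using $H\ch_2^{\obeta(E)}(E)=0$ together with the classical Bogomolov--Gieseker bound $\HDelta(E)\ge 0$; or \textbf{(b)} $E$ becomes strictly $\nu_{\alpha_\ast,\beta_\ast}$-semistable at some wall $(\alpha_\ast,\beta_\ast)$, with Jordan--H\"older factors $E_i$ of strictly smaller $\HDelta(E_i)<\HDelta(E)$ (strict by Corollary \ref{cor:DeltaJH}, since $E$ was tilt-stable just inside the wall). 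By induction each $E_i$ satisfies \eqref{eq:BGgeneral}, and the inequality is transported to $E$ at $(\alpha_\ast,\beta_\ast)$ via a convex-cone argument in the spirit of Lemma \ref{lem:halfspace}\eqref{enum:convexcone} adapted to the quadratic form in \eqref{eq:BGgeneral}, and finally back to $(\alpha_0,\beta_0)$ using Lemma \ref{lem:semicircle} again.

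The main obstacle I expect is the algebraic step in case (a): verifying that the single-point bound $\ch_3^{\obeta(E)}(E)\le 0$ really upgrades to the full quadratic inequality \eqref{eq:BGgeneral} on the entire no-walls region. One must write $\ch_3^\beta$ and $H^2\ch_1^\beta$ as polynomials in $\beta-\obeta(E)$, substitute into \eqref{eq:BGgeneral}, and check that $\ch_3^{\obeta(E)}(E)\le 0$, $H\ch_2^{\obeta(E)}(E)=0$, and $\HDelta(E)\ge 0$ jointly close the inequality. A secondary delicate point in case (b) is establishing the required convex-cone property of the locus cut out by \eqref{eq:BGgeneral}, which is analogous in spirit to the support-property arguments of Appendix \ref{app:SupportProperty} but for this new quadratic form.
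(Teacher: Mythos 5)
Your proposal is correct and follows essentially the same route as the paper: the easy direction is the identical limit computation, and the hard direction is the same induction on $\HDelta(E)$ using the nested semicircles of Lemma \ref{lem:semicircle}, the strict decrease of $\HDelta$ on Jordan--H\"older factors from Corollary \ref{cor:DeltaJH}, and the convex-cone argument of Lemma \ref{lem:convexcone}. The two points you flag as delicate are precisely where the paper's organization pays off: it argues by contradiction along the vertical line $\beta=\obeta(E)$, so a violation of \eqref{eq:BGgeneral} there reads directly as $\ch_3^{\obeta(E)}(E)>0$ (making your case (a) propagation unnecessary, though it does work since \eqref{eq:BGgeneral} at $(\alpha,\obeta(E)+t)$ reduces to $(\alpha^2+t^2)\HDelta(E)\ge 6\,\ch_3^{\obeta(E)}(E)\,H^2\ch_1^{\beta}(E)$), and the required convex-cone property is exactly the statement that the kernel of $\overline{Z}_{\alpha,\beta}$ is negative semi-definite for the quadratic form $\alpha^2\HDelta+\HNablab$, the boundary case $K=\alpha^2$ of Lemma \ref{lem:negativedefinite}.
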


\begin{Lem} \label{lem:obetanoendpoint}
Let $E \in \Db(X)$ be an object with $\HDelta(E) > 0$ that is $\nu_{\alpha,\beta}$-stable for some
$(\alpha, \beta) \in \R_{>0}\times \R$.
The point $(0, \obeta(E))$ cannot be an endpoint of a wall of tilt-stability for $E$.
Moreover, each of the semicircles of Lemma \ref{lem:semicircle} (along which $E$ has to remain
stable) contains $(0, \obeta(E))$ in its interior.
\end{Lem}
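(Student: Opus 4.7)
The plan is to prove both assertions via a single direct computation: I will show that for any $(\alpha,\beta)\in \R_{>0}\times \R$ at which $E$ is tilt-semistable with $\nu_{\alpha,\beta}(E)\neq +\infty$, the point $(0,\obeta(E))$ lies strictly inside the open disk bounded by the semicircle $\CC_{\alpha,\beta}(E)$ of Lemma~\ref{lem:semicircle}. This immediately yields the second assertion. For the first, every wall for $E$ is --- by Lemma~\ref{lem:semicircle} combined with the fact that strict semistability is closed along a full semicircle --- either a complete semicircle of the form $\CC_{\alpha,\beta}(E)$, or a vertical line $\beta = H^2\ch_1(E)/H^3\ch_0(E)$ of $\nu_{\alpha,\beta}=+\infty$ type; endpoints of the former lie on the bounding circle, while the endpoint of the latter is $(0,H^2\ch_1(E)/H^3\ch_0(E))$, which differs from $(0,\obeta(E))$ since $\HDelta(E)>0$.

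For the main computation, set $r := H^3\ch_0(E)$, $u := H^2\ch_1^\beta(E)$, $\nu := \nu_{\alpha,\beta}(E)$, and $p := \obeta(E) - \beta$. Since $\CC_{\alpha,\beta}(E)$ has center $(0, \beta+\nu)$ and radius $\sqrt{\alpha^2+\nu^2}$, the ``interior'' claim reduces to the inequality $p^2 - 2p\nu - \alpha^2 < 0$. The torsion case $r = 0$ is trivial: one checks directly from the formulas that $p = \nu$, so the left-hand side equals $-\alpha^2 < 0$. In the case $r\neq 0$, I will combine the relations $pr = u - \sqrt{\HDelta(E)}$ (from the quadratic-formula definition of $\obeta(E)$), $u\nu = H\ch_2^\beta(E) - \tfrac{1}{2}\alpha^2 r$, and the $\beta$-independent identity $\HDelta(E) = u^2 - 2r\,H\ch_2^\beta(E)$. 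After clearing denominators and simplifying, these should collapse to the clean identity
\[
p^2 - 2p\nu - \alpha^2 \;=\; -\,\frac{\sqrt{\HDelta(E)}\,(p^2+\alpha^2)}{u}.
\]
Since $E\in\Coh^\beta(X)$ with finite tilt-slope forces $u>0$, and $\HDelta(E)>0$, $\alpha>0$ by hypothesis, the right-hand side is strictly negative.

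The only non-routine step is the algebraic identity above, and the main obstacle is psychological: noticing that the ``minus'' branch of the square root in the definition of $\obeta(E)$ (rather than the ``plus'' branch) is exactly what forces the sign of $u$ to be positive, and that this same choice is what causes the manipulation to collapse to the displayed closed form. Conceptually the identity reflects Lemma~\ref{lem:limitkernel}: the vector $\overline{v}_H(E)$ lies on the tangent plane to the quadric $\overline q=0$ at $\ker \overline Z_{0,\obeta(E)}$, and it is this tangency that places $(0,\obeta(E))$ in the strict interior of every $E$-wall semicircle.
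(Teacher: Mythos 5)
Your proof is correct, but it takes a genuinely more computational route than the paper's. The paper never writes down the inequality $p^2-2p\nu-\alpha^2<0$: for the first assertion it identifies a wall with the locus where $\ker \overline{Z}_{\alpha,\beta}$ lies in a fixed $2$-plane $\Pi$ containing $\overline{v}_H(E)$, notes that $\overline{q}|_\Pi$ must have signature $(1,1)$, and derives a contradiction with the second part of Lemma \ref{lem:limitkernel} (positive semi-definiteness of $\overline{q}$ on the span of $\overline{v}_H(E)$ and $\ker\overline{Z}_{0,\obeta(E)}$) if $(0,\obeta(E))$ were an endpoint; for the second assertion it argues qualitatively that the nested semicircles shrink onto the point with $\alpha=0$ and $\nu=0$. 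You instead verify directly that $(0,\obeta(E))$ lies in the open disk bounded by every $\CC_{\alpha,\beta}(E)$, which disposes of both assertions at once; I checked your identity $p^2-2p\nu-\alpha^2=-\sqrt{\HDelta(E)}\,(p^2+\alpha^2)/u$ (using $pr=u-\sqrt{\HDelta(E)}$, $u\nu=H\ch_2^\beta(E)-\tfrac12\alpha^2 r$ and $\HDelta(E)=u^2-2r\,H\ch_2^\beta(E)$) and it is correct; as you say, it is exactly the tangency statement of Lemma \ref{lem:limitkernel} written in coordinates. Your computation is arguably cleaner for the second assertion, where the paper's ``shrinking centers'' argument is terse, while the paper's signature argument is more conceptual and coordinate-free for the first. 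Two minor points: in the torsion case $r=0$ one has $p=\nu$, so the left-hand side equals $-p^2-\alpha^2$ rather than $-\alpha^2$ (still negative, so nothing is affected); and your reduction of the first assertion to the disk computation tacitly uses that an actual wall is contained in a full numerical semicircle or vertical line, but this is the same identification of walls with numerical loci (Remark \ref{rmk:twowalltypes}, Proposition \ref{prop:tiltwallcrossing}) that the paper's own proof relies on, so you are not assuming more than the paper does.
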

\begin{proof}
Recall the description of walls in Remark \ref{rmk:twowalltypes}. As
$\HDelta(E) > 0$ implies $H^2\ch_1^{\obeta(E)}(E) > 0$, we can exclude the possibility of a wall
given by $\nu_{\omega, B}(E) = +\infty$.
The other type of walls can equivalently be defined by the property that
the kernel of $\overline{Z}_{\alpha, \beta}(E)$  is contained in the 2-plane $\Pi \subset \R^3$ spanned by
$\overline{v}_H(F)$ and $\overline{v}_H(E)$, for some destabilizing subobject $F \into E$.
The signature of $\overline{q}$ restricted to $\Pi$ has to be $(1, 1)$
(as it contains $\overline{v}_H(E)$ and the kernel of $\overline{Z}_{\alpha, \beta}$ for some
$\alpha > 0$).
If $(0, \obeta(E))$ was an endpoint of this wall, then by Lemma \ref{lem:limitkernel} the
kernel of $Z_{0, \obeta(E)}$ would also be contained in $\Pi$; this is a contradiction to the second
assertion of Lemma \ref{lem:limitkernel}.

For the second claim, recall that the semicircles of Lemma \ref{lem:semicircle} do not intersect.
(For example, in Figure \ref{fig:walls}, they are given by the
condition that $\Ker \overline{Z}_{\alpha, \beta}$ is contained in a given plane through
$\overline{v}_H(E)$.) As we shrink the radius of the circles, their center has to converge to
the point with $\alpha = 0$ and $\nu_{\alpha, \beta}(E) = 0$.
\end{proof}

\begin{Lem} \label{lem:Delta0Conjecture}
Objects with $\HDelta(E) = 0$ satisfy both Conjecture \ref{con:BGgeneral} and Conjecture \ref{con:BG0}.
\end{Lem}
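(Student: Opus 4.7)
The plan is to show that, when $\HDelta(E)=0$, the two conjectural inequalities reduce to a single statement, namely $\ch_3^{\obeta(E)}(E)\le 0$, and then to prove this statement using the classical Bogomolov--Gieseker bound of Simpson and Langer applied to an associated slope-semistable locally free sheaf.

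\textbf{Step 1: Equivalence of the two inequalities.}
If $\ch_0(E)=0$ then $\HDelta(E)=(H^2\ch_1(E))^2=0$ forces $H^2\ch_1(E)=0$, so Conjecture \ref{con:BGgeneral} collapses to the trivial $4(H\ch_2^\beta(E))^2\ge 0$, while $\obeta(E)$ is undefined and Conjecture \ref{con:BG0} is vacuous. Assume then $\ch_0(E)\ne 0$ and set $e_i:=H^{3-i}\ch_i(E)$. The identity $\HDelta(E)=0$ gives $e_1=\obeta(E) e_0$ and $e_2=\tfrac12\obeta(E)^2 e_0$, and a short expansion produces
\[
H^2\ch_1^\beta(E)=e_0(\obeta(E)-\beta),\qquad H\ch_2^\beta(E)=\tfrac{e_0}{2}(\obeta(E)-\beta)^2,
\]
together with $\ch_3^\beta(E)=\ch_3^{\obeta(E)}(E)+\tfrac{e_0}{6}(\obeta(E)-\beta)^3$. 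Substituting into the left-hand side of \eqref{eq:BGgeneral} the $(\obeta(E)-\beta)^4$ contributions cancel and what remains is $-6e_0(\obeta(E)-\beta)\,\ch_3^{\obeta(E)}(E)$. Since $H^2\ch_1^\beta(F)\ge 0$ for any $F\in\Coh^\beta(X)$, the factor $e_0(\obeta(E)-\beta)$ is non-negative whenever $E\in\Coh^\beta(X)$ and flips sign together with $\ch_3^{\obeta(E)}$ when $E[1]\in\Coh^\beta(X)$; in both cases the inequality \eqref{eq:BGgeneral} is equivalent to $\ch_3^{\obeta(E)}(E)\le 0$, which is Conjecture \ref{con:BG0}.

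\textbf{Step 2: Reduction to a $\mu_H$-semistable locally free sheaf.}
Corollary \ref{cor:DeltaJH} implies that every Jordan--H\"older factor $E_i$ of a strictly tilt-semistable such $E$ satisfies $\HDelta(E_i)=0$ with $\overline{v}_H(E_i)$ proportional to $\overline{v}_H(E)$, so $\obeta(E_i)=\obeta(E)$; by additivity of $\ch_3^{\obeta(E)}$ it suffices to treat each factor, and I may assume $E$ is $\nu_{\alpha,\beta}$-stable. Corollary \ref{cor:Delta0}\eqref{enum:tiltstableD0} then identifies $E$ with (up to shift and a zero-dimensional defect) a $\mu_H$-semistable torsion-free sheaf; the subcase of support of dimension $\le 2$ is incompatible with $\HDelta(E)=0$ and $H$ ample, forcing the support to be at most one-dimensional and the inequality to be trivial. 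Passing to the reflexive hull as in the proof of Proposition \ref{prop:Delta0Conjecture} (via \cite[Propositions 3.1, 3.12]{JasonYogesh:P3}) produces a locally free $\mu_H$-semistable sheaf with vanishing $H$-discriminant and with $\ch_3^B$ at least that of $E$.

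\textbf{Step 3: Applying Simpson--Langer.}
The classical Bogomolov--Gieseker inequality of Simpson and Langer (\cite[Theorem 2]{Simpson:Higgs}, \cite[Theorem 4.1]{Langer:S-group}) then yields inequality \eqref{eq:strongBG} of Conjecture \ref{con:strongBG} for this locally free sheaf and hence for $E$, valid for every real $B$. Specializing to $B=\obeta(E)H$, where the right-hand side $\tfrac{\omega^2}{18}\ch_1^B(E)$ vanishes because $H^2\ch_1^{\obeta(E)}(E)=0$, gives $\ch_3^{\obeta(E)}(E)\le 0$; by Step 1 this establishes both conjectures. The main delicacy is that $\obeta(E)$ is in general irrational while Proposition \ref{prop:Delta0Conjecture} is stated for rational $B$; this is bypassed by invoking Simpson--Langer directly on the underlying classical sheaf, where only Chern class data enters and no rationality hypothesis is needed.
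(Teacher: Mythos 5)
Your overall strategy---reduce both conjectures to the single inequality $\ch_3^{\obeta(E)}(E)\le 0$, pass to Jordan--H\"older factors and then to a slope-semistable vector bundle, and finish with Simpson--Langer---is viable, and in fact close to the paper's own route (the paper also rests on Simpson via Proposition \ref{prop:Delta0Conjecture}, but deduces the Conjecture \ref{con:BG0} part from the Conjecture \ref{con:BGgeneral} part by an order-of-vanishing argument near $(0,\obeta(E))$ rather than by your explicit expansion, which is correct). However, there are two genuine gaps.

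First, in Step 2 the ``zero-dimensional defect'' works \emph{against} you in the case $H^{-1}(E)\neq 0$: the triangle $H^{-1}(E)[1]\to E\to H^0(E)$ gives $\ch_3^{\obeta}(E)=-\ch_3^{\obeta}(H^{-1}(E))+\mathrm{length}(H^0(E))$, so the defect enters with a positive sign and your claim that the associated locally free sheaf has ``$\ch_3^B$ at least that of $E$'' is false unless $H^0(E)=0$. That vanishing is true but requires an argument: since $H^0(E)$ is zero-dimensional, $\overline{v}_H(H^{-1}(E)[1])=\overline{v}_H(E)$, so $H^{-1}(E)[1]$ would be a proper subobject of $E$ with the same tilt, contradicting stability of $E$. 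This is exactly the refinement recorded in the proof of Proposition \ref{prop:Delta0Conjecture}, and you need to invoke it.

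Second, the final inference of Step 3 is not justified as written. Conjecture \ref{con:strongBG} carries the hypothesis $\nu_{\omega,B}(E)=0$, which fails at $B=\obeta(E)H$ when $\HDelta(E)=0$: there $H^2\ch_1^{\obeta(E)}(E)=0$ and the tilt is $+\infty$, so one cannot ``specialize'' \eqref{eq:strongBG} to that $B$; nor is \eqref{eq:strongBG} ``valid for every real $B$'' without that hypothesis (already $\OO_X(H)$ violates it at $B=0$ for small $\omega$). What Simpson--Langer actually gives is projective flatness, i.e.\ $\ch=r\,e^{D}$ with $D=c_1/r$, and then $\ch_3^{\obeta(E)}=\tfrac r6\bigl(D-\obeta(E)H\bigr)^3$; knowing only $H^2\bigl(D-\obeta(E)H\bigr)=0$ does \emph{not} force this to be $\le 0$ (on a product of three elliptic curves, $\delta=-f_1-f_2+2f_3$ has $H^2\delta=0$ but $\delta^3=12>0$). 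The missing step is the equality case of the Hodge index theorem: $\HDelta(E)=0$ together with $\ch=r\,e^{D}$ gives $(H^2D)^2=H^3\cdot HD^2$, hence $D\equiv\obeta(E)H$ numerically, and therefore $\ch_3^{\obeta(E)}(E)=0$. Without this, the vanishing of the right-hand side of \eqref{eq:strongBG} says nothing about the left-hand side.
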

\begin{proof}
Proposition \ref{prop:Delta0Conjecture} combined with Theorem \ref{thm:BGgeneral} ensures that such
an object satisfies Conjecture \ref{con:BGgeneral}. If $E$ in addition satisfies the assumptions of
Conjecture \ref{con:BG0}, we consider inequality \eqref{eq:BGgeneral} nearby $(0, \obeta(E))$.
The first term vanishes identically, the second vanishes to second order at $(0, \obeta(E))$.
Therefore, we must have $\ch_3^{\obeta(E)} (E)= 0$; otherwise the third term would only have a simple
zero, in contradiction to Conjecture \ref{con:BGgeneral}.
\end{proof}

\begin{proof}[Proof of Theorem \ref{thm:reduceto0}]
By the previous lemma, we can restrict to the case $\HDelta(E) > 0$ throughout.
First assume that Conjecture \ref{con:BGgeneral} holds. Let $E$ be an object as in the assumptions
of Conjecture \ref{con:BG0} and consider the limit of \eqref{eq:BGgeneral} as $(\alpha, \beta) \to
(0, \obeta)$. Evidently the first term $\alpha^2 \HDelta(E)$ goes to zero; by equation \eqref{eq:ch2barbeta}, the same
holds for the second term $\left(H \ch_2^\beta(E)\right)^2$. Since
$H^2 \ch_1^{\obeta(E)}> 0$, the limit yields exactly \eqref{eq:BG0}.

For the converse, we start with three observations on inequality \eqref{eq:BGgeneral}.
\begin{enumerate}
\item \label{obs1}
Consider
a semicircle given by Lemma \ref{lem:semicircle}. By the proof of Theorem \ref{thm:BGgeneral},
inequality \eqref{eq:BGgeneral} either holds for all points on the semicircle, or it is violated
for all such points; indeed, it is equivalent to inequality \eqref{eq:BGstrange}, which is
just the original Conjecture \ref{con:strongBG} applied at the point where this semicircle
intersects the curve given by $\nu_{\alpha, \beta}(E) = 0$.
\item \label{obs2}
Once we fix $\beta$, it is clear from Theorem \ref{thm:BGvariants} that if \eqref{eq:BGgeneral} holds for a given $\alpha_0$,
then it holds for all $\alpha \ge \alpha_0$.
\item \label{obs3}
Finally, if we consider the semicircles of Lemma
\ref{lem:semicircle} at all points $(\alpha, \beta)$ with $\alpha > 0, \beta = \obeta(E)$,
then by Lemma \ref{lem:obetanoendpoint} they fill up all points of $\R_{>0} \times \R$ with $H^2 \ch_1^\beta(E) > 0$.
\end{enumerate}

Now assume that Conjecture \ref{con:BG0} holds.  We proceed by induction on $\overline{\Delta}_H(E)$
(recall that $\HDelta$ only obtains non-negative integers for tilt-stable objects $E$).

For contradiction, let $E$ be an object that is $\nu_{\alpha, \beta}$-stable, with $\overline{\Delta}_H(E) > 0$, and that
violates conjecture \eqref{eq:BGgeneral} at this point. By Lemma \ref{lem:obetanoendpoint} and
observation \eqref{obs1} above, we may assume $\beta = \obeta(E)$.

Now fix $\beta = \obeta(E)$ and start decreasing $\alpha$.
Since we assume \eqref{eq:BGgeneral} to be violated, we must have $\ch_3^{\obeta(E)}(E) > 0$.
If $E$ were to remain stable as $\alpha \to 0$, then by Lemma \ref{lem:obetanoendpoint} it would be
stable in a neighborhood of $(0, \obeta(E))$ as in the conditions of Conjecture \ref{con:BG0}; this
is a contradiction.

Therefore there must be a point $\alpha_0$ where $E$ is strictly $\nu_{\alpha_0,\obeta(E)}$-semistable; let
$E_i$ be the list of its Jordan-H\"older factors. By observation \eqref{obs2},
$E$ still violates conjecture \eqref{eq:BGgeneral} at $(\alpha_0, \obeta(E))$. On the other hand, by
Corollary \ref{cor:DeltaJH}, $\HDelta(E_i) < \HDelta(E)$ for each $i$; by the induction assumption, $E_i$ satisfies
Conjecture \ref{con:BGgeneral}.

Now the conclusion follows just as in Lemma \ref{lem:blah}:
consider the left-hand-side of \eqref{eq:BGgeneral} as a quadratic form on $\R^4$ with coordinates
$(H^3 \ch_0^\beta, H^2 \ch_1^\beta, H \ch_2^\beta, \ch_3^\beta)$.
The kernel of $\overline{Z}_{\alpha, \beta}$, considered as a subspace of $\R^4$, is
negative semi-definite with respect to the quadratic form. Therefore, the claim follows from Lemma
\ref{lem:convexcone}.
\end{proof}

\section{Tilt stability and \'etale Galois covers}

Consider an \'etale Galois cover $f \colon Y \to X$ with covering group $G$; in
other words, $G$ acts freely on $Y$ with quotient $X = Y/G$. In this section, we will show
that tilt-stability is preserved under pull-back by $f$.

For this section, we again let $\omega, B \in \NS(X)_\R$ be arbitrary classes
with $\omega$ a positive real multiple of an ample.
\begin{Prop}\label{prop:EtaleAndStability}
If $E \in \Db(X)$, then
\begin{enumerate}
\item\label{case:Etale1} $E\in\Coh^{\omega, B}(X)$ if and only if $f^*E\in\Coh^{f^*\omega, f^*B}(Y)$, and
\item\label{case:Etale2} $E$ is $\nu_{\omega,B}$-semistable if and only if $f^*E$ is
$\nu_{f^*\omega,f^*B}$-semistable.
\end{enumerate}
\end{Prop}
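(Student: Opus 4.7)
The plan is to reduce everything to two standard facts: first, since $f$ is \'etale, $f^*$ is exact on $\Coh$, $\ch(f^*E) = f^*\ch(E)$, and $\int_Y f^*\alpha = |G|\int_X\alpha$; and second, since $G$ acts freely on $Y$, pull-back induces an equivalence $f^* \colon \Db(X) \xrightarrow{\sim} \Db_G(Y)$ onto the $G$-equivariant derived category (Galois descent). The numerics immediately yield $\mu_{f^*\omega, f^*B}(f^*F) = \mu_{\omega, B}(F)$ and $\nu_{f^*\omega, f^*B}(f^*E) = \nu_{\omega, B}(E)$ whenever these are defined: the factor $|G|$ appears in both numerator and denominator of the defining ratios and cancels.

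Next I would establish the analogous statement for slope-stability of sheaves: $F \in \Coh(X)$ is $\mu_{\omega,B}$-semistable iff $f^*F$ is $\mu_{f^*\omega, f^*B}$-semistable, and more generally the HN-filtrations of $F$ and $f^*F$ correspond under $f^*$. One direction is immediate from the slope equality above. For the converse, note that the HN-filtration of $f^*F$ is unique, while each $g \in G$ acts by an auto-equivalence of $\Coh(Y)$ preserving $f^*\omega$ and $f^*B$; hence the HN-filtration is $G$-equivariant and by Galois descent comes from a filtration of $F$, which must then be its HN-filtration. In particular, $\mu_{\omega, B}^{\pm}(F) = \mu_{f^*\omega, f^*B}^{\pm}(f^*F)$.

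Part \eqref{case:Etale1} then follows at once: since $\TT_{\omega, B}$ and $\FF_{\omega, B}$ are characterized by conditions on $\mu^{\pm}$, the previous paragraph gives $F \in \TT_{\omega, B}$ iff $f^*F \in \TT_{f^*\omega, f^*B}$, and similarly for $\FF$. Combined with the exactness of $f^*$ on $\Coh$ (so $H^i(f^*E) = f^*H^i(E)$), this shows the tilted hearts on $X$ and $Y$ correspond under $f^*$ in both directions.

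For part \eqref{case:Etale2}, I would repeat the same Galois-descent argument one level up, now in the tilted hearts. The easy direction: any destabilizing sub-object $A \into E$ in $\Coh^{\omega, B}(X)$ pulls back, via the now-exact functor $f^*$ between the two tilted hearts, to a destabilizing sub-object of $f^*E$ of the same tilt. Conversely, the tilt-HN filtration of $f^*E$ is unique, hence $G$-equivariant, and by part \eqref{case:Etale1} together with the equivalence $\Db(X) \simeq \Db_G(Y)$ it descends to a filtration of $E$ in $\Coh^{\omega, B}(X)$, which would destabilize $E$ since tilts are preserved. The only subtle point is the Galois descent step at the level of the tilted hearts, but this is now formal: part \eqref{case:Etale1} is exactly the statement that an equivariant object of $\Coh^{f^*\omega, f^*B}(Y)$ descends to an object of $\Coh^{\omega, B}(X)$.
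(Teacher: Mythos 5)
Your proposal is correct and follows essentially the same route as the paper: equality of slopes and tilts under pull-back, the slope-stability statement for sheaves (which the paper simply cites from Huybrechts--Lehn, Lemma 3.2.2, rather than rederiving via Galois descent), and then uniqueness/functoriality of the Harder--Narasimhan filtration of $f^*E$ to see that its first step is $G$-equivariant and hence descends to a destabilizing subobject of $E$. The only cosmetic difference is that the paper descends just the first HN step rather than the whole filtration, but the mechanism is identical.
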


\begin{proof}
The pull-back formula for Chern characters immediately gives
\[
\mu_{f^*\omega,f^*B}(f^*F)= \mu_{\omega, B}(F) \quad \text{and} \quad
\nu_{f^*\omega,f^*B}(f^*E)= \nu_{\omega, B}(E).
\]
By \cite[Lemma 3.2.2]{HL:Moduli}, a torsion-free sheaf $F$ is $\mu_{\omega,B}$-semistable if and only
if $f^*F$ is $\mu_{f^*\omega,f^*B}$-semistable, which directly implies \eqref{case:Etale1}.

Now consider $E \in \Coh^{\omega, B}(X)$. Part \eqref{case:Etale1} and the above computation shows
that if $E$ is tilt-unstable, then so is $f^*E$. Conversely, assume that $f^*E$ is
tilt-unstable. Let $F \into f^*E$ be the first step in its Harder-Narasimhan filtration with respect
to $\nu_{f^*\omega, f^*B}$. Since $f^*E$ is $G$-equivariant, and since the HN filtration is unique and
functorial, the object $F$ must also be $G$-equivariant. Hence it is the pull-back of an object $F'$ in
$\Db(X)$. Using part \eqref{case:Etale1} again, we see that $F'$ must be an object of $\Coh^{\omega,
B}(X)$. Applying the same arguments to the quotient $f^*E/F$, we see that $F'$ is a destabilizing
subobject of $E$ in $\Coh^{\omega, B}(X)$.
\end{proof}

\begin{Ex}\label{ex:multiplication}
Let $n\in\Z_{>0}$.
Let $X=Y$ be an abelian threefold and let $\underline{n}\colon X\to X$ be the multiplication by $n$ map.
Then $\un$ has degree $n^6$, and $\un^*H=n^2 H$ for any  class $H\in\mathrm{NS}(X)$; see e.g.
\cite[Corollary 2.3.6 and Chapter 16]{Birkenhake-Lange}.
\end{Ex}

We also obtain directly the following consequence:
\begin{Prop}\label{prop:conjet}
If Conjecture \ref{con:strongBG} holds for tilt-stability
with respect to $\nu_{f^*\omega, f^*B}$ on $Y$, then it also holds for tilt-stability with respect
to $\nu_{\omega, B}$ on $X$.
\end{Prop}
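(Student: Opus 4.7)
The plan is essentially a two-line pull-back argument, using Proposition \ref{prop:EtaleAndStability} together with the behaviour of top-degree cohomology under a finite \'etale map. Let $E \in \Coh^{\omega, B}(X)$ be $\nu_{\omega, B}$-semistable with $\nu_{\omega, B}(E) = 0$; the goal is
\[
\ch^B_3(E) \le \frac{\omega^2}{18}\ch^B_1(E).
\]
First I would pull back to $Y$. By Proposition \ref{prop:EtaleAndStability}(\ref{case:Etale1}), $f^*E$ lies in $\Coh^{f^*\omega, f^*B}(Y)$, and by part (\ref{case:Etale2}) it is $\nu_{f^*\omega, f^*B}$-semistable; moreover the identity $\nu_{f^*\omega, f^*B}(f^*E) = \nu_{\omega, B}(E)$ noted in the proof of that proposition shows its tilt is still zero. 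By the hypothesized validity of Conjecture \ref{con:strongBG} on $Y$ we get
\[
\ch^{f^*B}_3(f^*E) \le \frac{(f^*\omega)^2}{18}\ch^{f^*B}_1(f^*E).
\]

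Second, I would transport this back to $X$. Naturality of the twisted Chern character gives $\ch^{f^*B}(f^*E) = f^*\ch^B(E)$, and hence the left- and right-hand sides of the inequality on $Y$ equal $f^*(\ch^B_3(E))$ and $\tfrac{1}{18} f^*(\omega^2 \ch^B_1(E))$ respectively, each viewed as a top-degree class. Because $f$ is finite \'etale of degree $d = |G|$, one has $\int_Y f^*\alpha = d \int_X \alpha$ for any top-degree $\alpha$; dividing through by the positive integer $d$ yields the desired inequality on $X$. There is essentially no obstacle, since all of the work is contained in Proposition \ref{prop:EtaleAndStability}; the only thing to verify is that the twisted Chern character pulls back naturally, which is immediate from $\ch^{f^*B}(f^*E) = e^{-f^*B}f^*\ch(E) = f^*(e^{-B}\ch(E))$.
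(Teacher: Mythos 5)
Your argument is correct and is exactly the one the paper intends: the proposition is stated as a direct consequence of Proposition \ref{prop:EtaleAndStability}, namely that $f^*E$ remains tilt-semistable of tilt zero and that the inequality on $Y$ descends after dividing by $\deg(f)=|G|$. The compatibility $\ch^{f^*B}(f^*E)=f^*\ch^B(E)$ and the projection formula for the finite \'etale map are precisely the (routine) points to check, and you have checked them.
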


\section{Abelian threefolds}
\label{sec:abelianBG0}

Let $(X, H)$ be a polarized abelian threefold.
In this section we prove Theorem \ref{thm:mainabelian}.

Most of this section will be concerned with proving Conjecture \ref{con:BG0}, the case where $\omega$ and $B$ are proportional to $H$.
For $(\alpha,\beta)\in\R_{>0}\times\R$, we let $\omega=\sqrt{3} \alpha H$ and $B = \beta H$.
We can also assume that $H$ is the class of a \emph{very ample} divisor, which, by abuse of
notation, will also be denoted by $H$.

We let $E\in\Db(X)$ be an object satisfying the assumptions of Conjecture \ref{con:BG0}.
By Lemma \ref{lem:Delta0Conjecture}, we can also assume $\overline{\Delta}_H(E)>0$, and so
$H^2\ch_1^{\obeta(E)}(E)>0$.
We proceed by contradiction, and assume that
\[
\ch_3^{\obeta(E)}(E) >0.
\]

\subsubsection*{Idea of the proof}
Consider the Euler characteristic of the pull-backs
\[
\un^* \left(E(-\obeta(E)H)\right)
\]
via the multiplication by $n$ map.
If we pretend that $E(-\obeta(E)H)$ exists, this Euler characteristic grows proportional to $n^6$;
we will show a contradiction via restriction of sections to divisors.

The proof naturally divides into two cases: if $\obeta(E)$ is rational, then
$\un^* \left(E(-\obeta(E)H)\right)$ exists when $n$ is sufficiently divisible, and the above
approach works verbatim; otherwise, we need to use Diophantine approximation of $\obeta(E)$.

\subsection*{Proof of Conjecture \ref{con:BG0}, rational case}\label{subsec:rational}
We assume that $\overline{\beta}(E)$ is a \emph{rational} number.

\subsubsection*{Reduction to $\overline{\beta}(E)=0$}
Let $q \in \Z_{>0}$ such that $q \obeta(E) \in \Z$, and
consider the multiplication map $\underline{q}:X\to X$.
By Proposition \ref{prop:EtaleAndStability}, $\underline{q}^*E$ still violates Conjecture \ref{con:BG0}.
By definition, we have
\[
\obeta({{\uq}^*E}) = q^2 \obeta(E) \in \Z.
\]
Replacing $E$ with $\uq^*E$, we may assume that $\obeta(E)$ is an integer.
Replacing $E$ again with $E\otimes \OO_X(-\obeta(E) H)$, we may assume that $E$ satisfies the
assumptions of Conjecture \ref{con:BG0}, as well as
\begin{itemize}
\item $\obeta(E)=0$, and so $H.\ch_2(E)=0$, and
\item $\ch_3(E)>0$, and so $\ch_3(E)\geq1$.
\end{itemize}

\subsubsection*{Asymptotic Euler characteristic}
We look at $\chi(\OO_X,\un^*E)$, for $n\to\infty$.
By the Hirzebruch-Riemann-Roch Theorem, we have
\begin{equation}\label{eq:EulerCharacteristic}
\chi(\OO_X,\un^*E) = n^6 \ch_3(E) \geq n^6.
\end{equation}
The goal is to bound $\chi(\OO_X,\un^*E)$ from above with a lower order in $n$.

\subsubsection*{First bound}
We claim that
\begin{equation}\label{eq:FirstBound}
\chi(\OO_X,\un^*E) \leq \hom(\OO_X,\un^*E) + \ext^2(\OO_X,\un^*E).
\end{equation}
Indeed, both $\un^*E$ and $\OO_X[1]$ are objects of $ \Coh^{\beta=0}(X)$.
Hence, for all $k\in\Z_{>0}$, we have
\begin{align*}
&\hom^{-k-1}(\OO_X,\un^*E)=\hom^{-k}(\OO_X[1],\un^*E)=0,\\
&\hom^{k+2}(\OO_X,\un^*E)=\hom^{k+3}(\OO_X[1],\un^*E)=\hom^{-k}(\un^*E,\OO_X[1]) =0.
\end{align*}

\subsubsection*{Hom-vanishing from stability}
To bound the above cohomology groups, we use Hom-vanishing between line bundles and $\un^*E$.
By Corollary \ref{cor:Delta0}, all objects of  $\Coh^\beta(X)$ of the form $\OO_X(uH)$ and $\OO_X(-uH)[1]$ are $\nu_{\alpha,\beta}$-stable, for all $u>0$ and $\beta$ close to 0.
For $(\beta,\alpha)\to(0,0)$, we have
\begin{equation}\label{eq:nu0}
\begin{split}
& \nu_{\alpha,\beta}(\OO_X(uH)) \to \frac u2 >0\\
& \nu_{\alpha,\beta}(\OO_X(-uH)[1]) \to -\frac u2 <0\\
& \nu_{\alpha,\beta}(\un^*E) \to 0,
\end{split}
\end{equation}
and therefore
\[
\nu_{\alpha,\beta}(\OO_X(H)) > \nu_{\alpha,\beta}(\un^*E) > \nu_{\alpha,\beta}(\OO_X(-H)[1]).
\]
Applying the standard Hom-vanishing between stable objects and Serre duality, we conclude
\begin{equation}\label{eq:vanishing}
\Hom(\OO_X(H),\un^*E)=0 \qquad \text{ and }\qquad \Ext^2(\OO_X(-H),\un^*E)=0.
\end{equation}

\subsubsection*{Restriction to divisors}
We will use this Hom-vanishing to restrict sections to divisors; we will repeatedly apply the
following immediate observation:
\begin{Lem} \label{lem:torvanishing}
Let $F_1, \dots, F_m$ be a finite collection of sheaves. Then any globally generated linear
system contains an open subset of divisors $D$ with
\[ \Tor^i(\OO_D, F_j) = 0 \]
for all $i > 0$ and $j = 1, \dots, m$.
\end{Lem}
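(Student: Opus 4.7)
The plan is to reduce the claim to a Bertini-type transversality statement via the Koszul resolution of a Cartier divisor. First I would observe that for any effective Cartier divisor $D$ in the linear system $|V| \subset \P(H^0(X,L))$, cut out by a section $s_D \in H^0(X,L)$, the short exact sequence
\[
0 \to L^{-1} \xrightarrow{\cdot s_D} \OO_X \to \OO_D \to 0
\]
is a length-one locally free resolution of $\OO_D$ as an $\OO_X$-module. Tensoring with any $F_j$ therefore forces $\Tor^i(\OO_D, F_j) = 0$ for all $i \geq 2$ automatically, and reduces the entire claim to the single condition
\[
\Tor^1(\OO_D, F_j) \;=\; \ker\!\bigl(F_j \otimes L^{-1} \xrightarrow{\cdot s_D} F_j\bigr) \;=\; 0
\]
for each $j$, i.e., $s_D$ should act as a non-zero-divisor on every $F_j$.

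The second step is the Bertini-style argument: $s_D$ is a non-zero-divisor on $F_j$ if and only if it does not vanish at any associated point of $F_j$. Each coherent sheaf has only finitely many associated points, so their union $\{x_1, \dots, x_N\}$ is a finite subset of $X$. For each $x_k$, the sublinear system $W_k \subset |V|$ of divisors whose zero-scheme contains $x_k$ is a linear subspace, and global generation guarantees it is \emph{proper}: picking any closed point $y$ in the Zariski closure $\overline{\{x_k\}}$, there is by hypothesis some $D' \in |V|$ not vanishing at $y$, and such a $D'$ cannot contain $x_k$ either. Consequently
\[
U \;:=\; |V| \setminus \bigcup_{k=1}^{N} W_k
\]
is a non-empty Zariski open subset of $|V|$, every member of which satisfies the required $\Tor$-vanishing.

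The only technical point worth flagging is the treatment of non-closed associated points (generic points of components of the supports of the $F_j$, together with possible embedded points), which is handled exactly as above by reducing to a closed specialization. There is no genuine obstacle here; everything is routine once the Koszul resolution is in hand and the finiteness of $\bigcup_j \mathrm{Ass}(F_j)$ is invoked.
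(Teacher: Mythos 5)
Your proof is correct and follows essentially the same route as the paper's: the paper's one-line argument is precisely to choose $D$ avoiding the associated points of the $F_j$, so that $F_j(-D) \to F_j$ is injective, and your Koszul-resolution reduction to $\Tor^1$ together with the Bertini-style openness argument simply makes the details explicit.
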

\begin{proof}
We choose $D$ such that it does not contain any of the associated points of $F_j$, i.e., such that
the natural map $F_j(-D) \to F_j$ is injective.
\end{proof}
In particular, for general $D$, a finite number of short exact sequences restrict
to exact sequences on $D$, and taking cohomology sheaves of a complex $E$
commutes with restriction to $D$.

\subsubsection*{Bound on $\hom(\OO_X,\un^*E)$}
We want to show
\begin{equation}\label{eq:BoundOnHom}
\hom(\OO_X,\un^*E) = O(n^4).
\end{equation}

We consider the exact triangle in $\Db(X)$
\[
\un^*E \otimes \OO_X(-H) \to \un^*E \to (\un^*E)\otimes \OO_D,
\]
where $D$ is a general smooth linear section of $H$.
By \eqref{eq:vanishing}, we have
\[
\hom(\OO_X,\un^*E) \leq \hom(\OO_X,(\un^*E)\otimes \OO_D).
\]
We consider the cohomology sheaves of $E$ and the exact triangle in $\Db(X)$
\[
H^{-1}(E)[1] \to E \to H^0(E).
\]
Since $D$ is general, Lemma \ref{lem:torvanishing} gives
\[
\hom\bigl(\OO_X,(\un^*E)\otimes \OO_D\bigr)\leq
h^0\bigl(D,(\un^*H^{0}(E))|_D\bigr) + h^1\bigl(D,(\un^*H^{-1}(E))|_D\bigr).
\]

The bound \eqref{eq:BoundOnHom} will then follow from Lemma \ref{lem:h0Bound} below.
We first recall a general bound on global sections of sheaves restricted to hyperplane sections,
which is due to Simpson and Le Potier, and can be deduced as a consequence of the Grauert-M\"ulich Theorem:

\begin{Thm}[{\cite[Corollary 3.3.3]{HL:Moduli}}]\label{thm:SLP}
Let $Y$ be a smooth projective complex variety of dimension $n\geq1$ and let $H$ be a very ample divisor on $Y$.
Let $F\in\Coh(Y)$ be a torsion-free sheaf.
Then, for a general sequence of hyperplane section $D_1,\dots,D_n\in |H|$ and for all $d=1,\dots,n$, we have
\[
h^0(Y_d,F |_{Y_d}) \leq \begin{cases}
\frac{\ch_0(F)H^n}{d!} \left(\mu_H^{+}(F) + \frac{\ch_0(F)-1}{2} + d\right)^d & \text{ if } \mu_{H}^{+}(F)\geq 0  \\
0   & \text{ if } \mu_{H}^{+}(F)<0
\end{cases},
\]
where $Y_n=Y$ and $Y_d:=D_1\cap\dots\cap D_{n-d}$.
\end{Thm}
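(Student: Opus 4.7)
The plan is to prove the bound by induction on $d$, going from the curve case $d=1$ up to the full-dimensional case $d=n$. The two essential inputs are the Grauert--M\"ulich restriction theorem, which controls the spread of Harder--Narasimhan slopes under restriction to a general hyperplane section, and a twisted restriction short exact sequence telescoped over all twists by $\OO(-kH)$.

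The base case $d=1$ I would treat as follows: after restricting to a general complete intersection curve $C=Y_1$, take the Harder--Narasimhan filtration of $F|_C$ and bound $h^0$ of each semistable factor $G_i$ by the classical curve estimate $h^0(G_i) \leq \rk(G_i)\max(\mu(G_i)+1,0)$. Summing over $i$ and using Grauert--M\"ulich to control $\mu_H^+(F|_C)$ in terms of $\mu_H^+(F)$ and $r$ --- the spread correction is what produces the term $\frac{r-1}{2}$ --- gives the desired one-dimensional estimate once one accounts for the fact that $C$ has degree $H^n$ with respect to the polarization.

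For the inductive step from $d-1$ to $d$, I would pick the next hyperplane $D_{n-d+1} \in |H|$ generically, avoiding the associated points of the finitely many sheaves $F|_{Y_d}(-kH)$ that enter the argument, exactly in the spirit of Lemma \ref{lem:torvanishing}. This makes each sequence
\[
0 \to F|_{Y_d}(-kH) \to F|_{Y_d}(-(k-1)H) \to F|_{Y_{d-1}}(-(k-1)H) \to 0
\]
exact on global sections, and telescoping over $k\geq 1$ yields
\[
h^0(Y_d, F|_{Y_d}) \leq \sum_{k\geq 0} h^0\bigl(Y_{d-1},\, F|_{Y_{d-1}}(-kH)\bigr).
\]
The sum is effectively finite because the inductive hypothesis forces each summand to vanish as soon as $\mu_H^+(F|_{Y_{d-1}}) - k < 0$. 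Substituting the $(d-1)$-dimensional polynomial bound and recognizing the resulting discrete sum as a Riemann-sum approximation to a polynomial integral of degree $d$ in the shifted variable $\mu_H^+(F) + \frac{r-1}{2}$ produces exactly the claimed expression $\frac{rH^n}{d!}\bigl(\mu_H^+(F) + \frac{r-1}{2} + d\bigr)^d$, after one also reduces the semistability hypothesis by applying the bound factor-by-factor to the HN filtration of $F$ itself.

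The main obstacles I anticipate are (i) extracting the constant $\frac{r-1}{2}$ from the Grauert--M\"ulich estimate under the chosen normalization of slope, which is characteristic-zero-specific and forces careful bookkeeping with the polarization degree $H^n$; and (ii) arranging the discrete telescoping sum so as to match the claimed exponent $d$ and denominator $d!$ exactly, rather than up to a universal constant. Both are essentially exercises but delicate enough that the paper rightly cites \cite[Corollary 3.3.3]{HL:Moduli} rather than reproving. Nothing in the approach uses that the ambient variety is an abelian threefold, which is consistent with the generality of the statement.
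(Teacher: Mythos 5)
The paper does not actually prove this statement: it is imported verbatim from \cite[Corollary 3.3.3]{HL:Moduli} (the Le Potier--Simpson estimates), and the only content the paper adds is the sentence immediately after the theorem, observing that the factor $H^n$ present in the formulation of \emph{loc.~cit.} has been absorbed into the normalization of the slope $\mu_H$. Your sketch is essentially a reconstruction of the argument in Section 3.3 of \cite{HL:Moduli}: induction on the dimension $d$ of the complete-intersection slice, the telescoped twisted restriction sequences giving $h^0(Y_d, F|_{Y_d}) \le \sum_{k\ge 0} h^0\bigl(Y_{d-1}, F|_{Y_{d-1}}(-kH)\bigr)$, and Grauert--M\"ulich as the source of the correction term $\tfrac{\ch_0(F)-1}{2}$ --- so the route is the intended one, and consistent with the paper's own framing (``can be deduced as a consequence of the Grauert--M\"ulich Theorem''). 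Two places where your sketch is looser than the actual argument: (i) Grauert--M\"ulich must be invoked at \emph{each} of the $n-d$ restriction steps, not only in the curve base case, since the inductive hypothesis naturally yields a bound in terms of $\mu_H^+$ of the successive restrictions and these must all be traded back for $\mu_H^+(F)$ of the original sheaf; and (ii) the vanishing clause for $\mu_H^+(F)<0$ needs the same care, because restriction to a general hyperplane section can increase the maximal slope, so the dichotomy in the displayed bound does not follow from the curve case alone. Both points are handled in \emph{loc.~cit.}, and since the paper deliberately cites rather than reproves, there is nothing further to reconcile beyond the normalization remark.
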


Notice that in the actual statement of \cite[Corollary 3.3.3]{HL:Moduli} there is a factor $H^n$; this is already included in our definition of slope.

\begin{Lem}\label{lem:h0Bound}
Let $Q$ be a sheaf on $X$ and let $L$ be a line bundle.
For all $i=0,1,2$ and for $D$ a smooth very general surface in the linear system $|H|$, we have
\[
h^i(D, (\un^*Q \otimes L) |_D) = O(n^4).
\]
\end{Lem}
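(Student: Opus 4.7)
The engine is the following numerical observation: on an abelian threefold the multiplication map $\un$ acts on $H^k(X,\Q)$ as multiplication by $n^k$, so $\ch_i(\un^*Q) = n^{2i}\ch_i(Q)$ for every $i$. In particular, since slope-semistability is preserved by \'etale pullback (Proposition \ref{prop:EtaleAndStability}), one has $\mu_H^+(\un^*Q\otimes L) = n^2\,\mu_H^+(Q) + \mu_H(L) = O(n^2)$, while the rank $\ch_0(\un^*Q\otimes L) = \ch_0(Q)$ is constant in $n$. This single estimate drives all three cases $i=0,1,2$, since, apart from rank, only $\mu_H^+$ enters the Simpson--Le Potier bound.

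For $i=0$, after replacing $Q$ by its torsion-free quotient (the torsion contribution is bounded separately: dimension $\le 1$ torsion restricts to a $0$-dimensional scheme on a general $D$ whose length scales at worst like the degree of a pulled-back curve, which is $O(n^4)$, while dimension-$2$ torsion is handled by Simpson--Le Potier on its support), I apply Theorem \ref{thm:SLP} to $\un^*Q\otimes L$ with $d=2$ and hyperplane $H$, restricting to a smooth very general $D\in|H|$. The quoted bound is a polynomial of degree $d=2$ in $\mu_H^+$ with rank-dependent coefficients, so it gives $h^0(D,(\un^*Q\otimes L)|_D) = O\bigl((n^2)^2\bigr) = O(n^4)$.

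For $i=2$, I use Serre duality on the smooth surface $D$: since $\omega_X=\OO_X$, adjunction gives $\omega_D = \OO_D(H|_D)$, so
\[
h^2(D,(\un^*Q\otimes L)|_D) = \hom_D\bigl((\un^*Q\otimes L)|_D,\OO_D(H|_D)\bigr).
\]
Assuming $Q$ locally free (the general case is reduced to this by the torsion filtration and passage to the reflexive hull, the difference again being a sheaf of lower-dimensional support), this equals $h^0\bigl(D,(\un^*Q^\vee\otimes L^{-1}(H))|_D\bigr)$, and applying the case $i=0$ to $Q^\vee$ in place of $Q$ yields $h^2 = O(n^4)$ as well.

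For $i=1$, I compute Euler characteristics. By Lemma \ref{lem:torvanishing}, for general $D$ the short exact sequence $0\to\un^*Q\otimes L(-H)\to\un^*Q\otimes L\to(\un^*Q\otimes L)|_D\to 0$ yields
\[
\chi(D,(\un^*Q\otimes L)|_D) = \ch_3(\un^*Q\otimes L) - \ch_3(\un^*Q\otimes L(-H)),
\]
using $\td(X)=1$. The terms of order $n^6$, both equal to $n^6\ch_3(Q)$, cancel, leaving a leading term of order $n^4$: explicitly, $\chi(D,(\un^*Q\otimes L)|_D) = n^4\,H\cdot\ch_2(Q) + O(n^2)$. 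Combined with the cases $i=0,2$, the identity $h^1 = h^0 + h^2 - \chi$ gives $h^1 = O(n^4)$. The main technical nuisance will be organizing the reduction to torsion-free (and then to locally free) $Q$ cleanly enough to preserve the $O(n^4)$ bound uniformly in all cases; the scaling $\ch_i(\un^*Q) = n^{2i}\ch_i(Q)$ conveniently ensures that torsion pieces of dimension $\le 2$ contribute Chern data of order at most $n^4$ after restriction to $D$.
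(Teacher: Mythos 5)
Your proposal is correct and follows essentially the same route as the paper: Theorem \ref{thm:SLP} applied to $\un^*Q\otimes L$ (whose $\mu_H^+$ grows like $n^2$ while the rank is constant) gives the $h^0$ bound, Serre duality on $D$ gives $h^2$, and Riemann--Roch on $D$ gives $h^1$. The only divergence is the reduction to torsion-free $Q$, which you rightly flag as the main nuisance: rather than filtering off the torsion and treating its strata separately (where invoking Simpson--Le Potier ``on the support'' of a two-dimensional torsion piece is not literally licensed, since that support need not be smooth), the paper simply chooses a resolution $0\to M\to N\to Q\to 0$ with $N$ locally free and $M$ torsion-free; by Lemma \ref{lem:torvanishing} this stays exact after restriction to a very general $D$, yielding $h^i(D,(\un^*Q\otimes L)|_D)\le h^i(D,(\un^*N\otimes L)|_D)+h^{i+1}(D,(\un^*M\otimes L)|_D)$ and reducing everything at once to the torsion-free case.
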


\begin{proof}
We assume first that $Q$ is torsion-free.
Notice that the multiplication map $\un$ preserves slope-stability and the rank.
Therefore, by Theorem \ref{thm:SLP}, we have
\begin{align*}
h^0(D,(\un^*Q \otimes L)|_D) & \leq \frac{\ch_0(Q) H^3}{2} \left(\mu_H^{+}(\un^*Q \otimes L) + \frac{\ch_0(Q)-1}{2} + 2\right)^2\\
      & = \frac{\ch_0(Q) H^3}{2} \left(\mu_H^+(Q)\right)^2 n^4 + O(n^3).
\end{align*}
The $h^2$-estimate follows similarly, by using Serre Duality on $D$.
Finally, the Hirzebruch-Riemann-Roch Theorem on $D$ gives
\begin{align*}
h^1 (D,(\un^*Q \otimes L)|_D) & = -\chi(D,(\un^*Q \otimes L)|_D) + h^0(D,(\un^*Q \otimes L)|_D) +
h^2(D,(\un^*Q \otimes L)|_D)\\
& = O(n^4).
\end{align*}
This finishes the proof in the torsion-free case.

For a general sheaf $Q$, we take a resolution
\[
0 \to M \to N \to Q \to 0,
\]
with $N$ locally-free and $M$ torsion-free.
Since $D$ is very general, Lemma \ref{lem:torvanishing} applies, giving
\[
h^i(D,(\un^*Q \otimes L)|_D) \leq h^i(D,(\un^*N \otimes L)|_D) + h^{i+1}(D,(\un^*M \otimes L)|_D).
\]
Hence the result follows from the previous case.
\end{proof}

\subsubsection*{Bound on $\ext^2(\OO_X,\un^*E)$}
This is similar to the previous case.
We consider the exact triangle
\[
\un^*E \to \un^*E \otimes \OO_X(H) \to (\un^*E\otimes \OO_X (H)) \otimes \OO_D.
\]
Again, we apply \eqref{eq:vanishing}, Lemma \ref{lem:torvanishing} and Lemma \ref{lem:h0Bound} and
reach
\begin{equation}\label{eq:BoundOnExt}
\begin{split}
\ext^2(\OO_X,\un^*E) & \leq \ext^1(\OO_X,(\un^*E\otimes \OO_X(H)) \otimes \OO_D)\\
  & \leq h^1(D,(\un^*H^0(E) \otimes \OO_X(H))|_D) + h^2(D,(\un^*H^{-1}(E) \otimes \OO_X(H))|_D)\\
  & = O(n^4).
\end{split}
\end{equation}

\subsubsection*{Conclusion}
By \eqref{eq:EulerCharacteristic}, \eqref{eq:FirstBound}, \eqref{eq:BoundOnHom}, and \eqref{eq:BoundOnExt}, we have
\[
n^6 \leq \chi(\OO_X,\un^*E) = O(n^4),
\]
which gives a contradiction for $n$ sufficiently large.

\subsection*{Proof of Conjecture \ref{con:BG0}, irrational case}
Now assume that $\overline{\beta}(E)\in \R\setminus\Q$ is an irrational number.
As a consequence $\ch_0(E)\neq0$ and, for all $\beta\in\Q$, $H\ch_2^\beta(E)\neq0$.

By assumption, there exists $\epsilon>0$ such that $E$ is  $\nu_{\alpha,\beta}$-stable for all $(\alpha,\beta)$ in
\[
V_\epsilon := \left\{(\alpha,\beta)\in\R_{>0}\times\R\,:\, 0< \alpha<\epsilon,\,  \obeta(E)-\epsilon<\beta <\obeta(E)+\epsilon \right\}.
\]

By the Dirichlet approximation theorem, there exists a sequence $\left\{\beta_n=\frac{p_n}{q_n}\right\}_{n\in\N}$ of rational numbers such that
\begin{equation}\label{eq:Dirichlet}
\left| \obeta(E) - \frac{p_n}{q_n} \right| < \frac{1}{q_n^2}< \epsilon
\end{equation}
for all $n$, and with $q_n \to +\infty$ as $n\to+\infty$.

\subsubsection*{The Euler characteristic}
The function $f(\beta) = \ch_3^\beta(E)$ has derivatives
$f'(\beta) = -H \ch_2^\beta(E)$ and $f''(\beta) = H^2 \ch_1^\beta(E)$;
since $H \ch_2^{\obeta}(E) = 0$ and $H^2 \ch_1^{\obeta}(E) > 0$, the point $\beta = \obeta(E)$ is a
local minimum. Thus, for large $n$, we have
\[
\ch_3^{\beta_n}(E) > \ch_3^{\obeta(E)}(E)>0.
\]
Consider the multiplication map $\underline{q_n}\colon X\to X$.
We let
\[
F_n:= \underline{q_n}^*E \otimes \OO_X(-p_nq_nH).
\]
By Lemma \ref{prop:EtaleAndStability}, $F_n$ is $\nu_{\alpha,0}$-stable, for all $\alpha>0$ sufficiently small.
We have
\begin{equation}\label{eq:EulerCharIrrational}
\chi(\OO_X,F_n) = \ch_3(F_n) = q_n^6 \ch_3^{\beta_n}(E) > q_n^6 \ch_3^{\obeta(E)}(E).
\end{equation}
By \eqref{eq:FirstBound}, it is again enough to bound both $\hom(\OO_X,F_n)$ and $\ext^2(\OO_X,F_n)$ from above.

\subsubsection*{Hom-vanishing}
As $\alpha \to 0$, we have
\[ \nu_{\alpha,0}(F_n) \to q_n^2 \frac{H\ch_2^{\beta_n}(E)}{H^2\ch_1^{\beta_n}(E)}. \]
We can bound this term as follows:
\[
\begin{split}
\abs{q_n^2 \frac{H\ch_2^{\beta_n}(E)}{H^2\ch_1^{\beta_n}(E)}}
&=
q_n^2 \abs{ \frac{
 		H\ch_2^{\obeta}(E) - (\beta_n-\obeta) H^2\ch_1^{\obeta}(E) + \frac 12 (\beta_n - \obeta)^2 H^3 \ch_0(E)}
	{H^2\ch_1^{\obeta}(E) - (\beta_n - \obeta) H^3 \ch_0(E)}} \\
&= q_n^2 \abs{ \beta_n - \obeta}
 \abs{ \frac{H^2 \ch_1^{\obeta}(E) - \frac 12 (\beta_n - \obeta)H^3 \ch_0(E)}
       {H^2 \ch_1^{\obeta}(E) - (\beta_n - \obeta)H^3 \ch_0(E)}} \\
& \le 1 \cdot \left( 1 +  \abs{\frac {\frac 12 (\beta_n - \obeta)H^3 \ch_0(E)}
       {H^2 \ch_1^{\obeta}(E) - (\beta_n - \obeta)H^3 \ch_0(E)}} \right) \to 1.
\end{split}
\]
Here we used $H^2 \ch_2^{\obeta(E)}(E) = 0$ in the second equality, and $H^2 \ch_1^{\obeta(E)}(E) >
0$ in the limit.

By comparison with \eqref{eq:nu0}, it follows that
\[
\nu_{\alpha, \beta_n} (\OO_X(3H)) > \nu_{\alpha,\beta_n}(F_n) > \nu_{\alpha,\beta}(\OO_X(-3H)[1])
\]
for $\alpha \to 0$ and $n$ sufficiently large; therefore
\begin{equation}\label{eq:vanishingIrrational}
\Hom(\OO_X(3H),F_n)=0 \qquad \text{ and }\qquad \Ext^2(\OO_X(-3H),F_n)=0.
\end{equation}

\subsubsection*{Bound on $\hom(\OO_X,\um^*F_n)$ and conclusion}
Proceeding as in the rational case, we consider the exact triangle
\[
F_n\otimes \OO_X(-3H) \to F_n \to F_n\otimes \OO_D,
\]
where $D$ is a general smooth surface in the linear system $|3H|$.
By \eqref{eq:vanishingIrrational}, we have
\begin{align*}
\hom(\OO_X,F_n) & \leq \hom(\OO_X,F_n\otimes \OO_D)\\
 & \leq h^0(D,H^0(F_n)|_D) + h^1(D,H^{-1}(F_n)|_D).
\end{align*}
The following is the analogue of Lemma \ref{lem:h0Bound}:

\begin{Lem}\label{lem:h0BoundIrrational}
Let $Q$ be a sheaf on $X$ and let $L$ be a line bundle.
Then
\[
h^i(D, (\underline{q_n}^* Q(-p_nq_n H) \otimes L)|_D) = O(q_n^4),
\]
for all $i$, and for $D$ a general smooth surface in $|3H|$.
\end{Lem}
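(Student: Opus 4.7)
The plan is to mirror the proof of Lemma \ref{lem:h0Bound}, with the twist by $\OO_X(-p_nq_nH)$ controlled via the Dirichlet bound $|\beta_n - \obeta(E)| < q_n^{-2}$. As in the rational case, one first reduces to the torsion-free case: for a general $Q$, take a short exact sequence $0 \to M \to N \to Q \to 0$ with $N$ locally free and $M$ torsion-free, and use Lemma \ref{lem:torvanishing} together with the long exact sequence after restriction to $D$ to deduce the bound for $Q$ from those for $M$ and $N$.

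The heart of the argument is the slope estimate. Since $\underline{q_n}^*H = q_n^2H$ and pull-back by $\underline{q_n}$ preserves slope-semistability along with Harder-Narasimhan filtrations of torsion-free sheaves (by Proposition \ref{prop:EtaleAndStability}, or \cite[Lemma 3.2.2]{HL:Moduli}), one computes
\[
\mu_H^+\bigl(\underline{q_n}^*Q(-p_nq_nH) \otimes L\bigr) = q_n^2\bigl(\mu_H^+(Q) - \beta_n\bigr) + \mu_H(L).
\]
Since $\beta_n \to \obeta(E)$ as $n \to \infty$, the factor $\mu_H^+(Q) - \beta_n$ is bounded uniformly in $n$, and hence $\mu_H^+ = O(q_n^2)$ with implied constant depending only on $Q$, $L$, and $\obeta(E)$.

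With this slope bound, Theorem \ref{thm:SLP} applied on $X$ (with ambient dimension $3$, $d = 2$, and $D \in |3H|$ a very general surface) yields
\[
h^0\bigl(D, F_n|_D\bigr) \leq \tfrac{\ch_0(Q)H^3}{2}\Bigl(\mu_H^+(F_n) + \tfrac{\ch_0(Q)-1}{2} + 2\Bigr)^2 = O(q_n^4),
\]
where $F_n := \underline{q_n}^*Q(-p_nq_nH) \otimes L$, with the bound trivially $0$ whenever $\mu_H^+(F_n) < 0$. The estimate on $h^2$ follows by Serre duality on $D$ applied to the dual sheaf, whose slope is again $O(q_n^2)$ in absolute value. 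Finally, $h^1 = O(q_n^4)$ follows from Hirzebruch-Riemann-Roch on $D$: a direct expansion of $\chi(D, F_n|_D)$ in the Chern data of $F_n$ restricted to $D$ produces only terms of size $O(q_n^4)$, because $\ch_i(\underline{q_n}^*Q) = q_n^{2i}\ch_i(Q)$ on the abelian threefold $X$, and $D$ is a surface so only $\ch_1|_D, \ch_2|_D$ of $F_n$ contribute.

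The only genuine content beyond transcription from the rational case is the slope computation: the Dirichlet approximation is precisely what keeps $\mu_H^+(F_n)$ of order $q_n^2$ rather than $q_n^3$, so that the $h^0$-bound remains $O(q_n^4)$. A minor bookkeeping point is to select a single $D \in |3H|$ that is simultaneously generic for the countably many Tor-vanishing conditions indexed by $n$; this is possible because each of those conditions is satisfied on an open dense subset of the linear system.
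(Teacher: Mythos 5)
Your proposal is correct and follows the same route as the paper's proof: reduce to the torsion-free case as in Lemma \ref{lem:h0Bound}, observe via $\underline{q_n}^*H = q_n^2H$ and the Dirichlet approximation that the twisted slope is $q_n^2(\mu_H^+(Q)-\beta_n)+\mu_H(L) = O(q_n^2)$, apply Theorem \ref{thm:SLP}, and handle $h^2$ and $h^1$ by Serre duality and Riemann--Roch. The only (harmless) discrepancy is a normalization constant: since $D\in|3H|$ the theorem should be invoked with the polarization $3H$, giving the prefactor $\frac{\ch_0(Q)(3H)^3}{2}$ and slope $\mu_{3H}^+=\frac13\mu_H^+$ as in the paper, but this does not affect the $O(q_n^4)$ conclusion.
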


\begin{proof}
By the same argument as in the proof of Lemma \ref{lem:h0Bound}, we may assume that $Q$ is
torsion-free. Applying Theorem \ref{thm:SLP} in our case we obtain, for general $D$,
\begin{eqnarray*}
\lefteqn{h^0\left(D,(\underline{q_n}^* Q(-p_nq_n H) \otimes L)|_D\right)} \\
& \leq & \frac{\ch_0(Q)(3H)^3}{2} \left(\mu_{3H}^{+}(\underline{q_n}^* Q(-p_nq_n H) \otimes L) + \frac{\ch_0(Q)-1}{2} + 2\right)^2\\
      & = & \frac{3 \ch_0(Q) H^3}{2} \left(\mu_{H,\beta_n}^+(Q)\right)^2 q_n^4 + O(q_n^3)\\
      & =  &\frac{3 \ch_0(Q) H^3}{2} \left(\mu_{H,\obeta(E)}^+(Q)\right)^2 q_n^4 + O(q_n^3),
\end{eqnarray*}
The $h^1$ and $h^2$ bounds follow from Serre duality and the Riemann-Roch Theorem.
\end{proof}

Applying Lemma \ref{lem:h0BoundIrrational} to the cohomology sheaves of $E$ in combination with
Lemma \ref{lem:torvanishing}, we get
\[
\hom(\OO_X,F_n) = O(q_n^4).
\]
The same argument gives a similar bound on $\ext^2(\OO_X,F_n)$ and a contradiction to \eqref{eq:EulerCharIrrational}.
This completes the proof of Conjecture \ref{con:BG0}, and therefore Conjecture \ref{con:BGgeneral}, for abelian threefolds.

\subsection*{Proof of Theorem \ref{thm:mainabelian}}

Let now $B\in\mathrm{NS}(X)_\R$ be an arbitrary divisor class and $\omega$ a positive multiple of $H$.
In the abelian threefold case, we can use Conjecture \ref{con:BG0} to deduce Conjecture
\ref{con:strongBG} in this more general case.

We let $E\in\Coh^{\omega,B}(X)$ be as in Conjecture \ref{con:strongBG}.
We first assume that $B\in\mathrm{NS}(X)_\Q$ is \emph{rational}.
Then, by Proposition \ref{prop:EtaleAndStability}, we can assume $B$ integral.
By taking the tensor product with $\OO_X(-B)$, we can then assume $E$ is $\nu_{\omega,0}$-semistable.
Conjecture \ref{con:strongBG} then follows directly from Conjecture \ref{con:BGgeneral} and Theorem \ref{thm:reduceto0}.

Finally, we take $B$ irrational.
Since \eqref{eq:strongBG} is additive, by considering its Jordan-H\"older factors we can assume $E$ is $\nu_{\omega,B}$-stable.
By using Theorem \ref{thm:BGvariants} and Remark \ref{rmk:OpennessAndHN}, we can deform $(\omega, B)$
to $(\omega', B')$ with $B'$ rational (and $\omega'$ still proportional to $H$), such that  $E$ is still $\nu_{\omega',B'}$-stable with
$\nu_{\omega', B'}(E) = 0$.
But, if \eqref{eq:strongBG} does not hold for $(\omega, B)$, then it does not hold for
$(\omega', B')$ sufficiently close, giving a contradiction to what we just proved.

\section{Construction of Bridgeland stability conditions} \label{sec:newstability}

It was already established in \cite{BMT:3folds-BG} that Conjecture \ref{con:strongBG} implies the
existence of Bridgeland stability conditions on $X$, except that the notion of support property was
ignored. This property ensures that stability conditions deform freely, and exhibit well-behaved
wall-crossing.

In this section, we show that the equivalent Conjecture \ref{con:BGgeneral} is in fact strong enough
to deduce the support property, and to construct an explicit open subset of the space of stability
conditions. In the following section, we will show that in the case of abelian threefolds, this open
set is in fact an entire component of the space of stability conditions.

\subsection*{Statement of results}
Fix a threefold $X$ with polarization $H$;
we assume throughout this section that Conjecture \ref{con:BGgeneral} is satisfied for the pair
$(X, H)$.  We consider the lattice $\Lambda_H\cong\Z^4$ generated by vectors of the form
\[
\left(H^3 \ch_0(E), H^2\ch_1(E), H\ch_2(E), \ch_3(E) \right) \in \Q^4
\]
together with the obvious map $v_H\colon K(X) \to \Lambda_H$.

We refer to Appendix \ref{app:SupportProperty} for the definition of stability conditions on
$\Db(X)$ with respect to $(\Lambda_H, v_H)$; it is given by a pair $\sigma = (Z, \PP)$, where $\PP$
is a \emph{slicing}, and the \emph{central charge} $Z$ is a linear map $Z \colon \Lambda_H \to \C$.
 The main result of \cite{Bridgeland:Stab} shows that the space $\Stab_H(X)$ of such stability
conditions is a four-dimensional complex manifold such that
\[
\ZZ \colon \Stab_H(X) \to \Hom(\Lambda_H, \C), \quad (Z, \PP) \mapsto Z
\]
is a local isomorphism. In Proposition \ref{prop:EffectiveDefo} we make this deformation result more
effective. This result will be essential in the following, where we will construct an  explicit open
subset of this manifold.
We let $\CCC \subset \Lambda_H \otimes \R \cong \R^4$ be the cone over the twisted cubic
\[ \CCC = \stv{\left(x^3, x^2y, \frac 12 xy^2, \frac 16 y^3\right)}{x, y \in \R},\]
which contains $v_H(\OO_X(uH))$ for all $u \in \Z$.

\begin{Def} \label{def:PPP}
Consider the open subset $\VVV \subset \Hom(\Lambda_H, \C)$ of central charges whose kernel intersects
$\CCC$ only at the origin. We let $\PPP \subset \VVV$ be the connected component containing
$Z_H^{\mathrm{basic}}$ defined by
\begin{equation} \label{eq:Zbasic}
Z_H^{\mathrm{basic}}(E) = \left[ - \ch_3(E) + \frac 12 H^2 \ch_1(E) \right] + i \left[H \ch_2(E) - \frac 16 H^3
\ch_0(E) \right].
\end{equation}
Let $\widetilde \PPP$ be its universal covering.
\end{Def}

The goal of this section is the following precise version of Theorem \ref{maintheorem}:
\begin{Thm} \label{thm:mainPPP}
Let $(X, H)$ be a polarized threefold for which Conjecture \ref{con:BGgeneral} is satisfied. Then
there is an open embedding $\widetilde\PPP \subset \Stab_H(X)$ for which the following diagram
commutes:
\[ \xymatrix{
{\widetilde \PPP} \ar@{^{(}->}[r] \ar[d] & \Stab_H(X) \ar[d]^\ZZ \\
{\PPP} \ar@{^{(}->}[r] & \Hom(\Lambda_H, \C)
} \]
\end{Thm}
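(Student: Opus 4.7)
The plan is to construct an explicit family of Bridgeland stability conditions via the double-tilt construction of \cite{BMT:3folds-BG}, establish the support property using the quadratic form furnished by Conjecture \ref{con:BGgeneral}, and enlarge this family to cover $\widetilde\PPP$ via the $\widetilde{\mathrm{GL}}^+_2(\R)$-action combined with the effective deformation statement in Appendix \ref{app:SupportProperty}.

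\emph{Step 1 (initial family).} For $(\alpha,\beta)\in\R_{>0}\times\R$, set $\omega=\sqrt{3}\alpha H$ and $B=\beta H$, and perform a second tilt of $\Coh^{\omega,B}(X)$ at the slope $\nu_{\omega,B}=0$ to produce a heart $\AA^{\omega,B}\subset\Db(X)$. Pair it with the central charge
\[
Z_{\omega,B}(E)=-\ch_3^B(E)+\frac{\omega^2}{2}\,\ch_1^B(E)+i\left(\omega\,\ch_2^B(E)-\frac{\omega^3}{6}\,\ch_0^B(E)\right).
\]
By Theorem \ref{thm:BGgeneral} our hypothesis is equivalent to Conjecture \ref{con:strongBG}, so the arguments of \cite{BMT:3folds-BG} show that $\sigma_{\alpha,\beta}:=(Z_{\omega,B},\AA^{\omega,B})$ is a pre-stability condition (admitting Harder--Narasimhan filtrations, but a priori without support property).

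\emph{Step 2 (support property).} Let $Q_{\alpha,\beta}$ be the quadratic form on $\Lambda_H\otimes\R$ given by the left-hand side of inequality \eqref{eq:BGgeneral}, so that by Conjecture \ref{con:BGgeneral} every tilt-semistable object $E$ satisfies $Q_{\alpha,\beta}(v_H(E))\ge 0$. A direct computation, analogous to Lemma \ref{lem:discriminantsignature}, shows that $\Ker Z_{\omega,B}$ is negative semi-definite with respect to $Q_{\alpha,\beta}$, with isotropic line spanned by $\Ker\overline{Z}_{\omega,B}$. A small perturbation produces a form $Q'_{\alpha,\beta}$ which is strictly negative on $\Ker Z_{\omega,B}$ and still non-negative on every tilt-semistable object (using openness of tilt-stability). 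Applying the general principle of Appendix \ref{app:SupportProperty}, which guarantees that such a quadratic inequality is preserved through any wall-crossing along which the kernel of the central charge remains negative definite with respect to the form, one deduces that $Q'_{\alpha,\beta}\ge 0$ continues to hold on every $\sigma_{\alpha,\beta}$-semistable object. This is exactly the support property of $\sigma_{\alpha,\beta}$ with respect to $(\Lambda_H,v_H)$.

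\emph{Step 3 (extension to $\widetilde\PPP$).} The $\widetilde{\mathrm{GL}}^+_2(\R)$-action on $\Stab_H(X)$ preserves the support property, and the four-dimensional orbits through the family $\{\sigma_{\alpha,\beta}\}$ assemble into an open subset $U\subset\Stab_H(X)$. One verifies that $\ZZ(\sigma_{\alpha,\beta})\in\PPP$: up to the $\widetilde{\mathrm{GL}}^+_2(\R)$-action the family continuously deforms to $Z_H^{\mathrm{basic}}$, and $\Ker Z_{\omega,B}$ cannot meet $\CCC$ since the line bundles $\OO_X(uH)$ are $\sigma_{\alpha,\beta}$-semistable (by Corollary \ref{cor:Delta0}), their $v_H$-classes are dense on $\CCC$, and a semistable object necessarily has non-zero central charge. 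Hence $\ZZ(U)\subset\PPP$, and by Proposition \ref{prop:EffectiveDefo} the map $\ZZ\colon U\to\PPP$ is a local homeomorphism. A dimension count, together with the fact that both $U$ and $\PPP$ are connected smooth manifolds of real dimension $8$, implies $\ZZ(U)=\PPP$. A standard covering-space argument then lifts $U$ uniquely to an open embedding $\widetilde\PPP\hookrightarrow\Stab_H(X)$ that makes the square in the statement commute.

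The principal obstacle lies in Step 2: the natural form $Q_{\alpha,\beta}$ is only semi-definite on $\Ker Z_{\omega,B}$, so a careful perturbation is needed in order to enter the strict regime of the support-property criterion without destroying positivity on tilt-semistable classes. Controlling the inequality across every intermediate wall between tilt-stability and $\sigma_{\alpha,\beta}$-stability is where the systematic framework of Appendix \ref{app:SupportProperty} becomes indispensable; it is precisely what allows one to promote the tilt-level inequality of Conjecture \ref{con:BGgeneral} to the Bridgeland-level support property stated in the theorem.
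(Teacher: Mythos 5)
Your overall architecture (explicit double-tilt family, support property from the quadratic form of Conjecture \ref{con:BGgeneral}, extension by the group action and Proposition \ref{prop:EffectiveDefo}) matches the paper's, but Step 2 contains the essential gap. The wall-crossing principle of Appendix \ref{app:SupportProperty} (Lemma \ref{lem:blah}, Proposition \ref{prop:EffectiveDefo}) operates along paths \emph{inside} the space of Bridgeland pre-stability conditions; the passage from tilt-stability to $\sigma_{\alpha,\beta}$ is not such a path. Tilt-stability is only a ``very weak'' stability condition (its central charge vanishes on zero-dimensional torsion sheaves), and it arises as the degenerate limit $a\to+\infty$ of the central charges $Z^{a}_{\alpha,\beta}$, not as a nearby point of $\Stab_H(X)$. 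The paper therefore needs a separate argument (Lemma \ref{lem:RationalBGinDim3}): an induction on the discrete-valued function $f(E)=\Im Z^{a}_{\alpha,\beta}(E)$, pushing $a\to+\infty$, identifying the surviving objects in the limit via Lemma \ref{lem:LVL} as (shifts of) tilt-semistable objects plus zero-dimensional pieces, and only then invoking Lemma \ref{lem:blah} at each finite wall. This induction is also why the construction is first carried out for rational $\alpha,\beta$ (so that $\Im Z$ is discrete and Harder--Narasimhan filtrations exist) and then extended by deformation; your proposal asserts the pre-stability condition and the support property for all real parameters at once, which the cited arguments of \cite{BMT:3folds-BG} do not give. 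A smaller inaccuracy: the perturbed form that works is $Q_K^\beta=K\,\HDelta+\HNablab$ with $K$ slightly larger than $\alpha^2$, and the reason it remains non-negative on tilt-semistable objects is not ``openness of tilt-stability'' but the decomposition $Q_K^\beta=(K-\alpha^2)\HDelta+\bigl(\alpha^2\HDelta+\HNablab\bigr)$ together with $\HDelta\ge 0$ from Theorem \ref{thm:BGvariants}.

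In Step 3, surjectivity of $\ZZ$ onto $\PPP$ does not follow from a dimension count: a local homeomorphism between connected manifolds of equal dimension need not be onto. The paper instead shows that the explicit central charges $Z^{a,b}_{\alpha,\beta}$ form a slice of the $\GL_2^+(\R)$-action on all of $\PPP$ (Lemma \ref{lem:skewtocubic}), so every point of $\PPP$ is already realized by a constructed stability condition; and the gluing of the local charts $U(\alpha,\beta,a,b)$ into a single embedding of the universal cover $\widetilde\PPP$ requires knowing that $(\alpha,\beta,a)\mapsto\sigma^{a}_{\alpha,\beta}$ is a \emph{continuous} family (Proposition \ref{prop:DeformingBridgelandStability}), itself a nontrivial statement proved via Lemmas \ref{lem:thesame}, \ref{lem:Zpositive} and \ref{lem:letstry}. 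Neither point is addressed in your proposal.
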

We will prove this theorem by constructing an explicit family of stability conditions following the
construction of \cite{BMT:3folds-BG}, and then applying the deformation arguments of Proposition
\ref{prop:EffectiveDefo}.

\subsection*{Alternative description of $\PPP$}
We will need a more explicit description of the set $\PPP$ before proceeding to prove our main
result.

The group $\GL_2^+(\R)$ of $2 \times 2$-matrices with positive determinant acts on $\PPP$ on the
left by post-composing a central charge with the induced $\R$-linear map of $\R^2 \cong \C$. There
is also an action of $\R$ on $\PPP$ on the right: for $\beta \in \R$, the multiplication by
$e^{-\beta H}$ in $K(\Db(X))$ corresponds to a linear selfmap of $\Lambda_H \otimes \R$ which leaves
$\CCC$ invariant; therefore we can act on $\PPP$ by pre-composing with this linear map.

\begin{Lem}  \label{lem:skewtocubic}
There is a slice of $\PPP$ with respect to the $\GL_2^+(\R)$-action given by central charges of the
form
\[
Z_{\alpha, \beta}^{a, b} := \left[-\ch^\beta_3 + b H \ch^\beta_2 + a H^2 \ch^\beta_1 \right] + i
\left[H \ch^\beta_2 - \frac 12 \alpha^2 H^3 \ch^\beta_0 \right]
\]
for all  $\alpha, \beta, a, b \in \R$ satisfying $\alpha > 0$ and
\begin{equation} \label{ineq:a}
a > \frac 16 \alpha^2 + \frac 12 \abs{b} \alpha.
\end{equation}
This slice is simply-connected.
\end{Lem}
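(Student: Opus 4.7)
My plan is to verify the slice property by an explicit normalization algorithm and to deduce simple-connectedness from the explicit description of the parameter region. Write $Z \in \Hom(\Lambda_H,\C)$ as $Z = u + iv$ in the basis dual to $(H^3\ch_0, H^2\ch_1, H\ch_2, \ch_3)$, and let $(u_i)$, $(v_i)$ denote the respective coefficients. Expanding the target $Z^{a,b}_{\alpha,\beta}$ in these coordinates shows it is characterized by the three linear normalizations $u_3 = -1$, $v_3 = 0$, $v_2 = 1$, together with a single polynomial identity among $(u_0, u_1, u_2, v_0, v_1)$; the parameters are then uniquely recovered as $\beta = -v_1$, $\alpha^2 = v_1^2 - 2v_0$, $b = u_2 + v_1$, $a = u_1 - u_2 v_1 - \tfrac12 v_1^2$. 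The basepoint $Z_H^{\mathrm{basic}}$ of Definition \ref{def:PPP} corresponds to $(\alpha,\beta,a,b) = (1/\sqrt{3}, 0, 1/2, 0)$, which satisfies \eqref{ineq:a}.

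Given $Z \in \PPP$, the value $Z(p_\infty) \in \C^\ast$ is nonzero by definition of $\VVV$, so there is a two-parameter family of elements $g_1 \in \GL_2^+(\R)$ sending $Z(p_\infty)$ to $-\tfrac16$; after applying any such $g_1$ we have $u_3 = -1$, $v_3 = 0$, and the residual stabilizer consists of matrices $\left(\begin{smallmatrix}1 & q \\ 0 & s\end{smallmatrix}\right)$ with $s > 0$. The quantity $u_2 v_3 - u_3 v_2$ transforms by $\det g$ under $\GL_2^+(\R)$, so its sign is a $\GL_2^+(\R)$-invariant on $\PPP$; evaluating at the basepoint gives $+1$, so $v_2 > 0$ after the first step, and the unique choice $s = 1/v_2$ normalizes $v_2 = 1$. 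The polynomial identity is affine-linear in the remaining shear parameter $q$ with coefficient $v_0 - \tfrac12 v_1^2 = -\tfrac12 \alpha^2 < 0$, determining $q$ uniquely once $\alpha^2 > 0$. Uniqueness of the slice representative in each orbit is then immediate: any element of $\GL_2^+(\R)$ preserving the three linear normalizations and the polynomial identity must be the identity.

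The main technical point is verifying that the extracted parameters satisfy $\alpha > 0$ and \eqref{ineq:a}, and conversely that every $Z^{a,b}_{\alpha,\beta}$ with these constraints lies in $\PPP$. Both directions reduce to a sign analysis of the induced cover $\CCC \to \C^\ast/\R^\ast$ characterizing $\PPP$. After normalization, $\Im Z|_{\CCC}$ expressed in the affine parameter $t$ becomes the quadratic $v_0 + v_1 t + \tfrac12 t^2$; together with the root at $p_\infty$, the degree-$3$ condition (as opposed to $\pm 1$) forces three distinct real zeros on $\P^1(\R)$, equivalently $v_1^2 > 2v_0$, yielding $\alpha > 0$. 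The positive orientation of the degree-$+3$ cover further forces the signs of $\Re Z$ at the three zeros $t = \beta - \alpha,\ \beta + \alpha,\ p_\infty$ to follow the pattern $(-, +, -)$, the same pattern that holds at the basepoint $Z_H^{\mathrm{basic}}$; substituting the explicit formulas at $t = \beta \pm \alpha$ produces the two inequalities $a \pm \tfrac12 b\alpha > \tfrac16 \alpha^2$, equivalent to \eqref{ineq:a}. The converse follows from the same computation, using that the parameter region is connected and contains the basepoint of $\PPP$.

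Finally, the parameter region $\{(\alpha,\beta,a,b) : \alpha > 0,\ a > \tfrac16\alpha^2 + \tfrac12|b|\alpha\}$ is homeomorphic to $\R_{>0} \times \R \times \R_{>0} \times \R$ via $(\alpha,\beta,a,b) \mapsto (\alpha, \beta, a - \tfrac16\alpha^2 - \tfrac12|b|\alpha, b)$, so in particular it is simply connected.
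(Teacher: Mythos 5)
Your proof is correct and follows essentially the same route as the paper's: normalize by the $\GL_2^+(\R)$-action, read off the parameters $(\alpha,\beta,a,b)$ from the real and imaginary parts of $Z$ restricted to the twisted cubic, and propagate the configuration of zeros --- hence the inequality \eqref{ineq:a}, which is exactly the statement that $\Re Z$ has the right nonzero signs at the three zeros of $\Im Z|_{\CCC}$ --- from the basepoint $Z_H^{\mathrm{basic}}$ by connectedness. One caution: you invoke the ``unramified degree-$+3$ cover'' description as \emph{characterizing} $\PPP$, whereas in Section \ref{sec:newstability} the set $\PPP$ is defined as a connected component of $\VVV$ and that description is only \emph{deduced} from this lemma (see the remark following it); your argument is not actually circular, since each step where you use it (non-vanishing of $u_2v_3-u_3v_2$, the two distinct real roots of the quadratic, the sign pattern $(-,+,-)$) needs only the weaker fact that the degree of $[\Re Z:\Im Z]\colon \P^1(\R)\to\P^1(\R)$ is locally constant on $\VVV$ and equals $\pm 3$ at the basepoint, but that reduction should be made explicit.
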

It follows that it is simultaneously a slice of the $\widetilde \GL_2^+(\R)$-action on
$\widetilde \PPP$.

\begin{proof}
Consider a central charge $Z \in \PPP$. Since $Z(0, 0, 0, 1) \neq 0$ by definition of $\PPP$, we may use the
action of rotations and dilations to normalize to the assumption $Z(0, 0, 0, 1) = -1$. Now consider
the functions
\[
r(x) := \Re Z\left(1, x, \frac 12 x^2, \frac 16 x^3\right) = - \frac 16 x^3 + O(x^2)
\; \text{and} \;
i(x) := \Im Z\left(1, x, \frac 12 x^2, \frac 16 x^3\right) = O(x^2)
\]
for $Z \in \PPP$ normalized as above; their coefficients vary continuously
with $Z$. They can never vanish simultaneously, by definition
of $\PPP$. In the case of $Z_H^{\mathrm{basic}}$, the function
$r(x) = -\frac 16 x^3 + \frac 12 x$ has zeros as
$x = - \sqrt{3}$, $x = 0$, $x = \sqrt{3}$, whereas $i(x) = \frac 12 x^2 - \frac 16$ has zeros at $x = \pm \sqrt{\frac 13}$.
This configuration of zeros on the real line will remain unchanged as $Z$ varies:
$r(x)$ will always have three zeros, and $i(x)$ will have two zeros lying between the first and
second, and the second and third zero of $r(x)$, respectively.

We now use the action of $\R$ on $\PPP$ from the right to ensure that $x = 0$ is always the midpoint of the
two zeros of $i(x)$. The sign of the leading coefficient of $i(x)$ must remain constant as $Z$
varies; therefore, we can use vertical rescaling of $\R^2$ to normalize it to be $+\frac 12$.
Since the sign of $i(0) = \Im Z(\OO_X)$ is constant within this slice, it has to be negative;
hence there exists a unique $\alpha \in \R_{>0}$ such that $i(0) = - \frac 12 \alpha^2$.

On the slice we have constructed thus far, we still have the action of $\R$ given by sheerings of
$\R^2 \cong \C$ that leave the real line fixed. Since $\Im Z(\OO_X) = i(0) < 0$, there is a unique
such sheering that forces $Z(\OO_X)$ to be real. Summarizing, we have constructed a slice
in which all central charges are of the form
\[
Z_{\alpha, \beta = 0}^{a, b} := \left[-\ch_3 + b H \ch_2 + a H^2 \ch_1 \right] + i
\left[H \ch_2 - \frac 12 \alpha^2 H^3 \ch_0 \right].
\]
In this form, the zeros of $i(x) = \frac 12 x^2 - \frac 12\alpha^2$ are $x = \pm\alpha$; thus the kernel of $Z$
intersects the twisted cubics if and only if
\[
a = \frac 16 \alpha^2 \pm \frac 12 b \alpha.
\]
In the case of $Z_H^{\mathrm{basic}}$, we have $\alpha = \sqrt{\frac 13}$, $b = 0$ and $a = \frac
12$, which is bigger than the right-hand-side. It follows that the inequality \eqref{ineq:a} holds
in the whole connected component of our slice.

Conversely, given a central charge $Z_{\alpha, \beta}^{a, b}$ as described in the lemma, we can
first use the action of $\R$ to reduce to the case $\beta = 0$. The coefficients of the linear
functions $\Im Z, \Re Z$ are in one-to-one correspondence with the coefficients of $r(x)$ and
$i(x)$, respectively; these are, up to scaling, uniquely determined by the configurations of zeros
of $r(x)$ and $i(x)$ on the real line. But our conditions ensure that we can continuously deform the
configuration of zeros into the one corresponding to $Z_H^{\mathrm{basic}}$.
\end{proof}

\begin{Rem}
From the proof of the lemma one can also deduce the following more intrinsic description of the set
$\PPP$. Consider the twisted cubic $\overline{\CCC}$ in projective space $\P^3(\R)$. There is an open subset of
central charges $Z$ with the following properties: the hyperplanes $\Im Z = 0$ and $\Re Z = 0$
both intersect $\overline{\CCC}$ in three distinct points; moreover, their configuration on
$\overline{\CCC} \cong S^1$ are such that the zeros of the two functions alternate.
This open set has two components: one of them is $\PPP$, the other is obtained from $\PPP$ by
composing central charges with complex conjugation.

Moreover, one can also deduce the description given in the introduction.
\end{Rem}

Recall the $H$-discriminant
\[
\HDelta = \left(H^2 \ch_1^\beta\right)^2 - 2 H^3 \ch_0^\beta H \ch^\beta_2.
\]
defined in \eqref{eq:Hdiscriminant},
Let us also introduce a notation of the remainder term of \eqref{eq:BGgeneral}:
\[
\HNablab := 4 \left( H \ch_2^\beta(E)\right)^2 - 6 H^2 \ch_1^\beta(E) \ch_3^\beta(E).
\]

\begin{Lem} \label{lem:negativedefinite}
There is an open interval $I_{\alpha}^{a, b} \subset \R_{>0}$ such that the
kernel of $Z_{\alpha, \beta}^{a, b}$ is negative definite
with respect to the quadratic form
$K \HDelta + \HNablab$ for all $K \in I_{\alpha}^{a, b}$.  In case $b = 0$, the interval is given by
$I_{\alpha}^{a, b} = (\alpha^2 , 6a)$. In case $b \neq 0$, it is a subinterval of $(\alpha^2, 6a)$
satisfying $\frac 12 \left(\alpha^2 + 6a\right) \in I_{\alpha}^{a, b}$ for all $b$, and
\[ I_{\alpha}^{a, b'} \subset I_{\alpha}^{a, b} \]
whenever $\abs{b'} > \abs{b}$.
\end{Lem}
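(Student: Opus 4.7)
The plan is to carry out the negative definiteness check by direct linear algebra, after picking convenient coordinates on the kernel of $Z_{\alpha,\beta}^{a,b}$.

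First I would introduce coordinates $r = H^3\ch_0^\beta$, $c = H^2\ch_1^\beta$, $d = H\ch_2^\beta$, $e = \ch_3^\beta$ on the image of the relevant Chern character map. In these coordinates the central charge reads
\[
Z_{\alpha,\beta}^{a,b} = (-e + bd + ac) + i\bigl(d - \tfrac12 \alpha^2 r\bigr),
\]
so its kernel is the $2$-plane parametrized by $(r,c)\mapsto (r, c, \tfrac12 \alpha^2 r,\, ac + \tfrac12 \alpha^2 b r)$. Substituting into $\HDelta = c^2 - 2rd$ and $\HNablab = 4d^2 - 6ce$, a short computation gives the restriction of $K\HDelta + \HNablab$ to the kernel as
\[
Q_K(r,c) := (K-6a) c^2 - \alpha^2 (K - \alpha^2)\, r^2 - 3\alpha^2 b\, rc.
\]

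Negative definiteness of a $2\times 2$ form is equivalent to negative diagonal entries and positive determinant. The diagonal conditions are $K > \alpha^2$ and $K < 6a$, while the determinant condition simplifies to
\[
(K - \alpha^2)(6a - K) > \tfrac94 \alpha^2 b^2.
\]
When $b=0$ the last inequality holds throughout $(\alpha^2, 6a)$, so $I_\alpha^{a,0} = (\alpha^2, 6a)$. When $b\neq 0$, the left-hand side is a concave parabola in $K$ whose maximum is attained at the midpoint $K^{*} := \tfrac12(\alpha^2 + 6a)$ and equals $\bigl(\tfrac{6a-\alpha^2}{2}\bigr)^2$. The hypothesis \eqref{ineq:a}, namely $a > \tfrac16 \alpha^2 + \tfrac12 |b|\alpha$, rearranges to $(6a - \alpha^2)^2 > 9\alpha^2 b^2$, which is exactly the assertion that the determinant inequality is strict at $K^{*}$. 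By continuity and concavity, the set of admissible $K$ is an open subinterval of $(\alpha^2, 6a)$ containing $K^{*}$, and we take $I_\alpha^{a,b}$ to be this interval.

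The monotonicity $I_\alpha^{a,b'} \subset I_\alpha^{a,b}$ whenever $|b'|>|b|$ is then immediate, since increasing $|b|$ only raises the right-hand side of the determinant inequality while leaving the left-hand side untouched. This whole argument is a routine computation once the kernel is parametrized correctly; there is no genuine obstacle, only some care needed to verify that the two numerical inequalities assembling negative definiteness match the combinatorial condition \eqref{ineq:a} appearing in Lemma~\ref{lem:skewtocubic}.
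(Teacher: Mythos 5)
Your proof is correct and follows essentially the same route as the paper: parametrize the two-dimensional kernel of $Z_{\alpha,\beta}^{a,b}$ by $(r,c)$, restrict $K\HDelta + \HNablab$ to it, and read off negative definiteness from the signs of the two diagonal entries and of the determinant. One small point in your favour: your cross term $-3\alpha^2 b\,rc$ gives off-diagonal Gram entry $-\tfrac32\alpha^2 b$ and hence the determinant condition $(K-\alpha^2)(6a-K) > \tfrac94\alpha^2 b^2$, whose validity at the midpoint is exactly equivalent to \eqref{ineq:a}; the paper's printed matrix has off-diagonal entry $-3b\alpha^2$ (the full coefficient of $rc$ rather than half of it), which taken literally would require the stronger bound $a > \tfrac16\alpha^2 + \abs{b}\alpha$ for the midpoint claim, so your computation is the one that actually matches the constant appearing in Lemma \ref{lem:skewtocubic}.
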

\begin{proof}
Let us use the coordinates $e_i := H^{3-i}\ch_i^\beta$ on $\Lambda_H \otimes \R$.
In these coordinates, the kernel of $Z_{\alpha, \beta}^{a, b}$ is generated by the vectors
$(1, 0, \frac 12 \alpha^2, \frac 12 b\alpha^2)$ and $(0, 1, 0, a)$. The intersection matrix for the
symmetric pairing associated to $K \HDelta + \HNablab$ is
\[
\begin{pmatrix}
- K \alpha^2 + \alpha^4 & -3 b \alpha^2 \\
- 3b \alpha^2 & K - 6a
\end{pmatrix}.
\]
The diagonal entries are negative for $K \in (\alpha^2, 6a)$ (which is non-empty by the assumptions
on $a$). In case $b  \neq 0$, we additionally need to ensure that the determinant
\[
\alpha^2 \left(-K^2 + 6a K + K\alpha^2 - 6a \alpha^2 - 9 b^2 \alpha^2\right)
\]
is positive. Solving the quadratic equation, one obtains a subinterval of $(\alpha^2, 6a)$
symmetric around the midpoint $K = \frac 12 \left(\alpha^2 + 6a\right)$ with the properties as
claimed.
\end{proof}

\subsection*{Review: construction of stability conditions}
We will use \cite[Proposition 5.3]{Bridgeland:Stab} to construct stability conditions. It says that
a stability condition is equivalently determined by a pair $\sigma=(Z,\AA)$, where
$Z\colon \Lambda_H\to\C$ is a group homomorphism (called \emph{central charge}) and $\AA\subset\Db(X)$
is the \emph{heart of a bounded t-structure}, which have to satisfy the following three properties:
\begin{enumerate}
\item \label{item:positivity} For any $0 \neq E\in\AA$ the central charge $Z(v_H(E))$ lies in the following semi-closed
upper half-plane:
\begin{equation} \label{eq:Zpositivity}
Z(v_H(E)) \in  \R_{>0} \cdot e^{(0,1]\cdot i\pi}
\end{equation}
\suspend{enumerate}
We can use $\Re Z$ and $\Im Z$ to define a notion of slope-stability
on the abelian category $\AA$ via the slope function $\lambda_\sigma(E)= -\frac{\Re Z(v_H(E))}{\Im
Z(v_H(E))}$
\resume{enumerate}
\item \label{item:HN} With this notion of slope-stability, every object in $E \in \AA$ has a Harder-Narasimhan
filtration $0 = E_0 \into E_1 \into \dots \into E_n = E$ such that each $E_i/E_{i-1}$ is
$\lambda_\sigma$-semistable,
with $\lambda_\sigma(E_1/E_0) > \lambda_\sigma(E_2/E_1) > \dots > \lambda_\sigma(E_n/E_{n-1})$.
\item \label{item:support} (\emph{support property}) There is a constant $C>0$ such that, for all
$\lambda_\sigma$-semistable object $E\in \AA$, we have
\begin{align*}
\lVert v_H(E) \rVert \le C \lvert Z(v_H(E)) \rvert,
\end{align*}
where $\lVert \blank \rVert$ is a fixed norm on $\Lambda_H\otimes\R \cong \R^4$.
\end{enumerate}
For brevity, we will write $Z(E)$ instead of $Z(v_H(E))$. Shifts of $\lambda_\sigma$-(semi)stable
objects are called $\sigma$-(semi)stable.

\subsection*{Explicit construction of stability conditions}
We start by reviewing (a slightly generalized version of) the construction of stability conditions
in \cite{BMT:3folds-BG}.

We define a heart $\AA^{\alpha,\beta}(X)\subset \Db(X)$ as a tilt of
$\Coh^{\beta}(X)$: we let
\begin{align*}
\TT_{\alpha, \beta}' &= \stv{E \in \Coh^{\beta} (X)}
{\text{any quotient $E \onto G$ satisfies $\nu_{\alpha,\beta}(G) > 0$}} \\
\FF_{\alpha,\beta}' &= \stv{E \in \Coh^{\beta} (X)}
{\text{any subobject $F \into E$ satisfies $\nu_{\alpha,\beta}(F) \le 0$}}
\end{align*}
and define
\[
\AA^{\alpha,\beta}(X) = \langle \TT_{\alpha,\beta}', \FF_{\alpha,\beta}'[1] \rangle.
\]

\begin{Thm}[{\cite{BMT:3folds-BG}}] \label{thm:rationalconstruction}
Let $(X, H)$ be a polarized threefold for which Conjecture
\ref{con:BGgeneral} holds.
Assume that $\alpha, \beta \in \Q$, and that $\alpha, \beta, a, b$ satisfy \eqref{ineq:a}. Then
the pair $\sigma = \left(Z_{\alpha, \beta}^{a, b}, \AA^{\alpha, \beta}(X)\right)$ satisfy conditions
\eqref{item:positivity} and \eqref{item:HN} above.
\end{Thm}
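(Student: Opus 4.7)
The plan is to follow the double-tilt construction of \cite{BMT:3folds-BG}. Positivity on $\AA^{\alpha,\beta}(X)$ will be reduced, via HN filtrations, to a statement about tilt-semistable objects of $\Coh^\beta(X)$; there Conjecture \ref{con:BGgeneral} combined with the classical Bogomolov--Gieseker inequality (Theorem \ref{thm:BGvariants}) will force $\Re Z_{\alpha,\beta}^{a,b}<0$ along the boundary locus $\Im Z = 0$. The Harder--Narasimhan property \eqref{item:HN} then follows from Noetherianness of $\AA^{\alpha,\beta}(X)$ and discreteness of $\Im Z_{\alpha,\beta}^{a,b}$, both consequences of $\alpha,\beta\in\Q$.

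For positivity, any $E \in \AA^{\alpha,\beta}(X)$ fits in a short exact sequence (in $\AA^{\alpha,\beta}(X)$) $0 \to F[1] \to E \to T \to 0$ with $F \in \FF'_{\alpha,\beta}$ and $T \in \TT'_{\alpha,\beta}$. Refining via tilt-HN filtrations in $\Coh^\beta(X)$ reduces the verification to tilt-semistable objects $G$ of $\Coh^\beta(X)$ or their shifts. The cases $\nu_{\alpha,\beta}(G)>0$ and $\nu_{\alpha,\beta}(G)<0$ give $\Im Z > 0$ immediately. The boundary $\Im Z(G)=0$ splits into two subcases: either $G$ is supported in dimension $\leq 1$ (handled directly: curves force $\Im Z > 0$, while $0$-dimensional sheaves satisfy $\Re Z = -\ch_3(G) < 0$), or $G$ is tilt-semistable with $\nu_{\alpha,\beta}(G) = 0$, in which case $G[1] \in \AA^{\alpha,\beta}(X)$ and we must prove $\Re Z_{\alpha,\beta}^{a,b}(G) > 0$. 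This last point is the core of the argument.

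To treat it, set $r = H^3\ch_0^\beta(G)$, $c = H^2\ch_1^\beta(G)$, $d = H\ch_2^\beta(G)$, $s = \ch_3^\beta(G)$. The condition $\nu_{\alpha,\beta}(G) = 0$ forces $d = \tfrac{1}{2}\alpha^2 r$ and $c > 0$. Substituting $d = \tfrac{1}{2}\alpha^2 r$ into Conjecture \ref{con:BGgeneral} makes the $r^2$-terms cancel, leaving $6cs \leq \alpha^2 c^2$, hence $s \leq \tfrac{1}{6}\alpha^2 c$. The classical inequality $\HDelta(G) \geq 0$ of Theorem \ref{thm:BGvariants} gives $c^2 \geq \alpha^2 r^2$, so $c \geq \alpha|r|$. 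Combining,
\[
\Re Z_{\alpha,\beta}^{a,b}(G) \;=\; ac + bd - s \;\geq\; c\bigl(a-\tfrac{1}{6}\alpha^2\bigr) - \tfrac{1}{2}|b|\alpha^2|r| \;\geq\; \alpha|r|\bigl(a-\tfrac{1}{6}\alpha^2-\tfrac{1}{2}|b|\alpha\bigr),
\]
which is strictly positive by \eqref{ineq:a} when $r \neq 0$; when $r = 0$, the middle term $c(a-\tfrac{1}{6}\alpha^2)$ is already positive since $c>0$ and $a > \tfrac{1}{6}\alpha^2$. This step is the only place where both Conjecture \ref{con:BGgeneral} and the full strength of \eqref{ineq:a} are used.

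Finally, the HN property is obtained from \cite[Proposition 2.4]{Bridgeland:Stab}: Noetherianness of $\AA^{\alpha,\beta}(X)$ is proved as in \cite{BMT:3folds-BG} using Noetherianness of $\Coh^\beta(X)$, and discreteness of the image of $\Im Z_{\alpha,\beta}^{a,b}$ holds because for $\alpha,\beta \in \Q$ the central charge factors through a finitely generated subgroup of $\Q^4$. The main obstacle in this proof is the quadratic manipulation of the previous paragraph; the remaining steps amount to standard bookkeeping within the tilting formalism.
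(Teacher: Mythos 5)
Your proposal is correct and takes essentially the same route as the paper's proof. The decisive step --- showing $\Re Z_{\alpha,\beta}^{a,b}(F[1])<0$ for tilt-semistable $F$ with $\nu_{\alpha,\beta}(F)=0$ by cancelling the $r^2$-terms in Conjecture \ref{con:BGgeneral} to obtain $\ch_3^\beta(F)\le \frac{1}{6}\alpha^2 H^2\ch_1^\beta(F)$, then bounding the $b$-term via $\HDelta(F)\ge 0$ from Theorem \ref{thm:BGvariants} and invoking \eqref{ineq:a} --- is exactly the paper's computation, and your derivation of the HN property from noetherianity of $\AA^{\alpha,\beta}(X)$ and discreteness of the image of $\Im Z_{\alpha,\beta}^{a,b}$ for rational $\alpha,\beta$ likewise matches the paper's argument.
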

\begin{proof}
The case $b = 0$ is \cite[Corollary 5.2.4]{BMT:3folds-BG}, and the same arguments apply here; let us
review them briefly.

The construction of the heart directly ensures that if $E \in \AA^{\alpha, \beta}(X)$, then
$\Im Z_{\alpha, \beta}^{a, b} \ge 0$.
Moreover, if $E\in \AA^{\alpha,\beta}(X)$ is such that $\Im Z_{\alpha,\beta,s}(E)=0$,
then $E$ fits into an exact triangle $F[1] \to E \to T$ where
\begin{itemize}
\item $T$ is a zero-dimensional torsion sheaf, and
\item $F\in\Coh^\beta(X)$ is $\nu_{\alpha,\beta}$-semistable with  $\nu_{\alpha,\beta}(E)=0$ (in
particular, $H^2\ch_1^{\beta}(F)> 0$).
\end{itemize}
We have $Z_{\alpha, \beta}^{a, b}(T) = - \mathrm{length}(T) < 0$ if $T$ is non-trivial. To treat $F[1]$,
observer that $\nu_{\alpha, \beta}(F) = 0$ implies
\[
\frac 12 \alpha^2 H^3 \ch_0^\beta(F) = H \ch_2^\beta(F).
\]
Therefore we can use Conjecture \ref{con:BGgeneral} and Theorem \ref{thm:BGvariants} to estimate
\[
\begin{split}
Z_{\alpha, \beta}^{a, b}(F[1]) &=
\ch_3^\beta(F) - b H\ch_2^\beta(F) - a H^2 \ch_1(F) \\
&\le \frac 16 \alpha^2 H^2 \ch_1(F) + \abs{b} \frac 12 \alpha^2 H^2 \ch_1^\beta(F) - a H^2 \ch_1(F) < 0.
\end{split}
\]

By \cite[Proposition 5.2.2]{BMT:3folds-BG}, the category $\AA^{\alpha,\beta}(X)$ is noetherian.
Since $\Im Z_{\alpha,\beta,s}$ is a discrete subset of $\R$, we can apply \cite[Proposition
B.2]{localP2} to deduce the existence of Harder-Narasimhan filtrations.
\end{proof}

\subsection*{Support property}
The next step towards proving Theorem \ref{thm:mainPPP} is to establish the support property for the
stability conditions constructed in Theorem \ref{thm:rationalconstruction}. Our overall goal
is the following analogue of Theorem \ref{thm:BGvariants}.

Let $\sigma = (Z, \AA) \in \widetilde{\PPP} \subset \Stab_H(X)$ be a stability condition
in the open subset given in Theorem \ref{thm:rationalconstruction}. We may assume that
$Z = Z_{\alpha, \beta}^{a, b}$ is of the form given in Lemma \ref{lem:skewtocubic}. We also
choose a constant $K \in I_{\alpha}^{a, b}$ in accordance with Lemma \ref{lem:negativedefinite}.
\begin{Thm} \label{thm:BGinDim3}
Under the assumptions above, every $\sigma$-semistable object $E$ satisfies
\begin{equation} \label{eq:newBGdim3}
Q_K^\beta(E) : = K \HDelta(E) + \HNablab(E) \ge 0.
\end{equation}
Moreover, up to shift the heart $\AA$ is of the form $\AA = \AA^{\alpha, \beta}(X)$.
\end{Thm}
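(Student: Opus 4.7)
The plan is to reduce first to the rational case $\alpha, \beta \in \Q$, where Theorem \ref{thm:rationalconstruction} already provides a pair $(Z_{\alpha,\beta}^{a,b}, \AA^{\alpha,\beta}(X))$ satisfying the positivity and Harder--Narasimhan conditions; this immediately gives the heart identification. What remains is the support property, namely the inequality $Q_K^\beta(E) \ge 0$ for every $\sigma$-semistable object $E$.

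To establish this, I would propagate the inequality from the tilt-stability limit. Interpolate between $Z_{\alpha,\beta}^{a,b}$ and a degenerate ``tilt-stability'' central charge factoring through $\overline{v}_H$, via a continuous one-parameter family $\{Z_t\}_{t \in [0,1]}$ (e.g.\ by sending $a \to +\infty$ and simultaneously rescaling, or by turning off the $\ch_3^\beta$-contribution). At the tilt-stability endpoint, every tilt-semistable $E$ satisfies $Q_K^\beta(E) = K \HDelta(E) + \HNablab(E) \ge 0$: combining Conjecture \ref{con:BGgeneral}, which gives $\alpha^2 \HDelta(E) + \HNablab(E) \ge 0$, with Theorem \ref{thm:BGvariants}, which gives $\HDelta(E) \ge 0$, one writes $K \HDelta + \HNablab = (K - \alpha^2)\HDelta + (\alpha^2 \HDelta + \HNablab)$, which is non-negative whenever $K \ge \alpha^2$. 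Along the entire path, Lemma \ref{lem:negativedefinite} keeps $\ker Z_t$ negative definite with respect to $Q_K^\beta$ (the nesting $I_\alpha^{a,b'} \subset I_\alpha^{a,b}$ for $\abs{b'} > \abs{b}$ allows one to pick a single $K$ uniformly).

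The propagation mechanism is the general principle developed in Appendix \ref{app:SupportProperty}: at any wall crossed along the path, the Jordan--H\"older factors of a strictly semistable $E$ have classes lying in a common half-space of $\Lambda_H \otimes \R$. By the convexity argument behind Lemma \ref{lem:halfspace} and Corollary \ref{cor:DeltaJH} (ultimately Lemma \ref{lem:trivial}), the intersection of this half-space with $\{Q_K^\beta \ge 0\}$ is a convex cone, so $v_H(E)$, being the sum of the JH factor classes, remains in the cone. Iterating across all walls yields $Q_K^\beta \ge 0$ at $t = 1$, i.e.\ the support property of $\sigma$.

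For the irrational case, I would approximate $\sigma$ by a small deformation of a stability condition from the rational case, using the effective bounds of Proposition \ref{prop:EffectiveDefo}. Since $\Lambda_H$ is a discrete lattice and $Q_K^\beta$ depends continuously on $\beta$, the quadratic inequality survives the deformation; continuity of the slicing along the path identifies the heart of $\sigma$ with a shift of $\AA^{\alpha,\beta}(X)$. The main obstacle is the interpolation step: one must verify that each intermediate $Z_t$ genuinely defines a pre-stability condition (so that ``$Z_t$-semistable'' is meaningful and HN filtrations exist), and that only walls involving classes on which the inductive hypothesis applies are encountered. This is exactly what the machinery of Appendix \ref{app:SupportProperty}, combined with Theorems \ref{thm:BGvariants} and \ref{thm:rationalconstruction}, is set up to handle.
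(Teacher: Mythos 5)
Your overall strategy is the one the paper follows: work first at rational $(\alpha,\beta)$ with $b=0$, push $a\to+\infty$ so that $Z^{a}_{\alpha,\beta}$ degenerates (up to rescaling of the real part) to the tilt-stability charge $\overline Z_{\alpha,\beta}$, use the decomposition $Q_K^\beta=(K-\alpha^2)\HDelta+(\alpha^2\HDelta+\HNablab)$ together with Conjecture \ref{con:BGgeneral} and Theorem \ref{thm:BGvariants} at the limit, transport the inequality back across walls by convexity of $\{Q_K^\beta\ge 0\}$ inside a half-space (Lemma \ref{lem:blah}, Lemma \ref{lem:convexcone}), and reach irrational parameters and $b\ne 0$ via Proposition \ref{prop:EffectiveDefo}. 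So the route is the right one.

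The genuine gap is in the propagation step. You write that one must check that ``only walls involving classes on which the inductive hypothesis applies are encountered,'' but you never set up an induction, and Appendix \ref{app:SupportProperty} cannot supply it for you: Proposition \ref{prop:EffectiveDefo} applies only once a stability condition is already known to satisfy the support property, which is exactly what is being proved here, and Theorem \ref{thm:rationalconstruction} only establishes conditions \eqref{item:positivity} and \eqref{item:HN}, not \eqref{item:support}. Without a support-property-type statement for the potentially destabilizing classes, you cannot assert local finiteness of walls along the ray $a\to+\infty$, nor the existence of Jordan--H\"older factors at the first wall, so the argument is circular as stated. The paper breaks the circle by inducting on the discrete non-negative quantity $f(E)=\Im Z^{a}_{\alpha,\beta}(E)$: every potentially destabilizing subobject or quotient of $E$ along the ray has strictly smaller $f$, hence satisfies \eqref{eq:newBGdim3} by induction, and this is precisely the input needed for well-behaved wall-crossing; Lemma \ref{lem:blah} then closes the induction at the first wall. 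A second, smaller omission: if $E$ stays semistable all the way to $a=+\infty$, it need not itself be tilt-semistable --- by Lemma \ref{lem:LVL} it may instead have $H^{-1}_\beta(E)$ tilt-semistable and $H^0_\beta(E)$ zero-dimensional, and one must check separately (as the paper does, using $H^2\ch_1^\beta(E)<0$ and $\ch_3^\beta(H^0_\beta(E))\ge 0$) that $Q_K^\beta(E)\ge Q_K^\beta(H^{-1}_\beta(E))\ge 0$ in that case. Finally, for irrational $(\alpha,\beta)$ the heart identification is not just ``continuity of the slicing'': it requires the gluing statement of Proposition \ref{prop:DeformingBridgelandStability}, whose proof rests on Lemma \ref{lem:thesame} comparing two pre-stability conditions with the same central charge whose hearts are tilts of a common heart.
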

We will treat only the case $b = 0$; then $I_{\alpha}^{a, b} = (\alpha^2, 6a)$. We
will also shorten notation and write $Z_{\alpha, \beta}^a$ instead of
$Z_{\alpha, \beta}^{a, 0}$, and $I_{\alpha}^a$ instead of $I_{\alpha}^{a,0}$.
The case $b\neq0$ will then follow directly by Proposition \ref{prop:EffectiveDefo}.

The analogy between Theorem \ref{thm:BGinDim3} and Theorem \ref{thm:BGvariants} is reflected also in their proof.
We first treat the rational case:

\begin{Lem}\label{lem:RationalBGinDim3}
Let $(X,H)$ be a polarized threefold and $(\alpha,\beta)\in\Q_{>0}\times\Q$.
Assume that Conjecture \ref{con:BGgeneral} holds for this pair $(\alpha,\beta)$.
Then for any $a > \frac 16 \alpha^2$,
the pair $\sigma_{\alpha,\beta}^a=(Z_{\alpha,\beta}^a,\AA^{\alpha,\beta}(X))$ satisfies the support
property; more precisely, the inequality \eqref{eq:newBGdim3} holds for all
$\sigma_{\alpha,\beta}^a$-semistable objects $E$ and all $K \in I_{\alpha}^a$.
\end{Lem}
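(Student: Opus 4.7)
The proof will follow the induction-and-wall-crossing template of Theorem \ref{thm:BGvariants}, adapted from tilt-stability to Bridgeland stability. The three key ingredients are: (a) negative definiteness of $Q_K^\beta$ on $\ker Z_{\alpha,\beta}^a$ for $K\in(\alpha^2,6a)$ (Lemma \ref{lem:negativedefinite}); (b) convexity of $\{Q_K^\beta\geq 0\}$ inside the preimage of a fixed ray under $Z_{\alpha,\beta}^a$ (Lemma \ref{lem:convexcone}); and (c) for any tilt-semistable $F$, the decomposition $Q_K(F)=\bigl(\alpha^2\HDelta(F)+\HNablab(F)\bigr)+(K-\alpha^2)\HDelta(F)\geq 0$ for $K\geq\alpha^2$, by Conjecture \ref{con:BGgeneral} and Theorem \ref{thm:BGvariants}.

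Fix $K\in I_\alpha^a$. Up to shift, assume $E\in\AA^{\alpha,\beta}(X)$. Since $\alpha,\beta\in\Q$, the quantity $\Im Z_{\alpha,\beta}^a(E)=H\ch_2^\beta(E)-\tfrac{1}{2}\alpha^2 H^3\ch_0^\beta(E)$ takes values in a discrete subset of $\R_{\geq 0}$ on $\Lambda_H$; I induct on this quantity. Local finiteness of walls in the parameter $a$ is established as in \cite[Proposition 9.3]{Bridgeland:K3} using only the classical inequality $\HDelta\geq 0$ for tilt-semistable objects (Theorem \ref{thm:BGvariants}) applied to potential destabilizing classes, so there is no circular dependence on the support property we seek.

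\emph{Base case} $\Im Z(E)=0$: as in the proof of Theorem \ref{thm:rationalconstruction}, $E$ sits in an exact triangle $F[1]\to E\to T$ with $T$ a $0$-dimensional torsion sheaf and $F$ a $\nu_{\alpha,\beta}$-semistable object of $\Coh^\beta(X)$ with $\nu_{\alpha,\beta}(F)=0$. The inequality $Q_K(F)\geq 0$ follows from the decomposition in (c), and then a direct computation with $v_H(E)=v_H(F[1])+v_H(T)$ (using $H^2\ch_1^\beta(F)>0$ and $\ch_3(T)\geq 0$) gives $Q_K(E)=Q_K(F[1])+6H^2\ch_1^\beta(F)\cdot\ch_3(T)\geq 0$. \emph{Inductive step} $\Im Z(E)>0$: suppose for contradiction $Q_K(E)<0$, and increase $a$. \emph{Case (i):} $E$ becomes strictly semistable at some $a_0\geq a$ with Jordan--H\"older factors $E_1,\dots,E_m$, $m\geq 2$. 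Each $Z_{\alpha,\beta}^{a_0}(E_i)$ is a positive real multiple of $Z_{\alpha,\beta}^{a_0}(E)$, so $0<\Im Z(E_i)<\Im Z(E)$; by induction $Q_K(E_i)\geq 0$. Since $a_0\geq a$ we have $K\in I_\alpha^a\subseteq I_\alpha^{a_0}$, so $Q_K$ is negative definite on $\ker Z_{\alpha,\beta}^{a_0}$, and Lemma \ref{lem:convexcone} makes $\{Q_K\geq 0\}\cap\{v:Z_{\alpha,\beta}^{a_0}(v)\in\R_{\geq 0}Z_{\alpha,\beta}^{a_0}(E)\}$ a convex cone. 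The sum $v_H(E)=\sum v_H(E_i)$ then lies in this cone, giving $Q_K(E)\geq 0$---contradiction. \emph{Case (ii):} $E$ is $\sigma_{\alpha,\beta}^{a'}$-semistable for all $a'\geq a$; a large-volume analysis (analogous to Lemma \ref{lem:largelimit}) reduces this to showing $E$ is tilt-semistable up to a $0$-dimensional torsion component, and then Conjecture \ref{con:BGgeneral} yields $Q_K(E)\geq 0$, again a contradiction.

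The main obstacle is the large-volume argument in Case (ii): one must rigorously show that persistent $\sigma_{\alpha,\beta}^{a'}$-semistability as $a'\to\infty$ forces $E$ to reduce to a tilt-semistable situation. The key point is that $\lambda_{\sigma_{\alpha,\beta}^{a'}}$ is asymptotically dominated by $-a'/\nu_{\alpha,\beta}$, so any $\nu_{\alpha,\beta}$-destabilizing subobject in $\Coh^\beta(X)$ must appear as a $\sigma$-destabilizer for $a'$ large enough; the accounting of potential components in $T'_{\alpha,\beta}$ vs.\ $F'_{\alpha,\beta}[1]$ needs to be made precise.
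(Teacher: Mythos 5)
Your overall strategy is the same as the paper's: induction on $\Im Z_{\alpha,\beta}^a(E)$, wall-crossing as $a$ increases, convexity of $\{Q_K^\beta\ge 0\}$ on the preimage of a ray (Lemma \ref{lem:convexcone}) together with negative definiteness on the kernel (Lemma \ref{lem:negativedefinite}), and a large-volume limit reducing to tilt-semistability; your base-case computation and your decomposition $Q_K^\beta=(\alpha^2\HDelta+\HNablab)+(K-\alpha^2)\HDelta$ are both correct. However, there is one genuinely wrong step. You assert that local finiteness of walls in the parameter $a$ follows from the classical inequality $\HDelta\ge 0$ alone, "with no circular dependence on the support property." The argument of \cite[Proposition 9.3]{Bridgeland:K3} needs a quadratic form that is negative definite on the kernel of the central charge and non-negative on the classes of potential destabilizers, and $\HDelta$ is not such a form here: in the coordinates $e_i=H^{3-i}\ch_i^\beta$ the kernel of $Z_{\alpha,\beta}^a$ is spanned by $(1,0,\tfrac12\alpha^2,0)$ and $(0,1,0,a)$, and $\HDelta=e_1^2-2e_0e_2$ equals $+1$ on the second vector, so its restriction to the kernel has signature $(1,1)$. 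In particular $\HDelta\ge 0$ does not bound the set of classes of potential destabilizers with prescribed central charge (one may translate any class by multiples of $(0,1,0,a)$ without leaving $\{\HDelta\ge 0\}$), and the dichotomy underlying your Cases (i)/(ii) --- that if $E$ does not stay semistable for all $a'\ge a$ then there is a first $a_0$ at which it is strictly semistable --- is not established. The repair is exactly the paper's move, and it is available inside your own induction: every potential destabilizer $F$ of $E$ along the path has $\Im Z(F)<\Im Z(E)$, hence satisfies $Q_K^\beta(F)\ge 0$ by the induction hypothesis, and $Q_K^\beta$ \emph{is} negative definite on $\ker Z_{\alpha,\beta}^{a'}$ for all $a'\ge a$. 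So the induction hypothesis must be invoked to control the wall structure itself, not only the Jordan-H\"older factors at the wall.

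The second issue is the one you flag yourself: Case (ii) is left as a sketch. The paper isolates this as Lemma \ref{lem:LVL}: if $E$ is $\sigma_{\alpha,\beta}^{a'}$-semistable for all $a'\gg 0$, then either $H^{-1}_\beta(E)=0$ and $H^0_\beta(E)$ is $\nu_{\alpha,\beta}$-semistable, or $H^{-1}_\beta(E)$ is $\nu_{\alpha,\beta}$-semistable and $H^0_\beta(E)$ is zero or zero-dimensional; the proof is precisely the asymptotic comparison of real parts you indicate ($\Re Z_{\alpha,\beta}^{a'}(H^{-1}_\beta(E)[1])\to-\infty$ unless $H^{-1}_\beta(E)=0$, while $\Im Z$ is constant). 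In the second alternative one then checks $\HDelta(E)=\HDelta(H^{-1}_\beta(E))$ and $\HNablab(E)\ge\HNablab(H^{-1}_\beta(E))$ using $H^2\ch_1^\beta(E)<0$ and $\ch_3^\beta(H^0_\beta(E))\ge 0$; this is the same bookkeeping as in your base case, with the opposite sign of $H^2\ch_1^\beta$. With these two points filled in, your argument coincides with the paper's proof.
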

We first need an analogue of Lemma \ref{lem:largelimit}.

Let us denote by $H_\beta^i$ the $i$-th cohomology object with respect to the t-structure $\Coh^{\beta}(X)$.

\begin{Lem}\label{lem:LVL}
Let $E\in\AA^{\alpha,\beta}(X)$ be a $\sigma_{\alpha,\beta}^a$-semistable object, for all $a \gg 1$
sufficiently big.  Then it satisfies one of the following conditions:
\begin{enumerate}
\item $H^{-1}_\beta(E)=0$ and $H^0_\beta(E)$ is $\nu_{\alpha,\beta}$-semistable;
\item $H^{-1}_\beta(E)$ is $\nu_{\alpha,\beta}$-semistable and $H^0_\beta(E)$ is either 0 or supported in dimension 0.
\end{enumerate}
\end{Lem}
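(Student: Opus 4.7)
My plan is to mimic the proof of Lemma \ref{lem:largelimit}, one tilt level higher. The starting point is the canonical short exact sequence in $\AA^{\alpha,\beta}(X)$
\[
0\to H^{-1}_\beta(E)[1]\to E\to H^0_\beta(E)\to 0,
\]
with $H^{-1}_\beta(E)\in\FF'_{\alpha,\beta}$ and $H^0_\beta(E)\in\TT'_{\alpha,\beta}$. I will use the asymptotic behaviour of the slope $\lambda_{\sigma^a_{\alpha,\beta}}$ as $a\to+\infty$ to force this sequence to degenerate in the desired sense.

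The key observation is that the imaginary part $\Im Z^a_{\alpha,\beta}(F)$ is independent of $a$, while $\Re Z^a_{\alpha,\beta}(F)=-\ch^\beta_3(F)+aH^2\ch^\beta_1(F)$ is dominated by the linear term whenever $H^2\ch^\beta_1(F)>0$. Combined with the fact that any nonzero object of $\FF'_{\alpha,\beta}$ must have $H^2\ch^\beta_1>0$ (otherwise its tilt-slope is $+\infty$, forcing it into $\TT'_{\alpha,\beta}$), this yields $\lambda_{\sigma^a}(H^{-1}_\beta(E)[1])\to+\infty$ as soon as $H^{-1}_\beta(E)\neq 0$, and $\lambda_{\sigma^a}(F)\to-\infty$ for any $F\in\TT'_{\alpha,\beta}$ with $H^2\ch^\beta_1(F)>0$ and $\Im Z(F)>0$. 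If $H^{-1}_\beta(E)\neq 0$ while $H^0_\beta(E)$ also had $H^2\ch^\beta_1>0$, then $H^0_\beta(E)\in\TT'$ would force $\Im Z(H^0_\beta(E))>0$, and the semistability inequality $\lambda_{\sigma^a}(H^{-1}_\beta(E)[1])\le\lambda_{\sigma^a}(H^0_\beta(E))$ would be violated for $a\gg 0$. To upgrade ``$H^2\ch^\beta_1(H^0_\beta(E))=0$'' to ``$H^0_\beta(E)$ is supported in dimension $0$'', I would rule out any nonzero quotient $H^0_\beta(E)\twoheadrightarrow Q$ with $Q$ a sheaf supported on a curve: such a $Q$ satisfies $\Im Z^a(Q)=H\ch_2(Q)>0$ and $\lambda_{\sigma^a}(Q)$ is bounded in $a$, so the composite epimorphism $E\twoheadrightarrow Q$ in $\AA^{\alpha,\beta}(X)$ produces the same contradiction via $H^{-1}_\beta(E)[1]\hookrightarrow E$.

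To verify $\nu_{\alpha,\beta}$-semistability of the surviving cohomology I would argue by contradiction using its tilt HN filtration. In Case (1), a $\nu_{\alpha,\beta}$-destabilizing subsheaf $F\subset E$ in $\Coh^\beta(X)$ has $\nu(F)>\nu(E/F)>0$, so both $F,\,E/F\in\TT'_{\alpha,\beta}\subset\AA^{\alpha,\beta}(X)$; using $\Im Z^a(F)=H^2\ch^\beta_1(F)\,\nu(F)$ one computes $\lambda_{\sigma^a}(F)\sim -a/\nu(F)$ and $\lambda_{\sigma^a}(E/F)\sim -a/\nu(E/F)$, and the inequality $0<\nu(E/F)<\nu(F)$ gives $\lambda_{\sigma^a}(F)>\lambda_{\sigma^a}(E/F)$ for $a\gg 0$, a contradiction. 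Case (2) is handled symmetrically: a $\nu$-destabilizing subobject $F\subset H^{-1}_\beta(E)$ in $\Coh^\beta(X)$ with both $F$ and $H^{-1}_\beta(E)/F$ in $\FF'_{\alpha,\beta}$ produces, via the shift, a destabilizing subobject $F[1]\hookrightarrow E$ in $\AA^{\alpha,\beta}(X)$, and the same asymptotic calculation yields the contradiction.

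The main obstacle I anticipate is the careful bookkeeping of the degenerate cases where $\Im Z$ or $H^2\ch^\beta_1$ vanishes on some HN piece — these correspond to phase-$1$ subquotients, and treating them requires a small case analysis inside the iterated tilt structure of $\AA^{\alpha,\beta}(X)$, using that objects of $\Coh^\beta(X)$ with $H^2\ch^\beta_1=0$ are constrained by the torsion/torsion-free decomposition of the first tilt. Once these boundary cases are absorbed, the slope asymptotics above close out both the vanishing statement and the $\nu_{\alpha,\beta}$-semistability statement uniformly in all sufficiently large $a$.
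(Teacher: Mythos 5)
Your proposal is correct and follows essentially the same route as the paper: the canonical short exact sequence $0 \to H^{-1}_\beta(E)[1] \to E \to H^0_\beta(E) \to 0$ in $\AA^{\alpha,\beta}(X)$, the observation that $\Im Z^a_{\alpha,\beta}$ is independent of $a$ while $\Re Z^a_{\alpha,\beta}$ grows linearly in $a$ with coefficient $H^2\ch_1^\beta$, and the passage to the limit $a\to+\infty$ (where $Z^a_{\alpha,\beta}$ recovers $\overline{Z}_{\alpha,\beta}$ up to rescaling the real part) to deduce $\nu_{\alpha,\beta}$-semistability of the surviving cohomology object. Your explicit slope asymptotics $\lambda_{\sigma^a}\sim -a/\nu_{\alpha,\beta}$ and the treatment of curve-supported quotients just spell out in more detail what the paper's proof states in two sentences.
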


\begin{proof}
Consider the exact sequence
\[
0 \to H^{-1}_\beta(E)[1] \to E \to H^0_\beta(E) \to 0.
\]
in $\AA^{\alpha,\beta}(X)$. For $a\to +\infty$, we have
\[
\Re Z_{\alpha, \beta}^a\left(H^{-1}_\beta(E)[1]\right) = -a H^2 \ch_1^\beta(H^{-1}_\beta(E)) +
\mathrm{const} \to -\infty
\]
unless $H^{-1}_\beta(E) = 0$, and
\[
\Re Z_{\alpha, \beta}^a\left(H^0_\beta(E)\right) = a H^2 \ch_1^\beta(H^0_\beta(E)) - \ch_3^\beta(H^0_\beta(E))
\ge -\ch_3^\beta(H^0_\beta(E)).
\]
Their imaginary parts are constant, with $\Im Z_{\alpha, \beta}^a\left(H^0_\beta(E)\right) \neq 0$
unless $H^0_\beta(E)$ is supported in dimension zero. This means that $E$ is $\sigma_{\alpha,
\beta}^a$-unstable for $a \gg 0$ unless $H^{-1}_\beta(E) = 0$, or $H^0_\beta(E)$ is a zero-dimensional
torsion sheaf, or $H^0_\beta(E) = 0$.

In the limit $a \to +\infty$, we have $Z_{\alpha, \beta}^a \to \overline{Z}_{\alpha, \beta}$ up to
rescaling of the real part; this implies the $\nu_{\alpha, \beta}$-semistability of the cohomology
objects in both cases.
\end{proof}

We have already proved the analogue of Lemma \ref{lem:discriminantsignature}, as part of Lemma
\ref{lem:negativedefinite}. This also enables us to use the result from Appendix
\ref{app:SupportProperty}.

\begin{proof}[Proof of Lemma \ref{lem:RationalBGinDim3}]
Throughout the proof, we fix $\alpha$ and $\beta$.

If $E$ is strictly $\sigma_{\alpha,\beta}^a$-semistable, and if \eqref{eq:newBGdim3} holds for
all of the Jordan-H\"older factors $E_i$ of $E$, then by Lemma \ref{lem:blah}, it also holds for
$E$. We may therefore assume that $E$ is \emph{stable}.

We also notice that if $F\in\Coh^{\beta}(X)$ is $\nu_{\alpha,\beta}$-semistable, then
Conjecture \ref{con:BGgeneral} and Theorem \ref{thm:BGvariants} show that in particular, it satisfies
$Q_K^\beta(F) \ge 0$ for every $K > \alpha^2$.

We proceed by induction on $f(E):=H\ch_2^\beta(E) - \frac{\alpha^2 \, H^3}{2} \ch_0^\beta(E) = \Im Z_{\alpha, \beta}^a(E)$, which
is a non-negative function on $\AA^{\alpha, \beta}(X)$ with discrete values.

We fix $a_0 > \frac 16 \alpha^2$ and $K \in (\alpha^2, 6a_0)$.
Let $E$ be a $\sigma_{\alpha,\beta}^{a_0}$-stable object in $\AA^{\alpha,\beta}(X)$.

If $E$ remains $\sigma_{\alpha,\beta}^a$-semistable, for all $a > a_0$, then by Lemma \ref{lem:LVL}
either $E=H_\beta^0(E)$ is $\nu_{\alpha,\beta}$-semistable, or $H_\beta^{-1}(E)$ is $\nu_{\alpha,\beta}$-semistable and
$H^0(E)$ is either 0 or supported in dimension 0.
In the first case, we already pointed out above that $E$ satisfies \eqref{eq:newBGdim3}.
In the second case, $H^2\ch_1^\beta(E)=H^2\ch_1^\beta(H_\beta^{-1}(E)[1])<0$ and $\ch_3^\beta(H_\beta^0(E))\geq0$.
Therefore $\HDelta(E) = \HDelta(H_\beta^{-1}(E))$ and
\[
\HNablab(E) = \HNablab(H^{-1}_\beta(E)) - 6 H^2 \ch_1^\beta(H^{-1}_\beta(E)[1])
\ch_3^\beta(H^0_\beta(E)) \ge \HNablab(H^{-1}_\beta(E)).
\]
Since \eqref{eq:newBGdim3} holds for $H_\beta^{-1}(E)$, it holds also for $E$.

Otherwise, $E$ will be unstable for $a$ sufficiently big.
Every possibly destabilizing subobject or quotient $F$ has $f(F) < f(E)$ (since $f$ is non-negative,
and since the subcategory of objects $F \in \AA^{\alpha, \beta}(X)$ with $f(F) = 0$ has maximum
possible slope with respect to $Z_{\alpha, \beta}^a$ for all $a$).

Therefore they obey the induction assumption; since $K \in (\alpha^2, 6a_0) \subset (\alpha^2,
6a)$, this means that all these possible subobject or quotients satisfy \eqref{eq:newBGdim3} with
respect to our choice of $K$.
Since $Z_{\alpha, \beta}^a$ has negative definite kernel with respect to $Q_K^\beta$ for all $a \ge
a_0$, this is equivalent to a support property type statement, see Appendix
\ref{app:SupportProperty}. It follows that $E$ satisfies well-behaved wall-crossing along our path.
Hence, there will exist $a_1> a_0$ such that $E$ is strictly $\sigma_{\alpha,\beta}^{a_1}$-semistable.
But all the Jordan-H\"older factors $E_i$ of $E$ have strictly smaller $f$. Using the induction
assumption again, we see that they satisfy $Q_K^\beta(E_i) \ge 0$; therefore, we can again
apply Lemma \ref{lem:blah} to deduce the same claim for $E$.
\end{proof}

The combination of Lemma \ref{lem:skewtocubic}, Theorem \ref{thm:rationalconstruction} and Lemma
\ref{lem:RationalBGinDim3} together with Proposition \ref{prop:EffectiveDefo} leads to the following result: for each
tuple $\alpha, \beta, a, b$ as in Theorem \ref{thm:rationalconstruction} (in particular $\alpha,
\beta \in \Q$), we obtain an open subset $U(\alpha, \beta, a, b) \subset \Stab_H(X)$ of stability
conditions by deforming the pair $(Z_{\alpha, \beta}^{a, b}, \AA^{\alpha, \beta}(X))$. The
associated open subsets $\ZZ(U(\alpha, \beta, a, b))$ of central charges combine to cover the set
$\PPP$. To conclude the proof of Theorems \ref{thm:mainPPP} and \ref{thm:BGinDim3}, we need to show
that the sets $U(\alpha, \beta, a, b)$ glue to form a continuous family covering $\widetilde \PPP$.

This is done by the following analogue of Proposition \ref{prop:DeformingTiltStability}:

\begin{Prop}\label{prop:DeformingBridgelandStability}
There is a continuous family of Bridgeland stability conditions in ${\Stab}_H(X)$,
parameterized by the set
\[ (\alpha, \beta, a) \in \R_{>0}\times \R \times \R, \quad a > \frac 16 \alpha^2 \]
via
\[
(\alpha,\beta, a) \mapsto
	\sigma^a_{\alpha, \beta} := \left(Z_{\alpha, \beta}^a, \AA^{\alpha, \beta}(X)\right).
\]
\end{Prop}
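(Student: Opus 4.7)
The plan is to extend the rational case handled by Lemma \ref{lem:RationalBGinDim3} to the full parameter space via the effective deformation result, Proposition \ref{prop:EffectiveDefo} of Appendix \ref{app:SupportProperty}. The strategy closely parallels the proof of the tilt-stability analogue, Proposition \ref{prop:DeformingTiltStability}.

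First, I would fix a base point $(\alpha_0,\beta_0,a_0)$ with $\alpha_0,\beta_0\in\Q$ and $a_0>\alpha_0^2/6$. By Lemma \ref{lem:RationalBGinDim3}, $\sigma^{a_0}_{\alpha_0,\beta_0}=(Z^{a_0}_{\alpha_0,\beta_0},\AA^{\alpha_0,\beta_0}(X))$ is a Bridgeland stability condition satisfying the support property with respect to $Q_K^{\beta_0}$ for every $K\in I_{\alpha_0}^{a_0}=(\alpha_0^2,6a_0)$; I fix one such $K$, for instance $K=(\alpha_0^2+6a_0)/2$. Proposition \ref{prop:EffectiveDefo} then produces an open neighborhood $U\subset\Stab_H(X)$ of $\sigma^{a_0}_{\alpha_0,\beta_0}$ whose central charges range over those $Z\in\Hom(\Lambda_H,\C)$ whose kernel remains negative definite with respect to $Q_K^{\beta_0}$.

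Next, by Lemma \ref{lem:negativedefinite} at the base point together with continuity, the kernel of $Z^a_{\alpha,\beta}$ is negative definite with respect to $Q_K^{\beta_0}$ for all $(\alpha,\beta,a)$ in an open neighborhood $W$ of $(\alpha_0,\beta_0,a_0)$. This yields, via $U$, a continuous family $(\alpha,\beta,a)\mapsto\widetilde\sigma^a_{\alpha,\beta}$ over $W$ whose central charge is exactly $Z^a_{\alpha,\beta}$. The main obstacle is to identify the heart of $\widetilde\sigma^a_{\alpha,\beta}$ with the explicitly defined $\AA^{\alpha,\beta}(X)$. I would argue this in two steps: (a) by Remark \ref{rmk:OpennessAndHN} applied to slope- and tilt-stability, the torsion pair defining $\Coh^\beta(X)\subset\Db(X)$, and the torsion pair $(\TT'_{\alpha,\beta},\FF'_{\alpha,\beta})$ defining $\AA^{\alpha,\beta}(X)$ inside $\Coh^\beta(X)$, are both locally constant in $(\alpha,\beta)$ away from a locally finite collection of walls; after shrinking $W$ I may assume no such wall is crossed. (b) The heart produced by the abstract deformation can differ from $\AA^{\alpha_0,\beta_0}(X)$ only by the standard tilts at phases $0$ and $1$ arising from objects whose phase under $\widetilde\sigma^a_{\alpha,\beta}$ crosses these integers; tracking the resulting torsion pairs using (a) identifies them with those defining $\AA^{\alpha,\beta}(X)$. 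Hence $\widetilde\sigma^a_{\alpha,\beta}=\sigma^a_{\alpha,\beta}$ throughout $W$.

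Finally, since rational base points $(\alpha_0,\beta_0,a_0)$ are dense in the region $\{a>\alpha^2/6\}$, the neighborhoods $W$ cover the full parameter space. Any two such local families agree on overlaps by the uniqueness part of Bridgeland's deformation theorem, applied along a path in the overlap connecting the two base points; they therefore glue to a globally defined continuous family $(\alpha,\beta,a)\mapsto\sigma^a_{\alpha,\beta}$ of Bridgeland stability conditions on $\Db(X)$, as required.
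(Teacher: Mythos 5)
Your overall skeleton (start from a rational base point where Lemma \ref{lem:RationalBGinDim3} gives the support property, deform the central charge via Proposition \ref{prop:EffectiveDefo}, identify the deformed heart with $\AA^{\alpha,\beta}(X)$, then glue by density and uniqueness) is the same as the paper's. But step (a) of your heart-identification argument contains a genuine gap: the torsion pairs defining $\Coh^\beta(X)$ and $\AA^{\alpha,\beta}(X)$ are \emph{not} locally constant in $(\alpha,\beta)$, and no shrinking of $W$ avoids this. The locally finite wall-and-chamber structure of Remark \ref{rmk:OpennessAndHN} holds only for a \emph{fixed} numerical class; ranging over all objects, every neighbourhood of $(\alpha_0,\beta_0)$ contains walls. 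Concretely, for any $\epsilon>0$ there are tilt-stable objects $F$ with $0<\nu_{\alpha_0,\beta_0}(F)<\epsilon$, and these move from $\TT'_{\alpha_0,\beta_0}$ to $\FF'_{\alpha,\beta}$ under arbitrarily small perturbations, so $\AA^{\alpha,\beta}(X)\neq\AA^{\alpha_0,\beta_0}(X)$ for all nearby $(\alpha,\beta)\neq(\alpha_0,\beta_0)$. Since (b) relies entirely on (a) (``tracking the resulting torsion pairs using (a)''), the identification of the deformed heart with $\AA^{\alpha,\beta}(X)$ is not established.

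The missing idea is a quantitative control of exactly those objects whose tilt-slope crosses $0$ or $\infty$: the paper proves (Lemma \ref{lem:Zpositive}, using Conjecture \ref{con:BGgeneral} together with Theorem \ref{thm:BGvariants}) that a $\nu_{\alpha,\beta}$-stable object $E$ with $\abs{\nu_{\alpha,\beta}(E)}<\epsilon$ satisfies $\Re Z_{\alpha,\beta}^a(E)>0$, hence lies in $\PP_0\bigl(\bigl(-\tfrac12,\tfrac12\bigr)\bigr)$ (Lemma \ref{lem:letstry}). This shows that both the deformed heart $\PP_2((0,1])$ and the explicit heart $\AA^{\alpha,\beta}(X)$ are tilts of the common heart $\BB=\PP_0\bigl(\bigl(-\tfrac12,\tfrac12\bigr]\bigr)$, and then the uniqueness statement Lemma \ref{lem:thesame} (two pre-stability conditions with the same central charge whose hearts are both tilts of a common $\BB$ coincide) forces them to agree. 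Note also that this route simultaneously supplies the Harder--Narasimhan property for $\sigma^a_{\alpha,\beta}$ at irrational $(\alpha,\beta)$, which Theorem \ref{thm:rationalconstruction} does not cover and which your proposal would otherwise still owe.
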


Indeed, deformations of the central charge $Z_{\alpha, \beta}^{a, b}$ for $b \neq 0$ (while keeping
$\alpha, \beta, a$ fixed) do not change the heart, as modifying $b$ only affects the real part of
the central charge. Acting on these stability conditions by $\GL_2^+(\R)$ produces the entire set
$\widetilde \PPP$.

To prove Proposition \ref{prop:DeformingBridgelandStability}, we need a few preliminary results. We
will use the notion of a \emph{pre-stability condition}, which is a stability condition that does
not necessarily satisfy the support property; see Appendix \ref{app:SupportProperty}. The
first result already appears implicitly in \cite[Section 10]{Bridgeland:K3}.

\begin{Lem} \label{lem:thesame}
Assume that $\sigma_1 = (Z, \AA_1)$ and $\sigma_2 = (Z, \AA_2)$ are two pre-stability
conditions with the following properties:
\begin{enumerate}
\item Their central charges agree.
\item There exists a heart $\BB$ of a bounded t-structure such that each $\AA_i$ can be obtained as
a tilt of $\BB$:
\[
\AA_1, \AA_2 \subset \langle \BB, \BB[1] \rangle.
\]
\end{enumerate}
Then $\sigma_1 = \sigma_2$.
\end{Lem}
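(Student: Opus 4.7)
The plan is to show that the torsion pair $(\TT_i, \FF_i)$ in $\BB$ corresponding to the tilt $\AA_i = \langle \TT_i, \FF_i[1]\rangle$ is uniquely determined by $Z$ and $\BB$, independently of the index $i$. Since a torsion pair is determined by its torsion part (the other half being the right-orthogonal in $\BB$), it suffices to characterize $\TT_i = \AA_i \cap \BB$ intrinsically. Once I establish $\TT_1 = \TT_2$, the equalities $\FF_1 = \FF_2$ and then $\AA_1 = \AA_2$ follow, and the coincidence of the slicings $\PP_1 = \PP_2$ is then automatic from the standard fact that a pre-stability condition is recovered from its heart and central charge via the slope function and HN filtrations in $\AA_i$.

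The main step is to prove the following characterization: for $E \in \BB$, one has $E \in \TT_i$ if and only if every nonzero quotient $E \twoheadrightarrow G$ in $\BB$ satisfies $Z(G) \in \H \cup \R_{<0}$, where $\H$ denotes the open upper half plane. For the ``only if'' direction, I would invoke that the torsion part of a torsion pair is closed under quotients in $\BB$, so such a $G$ lies in $\TT_i \subset \AA_i$, and the positivity axiom \eqref{item:positivity} then places $Z(G)$ in the semi-closed upper half plane $\H \cup \R_{<0}$. For the ``if'' direction, I would examine the canonical torsion decomposition $0 \to T \to E \to F \to 0$ in $\BB$ with $T \in \TT_i$ and $F \in \FF_i$: the quotient $F$ either vanishes or, by hypothesis, has $Z(F) \in \H \cup \R_{<0}$; on the other hand, $F \in \FF_i$ means $F[1] \in \AA_i$, and positivity applied to $F[1]$ forces $-Z(F) \in \H \cup \R_{<0}$, hence $Z(F) \in -\H \cup \R_{>0}$ whenever $F \neq 0$. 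Since $\H \cup \R_{<0}$ and $-\H \cup \R_{>0}$ are disjoint, this contradiction forces $F = 0$ and hence $E = T \in \TT_i$.

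The delicate point, which I would verify most carefully, concerns the kernel of $Z$: a priori one might worry about a nonzero $F \in \FF_i$ with $Z(F) = 0$, which would evade both regions above. This case is excluded precisely by the strict version of the positivity axiom in the definition of pre-stability condition, which forbids nonzero objects of $\AA_i$ from having $Z = 0$; applied to $F[1] \in \AA_i$, this gives $F = 0$ whenever $Z(F) = 0$. Modulo this check, the characterization of $\TT_i$ uses only $Z$ and $\BB$, so $\TT_1 = \TT_2$, and consequently $\sigma_1 = \sigma_2$.
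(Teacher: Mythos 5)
Your proof is correct and rests on the same two ingredients as the paper's: the torsion class of a torsion pair in $\BB$ is closed under quotients, and the positivity axiom places $Z$ of nonzero objects of $\TT_i$ and of $\FF_i$ in two disjoint half-planes, so no nonzero object can satisfy both constraints. The paper phrases this as a direct double inclusion (take $T\in\TT_2$, decompose it with respect to $(\TT_1,\FF_1)$, and conclude that the torsion-free quotient lies in $\TT_2\cap\FF_1=\{0\}$), whereas you package the identical mechanism as an intrinsic characterization of $\TT_i$ in terms of $Z$ and $\BB$ alone; the difference is purely presentational.
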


\begin{proof}
By \cite[Lemma 1.1.2]{Polishchuk:families-of-t-structures}, for $i=1,2$, $\AA_i$ is a tilt of $\BB$ with respect to the torsion pair
\[
\TT_i := \BB \cap \AA_i \quad \text{ and } \quad \FF_i := \BB \cap \AA_i[-1].
\]
We need to show that $\TT_1=\TT_2$ and $\FF_1=\FF_2$; in fact, since $\FF_i = \TT_i^\perp$, it is
enough to show $\TT_1 = \TT_2$. Observe that, since the
central charges agree, we have $\TT_2 \cap \FF_1 = \{0\} = \TT_1 \cap \FF_2$.

We let $T\in\TT_2$.  Consider the exact sequence in $\BB$
\[
0 \to T_1 \to T \to F_1 \to 0,
\]
with $T_1\in\TT_1$ and $F_1\in\FF_1$.
Since the torsion part of any torsion pair is closed under quotients, $F_1\in\TT_2$, contradicting
the observation above.
Hence, $T\in\TT_1$, and so $\TT_2\subseteq\TT_1$. The reverse inclusion follows similarly.
\end{proof}

\begin{Lem} \label{lem:Zpositive}
There exists a continuous positive function $\epsilon(\alpha, \beta, a) > 0$ with the following
property:
if $E \in \Coh^\beta(X)$ is $\nu_{\alpha, \beta}$-stable with
\[ \abs{\nu_{\alpha, \beta}(E)} < \epsilon(\alpha, \beta, a), \]
then $\Re Z_{\alpha, \beta}^a(E) > 0$.
\end{Lem}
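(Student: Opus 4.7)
The plan is to combine Conjecture \ref{con:BGgeneral} (which we are assuming) with the classical inequality $\HDelta(E) \ge 0$ from Theorem \ref{thm:BGvariants}. Introduce the coordinates $e_i := H^{3-i}\ch_i^\beta(E)$, so that $\HDelta(E) = e_1^2 - 2 e_0 e_2$ and $\nu := \nu_{\alpha,\beta}(E) = (e_2 - \tfrac{1}{2}\alpha^2 e_0)/e_1$. The assumption $|\nu| < \infty$ forces $e_1 > 0$, so $\Re Z_{\alpha,\beta}^a(E) = a e_1 - e_3$, and the strict positivity we want is equivalent to $6 a e_1^2 - 6 e_1 e_3 > 0$. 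First I would bound $e_3$ above using Conjecture \ref{con:BGgeneral}, which gives $6 e_1 e_3 \le \alpha^2 \HDelta(E) + 4 e_2^2$, so it suffices to show
\[
6 a e_1^2 - \alpha^2 \HDelta(E) - 4 e_2^2 > 0.
\]

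Substituting $\HDelta(E) = e_1^2 - 2 e_0 e_2$ and $e_2 = \tfrac{1}{2}\alpha^2 e_0 + \nu e_1$ into the left hand side and expanding, the $\alpha^4 e_0^2$ terms cancel and the expression simplifies to
\[
e_1 \bigl[ (6a - \alpha^2 - 4\nu^2)\, e_1 - 2\alpha^2 \nu e_0 \bigr].
\]
Since $e_1 > 0$, the sign is controlled by the bracketed quantity. To obtain uniformity in $E$, the second step is to bound $|e_0|$ in terms of $e_1$ using $\HDelta(E) \ge 0$: the inequality $e_1^2 \ge 2 e_0 e_2 = \alpha^2 e_0^2 + 2\nu e_0 e_1$ rearranges to $(e_1 - \nu e_0)^2 \ge (\alpha^2 + \nu^2) e_0^2$, from which
\[
|e_0| \;\le\; \frac{e_1\bigl(\sqrt{\alpha^2+\nu^2} + |\nu|\bigr)}{\alpha^2}.
\]

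Combining the two bounds, positivity of the bracketed quantity is implied by the purely numerical condition
\[
6a - \alpha^2 \;>\; 6\nu^2 + 2|\nu|\sqrt{\alpha^2 + \nu^2},
\]
whose left hand side is strictly positive by the standing assumption $a > \tfrac{1}{6}\alpha^2$ and whose right hand side vanishes at $\nu = 0$. A continuous, strictly positive solution $\epsilon(\alpha,a)$ therefore exists, and depends continuously on $(\alpha,a)$ (trivially on $\beta$), giving the required function $\epsilon(\alpha,\beta,a)$. The argument is elementary once the two quadratic inequalities are in hand; the only mild subtlety is verifying the cancellation that reduces everything to a homogeneous condition in $(e_0,e_1)$, so that no absolute lower bound on $e_1$ is needed and the resulting $\epsilon$ is uniform across all stable $E$.
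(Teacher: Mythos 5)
Your argument is correct and is essentially the paper's proof: both combine Conjecture \ref{con:BGgeneral} (to bound $\ch_3^\beta$) with $\HDelta(E)\ge 0$ from Theorem \ref{thm:BGvariants}, and reduce to a purely numerical condition on $\nu$ that holds for $|\nu|$ small because $6a-\alpha^2>0$. The only cosmetic difference is that the paper uses $\HDelta\ge 0$ to bound $\abs{H\ch_2^\beta(E)}$ by $\tfrac{\epsilon+\sqrt{\alpha^2+\epsilon^2}}{2}H^2\ch_1^\beta(E)$, whereas you equivalently bound $\abs{H^3\ch_0(E)}$ in terms of $H^2\ch_1^\beta(E)$.
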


\begin{proof}
We first apply Conjecture \ref{con:BGgeneral}, rewriting \eqref{eq:BGgeneral} as 
\begin{equation} \label{eq:firstestimate}
6 \ch_3(E) \le \alpha^2 H^2\ch_1^\beta(E) + 4 H \ch_2^\beta(E) \nu_{\alpha, \beta}(E).
\end{equation}
Now we apply Theorem \ref{thm:BGvariants}. First of all, we can rewrite
$\HDelta(E) \ge 0$ as
\[
\left(H^2 \ch_1^\beta\right)^2
+ \frac 1{\alpha^2} \left(H \ch_2^\beta - \frac{\alpha^2}2 H^3 \ch_0^\beta\right)^2
- \frac 1{\alpha^2} \left(H \ch_2^\beta + \frac{\alpha^2}2 H^3 \ch_0^\beta\right)^2 \ge 0.
\]
By assumption, 
\[ \abs{H \ch_2^\beta - \frac{\alpha^2}2 H^3 \ch_0^\beta} < \epsilon H^2 \ch_1^\beta \]
and therefore 
\[ \abs{H \ch_2^\beta + \frac{\alpha^2}2 H^3 \ch_0^\beta} < \sqrt{\alpha^2 +
\epsilon^2}H^2 \ch_1^\beta. \]
Summing up the last two equations we obtain
\[ \abs{H \ch_2^\beta} \le \frac{\epsilon + \sqrt{\alpha^2 + \epsilon^2}}{2}H^2 \ch_1^\beta.\]
Plugging this into \eqref{eq:firstestimate}, we obtain the desired claim.
\end{proof}

\begin{Lem} \label{lem:letstry}
We keep the notation as in the previous lemma.
If $E \in \Coh^\beta(X)$ is $\nu_{\alpha, \beta}$-stable with
$\abs{\nu_{\alpha, \beta}(E)} < \epsilon$, then
$E \in \PP_{\alpha, \beta}^{a}((-\frac 12, \frac 12))$.
\end{Lem}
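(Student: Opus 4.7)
The plan is to analyze the Harder-Narasimhan filtration of $E$ (respectively $E[1]$) in $\AA^{\alpha,\beta}(X)$ with respect to $\sigma^a_{\alpha,\beta}$ and show that all factors have the required phase. I split into two cases according to the sign of $\nu_{\alpha, \beta}(E)$. If $\nu_{\alpha, \beta}(E) > 0$, then $E \in \TT'_{\alpha,\beta} \subset \AA^{\alpha, \beta}(X)$, and the goal becomes that each HN factor has phase strictly less than $\tfrac{1}{2}$; if $\nu_{\alpha,\beta}(E) \le 0$, then $E \in \FF'_{\alpha,\beta}$, so $E[1] \in \AA^{\alpha, \beta}(X)$, and the goal becomes that each HN factor of $E[1]$ has phase strictly greater than $\tfrac{1}{2}$. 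The two cases are dual; I focus on the first.

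Suppose for contradiction that $F \into E$ is a maximal-phase HN factor of $E$ in $\AA^{\alpha,\beta}(X)$ with $\phi(F) \ge \tfrac{1}{2}$, equivalently $\Re Z_{\alpha,\beta}^a(F) \le 0$. Applying the long exact sequence of $\Coh^\beta(X)$-cohomology to $0 \to F \to E \to G \to 0$ in $\AA^{\alpha,\beta}(X)$, and using $E \in \TT'_{\alpha,\beta} \subset \Coh^\beta(X)$, I first deduce $H_\beta^{-1}(F) = 0$, so $F \in \TT'_{\alpha,\beta} \subset \Coh^\beta(X)$; and I then obtain an exact sequence in $\Coh^\beta(X)$
\[
0 \to A \to F \to K \to 0,
\]
with $A := H_\beta^{-1}(G) \in \FF'_{\alpha,\beta}$ and $K$ a subobject of $E$ in $\Coh^\beta(X)$ (whose cokernel $H_\beta^0(G) \in \TT'_{\alpha,\beta}$).

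The key step is to show that every subobject $G' \subset F$ in $\Coh^\beta(X)$ satisfies $\nu_{\alpha,\beta}(G') \le \nu_{\alpha, \beta}(E)$. For this, I consider the short exact sequence $0 \to G' \cap A \to G' \to G'/(G' \cap A) \to 0$: its first term lies in $\FF'_{\alpha,\beta}$, hence has $\nu \le 0$, while its last term embeds into $K \subset E$, so by the $\nu_{\alpha,\beta}$-stability of $E$ it has $\nu \le \nu_{\alpha,\beta}(E)$. Using $\nu_{\alpha,\beta}(E) > 0$ together with additivity and nonnegativity of $H^2 \ch_1^\beta$ on $\Coh^\beta(X)$, a short averaging argument yields $\nu_{\alpha,\beta}(G') \le \nu_{\alpha,\beta}(E)$. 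Hence $\nu^+_{\alpha,\beta}(F) \le \nu_{\alpha, \beta}(E) < \epsilon$, and since $F \in \TT'_{\alpha,\beta}$ also gives $\nu^-_{\alpha,\beta}(F) > 0$, every Jordan-H\"older factor $J$ of $F$ in $\Coh^\beta(X)$ is $\nu_{\alpha,\beta}$-stable with $|\nu_{\alpha,\beta}(J)| < \epsilon$. Lemma \ref{lem:Zpositive} then gives $\Re Z_{\alpha, \beta}^a(J) > 0$ for each such $J$, and summing over the Jordan-H\"older refinement of the HN filtration of $F$ in $\Coh^\beta(X)$ produces $\Re Z_{\alpha, \beta}^a(F) > 0$, the desired contradiction.

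The case $\nu_{\alpha,\beta}(E) \le 0$ is handled by the dual argument applied to a minimal-phase HN factor $Q = P[1]$ of $E[1]$ in $\AA^{\alpha,\beta}(X)$: the parallel cohomology analysis produces $P \in \FF'_{\alpha,\beta}$ fitting in an exact sequence $0 \to M \to P \to B \to 0$ in $\Coh^\beta(X)$, with $M$ a quotient of $E$ and $B \in \TT'_{\alpha,\beta}$. Dualizing the subobject estimate shows that every quotient of $P$ in $\Coh^\beta(X)$ has $\nu \ge \nu_{\alpha,\beta}(E) > -\epsilon$, so all Jordan-H\"older factors of $P$ satisfy $|\nu_{\alpha,\beta}| < \epsilon$, and Lemma \ref{lem:Zpositive} yields $\Re Z_{\alpha,\beta}^a(P) > 0$, hence $\Re Z_{\alpha,\beta}^a(Q) < 0$ and $\phi(Q) > \tfrac{1}{2}$, again a contradiction. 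The main technical subtlety throughout is transferring the $\nu_{\alpha,\beta}$-stability of $E$ in the first tilt $\Coh^\beta(X)$ into bounds on slopes of sub- and quotient objects of putative destabilizers that a priori only live in the double tilt $\AA^{\alpha,\beta}(X)$, which is exactly what the long exact cohomology sequence accomplishes.
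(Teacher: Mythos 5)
Your proposal is correct and follows essentially the same route as the paper's proof: take the extremal-phase HN factor in $\AA^{\alpha,\beta}(X)$, use the long exact cohomology sequence with respect to $\Coh^\beta(X)$ to place it in $\TT'_{\alpha,\beta}$ (resp.\ $\FF'_{\alpha,\beta}$) and fit it between a piece of $\FF'_{\alpha,\beta}$ and a subobject of $E$, deduce that all its tilt-slopes lie in $(0,\epsilon)$ (resp.\ $(-\epsilon,0]$), and conclude via Lemma \ref{lem:Zpositive}. The only difference is that you spell out the slope estimate on subobjects that the paper dismisses as ``standard arguments,'' and phrase the conclusion as a contradiction rather than directly.
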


\begin{proof}
We consider just the case $0 < \nu_{\alpha, \beta}(E)$; the opposite case follows from dual
arguments.

By construction we know $E \in \AA^{\alpha, \beta} = \PP_{\alpha, \beta}^{a}((0,1])$. Let $A$ be
the HN-filtration factor of $E$ with respect to $\sigma_{\alpha, \beta}^a$ and with the largest phase, and consider the associated
short exact sequence $A \into E \onto B$ in $\AA^{\alpha, \beta}$. The associated long exact cohomology
with respect to $\Coh^\beta(X)$ shows that
$A \in \Coh^\beta(X) \cap \AA^{\alpha, \beta} = \TT'_{\alpha, \beta}$; moreover,
there is a sequence $H^{-1}(B) \into A \to E$ exact on the left with 
$H^{-1}(B) \in \FF'_{\alpha, \beta}$. 

Now consider the slopes appearing in the Harder-Narasimhan filtration of $A$ for tilt-stability
with respect to $\nu_{\alpha, \beta}$. By standard arguments using the observations in the previous
paragraph, all these slopes lie in the interval $(0, \epsilon)$. Lemma \ref{lem:Zpositive}
then implies $\Re Z_{\alpha, \beta}^a(A) > 0$, and therefore $E \in \PP_{\alpha, \beta}^a((0, \frac
12))$ as we claimed.
\end{proof}

\begin{proof}[Proof of Proposition \ref{prop:DeformingBridgelandStability}]
Consider a stability condition $\sigma_0 = (Z_0, \PP_0) := \sigma_{\alpha_0, \beta_0}^{a_0}$.
Let $\epsilon := \epsilon(\alpha_0, \beta_0, a_0)$ be as in Lemmas \ref{lem:Zpositive} and
\ref{lem:letstry}. 
Consider 
$(\alpha, \beta, a)$ sufficiently close to $(\alpha_0, \beta_0, a_0)$ (which we will make
precise shortly). 
Let $\sigma_1 := \sigma_{\alpha, \beta}^{a}$, and let $\sigma_2 = (Z_{\alpha, \beta}^a, \PP_2)$ be the stability condition with
central charge $Z_{\alpha, \beta}^{a}$ obtained by deforming $\sigma_0$. We want to apply 
Lemma \ref{lem:thesame} with $\BB = \PP_0\Big(\Big(-\frac 12, \frac 12\Big]\Big)$.

By the support property for $\sigma_0$, and the analogous property for tilt-stability, we can
require ``sufficiently close'' to mean that:
\begin{itemize}
\item if $E$ is $\sigma_2$-stable of phase $\phi$, then 
$E \in \PP_0((\phi-\epsilon, \phi+\epsilon))$, and
\item the analogous statement for tilt-stability with respect to $\nu_{\alpha, \beta}$ and
$\nu_{\alpha_0, \beta_0}$, respectively. This means that
if $E \in \Coh^{\beta}(X)$ is $\nu_{\alpha, \beta}$-semistable, and if $A_1, \dots,
A_m$ are the Harder-Narasimhan filtration factors of $E$ for tilt-stability with respect to
$\nu_{\alpha_0, \beta_0}$, then the phases of $\overline{Z}_{\alpha_0, \beta_0}(A_i)$ differs 
by at most $\epsilon$ from the phase of $\overline{Z}_{\alpha, \beta}(E)$, 
\end{itemize}
The first assumption implies that
\[ \PP_2((0,1]) \subset \PP_0((-\epsilon, 1+ \epsilon]) \subset \langle \BB, \BB[1] \rangle. \]

The second assumption implies that if $E$ is tilt-stable with respect to $\nu_{\alpha, \beta}$
and $\nu_{\alpha, \beta} > 0$, then all HN filtration factors $A_i$ of $E$ with respect
to $\nu_{\alpha_0, \beta_0}$ satisfy $\nu_{\alpha_0, \beta_0}(A_i) > - \epsilon$. 
In case $\nu_{\alpha_0, \beta_0}(A_i) > 0$ this implies $A_i \in \AA^{\alpha_0, \beta_0} = 
\PP_0((0, 1])$. Otherwise, if $- \epsilon < \nu_{\alpha_0, \beta_0}(A_i) < 0$, then
Lemma \ref{lem:letstry} shows $A_i \in \PP_0\big(\big(-\frac 12, \frac 12\big]\big)$; overall we obtain
\[ E \in \PP_0\Big(\Big(-\frac 12, 1\Big]\Big)\subset \langle \BB, \BB[1] \rangle.\]
A similar argument implies that if $E$ is tilt-stable with $\nu_{\alpha, \beta}\le 0$,
then $E[1] \in \langle \BB, \BB[1] \rangle$. Combined, these two facts show that
$\AA^{\alpha, \beta} \subset \langle \BB, \BB[1] \rangle$.

We have verified all the assumptions of Lemma \ref{lem:thesame}, which implies
$\sigma_1 = \sigma_2$.
\end{proof}

Let us also mention the following property:

\begin{Prop}[{\cite[Proposition 2.1]{Dulip-Antony:I}}]\label{prop:StableSkyscraper}
Skyscraper sheaves are stable for all $\sigma \in \widetilde \PPP$.
\end{Prop}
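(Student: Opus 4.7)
The plan is to reduce to a standard representative of each $\widetilde{\GL}_2^+(\R)$-orbit on $\widetilde\PPP$, identify $k(x)$ as a phase-$1$ object of the tilted heart, and then prove simplicity inside the abelian subcategory of phase-$1$ semistable objects via a Hom-vanishing argument.

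Since stability of an object is $\widetilde{\GL}_2^+(\R)$-invariant, Lemma~\ref{lem:skewtocubic} reduces me to verifying $\sigma_{\alpha,\beta}^{a,b}$-stability, where $\sigma_{\alpha,\beta}^{a,b} = (Z_{\alpha,\beta}^{a,b}, \AA^{\alpha,\beta}(X))$ is a representative of the slice (existence for arbitrary real $\alpha,\beta,a,b$ as in Lemma~\ref{lem:skewtocubic} is given by Proposition~\ref{prop:DeformingBridgelandStability} and, for $b\neq 0$, Theorem~\ref{thm:BGinDim3}). A direct computation using $v_H(k(x)) = (0,0,0,1)$ yields
\[
Z_{\alpha,\beta}^{a,b}(k(x)) = -1,
\]
so $k(x)$ lies in $\AA^{\alpha,\beta}(X)$ with the maximum possible phase $\phi=1$. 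Consequently, $k(x)$ is $\sigma_{\alpha,\beta}^{a,b}$-stable if and only if it is simple in the abelian subcategory $\PP((1))\subset\AA^{\alpha,\beta}(X)$ of phase-$1$ semistable objects.

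The key structural input, already exploited in the proof of Theorem~\ref{thm:rationalconstruction}, is that every $A\in\PP((1))$ fits into the $\Coh^\beta(X)$-cohomology triangle
\[
F[1] \to A \to T,
\]
where $T = H^0_\beta(A)$ is a zero-dimensional torsion sheaf and $F = H^{-1}_\beta(A) \in \Coh^\beta(X)$ is $\nu_{\alpha,\beta}$-semistable with $\nu_{\alpha,\beta}(F) = 0$; both $F[1]$ and $T$ lie in $\PP((1))$, so this is a short exact sequence in that abelian category with $F[1] = \Ker(A\twoheadrightarrow T)$. Given any monomorphism $\iota\colon A\into k(x)$ in $\PP((1))$, the composition $F[1]\into A \xrightarrow{\iota} k(x)$ lies in $\Hom_{\Db(X)}(F[1], k(x)) = \Ext^{-1}_{\OO_X}(F, k(x)) = 0$. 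As a composition of monomorphisms in the abelian category $\PP((1))$ is a monomorphism, this forces $F[1]=0$, so $A = T$ is a zero-dimensional torsion sheaf. Finally, unwinding $\iota$ in $\Coh^\beta(X)$: its cokernel cannot have an $\FF$-component in $H^{-1}$, because a zero-dimensional sheaf admits no nonzero map to an object of $\FF$; hence $T\subset k(x)$ as a subsheaf, and simplicity of $k(x)$ in $\Coh(X)$ yields $T = 0$ or $T = k(x)$.

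The main subtlety is the careful identification of the category $\PP((1))$ as abelian together with the interpretation of $F[1]\into A$ as the kernel of the canonical truncation triangle in that abelian category; once this is in place, the vanishing of $\Ext^{-1}_{\OO_X}(F,k(x))$, together with the simplicity of skyscraper sheaves among zero-dimensional coherent sheaves, immediately closes the argument and no appeal to Conjecture~\ref{con:BGgeneral} is required.
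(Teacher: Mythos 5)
Your argument is correct and is essentially the paper's: both reduce to a stability condition with heart $\AA^{\alpha,\beta}(X)$ and show that $k(x)$ is a simple (minimal) object there by peeling off the two torsion pairs, using the vanishing of negative $\Ext$'s between sheaves and the fact that zero-dimensional sheaves cannot live in the $\FF$-type subcategories. The one slip is in your final step: the relevant point is that no nonzero object of $\FF'_{\alpha,\beta}$ can inject (in $\Coh^\beta(X)$) \emph{into} the zero-dimensional sheaf $T$ --- the map in the long exact sequence goes from $H^{-1}_\beta$ of the cokernel into $T$, not from $T$ outward --- but this is equally immediate (such a subobject would be a zero-dimensional subsheaf of $T$, hence of tilt $+\infty$) and does not affect correctness.
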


\begin{proof}[Proof (sketch)]
Using the long exact cohomology sequence with respect to the heart $\Coh(X)$, one sees that $k(x)$ is a minimal
object of $\Coh^\beta(X)$: otherwise, there would be a short exact sequence
$E \into k(x) \onto F[1]$ in $\Coh^\beta(X)$ coming from a short exact sequence $F \into E \onto k(x)$ of sheaves; this is
a contradiction to $\mu_{H, \beta}(F) < 0$ and $\mu_{H, \beta}(E) \ge 0$.
Similarly, taking the long exact cohomology sequence with respect to
$\Coh^\beta(X)$ of short exact sequences in $\AA^{\alpha, \beta}(X)$, we see that $k(x)$ is a
minimal object of $\AA^{\alpha, \beta}(X)$.
\end{proof}

\section{The space of stability conditions on abelian threefolds}\label{sec:abelianspace}

In this section we prove the following:

\begin{Thm} \label{thm:thatsit}
Let $(X, H)$ be a polarized abelian threefold. Then
$\widetilde \PPP \into \Stab_H(X)$ is a connected component of the space of stability
conditions.
\end{Thm}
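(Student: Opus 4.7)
The set $\widetilde\PPP$ is open and connected, so to prove it is a connected component it suffices to show it is closed in $\Stab_H(X)$. I would argue by contradiction: suppose $\sigma_0 \in \partial\widetilde\PPP \cap \Stab_H(X)$ with central charge $Z_0 = \ZZ(\sigma_0)$. Since $\partial \widetilde\PPP$ maps to $\partial \PPP$, and $\PPP$ is the component of the locus $\VVV$ where $\ker Z$ avoids the cone $\CCC$ over the twisted cubic, the kernel of $Z_0$ must meet $\CCC$ in a nonzero ray. The goal is to produce enough $\sigma_0$-semistable objects whose classes in $\Lambda_H \otimes \R$ approach this ray to contradict the support property.

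The objects I would use are Mukai's simple semi-homogeneous vector bundles $E_{r,s}$ on $X$ with $\mathrm{ch}(E_{r,s}) = r \exp(sH/r)$, indexed by $(r,s) \in \Z_{>0} \times \Z$. A direct computation yields
\[
v_H(E_{r,s}) = \tfrac{H^3}{r^2}\bigl(r^3,\ r^2 s,\ \tfrac12 r s^2,\ \tfrac16 s^3 \bigr) \in \CCC,
\]
so $\HDelta(E_{r,s}) = 0$ and $\HNablab(E_{r,s}) = 0$, and hence the quadratic form $Q_K^\beta$ of Theorem \ref{thm:BGinDim3} vanishes on $v_H(E_{r,s})$. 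Together with the skyscraper sheaves $k(x)$ (giving the asymptotic direction $(0,0,0,1)$ on $\CCC$ and stable on $\widetilde\PPP$ by Proposition \ref{prop:StableSkyscraper}), the projective classes of the $v_H(E_{r,s})$ are dense in the projectivization of $\CCC$, because the ratios $s/r$ are dense in $\R$.

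The second step is to show that each $E_{r,s}$ is $\sigma$-stable for every $\sigma \in \widetilde\PPP$. At a tilt-stability limit this is Corollary \ref{cor:Delta0}, since $E_{r,s}$ is $\mu_H$-stable with $\HDelta = 0$. To extend across $\widetilde\PPP$, I would show that no wall can occur: if $E_{r,s}$ were strictly $\sigma$-semistable somewhere, then the support property inequality $Q_K^\beta \ge 0$ of Theorem \ref{thm:BGinDim3} combined with the vanishing $Q_K^\beta(E_{r,s}) = 0$ forces, by Lemma \ref{lem:trivial}, every Jordan-H\"older factor $A_i$ to satisfy $Q_K^\beta(A_i) = 0$ and $v_H(A_i)$ proportional to $v_H(E_{r,s})$. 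After pulling back by a sufficiently divisible multiplication map $\underline{n}$, which preserves stability by Proposition \ref{prop:EtaleAndStability}, one reduces to the case where the factors have the class of a line bundle twist of $E_{r,s}$; since simple semi-homogeneous bundles become direct sums of line bundles under large enough $\underline{n}^*$, and the latter are stable by Corollary \ref{cor:Delta0}, the rigidity excludes any non-trivial decomposition.

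With these inputs, the contradiction is completed as follows. Take a sequence $\sigma_n \to \sigma_0$ in $\widetilde\PPP$; by step 2 and openness of $\widetilde\PPP$, each $E_{r,s}$ is $\sigma_n$-stable for large $n$, and by standard closedness of semistability under convergent deformations, $E_{r,s}$ is $\sigma_0$-semistable. The support property for $\sigma_0$ then gives a uniform bound $\lVert v_H(E_{r,s})\rVert \le C\,\lvert Z_0(E_{r,s})\rvert$. But by density, we may choose $(r_k, s_k)$ so that $v_H(E_{r_k,s_k})/\lVert v_H(E_{r_k,s_k})\rVert$ converges to a unit vector in $\ker Z_0 \cap \CCC$, so $\lvert Z_0(E_{r_k,s_k})\rvert/\lVert v_H(E_{r_k,s_k})\rVert \to 0$, contradicting the support property. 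The principal technical obstacle is step 2, propagating stability of semi-homogeneous bundles across all of $\widetilde\PPP$; once that rigidity is in place, the density/support-property argument is formal.
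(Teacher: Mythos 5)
Your overall architecture is exactly the paper's: the boundary central charge must kill a ray of the twisted cubic cone $\CCC$, and one contradicts the support property using skyscraper sheaves (for the direction $(0,0,0,1)$), the semi-homogeneous bundles $E_{p/q}$ with $\ch = r\,e^{(p/q)H}$ (for rational slopes, where $Z$ vanishes on an actual semistable class), and a Dirichlet-type limit of such bundles (for irrational slopes). The density and support-property contradiction in your final step is carried out in the paper essentially verbatim.

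The gap is in your Step 2, which you correctly identify as the crux. Two problems. First, your base case ``at a tilt-stability limit this is Corollary \ref{cor:Delta0}'' does not produce a point of $\widetilde\PPP$ at which $E_{p/q}$ is $\sigma$-stable: tilt-stability is only a degenerate limit, and one must exhibit an honest Bridgeland stability condition $\sigma^{a,b}_{\alpha,\beta}$ for which $E_{p/q}$ is stable. The paper does this by choosing $\beta < p/q$ with $\nu_{\alpha,\beta}(E_{p/q})=0$, so that $E_{p/q}[1] \in \AA^{\alpha,\beta}(X)$ has $\Im Z^{a,b}_{\alpha,\beta}=0$, hence maximal slope, hence is semistable; strict stability then follows from the extremal-ray statement in Lemma \ref{lem:convexcone} since $Q_K^\beta(E_{p/q})=0$. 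Second, once Lemma \ref{lem:trivial} forces every Jordan--H\"older factor at a putative wall to have class proportional to $v_H(E_{p/q})$, the argument is already finished: factors with proportional class have the same phase for every nearby central charge, so no such wall exists (this is Proposition \ref{prop:Q0remainsstable}, which combined with the covering of $\PPP$ by the negativity loci of the forms $Q_K^\beta$ propagates stability from the single base point to all of $\widetilde\PPP$). Your detour through pullback by $\underline{n}$ and splitting of semi-homogeneous bundles into line bundles is not only unnecessary but unsupported: Proposition \ref{prop:EtaleAndStability} establishes invariance under $\underline{n}^*$ only for tilt-stability, not for the Bridgeland stability conditions in $\widetilde\PPP$, and the ``rigidity excludes any non-trivial decomposition'' step is not an argument. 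Replace that detour with the extremality observation and your proof matches the paper's.
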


The fundamental reason behind Theorem \ref{thm:thatsit} is the abundance of projectively flat vector
bundles on abelian threefolds; their Chern classes are dense in the projectivization of the twisted
cubic $\CCC$.

Consider a slope $\mu = \frac pq \in \Q$ with $p, q$ coprime and $q > 0$. Then there exists a family
of simple vector bundles $E_{p/q}$ that are semi-homogeneous in the sense of Mukai, have slope ${\frac
pq}$ and Chern character
\[
\ch(E_{p/q}) =  r \cdot e^{\frac {pH}q}, 
\]
see \cite[Theorem 7.11]{Mukai:semi-homogeneous}.
They can be constructed as the push-forward of line bundles via an isogeny $Y \to X$
\cite[Theorem 5.8]{Mukai:semi-homogeneous}, and are slope-stable \cite[Proposition
6.16]{Mukai:semi-homogeneous}.

The above theorem is essentially based on the following result:
\begin{Prop}
The semi-homogeneous vector bundle $E_{p/q}$ is $\sigma$-stable for every $\sigma \in \widetilde{\PPP}$.
\end{Prop}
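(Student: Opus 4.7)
The central calculation is that, since $\ch(E_{p/q}) = r \exp((p/q)H)$, one has
\[
\HDelta(E_{p/q}) = 0 \qquad \text{and} \qquad \HNablab(E_{p/q}) = 0 \qquad \text{for every } \beta \in \R,
\]
so the quadratic form $Q_K^\beta$ of Theorem~\ref{thm:BGinDim3} vanishes identically on $v_H(E_{p/q})$. Moreover $v_H(E_{p/q})$ is proportional to a non-zero point of the twisted cubic $\CCC$, so $Z(v_H(E_{p/q})) \neq 0$ for every $Z \in \PPP$. My plan is to establish $\sigma_0$-stability at one specific $\sigma_0 \in \widetilde{\PPP}$ and then propagate stability along paths in $\widetilde{\PPP}$.

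For the base point, the constant $C_H = 0$ may be chosen on abelian threefolds (Lemma~\ref{lem:boundeffectiveD2}), so Corollary~\ref{cor:Delta0}(1) applied to the slope-stable vector bundle $E_{p/q}$ with $\HDelta = 0$ shows that $E_{p/q}$, up to shift, is $\nu_{\omega, B}$-stable for every $(\omega, B)$ proportional to $H$. Pick $\sigma_0 = \sigma^{a_0}_{\alpha_0, \beta_0}$ with $\beta_0 < p/q$ close to $p/q$, so that $E_{p/q} \in \AA^{\alpha_0, \beta_0}(X)$, and $a_0 \gg 1$. Lemma~\ref{lem:LVL} then shows that any potential $\sigma^{a}_{\alpha_0, \beta_0}$-destabilization of $E_{p/q}$ for large $a$ descends to a tilt-destabilization in $\Coh^{\beta_0}(X)$, which is ruled out by tilt-stability of $E_{p/q}$; this yields $\sigma_0$-stability.

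To propagate, suppose for contradiction that along a continuous path in $\widetilde{\PPP}$ the object $E_{p/q}$ first becomes strictly $\sigma$-semistable, with Jordan--H\"older factors $F_1, \dots, F_m$, $m \ge 2$. Each $F_i$ is $\sigma$-stable, hence $Q_K^\beta(F_i) \ge 0$ by Theorem~\ref{thm:BGinDim3}; the classes $v_H(F_i)$ lie on the ray through $Z(E_{p/q})$ and sum to $v_H(E_{p/q})$. By Lemma~\ref{lem:negativedefinite} the form $Q_K^\beta$ has signature $(2,2)$ with $\ker Z$ as its negative-definite part, so its restriction to the three-dimensional real subspace $\{w : Z(w) \in \R \cdot Z(E_{p/q})\}$ has signature $(1,2)$. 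Applying Lemma~\ref{lem:trivial} on this subspace and using $Q_K^\beta(E_{p/q}) = 0$ forces each $v_H(F_i)$ to be proportional to $v_H(E_{p/q})$, so that $\ch(F_i) = r_i \exp((p/q)H)$ for positive rationals $r_i < r$ with $\sum r_i = r$.

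The main obstacle is converting this purely numerical constraint into a genuine categorical contradiction. I plan to argue by induction on $r$: by the induction hypothesis each semi-homogeneous bundle of rank $r_i$ and slope $p/q$ is $\sigma$-stable, and using that $F_i$ is $\sigma$-stable with $\HDelta(F_i) = 0$ one can deform $\sigma$ back toward the tilt limit, invoke Corollary~\ref{cor:Delta0}(3), and apply Mukai's classification \cite{Mukai:semi-homogeneous} of simple slope-semistable sheaves of vanishing discriminant on abelian varieties to identify each $F_i$, up to shift and twist by an element of $\mathrm{Pic}^0(X)$, with a semi-homogeneous bundle. If $E_{p/q}$ is chosen to have the minimal rank admitting Chern character $r \exp((p/q)H)$, then no such $F_i$ of strictly smaller rank can exist; more generally, an extension of semi-homogeneous bundles of the same slope is itself semi-homogeneous, and the existence of a proper non-split subobject would contradict the simplicity $\End(E_{p/q}) = \C$. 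This forces $m = 1$ and $E_{p/q}$ remains $\sigma$-stable throughout $\widetilde{\PPP}$.
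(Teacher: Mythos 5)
Your overall strategy -- observe that $\HDelta(E_{p/q}) = \HNablab(E_{p/q}) = 0$, establish stability at one base point, and propagate across $\widetilde\PPP$ using the support property -- is exactly the paper's, and the first two thirds of your argument are essentially correct (modulo two small slips: for $\beta_0$ close to $p/q$ you get $\nu_{\alpha_0,\beta_0}(E_{p/q})<0$, so it is $E_{p/q}[1]$, not $E_{p/q}$, that lies in $\AA^{\alpha_0,\beta_0}(X)$ unless you also take $\alpha_0 < p/q - \beta_0$; and Lemma \ref{lem:LVL} runs in the wrong direction -- it constrains objects that \emph{are} semistable for all $a \gg 1$, it does not by itself show that tilt-stable bundles become $\sigma^a$-stable. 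The paper sidesteps the latter by choosing $(\alpha,\beta)$ with $\nu_{\alpha,\beta}(E_{p/q})=0$, so that $E_{p/q}[1]$ has $\Im Z = 0$, hence maximal slope, hence is automatically semistable.)

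The genuine gap is in your final step. Having deduced from Lemma \ref{lem:trivial} that every Jordan--H\"older factor $F_i$ has $v_H(F_i)$ proportional to $v_H(E_{p/q})$, you believe you still need a ``categorical contradiction,'' and the one you supply fails: the claim that a proper non-split subobject of $E_{p/q}$ with proportional class would contradict $\End(E_{p/q})=\C$ is false. Simplicity does not preclude strict semistability -- a non-split extension of two non-isomorphic stable bundles of the same slope is simple yet has an obvious destabilizing subobject -- and the identification of the $F_i$ with semi-homogeneous bundles via Mukai's classification is not justified either, since the $F_i$ are stable objects of some tilted heart along the path, not a priori (shifts of) simple slope-semistable sheaves. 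In fact no categorical input is needed: because $Q_K^\beta$ has signature $(2,2)$ with $\Ker Z$ negative definite, the isotropic vector $v_H(E_{p/q})$ spans an \emph{extremal ray} of the cone $\CC^+$ by the second part of Lemma \ref{lem:convexcone}; proportionality of all JH factors then directly contradicts the assumption that $E_{p/q}$ was \emph{strictly} stable for nearby stability conditions on the path. This is precisely Proposition \ref{prop:Q0remainsstable}, which is the tool the paper invokes to finish, and it closes your argument with no induction on rank, no minimality assumption, and no appeal to \cite{Mukai:semi-homogeneous} beyond slope-stability of $E_{p/q}$.
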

\begin{proof}
As mentioned above, $E_{p/q}$ is slope-stable.  By Corollary \ref{cor:Delta0}, either $E_{p/q}$ or
$E_{p/q}[1]$ is a $\nu_{\alpha, \beta}$-stable object of $\Coh^\beta(X)$ for all $\alpha > 0, \beta \in \R$.

Also observe that for all $K, \beta \in \R$, we have
\[
\HDelta(E_{p/q}) = \HNablab(E_{p/q}) = 0 \Rightarrow Q_K^\beta(E_{p/q}) = 0.
\]
The open subsets of $\PPP$ where the central charges are negative definite
with respect to $Q_K^\beta = K \HDelta + \HNablab$ for some $K, \beta$ form a covering of $\PPP$; by Proposition
\ref{prop:Q0remainsstable}, it is therefore enough to find a single stability condition $\sigma \in
\widetilde \PPP$ for which $E_{p/q}$ is $\sigma$-stable.

One can prove in general that $\nu_{\alpha, \beta}$-stable vector bundles are
$\sigma_{\alpha, \beta}^{a, b}$-stable for $a \gg 0$; but in our situation one can argue more easily as
follows.  Choose $\alpha, \beta$ with $\beta < \frac pq$ (and therefore $E_{p/q} \in \Coh^\beta(X)$)
and  $\nu_{\alpha, \beta}(E) = 0$. Then $E[1] \in \AA^{\alpha, \beta}(X)$ with
$\Im Z_{\alpha, \beta}^{a, b} = 0$ for all $a, b$, i.e. it has maximal possible slope; therefore it
is $\sigma_{\alpha, \beta}^{a, b}$-semistable.  By Lemma \ref{lem:convexcone}, it
must actually be strictly stable.
\end{proof}

\begin{proof}[Proof of Theorem \ref{thm:thatsit}]
Assume for a contradiction that there is a stability condition $\sigma = (Z, \PP) \in \partial \widetilde
\PPP$ in the boundary of $\widetilde \PPP$ inside $\Stab_H(X)$.
Since $\widetilde{\PPP} \to \PPP$ is a covering map, the central charge $Z$ must be in the boundary
$\partial \PPP$ of $\PPP \subset \Hom(\Lambda_H, \C)$; by definition, this means that there is a
point $(x^3, x^2y, \frac 12 xy^2, \frac 16 y^3)$ on the twisted cubic $\CCC$ that is contained in the kernel
of $Z$.

If $\mu := \frac yx = \frac pq$ is rational, then we observe that every
semi-homogeneous bundle $E_{p/q}$ is $\sigma$-semistable, because being $\sigma$-semistable is a closed condition on $\Stab_H(X)$. This is an immediate contradiction, as
$Z\left(E_{p/q}\right) = 0$.
Similarly, if $x = 0$, we get $Z(\OO_x) = 0$; yet skyscraper sheaves of
points are $\sigma$-semistable by \ref{prop:StableSkyscraper}.

Otherwise, if $\mu \in \R \setminus \Q$, consider a sequence $(p_n, q_n)$ with
\[ \lim_{n \to \infty} \frac{p_n}{q_n} = \mu, \]
let $E_n := E_{{p_n}/{q_n}}$, and let $r_n = \rk E_n$.  Then
\[ \lim_{n \to \infty} \frac 1{r_n} v_H(E_n) = \left(1, \mu, \frac 12 \mu^2, \frac 16 \mu^3\right) \]
and thus
\[
\lim_{n \to \infty} \frac{\abs{Z\left(v_H(E_n\right)}}{\| v_H(E) \|}
= \lim_{n \to \infty} \frac{\abs{Z\left(\frac 1{r_n} v_H(E_n)\right)}}
						{\left\| \frac 1{r_n} v_H(E) \right\|}
= \frac{\abs{Z\left(1, \mu, \frac 12 \mu^2, \frac 16 \mu^3\right)}}
				{\left\|\left(1, \mu, \frac 12 \mu^2, \frac 16 \mu^3\right)\right\|}
= 0.
\]
This is a contradiction to the condition that $\sigma$ satisfies the support property.
\end{proof}

\section{The space of stability conditions on some Calabi-Yau threefolds}\label{sec:CY}

Let $X$ be a projective threefold with an action of a finite group $G$.
In this section, we recall the main result of \cite{MMS:inducing}, which induces  stability
conditions on the $G$-equivariant derived category from $G$-invariant stability conditions on $X$;
similar results are due to Polishchuk, see \cite[Section 2.2]{Polishchuk:families-of-t-structures}.
We use it to construct stability conditions on Calabi-Yau threefolds that are
(crepant resolutions of) quotients of abelian threefolds, thus proving Theorems
\ref{thm:mainCYabelian}, \ref{maintheorem} and \ref{MAINTHM}.

\subsection*{The equivariant derived category}
We let $\Coh([X/G])$ be the abelian category of $G$-equivariant coherent sheaves on $X$,
and $\Db([X/G]):=\Db(\Coh([X/G]))$.
As explained in \cite{Elagin:equivariant}, the category $\Db([X/G])$ is equivalent to the category of the
$G$-equivariant objects in $\Db(X)$.

The \'etale morphism $f \colon X \to [X/G]$ of Deligne-Mumford stacks induces
a faithful pull-back functor
\[ f^* \colon \Db([X/G])\to\Db(X).
\]

Let $H\in\NS(X)$ be an ample $G$-invariant divisor class.
We consider the space $\Stab_H(X)$ of stability conditions on $\Db(X)$ with respect to the lattice $\Lambda_H$ as in Section
\ref{sec:newstability}; for $\Db([X/G])$ we use the same lattice, and the map
\[
v_H^G \colon K(\Db([X/G])) \to \Lambda_H, \qquad v_H^G(E):=v_H(f^*(E)).
\]
By mild abuse of notation, we will write $\Stab_H([X/G])$ for the space of stability
conditions on $\Db([X/G])$ satisfying the support property with respect to
$(\Lambda_H, v_H^G)$.  We will construct components of $\Stab_H([X/G])$ from
$G$-invariant components of $\Stab_H(X)$.

\subsection*{Inducing stability conditions}
Following \cite{MMS:inducing}, we consider
\[
\Stab_H^G(X) :=
\stv{\sigma\in{\Stab}_H(X)}{g^*\sigma=\sigma\mbox{, for any
}g\in G}.
\]
Here the action of $g$ on $\Stab_H(X)$ is given by
\[
g^* (Z, \AA) = \left(Z \circ (g^*)^{-1} , g^*(\AA)\right).
\]
For any $\sigma=(Z,\AA)\in \Stab_H^G(X)$, we define
\[
(f^*)^{-1}(\sigma):=(Z',\AA')
\]
where
\begin{align*}
\ & Z':=Z \circ v_H^G, \\
\ & \AA':=\{E\in\Db([X/G]) \colon f^*(E)\in\AA\}.
\end{align*}

\begin{Thm}[{\cite{MMS:inducing}}]\label{thm:inducing}
Let $(X, H)$ be a polarized threefold with an action by a finite group $G$ fixing the polarization.
Then $\Stab_H^G(X) \subset \Stab_H(X)$ is a union of connected components.

Moreover, the pull-back $f^*$ induces an embedding
\[
(f^*)^{-1} \colon \Stab_H^G(X) \into  \Stab_H([X/G])
\]
whose image is again a union of connected components.
\end{Thm}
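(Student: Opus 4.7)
The plan is to deduce the theorem from the general inducing framework of \cite{MMS:inducing} by verifying its hypotheses in our setting. The crucial observation is that, because $H$ is $G$-invariant and intersection numbers with $H$ are preserved by automorphisms fixing $H$, the map $v_H\colon K(X)\to \Lambda_H$ satisfies $v_H(g^*E)=v_H(E)$ for every $g\in G$. Consequently, $G$ acts trivially on the lattice $\Lambda_H$, and for any $\sigma=(Z,\PP)\in \Stab_H(X)$, the pullback $g^*\sigma$ has the \emph{same} central charge $Z$ as $\sigma$.

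Using this, I first show that $\Stab_H^G(X)$ is both open and closed in $\Stab_H(X)$. Closedness follows from the continuity of each $g^*$. For openness, fix $\sigma_0\in \Stab_H^G(X)$ and choose a neighborhood $U$ of $\sigma_0$ on which the central charge map $\ZZ$ restricts to a homeomorphism onto its image, as provided by Proposition \ref{prop:EffectiveDefo}. Shrinking $U$, we may assume $g^*U\subset U$ for each $g\in G$. For $\sigma\in U$, both $\sigma$ and $g^*\sigma$ lie in $U$ and share the central charge $Z$ by the observation above; injectivity of $\ZZ$ on $U$ forces $g^*\sigma=\sigma$, hence $\sigma\in \Stab_H^G(X)$.

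Next, for the construction of $(f^*)^{-1}(\sigma)=(Z',\AA')$ on $\Db([X/G])$: using Elagin's equivalence $\Db([X/G])\simeq \Db(X)^G$ from \cite{Elagin:equivariant}, the $G$-invariance of $\AA$ ensures that the category of $G$-equivariant objects of $\AA$ is the heart of a bounded t-structure, and this heart coincides with $\AA'$. The positivity of $Z'=Z\circ v_H^G$ on $\AA'$ is inherited from that of $Z$ on $\AA$. The Harder-Narasimhan filtration of $f^*E$ in $\AA$ is automatically $G$-equivariant by its functoriality and uniqueness, hence descends to an HN filtration of $E$ in $\AA'$. Finally, the support property for $\sigma'$ with respect to $(\Lambda_H, v_H^G)$ follows directly from that for $\sigma$, since $v_H^G=v_H\circ f^*$ gives $|Z'(E)|=|Z(f^*E)|$ and $\|v_H^G(E)\|=\|v_H(f^*E)\|$.

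The remaining step is showing that the image of $(f^*)^{-1}$ is itself a union of components of $\Stab_H([X/G])$, and this is where the main obstacle lies. I would argue symmetrically: any stability condition $\tau$ close to an induced $\sigma'=(f^*)^{-1}(\sigma)$ is determined by its central charge via Proposition \ref{prop:EffectiveDefo} applied on $\Db([X/G])$; this central charge is necessarily of the form $\widetilde Z\circ v_H^G$ for a unique $\widetilde Z$ close to $Z$ in $\Hom(\Lambda_H,\C)$, and the deformation $\widetilde\sigma\in \Stab_H(X)$ of $\sigma$ realising $\widetilde Z$ is forced to lie in $\Stab_H^G(X)$ by the openness already established. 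Verifying that $(f^*)^{-1}(\widetilde\sigma)=\tau$, i.e.\ that inducing commutes with the local inverse of $\ZZ$, is the most delicate point; it is handled in \cite{MMS:inducing}, and applies here once the compatibility $v_H^G=v_H\circ f^*$ is observed.
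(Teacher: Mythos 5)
Your proposal is correct and follows essentially the same route as the paper: both hinge on the observation that $v_H\circ g^*=v_H$ (so $G$ acts trivially on $\Lambda_H$ and $g^*\sigma$ has the same central charge as $\sigma$), transfer the support property through $v_H^G=v_H\circ f^*$, and defer the core inducing machinery to \cite{MMS:inducing}. The extra details you supply (openness of $\Stab_H^G(X)$ via local injectivity of $\ZZ$, and the compatibility of inducing with deformation) are exactly the content of the lemmas of \cite{MMS:inducing} that the paper cites.
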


\begin{proof}
The theorem is essentially a reformulation of Theorem 1.1 in \cite{MMS:inducing} but some subtle issues have to be clarified. First of all, Theorem 1.1 in \cite{MMS:inducing} deals with stability conditions whose central charge is defined on the Grothendieck group $K(X)$ rather than on the lattice $\Lambda_H$. On the other hand, the same argument as in \cite[Remark 2.18]{MMS:inducing} shows that all the results in \cite[Section 2.2]{MMS:inducing}, with the obvious changes in the statements and in the proofs, hold true if we consider pre-stability conditions as in Definition \ref{def:Bridgeland2} with respect to the lattice $\Lambda_H$. Thus we will freely quote the results there.

We now observe that if $\sigma$ is a $G$-invariant pre-stability condition on $\Db(X)$, then
$\sigma$ satisfies the support property with respect to $v_H$ if and only if
 $(f^*)^{-1}(\sigma)$ satisfies the support property with respect to $v_H^G$.
This is rather obvious, given the definition of $(f^*)^{-1}(\sigma)$ above, the fact that
$\Lambda_H$ is invariant under the action of $G$ and that the semistable objects in
$(f^*)^{-1}(\sigma)$ are the image under $f^*$ of the semistable objects in $\sigma$ (see
\cite[Theorem 1.1]{MMS:inducing}). Hence \cite[Proposition 2.17]{MMS:inducing} applies and $(f^*)^{-1}$
yields a well-defined and closed embedding.

It remains to point out that $\Stab_H^G(X)$ is a union of connected components of $\Stab_H(X)$. This is clear in view of the arguments in \cite[Lemma 2.15]{MMS:inducing} and, again, of the fact that $\Lambda_H$ is invariant under the action of $G$. Thus the image of $(f^*)^{-1}$ is a union of connected components as well.
\end{proof}

An immediate consequence of the results of Section \ref{sec:newstability} and Theorem \ref{thm:inducing} is the following, which completes the proof of Theorem \ref{maintheorem} (see also Examples \ref{ex:FreeAction} and \ref{ex:GenKummer} below):

\begin{Prop}\label{prop:equivariant}
Let $(X, H)$ be a smooth polarized threefold with an action of a finite group $G$ fixing the
polarization.  Assume that Conjecture \ref{con:BGgeneral} holds for $(X, H)$.
Then, given $\alpha, \beta \in \R$ and $\alpha, \beta, a, b$ satisfying \eqref{ineq:a},
the stability condition $\left(Z_{\alpha, \beta}^{a, b}, \AA^{\alpha, \beta}(X)\right)$ is in
$\Stab_H^G(X)$, and $(f^*)^{-1}(\Stab_H^G(X))$ is a non-empty union of connected components of
$\Stab_H([X/G])$.
\end{Prop}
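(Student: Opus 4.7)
The plan is to verify the two assertions in sequence, both of which reduce to bookkeeping once the right invariance statements are established.

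First I would show that $\sigma := \left(Z_{\alpha, \beta}^{a, b}, \AA^{\alpha, \beta}(X)\right)$ lies in $\Stab_H^G(X)$, i.e.\ that $g^*\sigma = \sigma$ for every $g \in G$. For the central charge: since any $g \in G$ acts by an automorphism of $X$, the pullback $g^*$ preserves the Chern character (numerically), and by assumption $g^*H = H$. Consequently, every intersection number of the form $H^{3-i} \ch_i^\beta(g^*E)$ agrees with $H^{3-i} \ch_i^\beta(E)$, so $Z_{\alpha,\beta}^{a,b} \circ (g^*)^{-1} = Z_{\alpha,\beta}^{a,b}$. For the heart: the slope $\mu_{H,\beta}$ and tilt $\nu_{\alpha,\beta}$ are $G$-invariant functions on objects, and since $g^*$ is an exact autoequivalence, it preserves $\mu$-semistability and hence the torsion pair $(\TT_{H,\beta H}, \FF_{H,\beta H})$ used to define $\Coh^{\beta}(X)$. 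Therefore $g^*\Coh^\beta(X) = \Coh^\beta(X)$, and iterating the same argument with the tilt torsion pair $(\TT_{\alpha,\beta}', \FF_{\alpha,\beta}')$ yields $g^*\AA^{\alpha,\beta}(X) = \AA^{\alpha,\beta}(X)$. This proves $\sigma \in \Stab_H^G(X)$; in particular, $\Stab_H^G(X)$ is non-empty (and by Theorem \ref{thm:mainPPP} it contains the entire image of $\widetilde{\PPP}$ since the description of $\widetilde{\PPP}$ is $G$-invariant).

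Second, I would invoke Theorem \ref{thm:inducing} directly: it states that $(f^*)^{-1}$ embeds $\Stab_H^G(X)$ into $\Stab_H([X/G])$ as a union of connected components. Combined with the non-emptiness established above, this immediately gives that $(f^*)^{-1}(\Stab_H^G(X))$ is a non-empty union of connected components of $\Stab_H([X/G])$.

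There is no real obstacle here; the proposition is essentially a combination of (a) the construction of stability conditions carried out in Section \ref{sec:newstability}, (b) the elementary observation that this construction only uses $H$-based invariants and is therefore automatically $G$-equivariant when $G$ fixes $H$, and (c) the inducing theorem recalled as Theorem \ref{thm:inducing}. The only point that deserves a moment of care is checking that the support property for $v_H^G$ on $\Db([X/G])$ follows from the support property for $v_H$ on $\Db(X)$, but this is guaranteed by the definition $v_H^G = v_H \circ f^*$ together with the fact that semistable objects downstairs pull back to semistable objects upstairs, as already noted in the proof of Theorem \ref{thm:inducing}.
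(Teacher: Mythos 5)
Your proposal is correct and follows essentially the same route as the paper: verify that the explicitly constructed stability condition $\left(Z_{\alpha,\beta}^{a,b}, \AA^{\alpha,\beta}(X)\right)$ is $G$-invariant (central charge because $G$ fixes $H$ and hence all the intersection numbers $H^{3-i}\ch_i^\beta$; heart because $g^*$ preserves slope- and tilt-semistability and hence both torsion pairs), and then apply Theorem \ref{thm:inducing}. The extra remark about the support property descending to $v_H^G$ is also consistent with how the paper handles it, namely inside the proof of Theorem \ref{thm:inducing}.
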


\begin{proof}
Given Theorem \ref{thm:inducing}, the result will follow once we prove that $\left(Z_{\alpha,
\beta}^{a, b}, \AA^{\alpha, \beta}(X)\right)$ is in $\Stab_H^G(X)$.
Since slope-stability with respect to $H$ is preserved by the group action, we have $g^*
\Coh^\beta(X) = \Coh^\beta(X)$ for all $g \in G$. The same argument holds for tilt-stability, as
\[
\nu_{\alpha,\beta}(g^*E)=\nu_{\alpha,\beta}(E)
\]
for all $g\in G$ and $E\in\Coh^\beta(X)$; therefore $A^{\alpha, \beta}(X)$ is $G$-invariant as well.
Since the central charge $ Z_{\alpha, \beta}^{a, b}$ is similarly preserved by $G$, this shows the
claim.
\end{proof}

As an immediate consequence we get the following.

\begin{Cor}\label{cor:abinv}
Let $(X, H)$ be a polarized abelian threefold with an action of a finite group $G$ fixing the
polarization. Then $(f^*)^{-1}(\widetilde \PPP)$ is a connected component of $\Stab_H([X/G])$.
\end{Cor}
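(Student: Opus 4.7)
The plan is to combine Theorem \ref{thm:thatsit} (which identifies $\widetilde{\PPP}$ as a connected component of $\Stab_H(X)$ for an abelian threefold) with the inducing procedure of Theorem \ref{thm:inducing}. Two points need to be verified: first, that $\widetilde{\PPP}$ is contained in the $G$-invariant locus $\Stab_H^G(X)$; and second, that the image $(f^*)^{-1}(\widetilde{\PPP})$ is a single connected component rather than merely a union of components.

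For the first point, I would begin by noting that since $H$ is $G$-invariant and any $g\in G$ is an automorphism of $X$, the induced action of $G$ on $\Lambda_H$ is trivial. In particular, for any $\sigma = (Z, \AA)$, the action $g^*\sigma = (Z\circ (g^*)^{-1}, g^*\AA) = (Z, g^*\AA)$ preserves central charges. By Proposition \ref{prop:equivariant}, the explicit stability condition $\sigma_{\alpha,\beta}^{a,b} = (Z_{\alpha,\beta}^{a,b}, \AA^{\alpha,\beta}(X))$ for rational parameters lies in $\Stab_H^G(X)$, providing a $G$-fixed point inside $\widetilde{\PPP}$. Since the forgetful map $\ZZ \colon \Stab_H(X) \to \Hom(\Lambda_H, \C)$ is a local homeomorphism and $G$ acts trivially on the target, the $G$-fixed locus is open in $\widetilde{\PPP}$; continuity of the $G$-action makes it closed, so by connectedness it equals all of $\widetilde{\PPP}$.

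For the second point, with $\widetilde{\PPP} \subset \Stab_H^G(X)$ in hand, Theorem \ref{thm:thatsit} together with Theorem \ref{thm:inducing} (which states that $\Stab_H^G(X)$ is a union of connected components of $\Stab_H(X)$) implies that $\widetilde{\PPP}$ is a connected component of $\Stab_H^G(X)$. Theorem \ref{thm:inducing} then gives a closed embedding $(f^*)^{-1} \colon \Stab_H^G(X) \into \Stab_H([X/G])$ whose image is a union of connected components. Since $\widetilde{\PPP}$ is connected and $(f^*)^{-1}$ is a homeomorphism onto its image, $(f^*)^{-1}(\widetilde{\PPP})$ is connected; being simultaneously open and closed in $\Stab_H([X/G])$, it must be a single connected component.

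The main obstacle is the first point, specifically propagating the $G$-invariance of the central charge to $G$-invariance of the full slicing across all of $\widetilde{\PPP}$. The key tool is the local injectivity of $\ZZ$: two stability conditions with identical central charges that are sufficiently close must coincide. Combined with the explicit $G$-fixed point provided by Proposition \ref{prop:equivariant} and the connectedness of $\widetilde{\PPP}$, a standard open-closed argument upgrades pointwise fixedness at one stability condition to fixedness on the entire component.
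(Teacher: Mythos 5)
Your proposal is correct and follows essentially the same route as the paper: Theorem \ref{thm:thatsit} plus the nonemptiness of $\widetilde\PPP\cap\Stab_H^G(X)$ from Proposition \ref{prop:equivariant}, combined with Theorem \ref{thm:inducing}, to conclude that $(f^*)^{-1}(\widetilde\PPP)$ is a connected component. The only difference is that your first step re-derives by hand (via the local injectivity of $\ZZ$ and an open-closed argument) the containment $\widetilde\PPP\subset\Stab_H^G(X)$, which the paper obtains immediately from the statement, already quoted in your second step, that $\Stab_H^G(X)$ is a union of connected components of $\Stab_H(X)$.
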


\begin{proof}
By Theorem \ref{thm:thatsit}, the open subset $\widetilde \PPP$ is a connected component of
$\Stab_H(X)$. By Proposition \ref{prop:equivariant}, we have that $\widetilde\PPP\cap\Stab_H^G(X)$
is not empty. Since $\Stab_H^G(X)$ is a union of connected components of $\Stab_H(X)$ (see Theorem
\ref{thm:inducing}), we get that $\widetilde\PPP$ is a connected component of $\Stab_H^G(X)$. Again
by Theorem \ref{thm:inducing}, we conclude that $(f^*)^{-1}(\widetilde \PPP)$ is a connected
component of $\Stab_H([X/G])$.
\end{proof}

\subsection*{Applications}

When the action of the finite group $G$ is free, the quotient $Y=X/G$ is smooth and $\Db(Y)\cong\Db([X/G])$.
In this case, an ample class $H$ on $X$ induces an ample class $H_Y$ on $Y$.
If we take $B$ on $X$ to be $G$-invariant as well, and write $B_Y$ for the induced class on $\NS(Y)_\R$, we then have, by Proposition \ref{prop:conjet}, that Conjecture \ref{con:strongBG} holds for $\nu_{\sqrt{3}\alpha H_Y, B_Y}$-stability on $Y$ if it holds for $\nu_{\sqrt{3}\alpha H, B}$-stability on $X$.

Here is a list of examples where $X$ is an abelian threefold and this discussion can be implemented,
concluding the proof of Theorems \ref{thm:mainCYabelian} and \ref{MAINTHM}.

\begin{Ex}\label{ex:FreeAction}
(i) A \emph{Calabi-Yau threefold of abelian type} is an \'{e}tale quotient $Y=X/G$ of an abelian
threefold $X$ by a finite group $G$ acting freely on $X$ such that the canonical line bundle of $Y$
is trivial and $H^1(Y,\C)=0$. In \cite[Theorem 0.1]{OguisoSakurai:quotient}, those Calabi-Yau
manifolds are classified; the group $G$ can be chosen to be $(\Z/2)^{\oplus 2}$ or $D_8$, and the
Picard rank of $Y$ is 3 or 2, respectively.  The following concrete example is usually referred to as Igusa's example (see Example 2.17 in \cite{OguisoSakurai:quotient}).
Take three elliptic curves $E_{1}$,
$E_{2}$ and $E_{3}$ and set $X = E_{1} \times E_{2} \times E_{3}$. Pick three non-trivial elements $\tau_{1}$, $\tau_2$ and $\tau_3$ in the $2$-torsion subgroups of
$E_{1}$, $E_{2}$ and  $E_{3}$, respectively. Then we define two automorphisms $a$ and $b$ of
$X$ by setting
\[
a(z_{1}, z_{2}, z_{3}) = (z_{1} + \tau_{1}, -z_{2}, -z_{3}) \quad \text{ and } \quad
b(z_{1}, z_{2}, z_{3}) = (-z_{1}, z_{2} + \tau_{2}, -z_{3} + \tau_{3}).
\]
By taking $G:=\langle a, b \rangle$, the quotient $Y=X/G$ is a Calabi-Yau threefold of abelian type.

(ii) Let $A$ be an abelian surface and let $E$ be an elliptic curve.
We write $X:= A \times E$.
Consider a finite group $G$ acting on $A$ and $E$, where the action on $E$ is given by translations.
Then the diagonal action on $X$ is free, but it may have non-trivial (torsion) canonical bundle.
The easiest example is by taking $A$ as the product $E_1\times E_2$ of two elliptic curve, and the action of $G$ only on the second factor so that $E_2/G\cong\P^1$.
Then $Y=E_1 \times S$, where $S$ is a bielliptic surface.
\end{Ex}

Let us now assume that $X$ is an abelian threefold, that $G$ acts faithfully,
and that the dualizing sheaf is locally trivial
as a $G$-equivariant sheaf.  By \cite{Mukai-McKay}, the quotient $X/G$ admits a crepant
resolution $Y$ with an equivalence $\Phi_{\mathrm{BKR}} \colon \Db(Y)\to \Db([X/G])$. By a slightly
more serious abuse of notation, we will continue to write $\Stab_H(Y)$ for the space of stability
conditions with respect to the lattice $\Lambda_H$ and the map $v_H^G \circ (\Phi_{\mathrm{BKR}})_* \colon K(Y) \to
\Lambda_H$.  By Corollary \ref{cor:abinv}, we obtain a connected component as
$\left(\Phi_{\mathrm{BKR}}\right)^* (f^*)^{-1}
\left(\widetilde \PPP\right) \subset \Stab_H(Y)$.

\begin{Ex}\label{ex:GenKummer}
We say that a Calabi-Yau threefold is of \emph{Kummer type} if it is obtained as a crepant
resolution of a quotient $X/G$  of an abelian threefold $X$.
Skyscraper sheaves will be semistable but not stable with respect to the stability conditions induced from $X$.
We mention a few examples.

(i) Let $E$ be an elliptic curve, and let $X=E\times E\times E$.
We consider a finite subgroup $G\subset\mathrm{SL}(3,\Z)$ and let it act on $X$ via the identification $X=\Z^3\otimes_\Z E$.
These examples were studied in \cite{AndreattaWisniewski:Kummer} and classified
in \cite{Donten:Kummer}; there are 16 examples, and $G$ has size at most 24.
The singularities of the quotient $X/G$ are not isolated.

(ii) Let $E$ be the elliptic curve with an automorphism of order $3$, and let $X=E\times E\times E$.
We can take $G=\Z/3\Z$ acting on $X$ via the diagonal action.
Then the crepant resolution $Y$ of $X/G$ is a simply connected rigid Calabi-Yau threefold containing
$27$ planes, see \cite[Section 2]{Beauville:remarksonc1zero}.

One can also take $G \subset (\Z/3\Z)^3$ to be the subgroup of order 9 preserving the volume form.
These examples were influential at the beginning of mirror symmetry, see
\cite{Batyrev-Borisov:generalizedMS} and references therein.

(iii) Let $X$ be the Jacobian of the Klein quartic curve.
The group $G=\Z/7\Z$ acts on $X$, and again the crepant resolution $Y$ of $X/G$ is a simply connected rigid Calabi-Yau threefold.

(iv) We can also provide easy examples involving three non-isomorphic elliptic curves $E_1$, $E_2$ and $E_3$. Indeed, take the involutions $\iota_i \colon E_i\to E_i$ such that $\iota_i(e)=-e$, for $i=1,2,3$, and set $G:=\langle \iota_1\times\iota_2\times\id_{E_3}, \iota_1\times\id_{E_2}\times\iota_3\rangle$. The quotient $(E_1\times E_2\times E_3)/G$ admits a crepant resolution $Y$ which is a Calabi-Yau threefold. This is a very simple instance of the so called \emph{Borcea-Voisin construction} (see \cite{Bor, Voi}). This yields smooth projective Calabi-Yau threefolds as crepant resolutions of the quotient $(S\times E)/G$, where $S$ is a K3 surface, $E$ is an elliptic curve and $G$ is the group generated by the automorphism $f\times \iota$ of $S\times E$, with $f$ an antisymplectic involution on $S$ and $\iota$ the natural involution on $E$ above. Example 2.32 in \cite{OguisoSakurai:quotient} is yet another instance of this circle of ideas.
\end{Ex}

\appendix

\section{Support property via quadratic forms}
\label{app:SupportProperty}

In this appendix, we clarify the relation between support property, quadratic inequalities for
Chern classes of semistable objects, and effective deformations of Bridgeland stability conditions.

\subsection*{Equivalent definitions of the support property}

Let $\DD$ be a triangulated category, for which we fix a finite
rank lattice $\Lambda$ with a surjective map $v \colon K(\DD) \onto \Lambda$.
We recall the main definition of \cite{Bridgeland:Stab} with a slight change of terminology: a
stability condition not necessarily satisfying the support property will be called a
\emph{pre-stability condition}:
\begin{Def}\label{def:Bridgeland2}
A pre-stability condition on $\DD$ is a pair $(Z, \PP)$ where
\begin{itemize}
\item the central charge $Z$ is a linear map $Z \colon \Lambda \to \C$, and
\item $\PP$ is a collection of full subcategories $\PP(\phi) \subset \DD$ for all $\phi \in \R$,
\end{itemize}
such that
\begin{enumerate}
\item $\PP(\phi + 1) = \PP(\phi)[1]$;
\item for $\phi_1 > \phi_2$, we have
$\Hom(\PP(\phi_1), \PP(\phi_2)) = 0$;
\item for $0 \neq E \in \PP(\phi)$, we the complex number $Z(v(E))$ is contained in the ray
$\R_{>0} \cdot e^{i \pi \phi}$; and
\item every $E$ admits an HN-filtration
\[
\xymatrix@C=.5em{
0_{\ } \ar@{=}[r] & E_0 \ar[rrrr] &&&& E_1 \ar[rrrr] \ar[dll] &&&& E_2
\ar[rr] \ar[dll] && \ldots \ar[rr] && E_{m-1}
\ar[rrrr] &&&& E_m \ar[dll] \ar@{=}[r] &  E_{\ } \\
&&& A_1 \ar@{-->}[ull] &&&& A_2 \ar@{-->}[ull] &&&&&&&& A_m \ar@{-->}[ull]
}
\]
with $A_i \in \PP(\phi_i)$ and $\phi_1 > \phi_2 > \dots > \phi_m$.
\end{enumerate}
\end{Def}
We write $\phi_{\sigma}^+(E) := \phi_1$ and $\phi_{\sigma}^-(E) := \phi_m$ for the maximal and
minimum phase appearing in the HN filtration.
The mass is defined by $m_\sigma(E) := \sum_{i=1}^m \abs{Z(A_i)}$.

Recall the definition of the ``support property'' introduced by Kontsevich and Soibelman:
\begin{Def}[{\cite[Section 1.2]{Kontsevich-Soibelman:stability}}] \label{def:SupportProperty}
Pick a norm $\| \blank \|$ on $\Lambda \otimes \R$. The pre-stability
condition $\sigma = (Z, \PP)$ satisfies the support property if there exists a constant $C > 0$ such
that for all $\sigma$-semistable objects $0 \neq E \in \Db(X)$, we have
\begin{equation} \label{eq:supportproperty}
\| v(E) \| \le C \abs{Z(v(E))}
\end{equation}
\end{Def}
This notion is equivalent to $\sigma$ being ``full'' in the sense of \cite{Bridgeland:K3}, see \cite[Proposition B.4]{localP2}.
The definition is quite natural: it implies that if $W$ is in an
$\epsilon$-neighborhood of $Z$ with respect to the operator norm on $\Hom(\Lambda_\R, \C)$ induced by
$\| \blank \|$ and the standard norm on $\C$, then $W(E)$ is in a disc of radius
$\epsilon C \abs{Z(E)}$ around $Z(E)$ for all semistable objects $E$; in particular, we can bound
the difference of the arguments of the complex numbers $Z(E)$ and $W(E)$.

Moreover, it is equivalent to the following
notion; we follow Kontsevich-Soibelman and also call it ``support property'':
\begin{Def} \label{def:SupportProperty2}
The pre-stability condition $\sigma = (Z, \PP)$ satisfies the support property if there exists a
quadratic form $Q$ on the vector space $\Lambda_\R$ such that
\begin{itemize}
\item the kernel of $Z$ is negative definite with respect to $Q$, and
\item for any $\sigma$-semistable object $E \in \Db(X)$, we have
\[ Q(v(E)) \ge 0.  \]
\end{itemize}
\end{Def}

\begin{Lem}[{\cite[Section 2.1]{Kontsevich-Soibelman:stability}}]\label{lem:KS}
Definitions \ref{def:SupportProperty} and \ref{def:SupportProperty2} are equivalent.
\end{Lem}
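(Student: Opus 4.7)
The plan is to prove the two implications separately; both are elementary once we choose the norm $\|\cdot\|$ on $\Lambda_\R$ to come from an inner product (which we may do without loss of generality, since any two norms on a finite-dimensional space are equivalent, possibly at the cost of enlarging $C$).

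For the direction \ref{def:SupportProperty2} $\Rightarrow$ \ref{def:SupportProperty}, I would proceed by a compactness argument. Consider the unit sphere $S := \{x \in \Lambda_\R : \|x\| = 1\}$, which is compact. The closed subset $K := S \cap \ker Z$ lies inside the kernel of $Z$, where $Q$ is strictly negative; by continuity of $Q$, there is an open neighborhood $U \supset K$ in $S$ on which $Q < 0$. On the complement $S \setminus U$, which is compact, the continuous function $\abs{Z(\blank)}$ is strictly positive, hence bounded below by some constant $c > 0$. Now for any $\sigma$-semistable $E$, the assumption $Q(v(E)) \ge 0$ forces $v(E)/\|v(E)\| \in S \setminus U$, so $\abs{Z(v(E))}/\|v(E)\| \ge c$, i.e., \eqref{eq:supportproperty} holds with $C = 1/c$.

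For the converse direction \ref{def:SupportProperty} $\Rightarrow$ \ref{def:SupportProperty2}, I would construct the quadratic form explicitly. Writing $Z(x) = a(x) + i\, b(x)$ with $a, b \in \Hom(\Lambda_\R, \R)$, the function $\abs{Z(x)}^2 = a(x)^2 + b(x)^2$ is a quadratic form on $\Lambda_\R$, and (since the inner product norm is also a quadratic form) so is
\[
Q(x) := \abs{Z(x)}^2 - c\,\|x\|^2
\]
for any constant $c \in \R$. Choosing $0 < c \le 1/C^2$, the support-property inequality \eqref{eq:supportproperty} immediately yields $Q(v(E)) \ge 0$ for every $\sigma$-semistable $E$. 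On the other hand, if $x \in \ker Z$ is nonzero, then $Q(x) = -c\|x\|^2 < 0$, so $\ker Z$ is negative definite with respect to $Q$, as required.

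Neither direction presents a real obstacle; the only subtlety is the initial reduction to an inner product norm, and the observation that $\abs{Z(\blank)}^2$ is genuinely quadratic (not merely the modulus of something linear). Once these are in place the proof is a matter of bookkeeping.
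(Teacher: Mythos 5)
Your proof is correct and follows essentially the same route as the paper: the converse direction uses the same quadratic form $\abs{Z(\blank)}^2 - c\,\lVert\blank\rVert^2$, and the forward direction is the same compactness argument (the paper packages it by making $C^2\abs{Z(w)}^2 - Q(w)$ positive definite and using its induced norm, whereas you fix a Euclidean norm and bound $\abs{Z}$ below on $\{Q\ge 0\}$ intersected with the unit sphere — equivalent up to changing $C$). Your explicit reduction to an inner-product norm, which the paper leaves implicit, is a worthwhile clarification since $\lVert\blank\rVert^2$ must actually be quadratic for the construction to make sense.
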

\begin{proof}
If $\sigma = (Z, \PP)$ satisfies Definition \ref{def:SupportProperty}, then the quadratic form
\[
Q(w) := C^2 \abs{Z(w)}^2 - \|w\|^2
\]
evidently satisfies both properties of Definition \ref{def:SupportProperty2}. Conversely, assume we
are given a quadratic form $Q$ as in Definition \ref{def:SupportProperty2}.
The non-negative quadratic form $\abs{Z(w)}^2$ is strictly positive on the set where
$-Q(w) \le 0$; by compactness of the unit ball, there exists a constant $C$ such that
\[
C^2 \abs{Z(w)}^2 - Q(w)
\]
is a positive definite quadratic form. Then $Z$ clearly satisfies \eqref{eq:supportproperty}
with respect to the induced norm on $\Lambda_\R$.
\end{proof}

\subsection*{Statement of deformation properties}

By $\Stab_\Lambda(\DD)$ we denote the space of stability conditions satisfying the support property with
respect to $(\Lambda,v)$. By the main result of \cite{Bridgeland:Stab}, the forgetful map
\[
\ZZ \colon \Stab_\Lambda(\DD) \to \Hom(\Lambda, \C), \quad (Z, \PP) \mapsto Z
\]
is a local homeomorphism.
The following result is the main purpose of this appendix:

\begin{Prop}\label{prop:EffectiveDefo}
Assume that $\sigma = (Z, \PP)\in \Stab_\Lambda(\DD)$ satisfies the support property
with respect to a quadratic form $Q$ on $\Lambda_\R$.
Consider the open subset of $\Hom(\Lambda, \C)$ consisting of central charges whose kernel is
negative definite with respect to $Q$, and let $U$ be the connected component containing $Z$.
Let $\UU \subset\Stab_\Lambda(\DD)$ be the connected component of the preimage $\ZZ^{-1}(U)$ containing
$\sigma$.
\begin{enumerate}
\item The restriction
$ \ZZ|_\UU \colon \UU \to U $
is a covering map.
\item Any stability condition $\sigma' \in \UU$ satisfies the support property with respect to the same
quadratic form $Q$.
\end{enumerate}
\end{Prop}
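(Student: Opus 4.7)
The plan is to make Bridgeland's deformation theorem effective with a radius that depends only on $Q$, and to propagate the support property with respect to the same $Q$ along any deformation within $U$; together these show that paths in $U$ lift uniquely to $\UU$, yielding both claims.

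First I would establish a uniform deformation radius. If $\sigma' = (Z', \PP') \in \ZZ^{-1}(U)$ satisfies the support property with respect to $Q$, then by the compactness argument in Lemma \ref{lem:KS} the constant
\[
C(Z') := \sup_{w \in \Lambda_\R,\, Q(w) \geq 0,\, w \neq 0} \frac{\|w\|}{|Z'(w)|}
\]
is finite and varies continuously with $Z'$ on $U$. Bridgeland's construction in \cite{Bridgeland:Stab} then deforms $\sigma'$ uniquely to a stability condition $\sigma_W$ with any central charge $W$ satisfying $\|W - Z'\|_{\mathrm{op}} \cdot C(Z') < \eta$ for some absolute $\eta > 0$: under such a bound the perturbed slope function is close enough to the original that Harder-Narasimhan filtrations can be perturbed. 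Crucially, $C(Z')$ stays uniformly bounded on compact subsets of $U$, so the deformation radius cannot shrink to zero along any compact path.

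Second, I would show that $\sigma_W$ still satisfies the support property with respect to $Q$ via a wall-crossing argument. Along a continuous path $\sigma_t$ within $\ZZ^{-1}(U)$, the walls of semistability form a locally finite collection of real hypersurfaces; between walls the set of semistable objects is constant, and at a wall $t = t_0$, any new $\sigma_{t_0 + \epsilon}$-semistable object $E$ of phase $\phi$ is an extension of the Jordan-H\"older factors $E_1, \dots, E_m$ of a strictly $\sigma_{t_0}$-semistable object, all sharing the phase $\phi$. Inductively each $E_i$ satisfies $Q(v(E_i)) \geq 0$, and the classes $v(E_i)$ all lie in the half-space $\mathcal{H}_\phi := \{w \in \Lambda_\R : Z_{t_0}(w) \in \R_{\geq 0}\, e^{i\pi\phi}\}$. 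Since $\ker Z_{t_0}$ is $Q$-negative definite, the set $\mathcal{H}_\phi \cap \{Q \geq 0\}$ is a convex cone by Lemma \ref{lem:convexcone}, the linear-algebra fact already exploited in Lemma \ref{lem:halfspace}; summing the $v(E_i)$ yields $Q(v(E)) \geq 0$.

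Combining the two ingredients, any continuous path $\gamma \colon [0,1] \to U$ starting at $Z$ admits a unique lift to $\UU$: the uniform radius permits incremental lifting, and preservation of the support property ensures the radius stays bounded below throughout. This gives simultaneously (1) that $\ZZ|_\UU \colon \UU \to U$ is a covering map, and (2) that every $\sigma' \in \UU$ satisfies the support property with respect to the same $Q$. The main obstacle is coordinating the two constructions across walls: one must check that the analytic deformation produced by Bridgeland's theorem agrees with the wall-crossing description of semistable objects. This is handled by the uniqueness clause in Bridgeland's theorem, together with the fact that both constructions produce the same central charge $W$ and assign every $\sigma_W$-semistable object its correct phase.
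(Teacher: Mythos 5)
Your overall strategy is the one the paper follows: prove claim (2) first by propagating the inequality $Q \ge 0$ across walls using the convexity of $Z^{-1}(\rho)\cap\{Q\ge 0\}$ when $\ker Z$ is $Q$-negative definite (Lemmas \ref{lem:convexcone} and \ref{lem:blah}), and then deduce claim (1) from Bridgeland's quantitative deformation theorem (Theorem \ref{thm:Bridgelanddeform}) together with the continuity of the support constant $C(Z')$ extracted from $Q$ as in Lemma \ref{lem:KS}. Your single-wall step is correct: a newly semistable object is semistable at the wall, its Jordan--H\"older factors there are stable, hence semistable slightly before the wall, so convexity of the cone transfers $Q\ge 0$ to the sum of their classes.

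The gap is in the well-foundedness of your induction. You assert that ``the walls of semistability form a locally finite collection \ldots between walls the set of semistable objects is constant,'' but local finiteness of walls is only available for a \emph{fixed} class in $\Lambda$ (and even that uses the support property). Your recursion does not stay within one class: to verify $Q(v(E))\ge 0$ you pass to the Jordan--H\"older factors $E_i$, whose classes are new, and to verify $Q(v(E_i))\ge 0$ you must trace each $E_i$ further along the path, producing factors of factors, and so on. Nothing in your argument rules out an infinite chain of such destabilizations accumulating along the path, which would prevent the induction from terminating. This is exactly the point the paper's proof is organized around: it shows the locus $\VV\subset\UU$ of stability conditions violating the inequality is open (easy, via openness of stability) and closed, where closedness is proved by contradiction --- a violating object at the start of a path that is repaired by the endpoint spawns an infinite sequence $E_0, E_1, E_2,\dots$ of destabilizations, and Lemma \ref{lem:noinfinitesubobjects} excludes this by bounding the masses $m_{\sigma(0)}(E_i)\le e^{2D}\abs{Z_0(E_0)}$ along the chain and then invoking local finiteness of walls for the resulting bounded-mass family together with compactness and piecewise linearity of the path. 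Without an argument of this kind (or some other device guaranteeing termination), your wall-by-wall induction does not close up, so claim (2) --- and with it the uniform lower bound on the deformation radius needed for claim (1) --- is not yet established.
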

In other words, this proposition gives an effective version of Bridgeland's deformation result
\cite[Theorem 1.2]{Bridgeland:Stab}, and shows that Chern classes of semistable objects for varieties continue to satisfy the
same inequalities within this class of deformations.

\subsection*{The quadratic form and wall-crossing}
We start with the observation that the quadratic form is preserved by wall-crossing:
\begin{Lem} \label{lem:blah}
Let $Q$ be a quadratic form on $\Lambda_\R$. Assume that $\sigma = (Z, \PP)$ is a pre-stability
condition such that the kernel of $Z$ is negative semi-definite with respect to $Q$.
If $E$ is strictly $\sigma$-semistable with Jordan-H\"older factors $E_1, \dots, E_m$,
and if $Q(E_i) \ge 0$ for all $i =1, \dots, m$, then
$Q(E) \ge 0$.
\end{Lem}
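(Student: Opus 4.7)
The plan is to realize $Q(v(E)) \ge 0$ as a consequence of convexity of an appropriate cone. I will first make two elementary observations: since $E$ admits a Jordan--H\"older filtration with factors $E_1,\dots,E_m$, additivity of $v$ on short exact sequences yields $v(E) = v(E_1) + \cdots + v(E_m)$ in $\Lambda$. Second, because all $E_i$ share the same phase $\phi$ as $E$, each central charge $Z(v(E_i))$ lies on the ray $\R_{\ge 0}\cdot z_0$ with $z_0 := e^{i\pi\phi}$; equivalently, each $v(E_i)$ lies in the ``half-space''
\[
H^+ := \stv{w \in \Lambda_\R}{Z(w) \in \R_{\ge 0}\cdot z_0}.
\]

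Next I would invoke Lemma \ref{lem:convexcone} from Appendix \ref{app:SupportProperty} (which is exactly the general linear-algebra statement behind, e.g., Lemma \ref{lem:halfspace}\eqref{enum:convexcone}): under the hypothesis that $\ker Z$ is negative semi-definite with respect to $Q$, the intersection
\[
\CC := H^+ \cap \stv{w \in \Lambda_\R}{Q(w) \ge 0}
\]
is a convex cone. By hypothesis each $v(E_i)$ lies in $\CC$, and since convex cones are closed under addition, $v(E) = \sum_i v(E_i)$ also lies in $\CC$, giving $Q(v(E)) \ge 0$ as desired.

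There is no real obstacle beyond citing Lemma \ref{lem:convexcone}; the substantive content is packaged there. At the level of geometric intuition, the reason convexity holds is that the semi-definiteness hypothesis forces the two components of $\{Q > 0\}$ (on any $2$-plane in $\Lambda_\R$ on which $Q|_{\ker Z}$ is negative) to lie on opposite sides of $\ker Z$; the condition $Z(w) \in \R_{\ge 0}\cdot z_0$ picks out a single side, so only one component survives inside $H^+$, producing a genuine convex cone rather than a double cone. This is exactly the pattern already used (in a rank-3 incarnation) in Lemma \ref{lem:trivial} and Corollary \ref{cor:DeltaJH}.
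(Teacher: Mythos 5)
Your proof is correct and follows essentially the same route as the paper: the paper's own argument also reduces the claim to the convexity of $Z^{-1}(\R_{\ge 0}\cdot Z(E)) \cap \{Q \ge 0\}$, which is exactly Lemma \ref{lem:convexcone}, applied to $v(E) = \sum_i v(E_i)$ with all $v(E_i)$ on the same ray. Nothing is missing.
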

\begin{proof}
Let $\H_E \subset \Lambda_\R$ be the half-space of codimension one given as the preimage
of the ray $\R_{\ge 0} \cdot Z(E)$, and let $\CC^+ \subset \H_E$ be the subset defined by
$Q \ge 0$. By the following Lemma, $\CC^+$ is a convex cone, implying the claim.
\end{proof}

\begin{Lem} \label{lem:convexcone}
Let $Q$ be a quadratic form an a real vector space $V$, and let $Z \colon V \to \C$ be a linear map
such that the kernel of $Z$ is semi-negative definite with respect to $Q$. Let $\rho$ be a ray in
the complex plane starting at the origin. Then the intersection
\[ \CC^+ = Z^{-1}(\rho) \cap \left\{Q(\blank) \ge 0\right\} \]
is a convex cone.

Moreover, if we assume that $Q$ has signature $(2, \dim V -2)$, and that the kernel of $Z$ is
negative definite, then any vector $w \in \CC^+$ with $Q(w) = 0$ generates an extremal ray of
$\CC^+$.
\end{Lem}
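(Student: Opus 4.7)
The plan is to prove both parts by exploiting the symmetric bilinear form $B$ associated to $Q$, with extremality following from a Lorentzian realization under the stronger hypotheses.

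First I would establish the key bilinear inequality: for $w_1, w_2 \in \CC^+$ with $Z(w_1), Z(w_2)$ nonzero, one has $B(w_1, w_2) \ge 0$. Since both central charges lie on the ray $\rho$, they are non-negative real multiples of each other, so one can write $Z(w_1) = \lambda Z(w_2)$ for some $\lambda > 0$, giving $w_1 - \lambda w_2 \in \ker Z$. The negative semi-definiteness of $Q|_{\ker Z}$ then yields
\[
0 \ge Q(w_1 - \lambda w_2) = Q(w_1) - 2\lambda B(w_1, w_2) + \lambda^2 Q(w_2),
\]
and combined with $Q(w_i) \ge 0$ this forces $B(w_1, w_2) \ge 0$. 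Convexity then follows immediately: $Q(w_1 + w_2) = Q(w_1) + 2 B(w_1, w_2) + Q(w_2) \ge 0$, while $Z(w_1 + w_2) \in \rho$ by linearity. The cone property is automatic.

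For extremality, I would realize $\CC^+$ as the forward light cone of a Lorentzian form under the stronger hypotheses of part (B). Consider the real hyperplane $H := Z^{-1}(\R \cdot \rho) \subset V$, which contains $\ker Z$ in codimension one. Since $Q|_{\ker Z}$ is strictly negative definite, $Q|_H$ has at most one positive direction; the overall signature hypothesis $(2, \dim V - 2)$ then forces $Q|_H$ to have signature $(1, \dim V - 2)$, i.e., Lorentzian. The half-space $Z^{-1}(\rho) \subset H$, one of the two closed half-spaces bounded by $\ker Z$, selects one of the two components of the double light cone $\{Q|_H \ge 0\}$, realizing $\CC^+$ as a forward light cone. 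For null $w \in \CC^+$ with $w = w_1 + w_2$ and $w_i \in \CC^+$, expanding $0 = Q(w) = Q(w_1) + 2 B(w_1, w_2) + Q(w_2)$ and using non-negativity of each term forces $Q(w_1) = Q(w_2) = B(w_1, w_2) = 0$. The standard fact that orthogonal future-directed null vectors in a Lorentzian form are parallel then gives $w_1, w_2$ proportional to $w$, establishing extremality.

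The main obstacle will be the degenerate boundary case for part (A), where $Z(w_1) = 0$: then $w_1 \in \ker Z$ with $Q(w_1) = 0$, and the bilinear inequality $B(w_1, w_2) \ge 0$ must instead be established by showing $w_1$ lies in the radical of $Q|_H$ (not merely of $Q|_{\ker Z}$), via a direct argument within the hyperplane $H$ or by approximation. Under the stronger hypotheses of part (B), negative definiteness of $\ker Z$ forces $w_1 = 0$ in this case, so this subtlety does not arise there.
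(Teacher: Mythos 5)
Your proof of convexity is, in substance, the paper's own: the paper also reduces to the vector $w_1 - \lambda w_2 \in \ker Z$ (with $\lambda>0$ determined by $Z(w_1)=\lambda Z(w_2)$) and derives a contradiction from the sign pattern of the quadratic polynomial $x\mapsto Q(w_1+xw_2)$; expanding $Q(w_1-\lambda w_2)\le 0$ to get $B(w_1,w_2)\ge 0$ is the same computation written through the bilinear form instead of a sign-change count. For extremality the paper is more direct: strict negativity of $Q$ on $\ker Z$ gives $Q(w_1-\lambda w_2)<0$ unless $w_1,w_2$ are proportional, hence $B(w_1,w_2)>0$ and $Q(w_1+w_2)>0$. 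Your Lorentzian reformulation is correct --- the signature of $Q$ restricted to the hyperplane $Z^{-1}(\R\cdot\rho)$ is indeed forced to be $(1,\dim V-2)$, since that hyperplane contains the negative definite $\ker Z$ of dimension $\dim V-2$, and orthogonal null vectors in Lorentzian signature are parallel --- but it is a longer route to the same conclusion.

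Regarding your ``main obstacle'': the degenerate case $Z(w_1)=0\neq Z(w_2)$ cannot be repaired in the way you propose, because the first assertion of the lemma is actually false there. Take $V=\R^3$, $Q(x,y,z)=xy-z^2$ (signature $(1,2)$), $Z(x,y,z)=y+iz$, and $\rho=\R_{\le 0}\subset\C$. Then $\ker Z=\R\cdot(1,0,0)$ is negative \emph{semi-}definite, and $w_1=(1,0,0)$, $w_2=(0,-1,0)$ both lie in $\CC^+$, yet $Q(w_1+w_2)=-1<0$; in particular $w_1$ does not lie in the radical of $Q$ restricted to the hyperplane. The paper's proof silently assumes that the required $\lambda>0$ exists, i.e.\ that $Z(w_1),Z(w_2)\neq 0$, and this holds in every application: semistable objects of a pre-stability condition have nonzero central charge by definition, and in the tilt-stability applications the case $\nu=+\infty$ (equivalently $\overline{Z}=0$ with nonzero class) is excluded explicitly, as in the remark following Corollary \ref{cor:DeltaJH}. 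So the correct resolution of the sub-case you isolated is to add the hypothesis $Z(w_i)\neq 0$ (or $0\notin\rho$), not to prove the bilinear inequality there; as you note, under the hypotheses of the second statement the issue disappears since $\ker Z$ negative definite and $Q(w_1)\ge 0$ force $w_1=0$.
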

\begin{proof}
To prove convexity we just need to show that if $w_1, w_2 \in \CC^+$, then
$Q(w_1 + w_2) \ge 0$. According to the taste of the reader, this can either be seen by drawing a
picture of 2-plane $\Pi$ spanned by $w_1, w_2$---the only interesting case being where $Q|_{\Pi}$ has
signature $(1, 1)$---, or by the following algebraic argument.
Assume that $Q(w_1 + w_2) < 0$.  Since $w_1, w_2 \in Z^{-1}(\rho)$, there exists $\lambda > 0$ such
that $w_1 - \lambda w_2$ is in the kernel of $Z$. We therefore have
\[
Q(w_1 - \lambda w_2) \le 0, \quad
Q(w_1) \ge 0, \quad Q(w_1 + w_2) < 0, \quad \text{and} \quad Q(w_2) \ge 0.
\]
This configuration is impossible, since the quadratic function $f(x) := Q(w_1 + x w_2)$ would have too many
sign changes.

To prove the second statement, observe that under these stronger assumptions and for $w_1, w_2,
\lambda$ as above, we have $Q(w_1 - \lambda w_2) < 0$. This implies $Q(w_1+ w_2) >0$, from which the
claim follows.
\end{proof}

Before returning to the proof of Proposition \ref{prop:EffectiveDefo}, let us add one additional
consequence:
\begin{Prop} \label{prop:Q0remainsstable}
Assume that the quadratic form $Q$ has signature $(2, \rk \Lambda_H -2)$.
Let $U \subset \Stab_\Lambda(\DD)$ be a path-connected set of stability conditions that
satisfy the support property with respect to $Q$. Let $E \in \Db(X)$ be
an object with $Q(E) = 0$ that is $\sigma$-stable for some $\sigma \in U$. Then $E$ is
$\sigma'$-stable for all $\sigma' \in U$.
\end{Prop}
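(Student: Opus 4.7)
The plan is to show that $U_0 := \{\sigma'' \in U : E \text{ is } \sigma''\text{-stable}\}$ is both open and closed in $U$; combined with $\sigma \in U_0$ and the path-connectedness hypothesis, this forces $U_0 = U$.

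Openness of $U_0$ is standard: Bridgeland's deformation theorem, together with local finiteness of walls for any fixed class in $\Lambda$, implies that the locus where a fixed object is stable is always open in $\Stab_\Lambda(\DD)$.

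For closedness, I take a sequence $\sigma_n \to \sigma''$ in $U$ with each $\sigma_n \in U_0$. Since semistability is a closed condition, $E$ is $\sigma''$-semistable; I assume for contradiction that it is strictly so. Let $F$ be the first term of a Jordan--H\"older filtration of $E$ at $\sigma''$, so there is a short exact sequence $0 \to F \to E \to G \to 0$ in the heart of $\sigma''$, with $F$ being $\sigma''$-stable, $F \not\cong E$, and both $F$ and $G$ of the same phase as $E$. By the support property at $\sigma''$, both $v(F)$ and $v(G)$ satisfy $Q \ge 0$ and lie in the half-space over the ray $\R_{\ge 0} Z''(E)$; hence they lie in the cone $\CC^+$ of Lemma \ref{lem:convexcone}.

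The main technical point--and where I would concentrate most care--is to leverage the hypothesis $Q(v(E)) = 0$. The signature assumption on $Q$ together with the support property (which forces $\Ker Z''$ to be negative definite) puts us in the situation of the second half of Lemma \ref{lem:convexcone}: $v(E)$ spans an extremal ray of $\CC^+$. This forces $v(F) = \lambda v(E)$ for some $\lambda \in (0,1)$. The remaining argument is then quick: openness of stability implies $F$ is $\sigma_n$-stable for $n \gg 0$, and since $v(F)$ is a positive multiple of $v(E)$ and $\sigma_n$ is close to $\sigma''$, we have $\phi_{\sigma_n}(F) = \phi_{\sigma_n}(E) =: \phi$. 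Thus $F$ and $E$ are both simple objects of the abelian category $\PP_{\sigma_n}(\phi)$, while the inclusion $F \hookrightarrow E$ (coming from the heart of $\sigma''$) is a non-zero morphism between them in $\DD$. Schur's lemma in $\PP_{\sigma_n}(\phi)$ would then force $F \cong E$, contradicting $v(F) = \lambda v(E)$ with $\lambda < 1$.
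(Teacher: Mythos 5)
Your proof is correct and follows essentially the same route as the paper's: both hinge on the second half of Lemma \ref{lem:convexcone}, which makes $v(E)$ an extremal ray of the cone $\CC^+$ and hence forces every Jordan-H\"older factor of $E$ to have class proportional to $v(E)$. The paper then simply observes that this contradicts $E$ being stable for nearby central charges, whereas you spell out that final contradiction via an open--closed argument and Schur's lemma in $\PP_{\sigma_n}(\phi)$ --- a more detailed, but not substantively different, ending.
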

\begin{proof}
Otherwise there would be a wall at which $E$ becomes strictly semi-stable. However, by the previous
Lemma, $v_H(E)$ is an extremal ray of the cone $\CC^+$. Therefore, all the Jordan-H\"older factors
$E_i$ must have $v_H(E_i)$ proportional to $v_H(E)$, in contradiction to $E$ being strictly stable
for some nearby central charges.
\end{proof}

\subsection*{Proof of the deformation property}

In a sense, Lemma \ref{lem:blah} is the key observation in the proof of Proposition
\ref{prop:EffectiveDefo}; the remainder boils down to a careful application of local finiteness of
wall-crossing, and of the precise version of the deformation result proved by Bridgeland.

To this end, we need to recall the definition of the metric on $\Stab_\Lambda(\DD)$.

\begin{Def}[{\cite[Proposition 8.1]{Bridgeland:Stab}}] \label{def:Bridgelandmetric}
The following is a generalized metric on $\Stab_\Lambda(\DD)$:
\[
d(\sigma_1, \sigma_2) := \sup_{0 \neq E \in \DD}
\left\{ \abs{\phi_{\sigma_2}^-(E) - \phi_{\sigma_1}^-(E)},
\abs{\phi_{\sigma_2}^+(E) - \phi_{\sigma_1}^+(E)}, \abs{\log m_{\sigma_2}(E) - \log m_{\sigma_2}(E)}
\right\}
\]
\end{Def}
Bridgeland's proof of the deformation result
in fact proves the following stronger statement:
\begin{Thm}[{\cite[Sections 6 and 7]{Bridgeland:Stab}}] \label{thm:Bridgelanddeform}
Assume that $\sigma = (Z, \PP)$ is a stability condition on $\DD$, and let $C>0$ be a constant with
respect to which $\sigma$ satisfies the support property condition \eqref{eq:supportproperty}. Let
$\epsilon < \frac 18$, and consider the neighborhood $B_{\frac{\epsilon}C}(Z)$ of $Z$ taken with respect to the operator norm on
$\Hom(\Lambda, \C)$. Then there exists an open neighborhood $U \subset \Stab_\Lambda(\DD)$
containing $\sigma$, such that $\ZZ$ restricts to a homeomorphism
\[ \ZZ|_U \colon U \to B_{\frac{\epsilon}C}(Z). \]
Therefore, $\Stab_\Lambda(\DD)$ is a complex manifold; moreover, the generalized metric of
Definition \ref{def:Bridgelandmetric} is finite on every connected component of $\Stab_\Lambda(\DD)$.
\end{Thm}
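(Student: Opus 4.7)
The proof plan is to follow Bridgeland's original deformation argument, while keeping track of the explicit constants that yield the radius $\epsilon/C$. The theorem really contains three assertions: (1) the explicit lift of a ball around $Z$, (2) the induced complex manifold structure, and (3) finiteness of the generalized metric. Assertions (2) and (3) will follow formally once (1) is established, so the bulk of the work lies in (1).

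For assertion (1), fix $\sigma = (Z, \PP)$ satisfying the support property with constant $C$, and let $W \in \Hom(\Lambda, \C)$ with $\|W - Z\| < \epsilon/C$. Using the support property, for every $\sigma$-semistable $E$ we obtain
\[
\abs{W(v(E)) - Z(v(E))} \le \|W-Z\|\cdot \|v(E)\| < \tfrac{\epsilon}{C}\cdot C\abs{Z(v(E))} = \epsilon\abs{Z(v(E))},
\]
so $W(v(E))$ lies in a disc of radius $\epsilon \abs{Z(v(E))}$ around $Z(v(E))$. Since $\epsilon < \tfrac{1}{8}$, this forces $\arg W(v(E))$ to differ from $\arg Z(v(E))$ by at most some $\eta(\epsilon) := \arcsin\epsilon < \pi/8$. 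In particular, $W(v(E))$ stays in the open upper half-plane whenever $E \in \PP(I)$ for $I$ an interval of length less than $1 - 2\eta(\epsilon)/\pi$.

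Next I would recall Bridgeland's theory of quasi-abelian categories: for any interval $I$ of length $<1$, the full subcategory $\PP(I)$ is quasi-abelian, with strict short exact sequences coming from triangles in $\DD$. The plan is to use $W$ as a stability function on $\PP(I)$ for suitable $I$, produce HN filtrations in $\PP(I)$ with respect to $\arg W$, and glue these to a slicing $\QQ$ of $\DD$. The construction of $\QQ(\psi)$ is as the full subcategory of $W$-semistable objects of phase $\psi$ inside $\PP(I)$ for any interval $I$ of length $<1$ containing $\psi$ in its interior; a standard argument shows this is independent of the choice of $I$ once $\sigma$-HN filtrations have phases in $I$. The pair $\tau := (W, \QQ)$ is then a pre-stability condition.

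The hard step is showing that HN filtrations of objects in $\PP(I)$ with respect to $\arg W$ actually exist. This requires verifying two finiteness properties: that strictly ascending chains of subobjects (in the quasi-abelian sense) with strictly increasing $\arg W$ terminate, and that infima of phases are attained. Bridgeland's argument uses the noetherian nature of $\PP(I)$ together with a careful control on the jumps of $\arg W$; here the explicit bound $\epsilon < 1/8$ enters, ensuring that $\arg W$ takes values in a sufficiently narrow cone inside the upper half-plane so that the usual inductive construction of HN filtrations converges. I expect this to be the main obstacle, as one must carry through Bridgeland's argument (\cite[Sections 6, 7]{Bridgeland:Stab}) while quantifying every estimate in terms of $\epsilon$ and $C$.

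Having constructed $\tau$, support property for $\tau$ with respect to the same quadratic form $Q$ follows from Lemma \ref{lem:blah}: each $\tau$-semistable object is built by iterated extensions from $\sigma$-semistable objects of nearby phase, all of which satisfy $Q \ge 0$, and the ray of $W$-phases lies in a half-space on which the cone $\{Q \ge 0\}$ is convex (Lemma \ref{lem:convexcone}). Uniqueness of $\tau$ near $\sigma$ follows from the standard fact that two pre-stability conditions with the same central charge whose slicings are sufficiently close must coincide; combined with the explicit construction, this gives the desired homeomorphism $\ZZ|_U \colon U \to B_{\epsilon/C}(Z)$. Assertion (2) is then immediate since $\Hom(\Lambda, \C)$ is a complex vector space. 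For assertion (3), on any connected component of $\Stab_\Lambda(\DD)$ one can bound $d(\sigma, \tau)$ in terms of $\|Z_\sigma - Z_\tau\|$ along a path by integrating the local bound $d(\sigma, \tau) = O(\epsilon)$ obtained from the explicit construction, yielding finiteness on each component.
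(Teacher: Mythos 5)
The paper offers no proof of this statement: it is imported verbatim from Bridgeland's Sections 6 and 7, and your plan --- build the slicing $\QQ(\psi)$ out of $W$-semistable objects inside $\PP(I)$, establish HN filtrations there, and check that the resulting pair is again a point of $\Stab_\Lambda(\DD)$ --- is exactly that argument, with the support property supplying the hypothesis $\abs{W(v(E))-Z(v(E))}<\epsilon\abs{Z(v(E))}$ on semistable objects and the local finiteness needed for Bridgeland's existence proof of HN filtrations. To that extent your route is the intended one, and leaving the HN-existence step as ``carry Bridgeland's estimates through'' is consistent with what the paper itself does by citation.

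One step is wrong as written, however: the claim that the support property for $\tau=(W,\QQ)$ ``follows from Lemma \ref{lem:blah}'' because a $\tau$-semistable object is an iterated extension of $\sigma$-semistable objects of nearby phase. Lemma \ref{lem:blah} and the convexity statement of Lemma \ref{lem:convexcone} apply only to collections of classes lying over a \emph{single} ray $\rho$ (the proof uses that $w_1-\lambda w_2\in\Ker Z$ for some $\lambda>0$); the $\sigma$-HN factors of a $W$-semistable object have $Z$-phases spread over an interval of positive length, and $Q\ge 0$ is not preserved under sums of classes whose central charges merely lie in a narrow cone. The assertion that semistable objects of the deformed stability conditions still satisfy $Q\ge 0$ is precisely Proposition \ref{prop:EffectiveDefo}(2), which the paper proves \emph{after}, and using, Theorem \ref{thm:Bridgelanddeform}, by an open--closed argument along paths together with Lemma \ref{lem:noinfinitesubobjects}; invoking it here would be circular. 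Fortunately the theorem only needs $\tau$ to satisfy \eqref{eq:supportproperty} for \emph{some} constant, and this follows from the mass comparison: for a $\tau$-semistable $E$ with $\sigma$-HN factors $A_i$ one has
\[
\|v(E)\|\le\sum_i\|v(A_i)\|\le C\, m_\sigma(E)\le C e^{d(\sigma,\tau)}\, m_\tau(E)=Ce^{d(\sigma,\tau)}\abs{W(v(E))},
\]
where $d(\sigma,\tau)\le\epsilon$ comes out of Bridgeland's construction. With that replacement your outline is sound.
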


\begin{proof}[Proof of Proposition \ref{prop:EffectiveDefo}]
Consider the subset $\VV \subset \UU$ of stability conditions that do \emph{not} satisfy the second
claim; we want to prove that $\VV$ is empty, thereby establishing the second claim.

Given $\sigma' \in \VV$, there exists a $\sigma'$-semistable object $E$ with
with $Q(v(E)) < 0$; by Lemma \ref{lem:blah}, we may assume that $E$ is stable. By openness of
stability of $E$, there exists a neighborhood of $\sigma'$ contained in $\VV$; therefore, $\VV
\subset \UU$ is open.

We claim that $\VV \subset \UU$ is also a closed subset; since $\UU$ is a manifold and $\VV \subset \UU$ is open,
it is enough to show that if $\sigma \colon [0,1] \to \UU$ is a piece-wise linear path with $\sigma(t) \in \VV$ for
$0 \le t < 1$, then $\sigma(1) \in \VV$. By the definition of $\VV$ and Lemma \ref{lem:blah} there
exists an object $E_0$ that is $\sigma(0)$-\emph{stable} with $Q(v(E_0)) < 0$. Since $\sigma(1)
\notin \VV$, there must be $0 < t_1 < 1$ such that $E_0$ is strictly semistable; applying Lemma
\ref{lem:blah} again, it must have a Jordan-H\"older factor $\sigma(t_1)$-stable factor $E_1$ with $Q(v(E_1)) < 0$. Proceeding
by induction, we obtain an infinite sequence $0=t_0<t_1 < t_2 < t_3 < \dots < 1$ of real numbers and objects
$E_i$ such that $E_i$ is $\sigma(t)$-stable for $t_i \le t < t_{i+1}$, strictly semistable
with respect to $\sigma(t_{i+1})$ (having $E_{i+1}$ as a Jordan-H\"older factor), and satisfies
$Q(v(E_i)) < 0$. This is a contradiction by Lemma \ref{lem:noinfinitesubobjects} below.

Therefore, since $\VV \subset \UU$ is both open and closed, and does not contain $\sigma$, it must be
empty.

It remains to prove the first claim. By Theorem \ref{thm:Bridgelanddeform}, it is enough to show
that there is a continuous function $C \colon U \to \R_{>0}$ such that every $\sigma \in \UU$
satisfies the support property with respect to $C(\ZZ(\sigma))$. This is evident from the second
claim and the proof of Lemma \ref{lem:KS}.
\end{proof}

\begin{Lem} \label{lem:noinfinitesubobjects}
Let $\sigma \colon [0,1] \to \Stab_\Lambda(\DD)$ be a piece-wise linear path in the space of stability condition
satisfying the support property. Assume there is a sequence $0 = t_0 < t_1 < t_2 < \dots < 1$ of real
numbers and a sequence of objects $E_0, E_1, E_2, \dots$ with the following properties:
\begin{itemize}
\item $E_i$ is $\sigma(t_i)$-\emph{stable}.
\item $E_{i}$ is $\sigma(t_{i+1})$-semistable, and $E_{i+1}$ is one of its Jordan-H\"older factors.
\end{itemize}

Such a sequence always terminates.
\end{Lem}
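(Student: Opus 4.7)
The plan is to argue by contradiction: I assume the sequence $(E_i)_{i \ge 0}$ is infinite. Then the strictly increasing $t_i$ are bounded above by $1$, so they converge to some $t_\infty \in (0, 1]$, and the path $\sigma$ restricted to $[0, t_\infty]$ is a compact subset of $\Stab_\Lambda(\DD)$. All of the necessary uniform bounds will come from this compactness.

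First I would establish two preliminary estimates. By Theorem \ref{thm:Bridgelanddeform}, every point of $\Stab_\Lambda(\DD)$ admits a neighborhood on which the support-property constant in \eqref{eq:supportproperty} can be taken uniform; covering the compact path by finitely many such neighborhoods yields a single $C > 0$ such that for every $t \in [0, t_\infty]$ and every $\sigma(t)$-semistable object $F$, $\|v(F)\| \le C\,|Z_{\sigma(t)}(v(F))|$. Since the path is piece-wise linear, the map $t \mapsto Z_{\sigma(t)} \in \Hom(\Lambda, \C)$ is Lipschitz with some constant $L$, so setting $\epsilon_i := \|Z_{\sigma(t_{i+1})} - Z_{\sigma(t_i)}\|$ gives $\sum_i \epsilon_i \le L\, t_\infty < \infty$.

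The key dynamical input is a uniform strict decrease of central-charge magnitudes. At $t_{i+1}$, the object $E_i$ is strictly semistable, and its Jordan--H\"older factors $F_1,\dots,F_k$ (with $k \ge 2$) are $\sigma(t_{i+1})$-stable, with $E_{i+1} = F_{j_0}$ for some index $j_0$. Each $v(F_j)$ is a nonzero lattice vector, hence satisfies $\|v(F_j)\| \ge \delta$ where $\delta > 0$ is the minimum length of a nonzero element of $\Lambda$. The support property then gives $|Z_{\sigma(t_{i+1})}(v(F_j))| \ge \delta/C$ for each $j$, and since all $Z_{\sigma(t_{i+1})}(v(F_j))$ lie on a common ray their moduli add, so
\[
  |Z_{\sigma(t_{i+1})}(v(E_{i+1}))| \le |Z_{\sigma(t_{i+1})}(v(E_i))| - \delta/C.
\]

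Setting $a_i := m_{\sigma(t_i)}(E_i) = |Z_{\sigma(t_i)}(v(E_i))|$ (equality by stability of $E_i$ at $t_i$) and combining this with the triangle inequality $|Z_{\sigma(t_{i+1})}(v(E_i))| \le a_i + \epsilon_i \|v(E_i)\| \le (1 + C \epsilon_i)\, a_i$ yields the recursion $a_{i+1} \le (1 + C\epsilon_i)\,a_i - \delta/C$. Iterating the weaker bound $a_{i+1} \le (1+C\epsilon_i)\,a_i$ and using summability of $(\epsilon_i)$ shows $a_i$ is bounded by some constant $A$; then telescoping the full recursion yields $a_N \le a_0 + CA \sum_{i<N} \epsilon_i - N\delta/C$, which tends to $-\infty$ as $N \to \infty$, contradicting $a_N \ge 0$. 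This forces the sequence to terminate. The principal obstacle I anticipate is the first step, namely extracting a single uniform support-property constant along the compact path via Bridgeland's deformation result without circularity with Proposition \ref{prop:EffectiveDefo}; once that is in hand the remaining estimates are routine.
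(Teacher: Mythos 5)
Your proof is correct, but it takes a genuinely different route from the paper's. The paper bounds the masses $m_{\sigma(0)}(E_i)$ uniformly using the generalized metric of Definition \ref{def:Bridgelandmetric} (via $m_{\sigma(t_{i+1})}(E_i)\le e^{d_i}m_{\sigma(t_i)}(E_i)$ and the finiteness of $D=\sum_i d_i$), and then concludes by invoking the local finiteness of walls from \cite[Section 9]{Bridgeland:K3}: a compact piece-wise linear path meets only finitely many walls, each finitely many times. You instead make the argument quantitative: discreteness of the finite-rank lattice $\Lambda$ gives $\delta>0$, a uniform support constant $C$ along the compact path gives $\abs{Z(v(F_j))}\ge\delta/C$ for every stable factor (note $v(F_j)\neq 0$ since $Z(v(F_j))\neq 0$ for stable objects), and since the central charges of Jordan--H\"older factors lie on a common ray, each step loses at least $\delta/C$ of central-charge magnitude, while the summable increments $\epsilon_i$ of a piece-wise linear path can only add a bounded amount back; the telescoped recursion then drives $a_N$ negative. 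This buys a self-contained contradiction that avoids the wall-and-chamber machinery altogether. Two small points. First, the uniform constant $C$ is most directly justified not by Theorem \ref{thm:Bridgelanddeform} as you cite it, but by the mass comparison built into the metric: if $d(\sigma,\sigma')\le 1$ and $E$ is $\sigma'$-semistable, then $\lVert v(E)\rVert\le C\,m_\sigma(E)\le Ce\,m_{\sigma'}(E)=Ce\abs{Z'(v(E))}$, so $Ce$ works on a metric ball; compactness of the path in the metric topology then gives a global constant. Second, your uniform drop requires $k\ge 2$, i.e.\ that $E_i$ be \emph{strictly} $\sigma(t_{i+1})$-semistable; the lemma as literally stated does not say this, but the paper's own proof (which uses the strict inequality $\abs{Z_{i+1}(E_{i+1})}<\abs{Z_{i+1}(E_i)}$ and treats each $t_{i+1}$ as a wall-crossing) makes the same implicit assumption, and it holds in the only place the lemma is applied, namely the proof of Proposition \ref{prop:EffectiveDefo}.
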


\begin{proof}
Assume we are given an infinite such sequence.  Let
$d_i := d\left(\sigma(t_i), \sigma(t_{i+1})\right)$; the assumptions imply that $\sigma$ is a path of bounded length, and
hence that
\[ D:= \sum_i d_i < +\infty.\]
On the other hand, if we write $Z_i$ for the central charge of $\sigma(t_i)$, then
\[
\abs{Z_{i+1}\left(E_{i+1}\right)} <
\abs{Z_{i+1}\left(E_{i}\right)} = m_{\sigma(t_{i+1})}(E_i)
\le e^{d_i} m_{\sigma(t_{i})}(E_i) = e^{d_i}  \abs{Z_{i}\left(E_{i}\right)};
\]
using induction we deduce that the mass of all objects $E_i$ is bounded:
\[
m_{\sigma(0)}(E_i) \le e^D m_{\sigma(t_i)}(E_i)  = e^D
\abs{Z_{i}\left(E_{i}\right)} \le e^{2D} \abs{Z_{0}\left(E_{0}\right)}.
\]
By \cite[Section 9]{Bridgeland:K3}, this implies that there is a locally finite collection of walls
of semistability for all $E_i$. Since our path is compact, it intersects only finitely many walls;
since it is piece-wise linear, it intersects every wall only finitely many times.
\end{proof}

\section{Deforming tilt-stability}\label{app:deformingtilt}

The purpose of this appendix is to establish rigorously the deformation and wall-crossing properties
of tilt-stability, in particular correcting \cite[Corollary 3.3.3]{BMT:3folds-BG}. This will lead to
variants of the results of Appendix \ref{app:SupportProperty} in this context. We assume that the
reader of this appendix is familiar with the notion of tilt-stability as reviewed in Sections
\ref{sec:TiltStability} and \ref{sec:classicalBG}, as well as with the proof of Bridgeland's
deformation result for stability conditions in \cite[Sections 6 and 7]{Bridgeland:Stab}.

Let $X$ be a smooth projective threefold with polarization $H$; the role of $\Lambda$ and $ v$ in
the previous appendix will be played by
\[
\begin{split}
&\Lambda = H^0(X, \Z) \oplus \NS(X)_\Z \oplus \frac 12 \Z \\
&v_H \colon K(X) \to \Lambda, \quad v_H(E) = \left(\ch_0(E), \ch_1(E), H \ch_2(E)\right).
\end{split}
\]

We will use a variant of the notion of ``weak stability'' of \cite{Toda:PTDT}, adapted
to our situation:
\begin{Def}
A very weak stability condition on $X$ is a pair
$\sigma = (Z, \AA)$, where $\AA$ is the heart of a bounded t-structure on $\Db(X)$,
and $Z \colon \Lambda \to \C$ is a group homomorphism such that
\begin{itemize}
\item
$Z$ satisfies the following weak positivity criterion for every $E \in \AA$:
\[ \Re Z(v_H(E)) \ge 0 \quad \text{and} \quad \Re Z(v_H(E)) = 0 \Rightarrow \Im Z(v_H(E)) \ge 0 \]
\item If we let $\nu_{Z, \AA} \colon \AA \to \R \cup {+\infty}$ be the induced slope function,
then HN filtrations exist in $\AA$ with respect to $\nu_{Z, \AA}$-stability.
\end{itemize}
\end{Def}
By induced slope function we mean that $\nu_{Z, \AA}(E)$ is the usual slope $\frac{\Im}{\Re}$ of the complex number
$Z(v_H(E))$ if its real part is positive, and $\nu_{Z, \AA}(E) = +\infty$ if $Z(v_H(E))$ is purely
imaginary or zero.
The crucial difference to a Bridgeland stability condition is that $Z(v_H(E)) = 0$ is allowed for
non-zero objects $E \in \AA$.

Given a very weak stability condition, one can define a slicing
$\PP = \{\PP(\phi) \subset \Db(X) \}_{\phi \in \R}$ just as in the case of a proper stability
condition constructed from a heart of a t-structure: for $-\frac 12 < \phi \le \frac 12$, we let
$\PP(\phi) \subset \AA$ be the subcategory of $\nu_{Z, \AA}$-semistable objects with slope
corresponding to the ray $\R_{>0} \cdot e^{i \pi \phi}$; this gets extended to all
$\phi \in \R$ via $\PP(\phi + n) = \PP(\phi)[n]$ for $n \in \Z$.

This allows one to define a topology on the set of very weak stability conditions; it is the
coarsest topology such that the maps $\sigma \mapsto Z$ and $\sigma \mapsto \phi_{\sigma}^\pm(E)$
are continuous, for all $E \in \Db(X)$. Our first goal is to show tilt-stability conditions vary
continuously; note that we use a slightly different normalization of the central charge than
in Section \ref{sec:TiltStability}:
\begin{Prop}\label{prop:DeformingTiltStability}
There is a continuous family of very weak stability conditions parameterized by
$ \R_{>0} \times \NS(X)_\R$ given by
\[ \left(\alpha, B \right) \mapsto \left(\overline{Z}_{\alpha, B}, \Coh^{H, B}(X)\right)\]
where
\[ \overline{Z}_{\alpha, B} = H^2 \ch_1^B + i \left(H \ch_2^B - \frac 12 \alpha^2 H^3
\ch_0^B\right).\]
\end{Prop}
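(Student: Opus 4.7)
\smallskip

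\noindent\textbf{Proof proposal.}
The plan is to prove the proposition in three steps: (1) for each $(\alpha, B)$, verify the axioms of a very weak stability condition; (2) extend the classical Bogomolov--Gieseker inequalities of Theorem \ref{thm:BGvariants} to arbitrary (possibly irrational) $B$, thereby establishing a support-property-style control via the $H$-discriminant; (3) deduce continuity of the family of slicings by adapting Bridgeland's deformation argument, using the quadratic form $\overline{\Delta}_H^B$ and its signature properties from Lemma \ref{lem:discriminantsignature}.

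For Step (1), weak positivity follows directly from the torsion-pair construction of $\Coh^{H,B}(X)$: the real part $H^2\ch_1^B$ is non-negative on $\TT_{H,B}$ and non-positive on $\FF_{H,B}$, and on the ``boundary'' subcategory where it vanishes (torsion sheaves supported in codimension $\geq 2$ together with shifts of certain $\FF$-objects) the imaginary part is non-negative. The existence of Harder--Narasimhan filtrations requires more care: $\Coh^{H,B}(X)$ is noetherian by the standard argument of \cite[Proposition 5.2.2]{BMT:3folds-BG}, but $\Im \overline{Z}_{\alpha,B}$ is not discrete because of objects supported in dimension zero lying in the kernel. One thus works relatively: apply the general criterion of \cite[Proposition B.2]{localP2} (or the variant from \cite{Toda:PTDT}) which only requires weak discreteness modulo the kernel subcategory.

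For Step (2), the rational case of Theorem \ref{thm:BGvariants} is already proved in Section \ref{sec:classicalBG}. For irrational $B$, I would argue by a continuity/approximation argument: fix $E$ tilt-semistable at $(\alpha, B)$, approximate $B$ by a sequence $B_n$ of rational classes, and transfer the inequalities $\overline{\Delta}_H^{B_n}(E) \geq 0$ and $\Delta_{H,B_n}^C(E) \geq 0$ to the limit. The approximation step uses openness of tilt-stability (Remark \ref{rmk:OpennessAndHN}), which is the output of Step (3) in the rational case --- so the logic is to carry Steps (2) and (3) out jointly, first at rational $B$ and then propagating to irrational $B$.

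For Step (3), the strategy mirrors \cite[Sections 6--7]{Bridgeland:Stab}. The two key inputs are: the quadratic form $\overline{q}_H^B$ has signature $(2, \rho(X))$ and the kernel of $\overline{Z}_{\alpha, B}$ is negative definite with respect to it (Lemma \ref{lem:discriminantsignature}); and every tilt-semistable object $E$ satisfies $\overline{q}_H^B(\overline{v}_H^B(E)) \geq 0$. Together these give a support-property-type bound, from which one deduces via Lemma \ref{lem:convexcone} that nearby central charges give rise to nearby slicings. Concretely, given $(\alpha_0, B_0)$ and a small perturbation $(\alpha, B)$, one bounds the change of $\phi_{\sigma(\alpha, B)}^\pm(E)$ in terms of the change of $\overline{Z}_{\alpha, B}(E)$ using the quadratic form, and applies a wall-crossing/mass-bound argument analogous to Lemma \ref{lem:noinfinitesubobjects}.

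The main obstacle is that the heart $\Coh^{H,B}(X)$ itself jumps discontinuously with $B$: when $B$ crosses a numerical wall determined by a slope-semistable sheaf, the torsion pair $(\TT_{H,B}, \FF_{H,B})$ changes. The workaround is to forget the heart and work purely with the slicing $\PP_{\alpha, B}$. A tilt-semistable object $E$ at $(\alpha_0, B_0)$ of phase $\phi_0$ need not lie in $\Coh^{H,B}(X)$ for nearby $B$, but its HN components with respect to $(\alpha, B)$ must lie in a thin sector $(\phi_0 - \epsilon, \phi_0 + \epsilon)$ by the quadratic-form estimate; this is enough to deduce continuity of $\phi^\pm$. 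Verifying this sector estimate rigorously, and then combining it with local finiteness of walls (Remark \ref{rmk:OpennessAndHN}(2)), is the most delicate part, but the linear-algebra inputs are already present in Lemma \ref{lem:convexcone} and Lemma \ref{lem:blah} of Appendix \ref{app:SupportProperty}, which readily adapt to the very weak setting.
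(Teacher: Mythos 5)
Your overall strategy (establish the rational case directly, then deform) matches the paper's, and you correctly identify the quadratic-form and support-property inputs. But there is a genuine gap at the core of Step (3), and Step (1) is not available as stated. For irrational $B$ the function $\Re \overline{Z}_{\alpha,B} = H^2\ch_1^B$ has dense image in $\R$, so the noetherian-plus-discreteness criterion does not apply and the existence of HN filtrations in $\Coh^{H,B}(X)$ cannot be verified ``for each $(\alpha,B)$'' directly; it has to come out of the deformation argument. Your Step (3) then presupposes that the HN factors of $E$ with respect to the perturbed $(\alpha,B)$ exist in order to show that they lie in a thin sector --- but their existence is exactly what must be proved.

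Moreover, the obstacle you single out (the heart jumping with $B$) is not the real one: Bridgeland's deformation machinery is built to produce a new heart, and the deformed very weak stability condition does turn out to have heart exactly $\Coh^{H,B'}(X)$. The actual obstruction is that the thin quasi-abelian categories $\PP(I)$, for intervals $I$ meeting $\frac 12 + \Z$, are not of finite length (e.g.\ $\cdots \into \OO_C(-2x) \into \OO_C(-x) \into \OO_C$ for a curve $C \subset X$), so the step of Bridgeland's proof that manufactures HN filtrations for the perturbed central charge (his Lemma 7.7) fails precisely on the sectors you need. The paper's fix consists of two ingredients absent from your proposal: (i) a one-sided movement lemma (Lemma \ref{lem:leftmoving}), deduced from the classical inequality $\HDeltaB \ge 0$, showing that after normalizing the sign of $H^2(B'-B)$ the objects of $\PP\left(\left(\frac 12, b\right)\right)$ keep $\Re W < 0$; and (ii) a coarse three-step filtration (Lemma \ref{lem:coarseHN}), obtained from the torsion pair exhibiting $\Coh^{H,B'}(X)$ as a tilt of $\Coh^{H,B}(X)$, which splits any object of $\PP((a,b))$ according to the sign of $\Re W$ and thereby reduces the construction of HN filtrations to intervals avoiding $\frac 12 + \Z$, where Bridgeland's finite-length argument does apply. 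Without these two steps your deformation argument does not close.
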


For rational $B$, this stability condition can be constructed by proving directly that the pair
$\left(\overline{Z}_{\alpha, B}, \Coh^{H, B}(X)\right)$ admits Harder-Narasimhan filtrations, see
\cite[Lemma 3.2.4]{BMT:3folds-BG}. We will extend this to arbitrary $B$ by deformations, and show
simultaneously that these deformations glue to give a single family of very weak stability
conditions.

Let us first indicate the key difficulty that prevents us from applying the methods of
\cite[Sections 6 and 7]{Bridgeland:Stab} directly.
Let $I$ be a small interval containing $\frac 12$; then the \emph{quasi-abelian category}
$\PP(I)$ is not Artinian:
if $x \in X$ lies on a curve $C \subset X$, then $\dots \into \OO_C(-2x) \into \OO_C(-x) \into \OO_C$
is an infinite chain of strict subobjects of $\OO_C$ in $\PP(\frac 12) \subset \PP(I)$. Therefore,
the proof of \cite[Lemma 7.7]{Bridgeland:Stab} does not carry over.

We now explain how to circumvent this problem.  Fix $\alpha, B$ with $B$
rational; we will use $Z := \overline{Z}_{\alpha, B}$
for the corresponding central charge.  By the rational case of Theorem \ref{thm:BGvariants}, proved in Section
\ref{sec:classicalBG}, the central charge $Z$ satisfies the support property.\footnote{Note that
definitions \ref{def:SupportProperty} and \ref{def:SupportProperty2} both apply verbatim in this
situation: they allow for $Z(v_H(E)) = 0$ for a stable object $E$ if and only if $v_H(E) = 0$.} Let
$C > 0$ be the constant appearing in the support property; we also write $\PP$ for the associated
slicing.

Now consider a central charge $W := \overline{Z}_{\alpha', B'}$, where $\alpha',B'$ are sufficiently close to $\alpha,B$ such that
$W$ satisfies $\|W - Z\| < \frac{\epsilon}C$, for some sufficiently small
$\epsilon > 0$; recall that this implies that the phases of
$\sigma$-semistable object change by at most $\epsilon$.
We choose $\epsilon < \frac 18$ and small enough such that $\abs{H^2(B'-B)} < \alpha H^3$ is automatically satisfied.
For simplicity we also assume that $H^2(B' - B) < 0$; the other case can be dealt with analogously.

Let $I = (a, b)$ be a small interval with  $ a +\epsilon < \frac 12 < b - \epsilon$; the key problem is to construct
Harder-Narasimhan filtrations of objects in $\PP(I)$ with respect to $W$. Our first observation is that
due to our assumption $H^2(B' - B) < 0$, central charges of objects in $\PP(I)$ can only ``move to
the left''; this is again based on the Bogomolov-Gieseker inequality for $\sigma$-stability:

\begin{Lem} \label{lem:leftmoving}
If $E \in \PP((a, b))$ is $\sigma$-semistable with $\Re Z(E) < 0$, then also $\Re W(E) < 0$.
\end{Lem}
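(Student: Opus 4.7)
The plan is to decompose $\Re W(E) - \Re Z(E) = H^2(B-B')\,\ch_0(E)$ and show this correction cannot undo the strict inequality $\Re Z(E) < 0$. Under the hypothesis $H^2(B'-B) < 0$, we have $H^2(B-B') > 0$, so the correction is non-positive precisely when $\ch_0(E) \le 0$; that case yields $\Re W(E) \le \Re Z(E) < 0$ immediately. The real work is the case $\ch_0(E) > 0$.

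For $\ch_0(E) > 0$, the aim is to prove $|\Re Z(E)| \ge \alpha H^3 \ch_0(E)$, which dominates the correction. The key step combines two inputs. First, by Theorem \ref{thm:BGvariants} the $\sigma$-semistable object $E$ satisfies the classical Bogomolov--Gieseker inequality $(H^2\ch_1^B(E))^2 \ge 2 H^3 \ch_0(E)\cdot H\ch_2^B(E)$. Second, because $(a,b)$ is a narrow interval around $\tfrac 12$ (so in the present setup we may assume $b < 1$), the condition $\Re Z(E) < 0$ forces the $\sigma$-phase of $E$ into $(\tfrac 12, b) \subset (\tfrac 12, 1)$, and therefore $\Im Z(E) > 0$; equivalently $H\ch_2^B(E) > \tfrac 12 \alpha^2 H^3 \ch_0(E)$. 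Substituting the latter into BG yields $(H^2\ch_1^B(E))^2 > \alpha^2 (H^3)^2 \ch_0(E)^2$, whence $|\Re Z(E)| \ge \alpha H^3\,\ch_0(E)$.

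Combined with the closeness assumption $|H^2(B'-B)| < \alpha H^3$, this gives $H^2(B-B')\,\ch_0(E) < \alpha H^3 \ch_0(E) \le |\Re Z(E)|$, and hence $\Re W(E) = \Re Z(E) + H^2(B-B')\,\ch_0(E) < 0$, completing the argument. The main delicate point is the deduction $\Im Z(E) > 0$ from the phase constraint: if the interval $(a,b)$ were allowed to stretch past $1$, phases in $(1,\tfrac 32)$ would give $\Im Z(E) < 0$ and the chain of inequalities would break down. This is why the surrounding setup takes $\epsilon < \tfrac 18$ and restricts attention to a sufficiently narrow interval around $\tfrac 12$.
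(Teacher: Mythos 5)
Your proof is correct and takes essentially the same route as the paper's: after disposing of the trivial case $\ch_0(E)\le 0$, you combine $\Im Z(E)\ge 0$ (forced by the phase of $E$ lying in $(\tfrac 12,b)$) with the Bogomolov--Gieseker inequality $\HDeltaB(E)\ge 0$ from Theorem \ref{thm:BGvariants} for the rational class $B$ to deduce $\abs{\Re Z(E)}\ge \alpha H^3\ch_0(E)$, which dominates the correction term $H^2(B-B')\ch_0(E)$ because $\abs{H^2(B'-B)}<\alpha H^3$. This is exactly the paper's argument, up to your using the strict inequality $\Im Z(E)>0$ where the paper only needs $\Im Z(E)\ge 0$.
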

Note that the assumption is equivalent to $E \in \Coh^{H, B}(X)[1] \cap \PP((a, b)) = \PP\left(\left(\frac 12, b\right)\right)$.

\begin{proof}
By assumption we have $\Re Z(E) = H^2 \ch_1^B (E)< 0$ and
\[ \Re W(E) = \Re Z(E) - H^2(B'-B) \ch_0(E).\]
The case $\ch_0(E) \le 0$ is trivial due to the assumption $H^2(B'-B) < 0$. Otherwise, note that
$\Im Z(E) \ge 0$ implies
\[
2 H \ch_2^B(E) H^3 \ch_0(E) - \alpha^2 \left(H^3 \ch_0(E)\right)^2 \ge 0.
\]
By using Theorem \ref{thm:BGvariants}, applied to the rational class $B$, we also have
\[
2 H \ch_2^B(E) H^3 \ch_0(E) - \alpha^2 \left(H^3 \ch_0(E)\right)^2
\le \left(H^2 \ch_1^B(E)\right)^2- \alpha^2 \left(H^3 \ch_0(E)\right)^2.
\]
Therefore, we deduce
\[
H^2 \ch_1^B (E) \le -\alpha H^3 \ch_0(E).
\]
Using $\abs{H^2(B'-B) } < \alpha H^3$, this implies the claim.
\end{proof}

As in \cite[Section 7]{Bridgeland:Stab}, we define the set of semistable objects
$\QQ(\phi)$ to be objects of $\PP((\phi-\epsilon, \phi+\epsilon))$ that are $W$-semistable in a
slightly larger category, e.g.  in $\PP((\phi - 2 \epsilon, \phi + 2\epsilon))$. The key lemma
overcoming the indicated difficulty above is the following:

\begin{Lem} \label{lem:coarseHN}
Given $E \in \PP((a, b))$, there exists a filtration $ 0 = E_0 \into E_1 \into E_2 \into E_3$
such that
\begin{itemize}
\item $E_1 \in \Coh^{H, B'}(X)[1]$ and $E_1$ has no quotients $E_1 \onto N$ in $\PP((a, b))$ with
$\Re W(N) \ge 0$;
\item $E_2/E_1 \in \Coh^{H, B'}(X)$ is $W$-semistable in $\PP((a, b))$ with $\Re W(E_2/E_1) = 0$;
\item $E_3/E_2 \in \Coh^{H, B'}(X)$ and $E_3/E_2$ has no subobjects $M \into E_3/E_2$ in $\PP((a, b))$ with
$\Re W(M) \le 0$.
\end{itemize}
\end{Lem}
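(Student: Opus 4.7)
The plan is to mimic Bridgeland's construction of Harder--Narasimhan filtrations for the new central charge $W$ on $\PP((a,b))$, but aim only at the coarse three-step filtration, which separates $E$ according to the sign of $\Re W$. This keeps us from needing Noetherianity of $\PP((a,b))$ in its full strength, and lets us use Lemma \ref{lem:leftmoving} plus the rational support property from Theorem \ref{thm:BGvariants} as the sole control on deformation.

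First I would construct $E_2$. Consider the collection $\FF$ of strict subobjects $F\hookrightarrow E$ in the quasi-abelian category $\PP((a,b))$ that lie in $\Coh^{H,B'}(X)[1]$, equivalently those with $\Re W(F)\le 0$. For any such $F$, additivity and non-negativity of $\Im Z$ on $\PP((a,b))$ give $0\le \Im Z(F)\le \Im Z(E)$. Combined with Lemma \ref{lem:leftmoving} (which controls $\Re Z(F)$ in terms of $\Re W(F)$ and $H^2(B'-B)\ch_0(F)$) and the support property of $\sigma=(\overline Z_{\alpha,B},\Coh^{H,B}(X))$, one bounds $\|v_H(F)\|$ uniformly for $F\in\FF$. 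Hence $v_H(F)$ ranges over a finite subset of the lattice $\Lambda$. I then pick $E_2\in\FF$ realizing the maximal value of $\Im Z$, breaking ties by maximizing $-\Re W$. By maximality, the quotient $E/E_2$ in $\PP((a,b))$ has no nonzero subobject with $\Re W\le 0$. A short calculation with the cohomology functors of the $t$-structure $\Coh^{H,B'}(X)$, using that $\PP((a,b))\cap\Coh^{H,B'}(X)[1]$ is closed under subobjects in $\PP((a,b))$, then forces $E/E_2\in\Coh^{H,B'}(X)$, which will serve as $E_3/E_2$.

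Second, I would construct $E_1\subset E_2$ by the dual procedure inside $E_2$. Among strict quotients $E_2\twoheadrightarrow Q$ in $\PP((a,b))$ with $Q\in\Coh^{H,B'}(X)$ and $\Re W(Q)=0$, the same boundedness argument (applied to kernels in $\PP((a,b))$) shows a maximal such quotient exists; set $E_1$ to be its kernel. The resulting $E_2/E_1$ is automatically $W$-semistable of slope $+\infty$: any destabilizer would produce a larger such quotient. And $E_1$ has no quotient $N$ in $\PP((a,b))$ with $\Re W(N)\ge 0$, for any such $N$ would combine with $E_2/E_1$ to violate maximality of the chosen quotient of $E_2$.

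The main obstacle, and the whole reason for restricting to a three-step filtration rather than a full HN filtration at this stage, is to establish that the subobject posets defining $E_2$ and $E_1$ possess maximal elements despite $\PP((a,b))$ failing to be Artinian. The key input that makes this possible is the combination of Lemma \ref{lem:leftmoving} with the rational case of Theorem \ref{thm:BGvariants}: together they convert the a priori infinite posets into finite ones by pinning $v_H(F)$ to a bounded region of the lattice. Once this coarse filtration exists, each of the three pieces inherits stronger structure ($E_1\in\Coh^{H,B'}(X)[1]$, $E_2/E_1$ a maximal-slope $W$-semistable object in $\Coh^{H,B'}(X)$, $E_3/E_2\in\Coh^{H,B'}(X)$), which provides the starting point for the subsequent finer analysis needed to complete the proof of Proposition \ref{prop:DeformingTiltStability}.
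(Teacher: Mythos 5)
Your strategy diverges substantially from the paper's, and it has a genuine gap at its core step. The paper does not extremize over subobject posets at all: it assembles the filtration from three torsion pairs that are already known to exist --- the decomposition $E'\into E\onto E''$ coming from the t-structure $\Coh^{H,B}(X)$ (whose $[1]$-part is absorbed into $E_1$ using Lemma \ref{lem:leftmoving}), the torsion pair $\bigl(\Coh^{H,B'}(X)[1]\cap\Coh^{H,B}(X),\,\Coh^{H,B'}(X)\cap\Coh^{H,B}(X)\bigr)$ exhibiting $\Coh^{H,B'}(X)$ as a tilt of $\Coh^{H,B}(X)$, and the torsion pair in $\Coh^{H,B'}(X)$ whose torsion part consists of the objects with $\Re W=0$ (first paragraph of the proof of \cite[Lemma 3.2.4]{BMT:3folds-BG}, which needs no rationality). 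The existence of all ``maximal'' pieces is packaged into the existence of these torsion pairs, so no chain condition and no support-property boundedness is ever invoked.

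The gap in your argument is the passage from ``$v_H(F)$ ranges over a finite subset of $\Lambda$'' to ``the poset $\FF$ has a maximal element with the required property.'' Finiteness of the image under $v_H$ does not give the ascending chain condition: there can be strictly increasing chains $F_1\subsetneq F_2\subsetneq\cdots$ of subobjects in $\PP((a,b))$ along which $v_H$ is constant, because the successive quotients may satisfy $v_H=0$ (shifts of zero-dimensional torsion sheaves). This is exactly the failure of finite-length-ness that this appendix is written to circumvent --- compare the chain $\cdots\into\OO_C(-2x)\into\OO_C(-x)\into\OO_C$ discussed just before the lemma, whose successive quotients $\OO_x$ have $v_H=0$. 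Concretely, a maximizer $E_2$ of $\bigl(\Im Z,-\Re W\bigr)$ need not be maximal in the poset, and your deduction that $E/E_2$ has no nonzero subobject $G$ with $\Re W(G)\le 0$ fails precisely when $\Im Z(G)=0$ and $\Re W(G)=0$: the preimage of such a $G$ in $E$ is a strictly larger element of $\FF$ with the same invariants, so no contradiction arises. Two further inaccuracies compound this: the asserted equivalence ``$F\in\Coh^{H,B'}(X)[1]$ iff $\Re W(F)\le 0$'' is false ($\Re W(F)=H^2\ch_1^{B'}(F)\le 0$ does not constrain the cohomology objects of $F$), and the $E_2$ of the statement is in general a nontrivial extension of an object of $\Coh^{H,B'}(X)$ with $\Re W=0$ by an object of $\Coh^{H,B'}(X)[1]$, hence does \emph{not} lie in $\Coh^{H,B'}(X)[1]$ --- so the poset you extremize over is in any case the wrong one for producing $E_2$.
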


\begin{proof}
The t-structure associated to
$\Coh^{H, B}(X)$ gives a short exact sequence $E' \into E \onto E''$ in $\PP((a, b))$ with $E' \in \Coh^{H, B}(X)[1]$
and $E'' \in \Coh^{H, B}(X)$. Any quotient $E' \onto N$ would necessarily satisfy
$N \in \PP((\frac 12, b))$; by Lemma \ref{lem:leftmoving}, this implies $\Re W(N) < 0$.
Thus, given a filtration as in the claim for $E''$, its preimage in $E$ will still satisfy all the
claims.

We may therefore assume $E \in \Coh^{H, B}(X)$.
Note that $\Coh^{H, B'}(X)$ can be obtained as a tilt of $\Coh^{H, B}(X)$: there exists a
a torsion pair
\[ \TT = \Coh^{H, B'}(X)[1] \cap \Coh^{H, B}(X),\qquad \FF = \Coh^{H, B'}(X) \cap \Coh^{H, B}(X). \]
Moreover,
\[T \in \TT \Rightarrow \Re W(T) < 0 \quad \text{and} \quad F \in \FF \Rightarrow \Re W(T) \ge 0. \]
Let $E_1 \into E \onto F$ be the short exact sequence associated to $E$ via this torsion pair. Since
$\TT$ is closed under quotients, $E_1$ satisfies all the claims in the lemma; similarly, $B$
only has subobjects with $\Re W(\blank) \ge 0$.

The existence of $E_2$ now follows from the fact that $\Coh^{H, B'}(X)$ admits a torsion pair
whose torsion part is given by objects with $\Re W(\blank) = 0$; this is shown in the first
paragraph of the proof of \cite[Lemma 3.2.4]{BMT:3folds-BG}, which does not use any rationality
assumptions.
\end{proof}

The existence of Harder-Narasimhan filtrations of $E_1$ and $E_3/E_2$ can now be proved with the same methods as
in \cite[Section 7]{Bridgeland:Stab}; the same goes for any $E \in \PP((a, b))$ when $(a, b)$ is
an interval not intersecting the set $\frac 12 + \Z$; this is enough to conclude the existence of
HN filtrations for arbitrary $E \in \Db(X)$, see the arguments at the end of Section 7 in
\cite{Bridgeland:Stab}. Similar arguments apply in the case $H^2(B'-B) > 0$.

We have thus proved the claim that the tilt-stability condition $\sigma$ deforms to a very weak
stability condition $\sigma'$ with central charge $W$. Moreover, by the construction in Lemma
\ref{lem:coarseHN}, its associated t-structure is exactly $\Coh^{H, B'}(X)$; this finishes the proof
of Proposition \ref{prop:DeformingTiltStability}.

Let us also observe that for $\phi \in \frac 12 + \Z$, the subcategory
$\PP(\phi) \cap \stv{E \in \Db(X)}{v_H(E) = 0}$ is unchanged under deformations: it consists of
0-dimensional torsion sheaves, shifted by $\phi - \frac 12$. These are the only semistable objects
with central charge equal to zero; we will use this fact to show that tilt-stability conditions
satisfy well-behaved wall-crossing:

\begin{Prop} \label{prop:tiltwallcrossing}
Fix a class $c \in \Lambda$.
There exists a wall-and-chamber structure given by a locally finite set
of walls in  $\R_{>0} \times \NS(X)_\R$ such that for an object $E$ with $v_H(E) = c$, tilt-stability is unchanged
as $(\alpha, B)$ vary within a chamber. Each of the walls is locally given by one of the following
conditions on $Z = \overline{Z}_{\alpha, B}$:
\begin{enumerate}
\item \label{enum:standardseqence}
$Z(F)$ is proportional to $Z(E)$ for some destabilizing subobject $F \into E$ with
$v_H(F) \neq 0 \neq v_H(E/F)$, or
\item $Z(E)$ is purely imaginary (if there exists a subobject or quotient $F$ with $v_H(F) = 0$).
\end{enumerate}
\end{Prop}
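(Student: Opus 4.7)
The plan is to establish this wall-and-chamber structure by combining the continuous deformation of very weak stability conditions given by Proposition~\ref{prop:DeformingTiltStability} with the Bogomolov-Gieseker inequality of Theorem~\ref{thm:BGvariants} and the convexity statement in Corollary~\ref{cor:DeltaJH}. Since the family $(\overline{Z}_{\alpha,B},\Coh^{H,B}(X))$ varies continuously, tilt-semistability of a fixed object $E$ with $v_H(E)=c$ is preserved unless we cross a codimension-one locus where some short exact sequence $0\to F\to E\to E/F\to 0$ in $\Coh^{H,B}(X)$ satisfies $\nu_{\alpha,B}(F)=\nu_{\alpha,B}(E)$.

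First, I would classify the walls. If both $v_H(F)\neq 0$ and $v_H(E/F)\neq 0$, the equality $\nu_{\alpha,B}(F)=\nu_{\alpha,B}(E)$ is equivalent to $\overline{Z}_{\alpha,B}(F)$ being a positive real multiple of $\overline{Z}_{\alpha,B}(E)$, yielding case~(1). If instead $v_H(F)=0$ (or dually $v_H(E/F)=0$), then $\nu_{\alpha,B}(F)=+\infty$ by convention, forcing $\nu_{\alpha,B}(E)=+\infty$; this is exactly $H^2\ch_1^B(E)=0$, i.e., $\overline{Z}_{\alpha,B}(E)\in i\R$, yielding case~(2).

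Second, the crucial step is local finiteness. Fix a compact subset $K\subset \R_{>0}\times\NS(X)_\R$. Case~(2) contributes the single real-analytic hypersurface $\{H^2\ch_1^B(E)=0\}\cap K$. For case~(1), I would bound the possible reduced classes $\overline{v}_H(F)=\bigl(H^3\ch_0(F),\,H^2\ch_1^B(F),\,H\ch_2^B(F)\bigr)$. Since $F, E/F\in\Coh^{H,B}(X)$, we have $0\le H^2\ch_1^B(F)\le H^2\ch_1^B(E)$; uniformly for $B$ in the compact set, this bounds $\ch_0(F)$ and $H^2\ch_1(F)$. Replacing $F$ on a wall by a Jordan-Hölder factor if necessary, Theorem~\ref{thm:BGvariants} gives $\overline{\Delta}_H(F)\ge 0$, while Corollary~\ref{cor:DeltaJH} (applied via the convex cone of Lemma~\ref{lem:halfspace} and Lemma~\ref{lem:trivial}) gives $\overline{\Delta}_H(F)\le\overline{\Delta}_H(E)$. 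Combined, these constraints bound $H\ch_2(F)$, so $\overline{v}_H(F)$ lies in a finite subset of the discrete image of $\overline{v}_H$. For each such class, the proportionality $\overline{Z}_{\alpha,B}(F)\parallel\overline{Z}_{\alpha,B}(E)$ carves out a real codimension-one submanifold, and the finite union yields the desired wall-and-chamber structure on $K$.

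The main obstacle is controlling destabilizing subobjects $F$ \emph{uniformly} as $(\alpha,B)$ varies on $K$: the Bogomolov-Gieseker bounds involve $B$ explicitly, so keeping the bounds uniform requires the compactness of $K$. A secondary subtlety is that on a wall $E$ is only strictly semistable, so the destabilizing $F$ need not itself be tilt-stable; one must therefore invoke the bound $\overline{\Delta}_H(F)\le\overline{\Delta}_H(E)$ via the Jordan-Hölder factors rather than applying the BG inequality to $F$ directly.
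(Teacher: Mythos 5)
Your overall strategy coincides with the paper's: the paper's proof of this proposition is a two-line appeal to the preceding observation (the only semistable objects with $v_H = 0$ are shifts of zero-dimensional torsion sheaves, which accounts for the walls of type (2)) together with ``the usual arguments based on the support property'' as in \cite[Section 9]{Bridgeland:K3} for the walls of type (1). Your classification of the two wall types and your plan of bounding the set of numerical classes of destabilizers on a compact subset of $\R_{>0}\times\NS(X)_\R$ is exactly that standard argument spelled out.

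There is, however, one step that is false as written: the claim that the inequalities $0\le H^2\ch_1^B(F)\le H^2\ch_1^B(E)$ bound $\ch_0(F)$ and $H^2\ch_1(F)$. The twisted degree $H^2\ch_1^B(F)=H^2\ch_1(F)-(H^2 B)\ch_0(F)$ is a single linear combination of the two quantities, so bounding it leaves the line $\{H^2\ch_1 = (H^2B)\ch_0\}$ (and nearby lattice points) entirely unconstrained; e.g.\ for $\NS(X)=\Z H$ and $B=\tfrac pq H$ one can take $\ch_0(F)=nq$, $\ch_1(F)=npH$ with $n$ arbitrary. The rank bound must instead come from the support property itself: on a type (1) wall one has $\overline{Z}_{\alpha,B}(F)\parallel\overline{Z}_{\alpha,B}(E)$ with both in the upper half-plane, hence $\abs{\overline{Z}_{\alpha,B}(F)}\le\abs{\overline{Z}_{\alpha,B}(E)}$; combining this with $\overline{\Delta}_H(F)\ge 0$ (Theorem \ref{thm:BGvariants}, applied to Jordan--H\"older factors as you say) and the fact that $\Ker\overline{Z}_{\alpha,B}$ is negative definite for $\overline{q}_H^B$ of signature $(2,1)$ (Lemma \ref{lem:discriminantsignature}) forces $\overline{v}_H(F)$ into a compact set, uniformly over the compact parameter set $K$. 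Concretely, substituting the wall condition $H\ch_2^B(F)-\tfrac12\alpha^2H^3\ch_0(F)=\nu_{\alpha,B}(E)\,H^2\ch_1^B(F)$ into $\overline{\Delta}_H(F)\ge 0$ yields $\alpha^2\left(H^3\ch_0(F)\right)^2\le O(1)$, which is what actually bounds the rank (using that $\alpha$ is bounded away from $0$ on $K$). Since you already have all of these ingredients on the table, the argument is repairable, but the bounding steps must be run in this order rather than the one you give.
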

\begin{proof}
As indicated above, the second type of walls corresponds to the case where $E$ has a shift of a
zero-dimensional torsion sheaf as a subobject or quotient. Otherwise, any possibly destabilizing
short exact sequence $F \into E \onto E/F$ must have the properties given in
\eqref{enum:standardseqence}, to which the usual arguments (e.g. in \cite[Section 9]{Bridgeland:K3})
based on support property apply.
\end{proof}

This allows us to complete the proof of the Bogomolov-Gieseker type inequalities:
\begin{proof}[Proof of Theorem \ref{thm:BGvariants}, case $H^2B$ non-rational]
Consider a $\nu_{\omega,B}$-semistable object $E$.  We may assume have $H^2\ch_1^B(E)\neq0$.
Using Lemma \ref{lem:halfspace}, we can assume that $E$ is in
fact $\nu_{\omega, B}$-stable.
By Proposition \ref{prop:tiltwallcrossing}, there is an open chamber in $\R_{>0} \times \NS(X)_\R$
in which $E$ is tilt-stable; this chamber contains points with rational $B$; therefore, our claim
$\HDeltaBC(E) \ge 0$ follows from case $H^2B$ rational proved in Section \ref{sec:classicalBG}.
\end{proof}


\begin{Rem}
(a) Alternatively, the statements of this appendix could be proved via the relation of tilt-stability to a
certain polynomial stability condition (in the sense of \cite{large-volume}); see Sections 4 and 5
of \cite{BMT:3folds-BG}, in particular Proposition 5.1.3. The advantage is that the slicing
associated to this polynomial stability condition is locally finite.

(b) Let us also explain precisely the problem with the statement of \cite[Corollary 3.3.3]{BMT:3folds-BG}: if we
allow arbitrary deformations of $\omega \in \NS(X)_\R$, rather than just those proportional to a
given polarization $H$, we would need to prove the support property for tilt-stable objects with
respect to a non-degenerate quadratic form on the lattice
\[
\Lambda = H^0(X) \oplus \NS(X) \oplus \frac 12 N_1(X), \quad
v(E) = \left(\ch_0(E), \ch_1(E), \ch_2(E)\right).
\]
However, none of the variants of the classical Bogomolov-Gieseker inequality discussed in Section
\ref{sec:classicalBG} give such a quadratic form, as they only depend on $H \ch_2$ rather than
$\ch_2$ directly.
\end{Rem}

\bibliography{abelian}                      
\bibliographystyle{alphaspecial}     

\end{document}